\DeclareMathOperator{\dM}{DM}
\DeclareMathOperator{\acl}{acl}
\DeclareMathOperator{\dcl}{dcl} 
\DeclareMathOperator{\aut}{Aut} \DeclareMathOperator{\id}{Id}
 \DeclareMathOperator{\dom}{dom}
\DeclareMathOperator{\tp}{tp}
\DeclareMathOperator{\mr}{RM}
\DeclareMathOperator{\rv}{rv}
\newtheorem{theorem}{Theorem}[section]
\newtheorem{claim}{Claim}[theorem]
\newtheorem{corollary}[theorem]{Corollary}
\newtheorem{fact}[theorem]{Fact}
\newtheorem{lemma}[theorem]{Lemma}
\newtheorem{proposition}[theorem]{Proposition}
\newtheorem*{gen-dif}{\fbox{{\large A}} \hypertarget{Agen-dif}{Gen-Dif}}
\newtheorem*{min-balln}{\fbox{{\large A}} \hypertarget{Amin-ball}{Cballs}}
\theoremstyle{definition}
\newtheorem{definition}[theorem]{Definition}
\newtheorem{example}[theorem]{Example}
\newtheorem{remark}[theorem]{Remark}
\newtheorem{question}[theorem]{Question}
\newcommand{\Rr}{{\mathbb{R}}}
\newcommand{\Nn}{{\mathbb{N}}}
\newcommand{\Qq}{{\mathbb{Q}}}
\newcommand{\Zz}{{\mathbb {Z}}}
\newcommand{\m}{\textbf{m}}
\newcommand{\bk}{\textbf{k}}
\newcommand{\CB}{{\mathcal B}}
\newcommand{\CL}{{\mathcal L}}
\newcommand{\CK}{{\mathcal K}}
\newcommand{\CN}{{\mathcal N}}
\newcommand{\CR}{{\mathcal R}}
\newcommand{\CM}{{\mathcal M}}
\newcommand{\CO}{{\mathcal O}}
\newcommand{\0}{\emptyset}
\renewcommand{\phi}{\varphi}
\newcommand{\RV}{\mathrm{RV}}
\def\la{\langle}
\def\ra{\rangle}
\def\qp{\mathbb Q_p}
\def\dpr{\mathrm{dp\text{-}rk}}
\def\sub{\subseteq}
\newenvironment{claimproof}[1][\proofname]
  {%
    \proof[#1]%
  }
  {%
    \endproof%
  }
\date{January 2022}
\title{On groups  interpretable in various valued fields}
\author{Yatir Halevi}
\address{Department of Mathematics\\ University of Haifa\\ 199 Abba Khoushy Avenue \\ Haifa \\Israel}
\address{Department of Mathematics, Ben Gurion University of the Negev, Be'er-Sheva 84105, Israel}
\email{ybenarih@campus.haifa.ac.il}
\author{Assaf Hasson}
\address{Department of Mathematics, Ben Gurion University of the Negev, Be'er-Sheva 84105, Israel}
\email{hassonas@math.bgu.ac.il}
\author{Ya'acov Peterzil}
\address{Department of Mathematics, University of Haifa, Haifa, Israel}
\email{kobi@math.haifa.ac.il}
\date{March 2024}
\begin{document}

\thanks{The first author was partially supported by ISF grant No. 555/21 and 290/19. The second author was supported by ISF grant No. 555/21. The third author was supported by ISF grant No. 290/19.}
\maketitle

\begin{abstract}
    We study infinite groups interpretable in three families of valued fields:  $V$-minimal, power bounded $T$-convex, and  $p$-adically closed fields. We show that every such  group $G$ has unbounded exponent and that if  $G$ is dp-minimal then it is abelian-by-finite. 
    
    Along the way, we associate with any infinite interpretable  group an infinite type-definable subgroup which is definably isomorphic to a group in one of four distinguished sorts: the underlying valued field $K$, its residue field $\bk$ (when infinite), its value group $\Gamma$,  or $K/\CO$, where $\CO$ is the valuation ring.
    
    Our work uses and extends techniques developed in \cite{HaHaPeVF} to circumvent elimination of imaginaries.  
\end{abstract}

\tableofcontents

\section{Introduction}

We continue our work from \cite{HaHaPeVF}, where we studied fields interpretable in a variety of valued fields, and extend our investigation to interpretable groups. To recall, in \cite{HaHaPeVF} we considered interpretable objects, namely quotients of definable sets by definable equivalence relations, in valued fields, with a focus on interpretable fields in  three families of valued fields: (i) V-minimal (i.e. certain expansions of algebraically closed valued fields of residue characteristic $0$), (ii)  $T$-convex expansions of power bounded o-minimal structures (i.e. certain expansions of  real closed valued fields) and (iii)  certain expansions of $p$-adically closed fields, namely of fields that are elementarily equivalent to finite extensions of $\mathbb{Q}_p$).

The key idea from \cite{HaHaPeVF} was to bypass results on elimination of imaginaries  and replace them with a reduction to four {\em distinguished} sorts: the valued field $K$ itself, the residue field $\bk$ (when infinite), the value group $\Gamma$, and the quotient $K/\CO$, where $\CO$ is the valuation ring. Applying techniques similar to those developed in \cite{HaHaPeVF} we introduce here the following notion: We say that $G$ is {\em locally  strongly internal to a set $D$} if there exists a definable infinite $X\sub G$ and a definable injective $f:X\to D^n$, for some $n$.

Using this notion we state our  main theorem\footnote{For brevity, we shall use ``power bounded $T$-convex expansions'' instead of ``$T$-convex expansions of power bounded o-minimal structures''}:




 \begin{theorem}[Section \ref{S: final}] \label{intro-2}  Let $K$ be a valued field  of characteristic $0$ and assume that either (i) $\CK$ is a $V$-minimal expansion of $K$ (ii) $\CK$ is a power bounded $T$-convex expansion of  $K$, or (iii) $K$ is a $p$-adically closed field and $\CK=K$.

Let $G$ be an infinite group interpretable in $\CK$. Then,  after possibly replacing $G$ with a quotient by a finite normal subgroup, there exists a distinguished sort $D\in \{K,\bk,\Gamma,K/\CO\}$, such that $G$ is locally strongly internal to $D$ and for every such $D$   there is an infinite type-definable  subgroup $\nu_D$ of $G$,  such that
\begin{enumerate}
    \item If $D=K$ or $D=\bk$  then $\nu_D$ is definably isomorphic to a type-definable group  $\nu_D'\vdash K^n$, or $\nu_D'\vdash \bk^n$, respectively, for some $n$. 
\item If $D=\Gamma$ then $\nu_{D}$ is definably isomorphic to a type-definable subgroup of $\la \Gamma^n,+\ra$, for some $n$.

\item If  $D=K/\CO$  then there is a definable subgroup $H$ of G,  $\nu_{D}\vdash H$, such that $H$ is definably isomorphic to a  subgroup of $\la (K/\CO)^n,+\ra$, for some $n$. 
\end{enumerate}
\end{theorem}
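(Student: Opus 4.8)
The plan is to reduce the general interpretability problem to a manageable local analysis, using the four distinguished sorts as the only ``coordinate systems'' available in the absence of elimination of imaginaries. First I would invoke the reduction-to-distinguished-sorts machinery from \cite{HaHaPeVF}: any interpretable set, after removing a small (lower-dimensional) piece, admits a definable finite-to-one or injective map into a product of the sorts $K,\bk,\Gamma,K/\CO$. Applied to (a large definable subset of) the group $G$, this produces some distinguished $D$ and a definable infinite $X\subseteq G$ with a definable injection $f\colon X\to D^n$, i.e. $G$ is locally strongly internal to $D$; the passage to a quotient by a finite normal subgroup is what lets us upgrade finite-to-one to injective, via a standard chunk/generic-stabilizer argument. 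I would fix such a $D$ of \emph{minimal} dimension (in the appropriate sense — o-minimal dimension for $K$, $\bk$, rational rank for $\Gamma$, etc.) and carry out the rest of the argument relative to this minimal $D$, since minimality is exactly what forces the infinitesimal subgroup $\nu_D$ to behave additively.

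The heart of the argument is the construction of $\nu_D$. Here I would adapt the Hrushovski--Pillay style ``group chunk in a definable/type-definable environment'' technique: using $f\colon X\to D^n$, transport the partial group operation of $G$ near the identity to $D^n$, obtaining a type-definable ``infinitesimal'' neighborhood and, by translating, a type-definable subgroup $\nu_D\leqslant G$ that sits (via $f$ or a variant of it) inside $D^n$ as a type-definable set. The key point is then to show the induced operation on $\nu_D$ is \emph{abelian} and in fact \emph{linear} over the relevant structure on $D$. For $D=K$ or $D=\bk$ this should follow because $K$ (resp.\ $\bk$) is a field, so a type-definable subgroup of $(K^n,+)$-worth of infinitesimals with a definable group law is, by the field's own structure theory (o-minimality / stable embeddedness of the residue field / the analysis of definable groups in ACVF-like or RCVF-like settings), forced to be a type-definable subgroup of the additive group; the minimality of $\dim D$ rules out the multiplicative group and higher-dimensional non-abelian possibilities. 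For $D=\Gamma$ the relevant fact is that $\Gamma$ is an ordered abelian group (o-minimal-like / dp-minimal), so any interpretable group locally internal to $\Gamma$ is, infinitesimally, a subgroup of $(\Gamma^n,+)$. For $D=K/\CO$ the situation is special because $K/\CO$ is already a group and is ``rigid'' in a suitable sense; I expect one gets not just a type-definable $\nu_D$ but an honest definable subgroup $H$ with $\nu_D\vdash H$, isomorphic to a subgroup of $((K/\CO)^n,+)$, by exploiting that $K/\CO$ has no extra definable structure beyond its group structure and the residual $\Gamma$-action, together with a divisibility/torsion analysis.

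The main obstacle, I expect, is twofold. First, making the ``group chunk'' argument go through with only \emph{type-definable} data on the $D^n$ side: the map $f$ is defined on a definable $X$, but the infinitesimal subgroup lives on a type-definable (an intersection of definable) set, and one must check that the partial operation, associativity, and inverse are all ``definably coherent'' on a cofinal system of definable approximations — this is where the technical extensions of \cite{HaHaPeVF} are needed, to produce a definable family of charts refining down to $\nu_D$. Second, and more seriously, pinning down the algebraic structure on $\nu_D$ as \emph{additive} rather than some twisted or multiplicative form: a priori a definable subgroup of, say, $\gl_n(\bk)$ or a unipotent group could intervene, and one must use the minimality of $D$ together with the classification of definable groups in the relevant reducts (algebraically closed valued fields after \cite{HaHaPeVF}, o-minimal $T$-convex theories, $p$-adically closed fields) to show that, infinitesimally and after a definable isomorphism, nothing but $+$ survives. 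I would handle the three families uniformly wherever possible by isolating the common feature — ``$D$ carries a definable/interpretable field or ordered-abelian-group structure and the ambient geometry is tame'' — and only branch into case-specific arguments for the classification of definable groups internal to $D$ and for the $K/\CO$ case, where the absence of any definable field structure on $K/\CO$ is what is ultimately responsible for clause~(3) giving a genuinely definable $H$.
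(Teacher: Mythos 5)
There is a genuine gap, and it starts with a misreading of the statement. Clause (1) does \emph{not} assert that $\nu_D$ is additive when $D=K$ or $D=\bk$; it only asserts that $\nu_D$ is definably isomorphic to a type-definable group whose underlying set lives in $K^n$ (resp.\ $\bk^n$). Your claim that for $D=K$ the infinitesimal group ``is forced to be a type-definable subgroup of the additive group'' is false: take $G=\mathrm{SL}_2(K)$, which is locally strongly internal to $K$; its infinitesimal subgroup (matrices infinitesimally close to the identity) is non-abelian. Consequently the organizing idea of your proof --- fix a distinguished sort $D$ of \emph{minimal} dimension and let minimality force additivity --- is not available and, moreover, does not prove the theorem as stated, since the conclusion is required for \emph{every} sort $D$ to which $G$ is locally strongly internal, not for one preferred $D$. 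In the actual argument, additivity enters only in clauses (2) and (3), and it comes from \emph{local linearity} of definable functions in $\Gamma$ and $K/\CO$ (piecewise affine behaviour in ordered vector spaces, $\Zz$-groups, and in $K/\CO$), applied to a decomposition of $xy^{-1}z$ into maps evaluated at sufficiently generic points; no classification of definable groups internal to $D$ is invoked for $D=K$, where one only records that $\nu_D$ is strongly internal to $K$.

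The second gap is that you treat the chunk/infinitesimal construction as a routine transport of \cite{HaHaPeVF}, but in case (iii) the sorts $\Gamma$ and $K/\CO$ are not SW-uniformities: the valuation topology on $K/\CO$ is discrete, so ``the intersection of definable neighbourhoods of $e$'' has no direct meaning. The paper's main technical work is precisely to replace the topology by a substitute --- the (non-definable, non-Hausdorff) ball topology on $K/\CO$, the existence of generic vicinities of generic points, and the axiomatic framework of vicinic sorts and $D$-groups --- and only then run the Ma\v{r}\'{\i}kov\'{a}-style argument showing the generic vicinities are cosets of a type-definable subgroup. Your proposal does not indicate how to get any of this in the $p$-adic $K/\CO$ and $\Gamma$ cases. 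Two further, smaller, divergences: the finite normal subgroup is not produced by a ``generic-stabilizer'' argument but by showing that the fibres of an (almost) $D$-critical finite-to-one map are generically cosets of one fixed finite subgroup (a sub-monoid argument), the stabilizer-type argument being reserved for the stable sort $\bk$ in the $V$-minimal setting --- a case your proposal does not treat separately, although there the conclusion is of a different nature (a definable normal subgroup which is $\bk$-algebraic); and for clause (3) the definable $H$ is obtained because definable sets of full rank in $K/\CO$ locally \emph{are} cosets of definable subgroups, not because $K/\CO$ ``has no extra definable structure'' (it is in fact not stably embedded).
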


\begin{remark}\label{R: intro thm}
\begin{enumerate}
    \item By a ``type-definable'' subgroup of $G$ we mean a partial type over $K$ (consisting possibly of infinitely many formulas) whose realization in every elementary extension $\widehat \CK$ of $\CK$ is a subgroup of $G(\widehat \CK)$.  
    A type-definable group $\nu$ is \textit{infinite } if $\nu(\widehat \CK)$ is infinite for some $\widehat K\succ \CK$. 
We write $\nu\vdash S$ to mean that the  formula defining $S$ belongs to the type $\nu$.     

   \item As we show in Proposition \ref{P: nu}, the group $\nu_D$  above  has various invariance properties. In addition, its dp-rank equals $\max \dpr(X)$, as $X\sub G$ varies over all definable subsets of $G$ for which there is some finite-to-one $f:X\to D^n$, for some $n$, 
\end{enumerate}


\end{remark}



Though the above  theorem may seem technical, it gives a new set of invariants for studying interpretable groups. In a subsequent paper, \cite{HaHaPeSSG}, we use this result to classify definably simple non-abelian and definably semisimple groups  interpretable in the same classes of fields. In the present paper, we give more direct global applications of the local data provided by our main result: 

\begin{theorem}[Section \ref{S: final}] \label{intro-1}
 Let $K$ be a valued field  of characteristic $0$ assume that either (i) $\CK$ is a $V$-minimal expansion of $K$, (ii) $\CK$ is a power bounded $T$-convex expansion of $K$, or (iii) $\CK=K$ is $p$-adically closed. Let $G$ be an infinite group interpretable in $\CK$. Then (1) $G$ has unbounded exponent and (2) If $G$ is   dp-minimal  then it  is abelian-by-finite. 
\end{theorem}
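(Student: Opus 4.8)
\emph{Proof plan.} We work in a sufficiently saturated model throughout. By Theorem~\ref{intro-2} (with Proposition~\ref{P: nu} and Remark~\ref{R: intro thm}) we may, after replacing $G$ by a quotient $G/N$ by a finite normal subgroup, fix a distinguished sort $D$ and an infinite type-definable subgroup $\nu:=\nu_D\leq G$ as there; in the case $D=K/\CO$ we also fix the definable subgroup $H\geq\nu$. In every case $\nu$ (resp.\ $H$) carries a definable embedding into one of the abelian groups $(K^n,+)$, $(\bk^n,+)$, $(\Gamma^n,+)$, $((K/\CO)^n,+)$, so in particular $\nu$ is abelian. Both conclusions are insensitive to the passage to $G/N$: trivially so for unbounded exponent, and for abelian-by-finiteness because the preimage in $G$ of an abelian finite-index subgroup of $G/N$ has finite derived subgroup, and infinite interpretable groups with finite derived subgroup are abelian-by-finite (see the commutator-pairing argument in Part~(2) below).

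\textbf{Part (1).} It suffices to exhibit inside $G$ an infinite subgroup of unbounded exponent, and $\nu$ (or $H$) is one. For $D\in\{K,\bk,\Gamma\}$ the ambient group is torsion-free — $K$, and $\bk$ in cases (i)--(ii), have characteristic $0$, and $\Gamma$ is an ordered abelian group — so an infinite subgroup has an element of infinite order. For $D=K/\CO$: if the residue characteristic is $0$ then $(K/\CO,+)$ is still torsion-free (if $mx\in\CO$ with $0\neq m\in\Zz$ then $v(x)\geq 0$), and we conclude as before; in case (iii) the group $(K/\CO,+)$ is a $p$-group whose $p^k$-torsion, namely $\{x\in K: v(x)\geq -k\,v(p)\}/\CO$, is finite for each $k$ (as $\bk$ is finite and $v(p)$ a fixed positive integer), so the infinite $p$-group $H$ has no exponent $p^k$ and hence unbounded exponent. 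In all cases $G$ has an infinite subgroup of unbounded exponent, hence $G$ has unbounded exponent.

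\textbf{Part (2).} Now $\dpr(G)=1$, so $\dpr(\nu)=1$ by Remark~\ref{R: intro thm}(2), and $\nu$ is abelian. Using the invariance of $\nu_D$ (Proposition~\ref{P: nu}) to arrange that $\nu$ has finitely many $G$-conjugates, we may replace $G$ by $N_G(\nu)$ and assume $\nu\trianglelefteq G$. Put $C:=C_G(\nu)$, a definable normal subgroup; as $C$ centralises $\nu$ elementwise, $\nu\leq Z(C)$, so $Z(C)$ is infinite with $\dpr(Z(C))=1$. It then suffices to show (a) $[G:C]$ is finite and (b) $C$ is abelian-by-finite. For (b): one first shows $[C,C]$ is finite; then $C_0:=C_C([C,C])$ has finite index in $C$ and is nilpotent of class $\leq 2$, so the commutator induces an alternating biadditive nondegenerate pairing $C_0/Z(C_0)\times C_0/Z(C_0)\to [C_0,C_0]$, whence for $e:=\exp([C_0,C_0])$ one gets $[x^e,y]=[x,y]^e=1$, i.e.\ $C_0/Z(C_0)$ has exponent dividing $e$; being an interpretable group of bounded exponent it is finite by Part~(1), so $Z(C_0)$ is an abelian subgroup of finite index in $C$ (and the same argument gives the finite-by-abelian claim used above). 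For (a): conjugation gives a faithful definable action of the dp-minimal group $Q:=G/C$ on the abelian group $\nu$; since $\nu$ is a type-definable subgroup of $D^n$, the definable automorphisms of $\nu$ are linear over $D$, so $Q$ is a dp-minimal group of $D$-linear transformations. When the action splits inside $G$ — so that, up to finite index, $G$ contains an internal semidirect product $\nu'\rtimes Q$ with $\nu'\leq\nu$ infinite definable — the underlying set of $\nu'\rtimes Q$ is the \emph{Cartesian} product $\nu'\times Q$, so $1=\dpr(G)=\dpr(\nu')+\dpr(Q)\geq 1+\dpr(Q)$ by additivity of dp-rank on Cartesian products, forcing $Q$ finite; the general case reduces to this by analysing $\nu$ as a module over the coefficient ring attached to $D$ (again using Part~(1) to rule out infinite interpretable sections of bounded exponent).

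\textbf{Main obstacle.} Part~(1), the pairing argument, and the reductions are routine; the real content is the finish of Part~(2) — bounding the conjugation action of $G$ on $\nu$ in step~(a), and proving $[C_G(\nu),C_G(\nu)]$ finite in step~(b). The key difficulty is that dp-rank is only \emph{sub}-additive along definable fibrations — for instance $\dpr(\CO)=\dpr(K/\CO)=\dpr(K)=1$ although $K\twoheadrightarrow K/\CO$ has all fibres of dp-rank $1$ — so one cannot simply subtract ranks; one must instead exploit the \emph{linearity} of $\aut(\nu)$ over the one-dimensional distinguished sort $D$, the additivity of dp-rank on \emph{Cartesian} products, and Part~(1) (there is no infinite interpretable group of bounded exponent).
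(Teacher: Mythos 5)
Your Part (1) has a genuine gap in the cases $D=K$ and $D=\bk$. You write that ``in every case $\nu$ (resp.\ $H$) carries a definable embedding into one of the abelian groups $(K^n,+)$, $(\bk^n,+)$, $(\Gamma^n,+)$, $((K/\CO)^n,+)$, so in particular $\nu$ is abelian'', and you then deduce unbounded exponent from torsion-freeness of the ambient additive group. But Theorem \ref{intro-2}(1) asserts only that $\nu_D$ is definably isomorphic to a type-definable group $\nu'\vdash K^n$ (resp.\ $\bk^n$), i.e.\ a group whose underlying partial type lives in $K^n$; its group law is an arbitrary definable operation with no stated compatibility with $+$ (contrast clauses (2) and (3), where the conclusion really is a subgroup of $\la\Gamma^n,+\ra$, resp.\ $\la(K/\CO)^n,+\ra$). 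So for $D=K,\bk$ neither abelianness nor absence of torsion follows from your premise; indeed abelianness is simply false in general (the infinitesimal subgroup of the unitriangular group over $K$ is a non-abelian $\nu_K$), and ruling out torsion, or bounded exponent, is precisely where the paper has to work: for $D=K$ (and $\bk$ in the $T$-convex case) it proves $\nu$ torsion-free by a differentiability argument (Lemma \ref{L: no tor}: the differential at $e$ of the $n$-fold product is $n\cdot\mathrm{id}$, invertible in characteristic $0$, hence $x^n\neq e$ on a definable neighbourhood of $e$), and for $D=\bk$ in the V-minimal case it passes to a definable subgroup which is an algebraic group over an algebraically closed field of characteristic $0$. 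Your treatment of $D=\Gamma$ and $D=K/\CO$ (torsion-freeness, resp.\ finiteness of each torsion subgroup of $(K/\CO)^r$ in the $p$-adic case) does agree with the paper.

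Part (2) is also not a proof as it stands, and it misses a much shorter route. The paper quotes Simon's theorem (\cite[Proposition 3.1]{SimDPMinOrd}): a dp-minimal group $G$ has a definable normal abelian subgroup $H$ with $G/H$ of bounded exponent; since $G/H$ is again interpretable, Part (1) forces it to be finite, and $G$ is abelian-by-finite --- no infinitesimal subgroups, centralizers, or quotient reductions are needed. Your alternative is built on the unjustified abelianness of $\nu$ (see above), and, as you yourself concede under ``Main obstacle'', its two crucial steps --- finiteness of $[C_G(\nu),C_G(\nu)]$ and finiteness of $G/C_G(\nu)$ --- are not established; the claims that definable automorphisms of $\nu$ are ``$D$-linear'' and that ``the general case reduces to this by analysing $\nu$ as a module over the coefficient ring attached to $D$'' are not arguments in this setting. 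The commutator-pairing reduction and the semidirect-product rank count are fine as far as they go, but they only treat special configurations. If you want to salvage your write-up, repair Part (1) as indicated and then replace all of Part (2) by the Simon-plus-Part-(1) argument.
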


\begin{remark} Theorem \ref{intro-1}  implies, in particular, that every 1-dimensional group $G\sub K^n$  definable in any of the above structures $\CK$ is abelian-by-finite. In $p$-adically closed fields, this was proved by  Pillay and Yao, \cite{PiNi}.  

Since the value group in  a $p$-adically closed field is a $\Zz$-group (i.e., a model of  Presburger Arithmetic) the above theorem also covers the work of Onshuus and Vicaria \cite[Theorem 1.1]{OnVi} on dp-minimal groups interpretable in $\Zz$-groups. 

Finally,  the above theorem complements  the work of Simonetta, \cite{Simonetta}, who gave an example of a dp-minimal  group interpretable in an algebraically closed valued field of characteristic $p$ which is nilpotent  of class $2$ (so not abelian-by-finite).
\end{remark}

Finding a framework that would allow us to avoid  repetition of proofs, as we move across the different settings and the different distinguished sorts, turned out to be one of the challenges of the present work. In \cite{HaHaPeVF} much of the work was carried out in SW-uniformities,  a framework of dp-minimal uniform structures introduced by Simon and Walsberg in \cite{SimWal}  (see below the precise definition). SW-uniformities generalise o-minimal expansions of groups and share several well known tameness properties of the latter setting. In particular,  the topology associated with these structures is Hausdorff and non-discrete.  This framework fits  the sorts $K$ in all settings, the sorts $\Gamma$ and $K/\CO$ in the non-$p$-adically closed setting and the sort $\bk$ in the power bounded $T$-convex setting. The remaining cases are divided into the unstable sorts of $K/\CO$ and $\Gamma$ in the $p$-adically close setting, and the algebraically closed (hence stable) sort $\bk$ in the $V$-minimal setting.

In order to uniformly treat all the unstable distinguished sorts (namely, all sorts except $\bk$ in the $V$-minimal setting), we introduce in 
 Section \ref{S: asi to D} the notion of \emph{a vicinity}, generalising the notion of a topological neighbourhood, and the associated \emph{vicinic sort}, generalising here the  notion of an SW-uniform sort. \\

Without getting into technical details, let us give a brief overview of the strategy of the proof and through it also some of the intuition underlying these new definitions. 
As in \cite{HaHaPeVF}, our method is local, in the sense that the subgroup $\nu$ in the conclusion of Theorem \ref{intro-2} appears as a generalised infinitesimal group (as in, e.g., \cite{PeStmu} or \cite{JohnDpJML}).

The first step in the proof is to find,  after possibly passing to a quotient by a finite normal subgroup,  a definable infinite $X\sub G$ with a definable  injection from $X$ into $D^n$, for $D$ one of the four distinguished sorts. Roughly, for a fixed such $D$, if $X$ has maximal dp-rank with respect to this property, then $X$ is called a $D$-set (see Definition \ref{D: D-sets} for a precise formulation).  

Most of the work in the paper is devoted to showing that each $D$-set gives rise to a type definable group $\nu_D$ as in Theorem \ref{intro-2} (as it turns out, this $\nu_D$ does not depend on the set $X$).

When $D$ is one of the SW-uniform sorts and $X$ is a $D$-set, the latter inherits a topology from $D$. Using ideas going back to Pillay, \cite{Pi5} (in a version due to  Ma\v{r}\'{\i}kov\'{a}, \cite{MarikovaGps}), we show  that the topology can be extended to a group topology on $G$. The subgroup $\nu$ in the conclusion of Theorem \ref{intro-2} is precisely the infinitesimal subgroup of $G$ with respect to this topology, namely the intersection of all definable open neighbourhoods of $e$. 

More generally, in all the unstable distinguished sorts, the notion of a vicinity is a good generalization of that of a neighbourhood, and it allows us to define the subgroup $\nu$ of Theorem \ref{intro-2} similarly to the above definition of the infinitesimal group. 

The remaining case, of the strongly minimal $\bk$ in the V-minimal setting, is different and is handled using methods of stable groups. \\

As can be seen in the statement of Theorem \ref{intro-2}, when $D$ is either the value group $\Gamma$ or the sort of closed balls $K/\CO$, we obtain a stronger result, asserting that $\nu$ can be identified with a type-definable subgroup of $(D^n,+)$ (here $n$ can be taken to be, in the above notation, $\dpr(X)$). This follows, essentially, from the fact that in ordered vector spaces and in $\Zz$-groups definable functions are piece-wise affine with respect to the additive structure. For the sort $K/\CO$ this follows from an analogous result  proved  in  \cite{HaHaPeVF} when $K/\CO$ is an SW-uniformity, and in Section \ref{ss: functions in K/O} below in the $p$-adic case.

On the technical level, there are several themes around which the paper is focused: 
\begin{itemize}
\item Properties of definable subsets and functions in $K/\CO$, in the setting of $p$-adically closed fields  (see Section \ref{S:P-minimal}).
    \item The definition of vicinic sorts and their basic properties (see Section \ref{S: asi to D}), and  
    the verification that all the unstable distinguished sorts, in all our settings, are vicinic. In SW-uniformities and in $\Zz$-groups these properties are known (or easy), and most of the work is dedicated to settling them for the sort of closed balls,     $K/\CO$, in the $p$-adically closed setting.
    \item  To carry out the above strategy, we have to work with definable injections from (definable subsets of) $G$ into the relevant distinguished sorts.  In general,  we can only assure the existence of finite-to-one such functions. We show that this can be resolved by replacing $G$ with a quotient by a finite normal subgroup (see Section \ref{ss:from almost to strong}). 
\end{itemize}   


To deduce the main part of Theorem \ref{intro-1} we first show that the type definable group $\nu_D$ obtained in Theorem \ref{intro-2}  has unbounded exponent, namely for every $\widehat \CK\succ \CK$, the group $\nu_D(\widehat \CK)$ has unbounded exponent. This easily implies that so if $G$.

By a result of Simon, \cite[Proposition 3.1]{SimDPMinOrd}, if $G$ is  dp-minimal then it has a definable abelian normal subgroup $H$,  such that $G/H$ has bounded exponent. Since $G/H$ is itself an interpretable group, the above implies that it must be finite. \\


\medskip

We conclude the introduction with a word on the setting: Case (iii) of  Theorem \ref{intro-2} is stated for $p$-adically closed fields (by which we mean models elementarily equivalent, in the MacIntyre language,  to a finite field extension of $\qp$). However, the proofs work without any changes in an expansion of $K$, a finite field extension of $\mathbb Q_p$, by analytic functions on $\CO^n$ (in the sense of \cite{DenvdDr}).
 In fact, our proofs work in $P$-minimal expansions of $p$-adically closed fields that are $1$-h-minimal and have  definable Skolem functions. However, for the sake of clarity of exposition, we state the results in the $p$-adically closed setting. As some of the results may be of independent interest and for ease of future reference, we will specify throughout the level of generality at which the same proof works. \\
 
\subsection{Previous work on groups in valued fields}
We list briefly additional work on groups in valued fields, beyond what has been mentioned above.
 Definable groups in $\qp$ were first studied by Pillay, \cite{PilQp}. Hrushovski and Pillay, \cite{HruPil}, discuss connections between definable groups in local fields and algebraic groups.
 In \cite{AcostaQp} Acosta gives an exhaustive list of all 1-dimensional groups definable in $\qp$. In \cite{AcostaACVF}  he extends this result to 1-dimensional \emph{commutative} groups definable in algebraically closed valued fields\footnote{By Theorem \ref{intro-1} the commutativity assumption is redundant in algebraically closed valued fields of equi-characteristic $0$.}
 Johnson and Yao, \cite{JohnYao}, study definable, definably compact, groups in $\qp$.
 Montenegro, Onshuus and Simon, \cite[Theorem 2.19]{MonOnSim}, work under general assumptions,  applicable in several examples of our setting (e.g., ACVF$_{0,0}$, RCVF, $p$-adically closed fields).
 A classification of definably simple groups definable in henselian fields of characteristic $0$ (or, more generally, in 1-h-minimal fields of characteristic 0) is obtained in an unpublished work of Gismatullin, Halupczok and Macpherson.
 
 While the above results  mostly study definable groups, the work of Hrushovski and Rideau-Kikuchi, \cite{HrRid}, covers interpretable, stably dominated groups in algebraically closed valued fields,
 using the results on elimination of imaginaries in such fields. In a recent pre-print, Johnson \cite{JohnTopQp} studies topology on groups interpretable in $p$-adically closed fields.
 
 Finally, although we do not make use of these here, we note the seminal papers of  \cite{HaHrMac1}, \cite{MelRCVFEOI}, and \cite{HrMarRid}, where elimination of imaginaries with appropriate sorts is proved for  algebraically closed valued fields, real closed valued fields and $p$-adically closed fields, respectively.

\subsection{Structure of the paper}
        In Section \ref{S: background} we review the notion of SW-uniformities and the distinguished sorts introduced in \cite{HaHaPeVF} and  collect some of their useful properties.  In Section \ref{S: local K/O}, we study  the definable sets and functions in $K/\CO$ in $p$-adically closed fields, isolating (Section \ref{ss:key geo properties})  key geometric properties which they share with SW-uniformities.
        
       Based on these properties, we introduce in Section \ref{ss: vicinic}  the framework of \emph{vicinic structures} where most of our work takes place. It is a generalisation of SW-uniformities, encompassing also the distinguished sorts of $p$-adically closed fields.

        The rest of Section \ref{S: asi to D} is dedicated to collecting the tools needed for the construction in Section \ref{S:infint groups} of infinitesimal subgroups of interpretable groups. Much of the work in Section \ref{S: asi to D} is devoted to dealing with a technical issue not arising in the study of fields: in \cite{HaHaPeVF} we have shown that every interpretable field can be locally injected into one of the distinguished sorts. In the present setting,  we are required to work with finite-to-one functions, and this requires some additional work.
        Also, in the V-minimal case, the sort $D=\bk$ is a pure algebraically field, and thus has a different nature. It is treated in Section \ref{S: lsi to k}.

        In Section \ref{S: final} we collect all the results of previous sections to prove Theorem \ref{intro-2} and Theorem \ref{intro-1}. Section \ref{S: examples} is dedicated to the study of  several natural examples of interpretable groups in light of the results of this paper. In Section \ref{S: foreig}  
        we apply the techniques developed thus far to show that, in our setting, there are no definable finite-to-finite correspondences between two distinct distinguished sorts. In the Appendix, we study the special case of a dp-minimal group in a power bounded T-convex setting.\\

\noindent{\bf Acknowledgements} We thank Pablo Cubides Kovacsics, Amnon Besser and Dugald Macpherson for several discussions during the preparation of the article. We thank Yair Glasner for pointing out a mistake in an earlier draft, and the anonymous referee for the thorough reading and useful suggestions.

\section{Background and preliminaries}\label{S: background}
\subsection{Notation,  terminology and some preliminaries}
Throughout, structures will be denoted by calligraphic letters, $\CM$, $\CN$, $\CK$ etc., and their respective universes by the corresponding Latin letters, $M$, $N$ and $K$. We reserve $\CK$ to denote expansions (possibly trivial) of valued fields, and $K$ will always be a valued field.  All structures may be multi-sorted. A valued field $\CK=(K,v,\ldots)$  is always  considered with a single home sort (for the ground field $K$) with all other sorts coming from $\CK^{eq}$. All sets are definable using parameters unless specifically mentioned otherwise.

Tuples from a structure $\CM$ are denoted by small Roman characters $a,b,c,\dots$ and are always assumed to be finite. We apply the standard model theoretic abuse of notation writing $a\in M$ for $a\in M^{|a|}$. Variables will be denoted $x,y,z,\dots$ with the same conventions as above. We do not distinguish notationally tuples and variables belonging to different sort, unless some ambiguity can arise. Capital Roman letters $A,B,C,\ldots$ usually denote small subsets of parameters from $M$. As is standard in model theory, we write $Ab$ as a shorthand for $A\cup \{b\}$. In the context of definable groups we will, whenever confusion can arise, distinguish between, e.g., $Agh:=A\cup\{g,h\}$ and $Ag\cdot h:=A\cup \{g\cdot h\}$. 

By a partial type we mean a consistent collection of formulas. Two partial types $\rho_1, \rho_2$ are equal, denoted $\rho_1=\rho_2$, if they are logically equivalent, i.e., if they have the same realizations in some sufficiently saturated elementary extension.

We use standard model theoretic terminology and notation, providing references as we proceed.  The introductory sections of \cite{HrKa} provide a quick, more or less self-contained, guide to much of the model theory of valued fields used in the present paper, as well as many relevant examples. Below we give some basic terminology and notation.\\

\noindent\textbf{Valued fields.} When $(K,v,\ldots)$ is an expansion of a valued field, we let $\CO_K$ (or just $\CO$, if the context is clear) denote its valuation ring. Its maximal ideal is $\m_K$ (or $\m$) and $\bk:=\CO/\m$ its residue field. The value group is  $\Gamma_K$ (or just $\Gamma$). As in \cite{HaHaPeVF}, $K$, $\bk$, $\Gamma$ and $K/\CO$ are the \emph{distinguished sorts}. We shall occasionally (especially in Section \ref{S: examples}) also use the sorts $\mathrm{RV_\gamma}:=K^\times /(1+\m_{\gamma})\cup \{0\}$ where, for a non-negative $\gamma\in \Gamma$ we denote $\m_\gamma:=\{x\in K: v(x)>\gamma\}$. We also let $\mathrm{rv}_\gamma: K\to \mathrm{RV}_\gamma$ denote the quotient map extended by $0$ at $0$. 

A closed ball in $K$ is a set of the form $B_{\geq \gamma}(a):=\{x\in K: v(x-a)\ge \gamma\}$ and $\gamma$ is its (valuative) radius, denoted $r(B)$. Open balls are defined similarly and denoted by $B_{>\gamma}(a)$. Throughout by "ball" we mean either an open or a  closed ball. An open (closed) ball in $K^n$ is a product of $n$ open (closed) balls of \textbf{equal radius}, i.e., $B_1\times \dots \times B_n$ where $B_i:=B_{>r}(a_i)$ (resp. $B_i:=B_{\geq r}(a_i)$) for some $r\in \Gamma$ and $a_i\in K$. A ball in $K^n$ is either a closed or an open ball, it has radius $r$ if it is a product of $n$ closed (or $n$ open) balls in $K$ each of radius $r$.

{\bf Throughout Section \ref{S: background} $\CK$ is either a $V$-minimal field,  a power bounded $T$-convex expansion of a real closed field or a $p$-adically closed field} (see \cite[Section 2.3]{HaHaPeVF}  and references therein for the definitions and basic properties of such fields). All these fields are 1-h-minimal, and though this notion is used, throughout, as a black box, for the sake of completeness we recall the definition (in a formulation suitable for mixed characteristic as well). We first recall the following definition from \cite{hensel-min}.

\begin{definition}\cite[Definition 2.1.6]{hensel-min}\label{D: m-next to}
	For an integer $m\in\mathbb{N}$, a ball $B\sub K$  is {\em $m$-next to $c\in K$} if $B=\{x\in K: \rv_m(x-c)=t\}$ for some non-zero $t\in \RV_m$. 
	
	A ball $B$ is {\em $m$-next to} a finite non-empty set $C\subseteq K$ if $B=\bigcap_{c\in C} B_c$ where  $B_c$ is a ball $m$-next to $c$ for all $c\in C$.
\end{definition}

\begin{remark}\label{rem:next}
    Note that if a ball B is $m$-next to some $c\in K$ then it is an open ball. Furthermore, an open ball $B$ is $m$-next to $c$ if and only if $c\notin B$ and the radius of $B$ is equal to $v(x-c)+m$ for every (equivalently, some) $x\in B$. Thus, an open ball $B$ is $m$-next to a finite set $C$ if and only if $B\cap C=\emptyset$ and $r(B)=\max\{v(x-c)+m:c\in C\}$, for every (equivalently, some) $x\in B$.
\end{remark}

\begin{definition}\label{D: 1-h-min.}
    A theory $T$ of valued fields of characteristic $0$ is \emph{1-h-minimal} if for any set of parameters $A$,
    and any $A$-definable $f: K\to K$ the following hold: 
    \begin{enumerate}
        \item There exists a finite $A$-definable $C$ and a positive integer $m$ such that for any ball $B$, $m$-next to $C$, there exists $\gamma_B\in \Gamma$ such that or all $x,y\in B$ we have 
        \[
            \gamma_B-v(m)\le v(f(x)-f(y))-v(x-y)\le \gamma_B +v(m).
        \]
        \item The set $\{y\in K: |f^{-1}(y)|=\infty\}$ is finite.
    \end{enumerate}
\end{definition}

\begin{remark}
\begin{enumerate}
\item The original formulation from \cite{hensel-minII} has some restrictions on the parameter set $A$, that is $A\subseteq K\cup \mathrm{RV}_n$ for some $n$. In our setting, since all the sorts are from $K^{eq}$ the definitions coincide. 

\item The above geometric definition is equivalent to the model theoretic definition, by \cite[Theorem 2.2.7]{hensel-minII}. 

\end{enumerate}
\end{remark}

It is known that in all the settings we are working in, the sorts $\bk$ and $\Gamma$ are stably embedded (for $p$-adically closed fields, or more generally $P$-minimal fields this is \cite[Theorem 6, Lemma 2]{ClucPresburger}, for $V$-minimal field, or more generally $C$-minimal fields this is \cite[Lemma 3.30]{HrKa} and for the power bounded $T$-convex setting it is \cite[Theorem A, Theorem B]{vdDries-Tconvex}). It is, therefore, clear what  the induced structure is on any of these sorts: $\Gamma$ is a pure ordered vector space when dense, and a $\Zz$-group in the $p$-adic setting, $\bk$ is a model of $T$ in the $T$-convex setting (for our needs it suffices that it is an o-minimal expansion of a real closed field) and a pure algebraically closed field of characteristic $0$ in the $V$-minimal case.  We note, however,  that the sort $K/\CO$ is not stably embedded in any of the structures we are studying, 

We assume the reader is familiar with the notion of dp-rank, see \cite[\S 2.1]{HaHaPeVF} for the definition and a quick survey of dp-rank and its main properties. Let us point out that in the field $K$, the value group $\Gamma$ and the residue field $\bk$, dp-rank coincides with the classical notions of dimension associated with each of these structures. It may also help the reader to note (and this will be proved in some cases below) that in every distinguished sort $D$ in our setting  the dp-rank of $a\in D^n$ equals its $\acl$-dimension, even if $\acl$ does not satisfy exchange.
For most applications below it suffices to know that dp-rank is finite and sub-additive; 
\[
    \dpr(ab/A)\le \dpr(a/A)+\dpr(b/Aa).
\]

Finally, when saying that a valued field has \emph{definable Skolem functions}, we mean that the valued field sort $K$  has definable Skolem functions. Power bounded $T$-convex valued fields have definable Skolem function by \cite[Remark 2.4]{vdDries-Tconvex} (after adding a constant). So do $p$-adically closed fields  \cite[Theorem 3.2]{vdDriesSkolem} and their expansion by all sub-analytic sets \cite[Theorem 3.6]{DenvdDr}.

\subsection{Almost strong internality}
The starting point of the present work is the following result that can be deduced from \cite[Proposition 5.5]{HaHaPeVF} (see also \S 7,  \textit{loc. cit.} for the details). 
By {\em a finite-to-finite correspondence between $X_1$ and $X_2$} we mean a  definable relation $C\sub X_1\times X_2$ such that both projections $\pi_i: C\to X_i$ ($i=1,2$) are surjective with finite fibres. 
\begin{fact}\label{F: 5.5}
    If $X$ is infinite and interpretable in $\CK$ then there exist  a distinguished sort $D$, infinite definable $T\sub X, S\sub D$ (possibly defined over additional parameters) and a definable finite-to-finite correspondence $C\sub T\times S$. 
\end{fact}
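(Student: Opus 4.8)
The statement is, in essence, a repackaging of \cite[Proposition 5.5]{HaHaPeVF} (together with the discussion in \S 7, \textit{loc. cit.}), followed by an elementary coordinate reduction; I would organise the proof in two steps.

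\emph{Step 1 (almost strong internality).} Apply \cite[Proposition 5.5]{HaHaPeVF} to the infinite interpretable set $X$: it provides an infinite definable $T_0\sub X$, a distinguished sort $D$, some $N\ge 1$, and a definable finite-to-one map $T_0\to D^N$ — in particular, writing $S_0$ for its (necessarily infinite) image and $C_0$ for its graph, we obtain a definable finite-to-finite correspondence $C_0\sub T_0\times S_0$ with $S_0\sub D^N$ (the projection to $T_0$ even has singleton fibres). I record here that, by definability of finiteness, the fibres of any definable finite-to-finite correspondence are uniformly bounded in size; this will be used repeatedly.

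\emph{Step 2 (descent to a single copy of $D$).} It remains to replace $D^N$ by $D$. Consider all triples $(T',S',C')$ where $T'\sub T_0$ is infinite definable, $S'\sub D^M$ is infinite definable for some $M\ge 1$, and $C'\sub T'\times S'$ is a definable finite-to-finite correspondence; by Step 1 this family is nonempty, so pick a member with $M$ minimal, and it suffices to show $M=1$. Suppose $M>1$ and let $p\colon D^M\to D$ be the projection to the first coordinate. If some fibre $F:=S'\cap p^{-1}(u)$ is infinite, then $F$ is an infinite definable subset of $\{u\}\times D^{M-1}\cong D^{M-1}$; its preimage $T'':=\{t\in T':\exists s\in F,\ (t,s)\in C'\}$ is infinite (the image and the preimage of an infinite set under a definable finite-to-finite correspondence with uniformly bounded fibres are infinite), and the restriction of $C'$ to $T''\times F$ is a definable finite-to-finite correspondence, contradicting the minimality of $M$. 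Otherwise every fibre $S'\cap p^{-1}(u)$ is finite, so $p\restriction S'$ is finite-to-one and $p(S')\sub D$ is infinite; then
\[
 C'' := \{(t,u)\in T'\times p(S') : \exists s\in S',\ (t,s)\in C'\ \text{and}\ p(s)=u\}
\]
is a definable correspondence whose fibre over $t\in T'$ is the image under $p$ of $\{s\in S':(t,s)\in C'\}$ (finite) and whose fibre over $u\in p(S')$ is the $C'$-preimage of the finite set $S'\cap p^{-1}(u)$ (finite), and surjectivity onto both coordinates is immediate from surjectivity of $C'$ onto $S'$. Thus $(T',p(S'),C'')$ is again an admissible triple, now with second component contained in $D$, contradicting $M>1$. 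Hence $M=1$, and $(T',S',C')$ witnesses the Fact.

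\emph{Main obstacle.} The genuine content lies entirely in \cite[Proposition 5.5]{HaHaPeVF} — producing even a single infinite definable subset of an arbitrary interpretable $X$ admitting a finite-to-one map into a product of copies of a distinguished sort — which rests on the structure theory of the valued fields in question and on the techniques of \cite{HaHaPeVF} for circumventing full elimination of imaginaries. Within the deduction above the only delicate point is the bookkeeping with correspondences: finite-to-finite correspondences do not compose in general, so one must use that at each step $C'$ is either merely restricted to a fibre or composed with a genuine finite-to-one function, and invoke the uniform bound on fibre sizes, to guarantee that infinite sets pull back to infinite sets and finite sets to finite sets.
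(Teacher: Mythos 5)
The paper does not actually prove this statement: Fact \ref{F: 5.5} is recorded as a direct quotation of \cite[Proposition 5.5]{HaHaPeVF} (with the remaining details delegated to \S 7 of that paper), so the only substantive issue in any write-up is what that cited proposition really delivers. Here your Step 1 goes wrong: you credit \cite[Proposition 5.5]{HaHaPeVF} with producing a definable \emph{finite-to-one map} from an infinite definable subset of $X$ into $D^N$. That is strictly stronger than what it provides, and stronger than the Fact itself; what it gives is precisely a finite-to-finite correspondence between an infinite definable subset of $X$ and an infinite definable subset of a single distinguished sort (this is exactly how the present paper invokes it, e.g.\ in the $p$-adic case, where the conclusion drawn from the citation is verbatim the conclusion of Fact \ref{F: 5.5}). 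The paper's architecture makes the point explicitly: upgrading the correspondence of Fact \ref{F: 5.5} to a finite-to-one function into a distinguished sort is the ``extra work'' carried out afterwards, sort by sort, using sort-specific features --- the SW-uniformity/Hausdorff arguments of Lemma \ref{L: asi from surjective} and Lemma \ref{L:from correspondece to lasi in vminimal}, elimination of finite imaginaries via \cite[Lemma 4.28]{HaHaPeVF}, and the ad hoc treatment of $K/\CO$ in Lemma \ref{L:asi diagaram K/O}. So your deduction starts from an input that is not available, and which, if it were available, would render a good part of Sections 2--4 of the paper redundant.

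The salvageable part is Step 2: the dichotomy on fibres of the first-coordinate projection (restrict to an infinite fibre, or compose the correspondence with the then finite-to-one projection) is a correct and routine way to pass from a correspondence with a subset of $D^M$ to one with a subset of $D$; two small remarks are that the uniform bound on fibre sizes comes from saturation rather than ``definability of finiteness'', and that pointwise finiteness of fibres already suffices for the infiniteness claims you need. If you replace the premise of Step 1 by the actual content of \cite[Proposition 5.5]{HaHaPeVF} --- a finite-to-finite correspondence with an infinite definable subset of a distinguished sort, or at worst of a finite power of one --- then the Fact follows immediately (with your Step 2 covering the power-of-$D$ case), which is exactly why the paper states it as a citation rather than proving it.
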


When -- in the above notation -- $X$ was a field, we were able to eliminate the discrete sorts $K/\CO$ and $\Gamma$ in the $p$-adically closed case (in fact, there this was done for any P-minimal field), and the remaining distinguished  sorts turned out to be rather tame topological structures (that we called SW-uniformities -- to be described in more detail below). In that setting (and for $X$ a field) we were able to strengthen Fact \ref{F: 5.5} to obtain a definable bijection between $T$ and a  subset of one of the distinguished sorts.  In the present setting  some extra work is needed to  obtain a weaker result, i.e. a finite-to-one function from an infinite definable subset of our group $G$ into one of the distinguished sorts. Extending our terminology from \cite{HaHaPeVF} we define: 

\begin{definition}\label{Def: internal}
    A definable set $X$ is \emph{$A$-almost strongly internal to a  definable set $D$} if there exists an $A$-definable finite-to-one function $f: X\to D^k$, for some $k\in \Nn$. The set $X$ is \emph{locally almost strongly internal to $D$} if there exists a parameter set $A$ and an infinite $A$-definable $X'\sub X$ that is $A$-almost strongly internal to $D$.  
\end{definition}

Recall that $X$ is \emph{strongly internal} to $D$ if we can find an injective $f: X\to D^k$ and \emph{locally strongly internal} to $D$ if some definable infinite $X'\sub X$ is strongly internal to $D$. As we shall see in Example  \ref{L: not lsi}, a set which is locally almost strongly internal to $D$ need not be locally strongly internal to $D$. The upshot of this situation is that we have to develop the theory, in parallel, for subsets of $G$ almost strongly internal to a distinguished sort $D$, and for subsets of $G$ strongly internal (possibly to the same sort $D$). Since the statements and the proofs are, as a rule, similar in both cases we usually state both results simultaneously; e.g. 
\begin{quote}
``If $X$ is locally (almost)  strongly internal to $D$ ... then $Y$ is (almost) strongly internal...''
\end{quote}
with the convention that either all brackets are included or all brackets are omitted. I.e., the two statements included in the above formulation are ``If $X$ is locally  strongly internal to $D$ ... then $Y$ is  strongly internal...'' and ``If $X$ is locally almost  strongly internal to $D$ ... then $Y$ is almost strongly internal...''.

\subsection{Simon-Walsberg uniformities and group topologies}\label{ss: SW-uniformities}
Though the setting of the present paper requires that we take into account the discrete sorts $\Gamma$ and $K/\CO$, in $p$-adically closed fields,  SW-uniformities still have a significant role to play, and even in these two cases some  arguments are modelled after the analogous arguments in the topological setting. So we remind:
\begin{definition}
    A definable set $D$ in a multi-sorted structure $\CM$ is an SW-uniformity if: 
    \begin{enumerate}
        \item $D$ is dp-minimal. 
        \item $D$ has a definable uniform structure (or, uniformity) giving rise to a Hausdorff topology (see \cite[IX.11]{james1966topology}).  I.e., there exists a formula $\varphi(x,y, z)$ and a definable set $S$ in $\CM$ such that $\{\phi(D^2, s): s\in S\}$ is a uniform structure on $D$, and the intersection of all $\phi(D^2,s)$, for $s\in S$, is the diagonal.
        \item Every definable subset of $D$ has non-empty interior with respect to the uniform topology. 
        \item $D$ has no isolated points in the uniform topology. 
    \end{enumerate}
\end{definition} 
As we already noted, the sorts $K$ in all settings,  $\Gamma$ and  $K/\CO$ in the power bounded T-convex and V-minimal settings and $\bk$ in the power bounded $T$-convex setting, are all SW uniformities (the fact that  $K/\CO$ is not stably embedded is not a problem since the definition allows for that).

One of the most important tools in \cite{HaHaPeVF} was the  technical fact below. We shall prove here an a analogue of this result in other settings as well, and eventually include a variant of it as one of the axioms of our \emph{ad hoc} setting of vicinic sorts.

\begin{fact}[{\cite[Corollary 3.13]{HaHaPeVF}}]   \label{Gen-Os in SW}
  Let $D$ be an SW-uniformity and  $X \sub  D^n$, $Y \sub  X$ be definable sets. Let $b \in  D^k$ be any element,  $a\in Y$ in the relative interior of $Y$ in $X$  and $A$ a small set of parameters. Then there exist $B\supseteq A$  and a $B$-definable open subset $U = U_1 \times \dots  \times  U_n \sub  D^n$ such that $a \in  U \cap  X \sub  Y$ and $\dpr(a, b/B) = \dpr(a, b/A)$.
\end{fact}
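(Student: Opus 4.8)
The plan is to reduce the statement to the choice of a single ``generic radius'', exploiting the chain-like structure of the uniformity on $D$. First I would recall that an SW-uniformity admits a cofinal family of entourages $(\Phi_t)_{t\in\Lambda}$ indexed by a definable linear order $\Lambda$, with $\Phi_t\supseteq\Phi_{t'}$ whenever $t\le t'$ (a basic feature of dp-minimal definable uniformities used in \cite{HaHaPeVF}; if necessary, refine the given uniformity to achieve it). Writing $N_t(x):=\Phi_t[x]$ for the corresponding basic neighbourhoods in $D$, the product uniformity on $D^n$ has the open boxes $U_t(c):=N_t(c_1)\times\dots\times N_t(c_n)$, $t\in\Lambda$, as a neighbourhood basis at $c$, and $c\in U_t(c)$ for every $t$ by reflexivity of entourages. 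Since $a$ lies in the relative interior of $Y$ in $X$, there is some $t$ with $U_t(a)\cap X\sub Y$; hence
\[ S_a:=\{t\in\Lambda : U_t(a)\cap X\sub Y\} \]
is a non-empty $Aa$-definable subset of $\Lambda$, and it is upward closed because $t\le t'$ forces $U_{t'}(a)\sub U_t(a)$. Thus it suffices to find $t_1\in S_a$ with $\dpr(a,b/At_1)=\dpr(a,b/A)$ and then take $B:=A\cup\{t_1\}$ and $U:=U_{t_1}(a)$.

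Next I would analyse $S_a$ as a cut. Being an $Aa$-definable up-set of a linear order, it is either $[t_0,\infty)$ or $(t_0,\infty)$ for a well-defined left endpoint $t_0\in\dcl(Aa)$, and in the discrete cases (e.g.\ $\Lambda=\Gamma$ when $K$ is $p$-adically closed) only the first possibility occurs. If $t_0\in S_a$, then $t_0\in\dcl(Aa)\sub\dcl(Aab)$, so $\dpr(a,b/At_0)=\dpr(a,b/A)$ and $t_1:=t_0$ works. If $t_0\notin S_a$, we must select $t_1$ strictly above $t_0$ without lowering $\dpr(a,b/A)$: replacing $A$ by $At_0$ (which does not affect $\dpr(a,b/\cdot)$ since $t_0\in\dcl(Aa)$), we choose $t_1>t_0$ realizing a global $At_0$-invariant extension of the formula $t>t_0$ — a coheir over $At_0$ concentrating on $(t_0,\infty)$ — whence $\dpr(a,b/At_0t_1)=\dpr(a,b/At_0)$ by the standard fact that adjoining a realization of an invariant type over the base does not decrease dp-rank. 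Concretely, in all our SW-uniformity sorts $\Lambda$ is stably embedded with tame induced structure (an ordered vector space, a $\Zz$-group, or an o-minimal expansion of $\Rr$), so such a $t_1$ is readily produced; when the parameters defining $X,Y$ already supply a positive $\gamma\in\dcl(A)\cap\Lambda$ one may simply take $t_1=t_0+\gamma$. With $t_1$ in hand, $B:=A\cup\{t_0,t_1\}$ and $U:=U_{t_1}(a)$ are a $B$-definable open box with $a\in U\cap X\sub Y$ and $\dpr(a,b/B)=\dpr(a,b/A)$.

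The main obstacle is precisely that last step: producing a radius in the \emph{open} cut $(t_0,\infty)$ that does not drop $\dpr(a,b/A)$. This cannot be done by ``choosing an algebraic point'', since a non-empty $Aa$-definable subset of $\Lambda$ need not meet $\dcl(Aa)$; it genuinely requires either the invariant-extension argument above or a careful use of the linear-order structure of the radius sort. One could instead try an induction on $n$ — for sufficiently generic first coordinates the fibre $\{y\in D:(a_1,\dots,a_{n-1},y)\in Y\}$ has $a_n$ in its interior, apply the case $n=1$, reinsert the coordinate, and control dp-rank by sub-additivity — but this bottoms out at the same one-dimensional genericity point, so I would carry out the direct argument above.
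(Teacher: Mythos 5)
This Fact is not proved in the paper at all: it is imported verbatim from \cite[Corollary 3.13]{HaHaPeVF}, so there is no internal proof to compare with; the closest internal analogues are Lemma \ref{L:generic over some gamma in K/O, K} and Proposition \ref{P:sgenos for K/O} for $K/\CO$ in the $p$-adic case. Judged on its own, your argument has a genuine gap at exactly the delicate point. Your final neighbourhood $U:=U_{t_1}(a)$ is a box \emph{centred at $a$}, so it is definable only over $B\cup\{a\}$ (equivalently, over the code of the box), not over $B=A\cup\{t_0,t_1\}$; the statement requires a $B$-definable $U$ together with $\dpr(a,b/B)=\dpr(a,b/A)$, and adding the centre to $B$ (or, in the o-minimal sort $\bk$ of a $T$-convex field, even the code of the box, from which the centre is definable as its midpoint) collapses the rank. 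Producing a neighbourhood of $a$ whose \emph{defining parameters} leave $(a,b)$ generic is the whole content of the lemma; in the paper's $K/\CO$-analogue this is the Claim inside Proposition \ref{P:sgenos for K/O}, proved by an automorphism argument that exploits the absence of definable centres of ultrametric balls, and no such argument is available for boxes centred at $a$.

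A second error occurs in your treatment of $t_0$: the inference ``$t_0\in\dcl(Aa)\sub\dcl(Aab)$, so $\dpr(a,b/At_0)=\dpr(a,b/A)$'' (used both when $t_0\in S_a$ and when you ``replace $A$ by $At_0$'') is false, since adding a definable function of $a$ to the base can lower its rank. Concretely, take $D=\bk$ o-minimal, $A\ni c$, $Y=(0,c)\sub X=D$, $a\in Y$ generic: the extremal radius is $t_0=\min(a,c-a)$, and $a\in\acl(At_0)$, so $\dpr(a/At_0)=0$. Choosing a radius strictly inside the cut \emph{without} dropping the rank is the actual work, and it has to be done over a model: coheirs and invariant types are not available over the set $At_0$, so one must first pass to $M\supseteq A$ with $\dpr(a,b/M)=\dpr(a,b/A)$ (cf.\ Fact \ref{F: extending to over a model}) before invoking any ``realizing an invariant type preserves dp-rank'' lemma. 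Finally, your chain-basis assumption is asserted rather than proved, and the escape routes you offer (``$t_1=t_0+\gamma$'', ``$\Lambda$ is an ordered vector space, a $\Zz$-group, or o-minimal'') use special features of the concrete sorts of this paper, whereas the Fact concerns arbitrary SW-uniformities; so even after repairing the two errors above, the argument as written would not establish the general statement.
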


We also require the following result (implicit in \cite{HaHaPeVF}) that plays a key role in  our definition of $D$-groups, the main object of study of Section \ref{S: asi to D} and Section \ref{S:infint groups} :

\begin{lemma}\label{L: asi from surjective} 
	Let $\CM$ be some structure and let $D$ be an SW-uniformity definable in $\CM$, $W, Z$ definable sets, and $f:Z\to W$ a definable finite-to-one surjection.
	
	\begin{enumerate}
		\item If $Z$ is strongly internal to $D$ then there exists 
		a definable $W_1\sub W$, with $\dpr(W_1)=\dpr(W)$ such that $W_1$ is strongly internal to $D$.
		\item If $Z$ is almost strongly internal to $D$ then 
		there exists a definable $W_1\sub W$, $\dpr(W_1)=\dpr(W)$, such that $W_1$ is almost strongly internal to $D$.
	\end{enumerate}
\end{lemma}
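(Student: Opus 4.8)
The plan is to reduce both parts to a single statement about the imaginary sort $[D^k]^m$ of $m$-element subsets of $D^k$, namely: \emph{every definable $P\sub [D^k]^m$ contains a definable $P_1$ with $\dpr(P_1)=\dpr(P)$ admitting a definable injection $\rho\colon P_1\hookrightarrow D^{km}$.} To prove this I would work in a sufficiently saturated model, pick a generic $p=\{a_1,\dots,a_m\}\in P$ (i.e. with $\dpr(p/A)=\dpr(P)=:d$, where $A$ defines $P$), fix an enumeration $\bar a=(a_1,\dots,a_m)\in D^{km}$ of it (so $\dpr(\bar a/A)=d$, since $p\in\dcl(\bar a)$ and $\bar a\in\acl(Ap)$), and apply Fact~\ref{Gen-Os in SW} with $X=D^{km}$, with $Y:=\{(x_1,\dots,x_m)\in(D^k)^m:x_i\neq x_j\text{ for }i\neq j\}$ — which is definable and, since $D^k$ is Hausdorff for the product of the definable uniformity on $D$, open in $X$ and contains $\bar a$ — with $a=\bar a$, and with the auxiliary parameter $b$ one of the $a_i$. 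This yields $B\supseteq A$ and a $B$-definable open box $U=U_1\times\cdots\times U_{km}\sub Y$ with $\bar a\in U$ and $\dpr(\bar a/B)=d$. Grouping the coordinates into $m$ blocks of length $k$ writes $U=\mathbf W_1\times\cdots\times\mathbf W_m$ with each $\mathbf W_i\sub D^k$ a box containing $a_i$, and from $U\sub Y$ one sees the $\mathbf W_i$ are pairwise disjoint (a tuple in $U$ with two blocks equal would lie outside $Y$). Then I would set $P_1:=\{p'\in P:|p'\cap\mathbf W_i|=1\text{ for all }i\le m\}$, which is $B$-definable, contains $p$, and (using $P_1\sub P$) has $\dpr(P_1)=\dpr(p/B)=\dpr(\bar a/B)=d$; and since disjointness of the $\mathbf W_i$ together with $|p'|=m$ forces $p'\sub\bigcup_i\mathbf W_i$ for $p'\in P_1$, the map $\rho$ sending $p'$ to the tuple whose $i$-th block is the unique element of $p'\cap\mathbf W_i$ is a definable injection $P_1\hookrightarrow D^{km}$.

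To deduce the lemma, let $g\colon Z\to D^k$ witness the (almost) strong internality of $Z$ — injective in part~(1), finite-to-one in part~(2) — and set $\psi(w):=g(f^{-1}(w))\sub D^k$. Since $f$ is onto, $\psi(w)\neq\emptyset$; since (by saturation) the fibres of $f$ and of $g$ are uniformly bounded, $\psi$ lands in $[D^k]^{\le N}$ for a fixed $N$; and $\psi$ is finite-to-one, because if $\psi(w)=\psi(w_0)$ then, fixing any $a_0\in\psi(w_0)$, one has $w\in f(g^{-1}(a_0))$, a finite set depending only on $w_0$. Next I would stratify $W$ into the finitely many definable pieces $W_{(m)}=\{w:|\psi(w)|=m\}$, choose $m_0$ with $\dpr(W_{(m_0)})=\dpr(W)$, regard $\psi$ as a finite-to-one map $W_{(m_0)}\to[D^k]^{m_0}$, put $P:=\psi(W_{(m_0)})$ (so $\dpr(P)=\dpr(W)$), apply the boxed statement to get $P_1\sub P$ and $\rho\colon P_1\hookrightarrow D^{km_0}$ with $\dpr(P_1)=\dpr(W)$, and finally set $W_1:=\psi^{-1}(P_1)\cap W_{(m_0)}$. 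Then $W_1\sub W$, the map $\psi|_{W_1}\colon W_1\to P_1$ is finite-to-one and onto, so $\dpr(W_1)=\dpr(P_1)=\dpr(W)$, and $\rho\circ\psi|_{W_1}\colon W_1\to D^{km_0}$ is finite-to-one; in part~(1) it is moreover injective, since there $g$ is injective, hence so is $\psi$ (the fibres of $f$ are non-empty and pairwise disjoint), hence so is $\psi|_{W_1}$. This yields (2) and (1) respectively.

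The main obstacle is the boxed sub-statement, and within it the tension between two requirements: the open box produced by Fact~\ref{Gen-Os in SW} must be tight enough that its blocks $\mathbf W_i$ are forced to be pairwise disjoint (so that ``lies in the $i$-th block'' becomes a definable selector on the members of $P$ near $p$), while the enumeration $\bar a$ must remain generic over the enlarged parameter set $B$ so that $P_1$ keeps full dp-rank — and it is precisely the dp-rank clause of Fact~\ref{Gen-Os in SW} that reconciles the two. Everything else is routine dp-rank bookkeeping, and the uniform bound on the fibres needed for the stratification is the only point where saturation is used. Finally, note that in part~(2) one obtains only \emph{almost} strong internality of $W_1$, which in view of Example~\ref{L: not lsi} cannot be improved.
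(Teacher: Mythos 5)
Your proof is correct, but it takes a genuinely different route from the paper's. The paper argues directly on each side of $f$: for (1) it places $Z$ inside $D^k$, uses Fact~\ref{Gen-Os in SW} to find a generic open box around a generic $z_0$ meeting the fibre $[z_0]_f$ only in $z_0$, and pushes the resulting full-rank set on which $f$ is injective forward to $W$; for (2) it forms the finite-to-finite correspondence $C\subseteq W\times D^k$ given by $y\in\sigma(f^{-1}(w))$, isolates a generic point $y_0$ of the second projection by a generic open set (Hausdorffness plus Fact~\ref{Gen-Os in SW}), and reads off a finite-to-one function on a full-rank subset of $W$. You instead prove a single auxiliary statement — a local, generic definable-selector (elimination of finite imaginaries at generic points) for the sort $[D^k]^m$: any definable family of $m$-element subsets of $D^k$ admits, on a full-rank definable subfamily, a definable simultaneous enumeration via pairwise disjoint generic boxes — and then deduce both parts uniformly through the set-valued map $w\mapsto g(f^{-1}(w))$, after stratifying $W$ by fibre size. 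The common engine is the same (Hausdorff separation of finitely many points combined with the dp-rank-preserving generic neighbourhoods of Fact~\ref{Gen-Os in SW}), but your organization is different and slightly stronger in (2), since you definably enumerate the whole finite set $g(f^{-1}(w))$ rather than just isolating one point; the selector statement is also reusable (it is close in spirit to the EfI-based argument the paper imports from \cite{HaHaPeVF} and uses in Lemma~\ref{L:from correspondece to lasi in vminimal}). The trade-off is that your part (1) passes through the same machinery as (2) and needs the fibre-size stratification, whereas the paper's part (1) is a two-line direct argument; your appeals to saturation (generic points, uniform bounds on fibres) are consistent with the paper's standing conventions.
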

\begin{proof}
Assume everything is definable over some parameter set $A$. 

	(1) Since $Z$ is strongly internal to $D$ we may assume that $Z\subseteq D^k$ for some $k$. We fix $z_0\in Z$ such that $\dpr(z_0/A)=\dpr(Z)$. We first find a definable open set $V\sub D^k$ such that $|[z_0]_f\cap V|=1$, where $[z_0]_f=f^{-1}(f(z_0))$.
	By Fact \ref{Gen-Os in SW} there exists an open set $U$, $z_0\in U\sub V$, definable over some $B\supseteq A$, such that $\dpr(z_0/A)=\dpr(Z)$, and $[z_0]_f\cap U=\{z_0\}$. But now, the set  $Z_1=\{z\in Z:[z]_f\cap U=\{z\}\}$ is defined over $B$ and contains $z_0$ hence $\dpr(Z_1)=\dpr(Z)=\dpr(W)$. Now $f\restriction Z_1$ is injective, and its image $W_1$ is our desired set.
	
	(2) Since  $Z$ is almost strongly internal to $D$ there exists $\sigma:Z\to D^k$ with finite fibres. Let 
	\[C=\{(w,y)\in W\times D^k: y\in \sigma(f^{-1}(w))\};\]
	it is a finite-to-finite correspondence. Choose $y_0\in \pi_2(C)$ (the projection of $C$ into $D^k$) with $\dpr(y_0/A)=\dpr(\pi_2(C))$. Since $C$ is finite-to-finite the set of all $y\in D^k$ such that $C^y\cap C^{y_0}\neq \0$ is finite, so there exists an open $V\ni y_0$ such that for all $y\in V$, $C^y\cap C^{y_0}=\0$.
	
	As above, we may replace $V$ with an open $U$ defined over $B$ such that $\dpr(y_0/B)=\dpr(y_0/A)$. Then $C\cap (W\times U)$ is a graph of a finite-to-one function, $F$. Since $y_0$ is in the range of that function and $\dpr(y_0/A)=\dpr(W)$, necessarily $\dpr(\dom(F))\ge \dpr(W)$, so equality must hold, as required. 
\end{proof}

Much of Section \ref{S: local K/O} is dedicated to making sense of the above results for $K/\CO$ in  $p$-adically closed fields and proving an appropriate analogue. Similar results for $\Zz$-groups are easier and follow from known properties of such groups. \\

When $G$ is locally  strongly internal to an SW-uniformity,  the group $\nu$ we obtain in Theorem \ref{intro-2} admits a group topology. This   will allow us to topologise $G$, using the following two easy observations. 

\begin{lemma}\label{L:from H a uniformity on G}
Let $G$ be a group and $H$ a subgroup endowed with a Hausdorff group topology, with $\mathcal B_H$ a basis for the $e$ neighbourhoods in $H$. For $V\in \mathcal B_H$, let $U_V = \{(x,y) \in  G \times G : x^{-1}y \in  V \}$.

Then the collection $\mathcal U^G=\{U_V:V\in \mathcal B_H\}$ is a left-invariant uniformity on $G$ extending the associated uniformity on $H$.
\end{lemma}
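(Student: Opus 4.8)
The statement is a routine verification, so the plan is simply to check the axioms of a uniformity for $\mathcal U^G$ and then the compatibility and invariance claims. First I would record the standard characterization: $\mathcal U^G$ is a uniformity if (a) every member contains the diagonal $\Delta_G$, (b) the family is a filter base (closed under finite intersections up to refinement), (c) for every $U\in\mathcal U^G$ there is $U'\in\mathcal U^G$ with $U'\circ U'\subseteq U$, and (d) for every $U\in\mathcal U^G$ there is $U'\in\mathcal U^G$ with $(U')^{-1}\subseteq U$. Each of these will be pulled back directly from the corresponding fact about $\mathcal B_H$: since $H$ carries a group topology, $e\in V$ for all $V\in\mathcal B_H$ (giving (a), because $x^{-1}x=e\in V$); for $V_1,V_2\in\mathcal B_H$ there is $V_3\in\mathcal B_H$ with $V_3\subseteq V_1\cap V_2$, whence $U_{V_3}\subseteq U_{V_1}\cap U_{V_2}$ (giving (b)); continuity of multiplication at $(e,e)$ in $H$ gives for each $V$ some $V'$ with $V'V'\subseteq V$, and then if $x^{-1}y\in V'$ and $y^{-1}z\in V'$ we get $x^{-1}z=(x^{-1}y)(y^{-1}z)\in V'V'\subseteq V$, i.e. $U_{V'}\circ U_{V'}\subseteq U_V$ (giving (c)); and continuity of inversion gives for each $V$ some $V'$ with $(V')^{-1}\subseteq V$, and then $(x,y)\in U_{V'}^{-1}$ means $y^{-1}x\in V'$, so $x^{-1}y=(y^{-1}x)^{-1}\in (V')^{-1}\subseteq V$, i.e. $U_{V'}^{-1}\subseteq U_V$ (giving (d)).

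Next I would check left-invariance: for $g\in G$ the pair $(gx,gy)$ satisfies $(gx)^{-1}(gy)=x^{-1}y$, so $(x,y)\in U_V\iff (gx,gy)\in U_V$; thus each $U_V$ is literally invariant under the diagonal left-translation action of $G$, and hence so is the whole uniformity.

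For the claim that $\mathcal U^G$ extends the associated uniformity on $H$, I would recall that the group topology on $H$ induces the uniformity with basic entourages $\{(x,y)\in H\times H: x^{-1}y\in V\}$, $V\in\mathcal B_H$; restricting $U_V$ to $H\times H$ gives exactly this set, so the trace of $\mathcal U^G$ on $H$ is the uniformity of $H$. (One should note the word "Hausdorff" in the hypothesis is used to ensure this is the canonical uniformity on the topological group $H$; it plays no role in the verifications above beyond that.)

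\textbf{Expected main obstacle.} There is essentially no obstacle here — every step is a one-line translation of a group-topology axiom for $H$ through the formula defining $U_V$. The only point requiring the tiniest care is making sure the refinements needed for (b), (c), (d) can be taken simultaneously within the \emph{basis} $\mathcal B_H$ rather than merely in the full neighbourhood filter; this is immediate since $\mathcal B_H$ is by definition cofinal in that filter. I would therefore present the proof as a short enumeration of the four uniformity axioms plus the invariance and restriction remarks, with no separate lemmas.
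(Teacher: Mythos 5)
Your verification is correct, and it is exactly the routine check the paper has in mind: Lemma \ref{L:from H a uniformity on G} is stated there as an easy observation with no proof given, so your enumeration of the uniformity axioms, left-invariance, and the restriction to $H$ supplies precisely the omitted argument. The only (harmless) quibble is that $\mathcal U^G$ is literally a base for a uniformity rather than a filter, and that the Hausdorff hypothesis is not needed for any of these verifications (a topological group has its canonical left uniformity regardless); it matters only for the topology produced later, but this does not affect your proof.
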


The uniformity $\mathcal U^G$ induces a topology on $G$, call it $\tau^G$,  whose basis is the collection of all sets $U_V(h,G)$, as $h$ varies in $G$ and $V$ varies in $\mathcal B_H$. Though this need not in general be a group topology,  for every $g\in G$ the map $x\mapsto gx$ is continuous. We can show, however: 

\begin{lemma} \label{L: top from H to G}
Let $G$ be a group and  $H\le G$  a topological group such that,
 \begin{enumerate}
    \item[(i)] For every $g\in G$ there is an open $V\sub H$ such that $V\sub H^g\cap H $.
    \item[(ii)] For every $g\in G$, the function $x\mapsto x^g=gxg^{-1}$, from $H$ into $H^g$, is continuous at $e$ with respect to the topology on $H$ (this makes sense due to (i)).
\end{enumerate}
Then the uniformity $\mathcal U^G$ defined above makes $G$  a topological group.
\end{lemma}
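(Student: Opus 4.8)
\textbf{Proof proposal for Lemma~\ref{L: top from H to G}.}

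The plan is to verify directly that the left-invariant uniformity $\mathcal U^G$ of Lemma~\ref{L:from H a uniformity on G} is compatible with the group operations, using the standard criterion for a uniformity to be a group uniformity: it suffices to check that multiplication and inversion are uniformly continuous, and for a left-invariant uniformity coming from a basis $\{U_V : V \in \mathcal B_H\}$ of entourages this reduces to a handful of statements about the neighbourhood filter $\mathcal B_H$ of $e$. Concretely, since every basic entourage has the form $U_V = \{(x,y) : x^{-1}y \in V\}$, the induced topology $\tau^G$ has, at each $g \in G$, the basic neighbourhoods $gV$ for $V \in \mathcal B_H$; so $\tau^G$ restricted to $H$ is the original topology, and left translations are homeomorphisms. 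What remains is: (a) continuity of inversion at an arbitrary $g$, and (b) continuity of multiplication at an arbitrary $(g_1,g_2)$. I would first reduce both to statements near the identity of $H$ by writing a general point as $g h$ with $h \in H$ close to $e$ and peeling off the (homeomorphic) left translation by $g$.

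First I would handle multiplication. Fix $g_1,g_2 \in G$ and a basic neighbourhood $g_1 g_2 V$ of the product, $V \in \mathcal B_H$. For $h_1, h_2 \in H$ we have $(g_1 h_1)(g_2 h_2) = g_1 g_2 (h_1^{g_2} h_2)$, where $h_1^{g_2} = g_2^{-1} h_1 g_2$ — note the conjugation is by $g_2^{-1}$, which is the hypothesis applied to the element $g_2^{-1} \in G$. By hypothesis (i) (applied to $g_2^{-1}$) there is an open $W \subseteq H \cap H^{g_2^{-1}}$, i.e. conjugation by $g_2^{-1}$ maps a neighbourhood of $e$ into $H$; by hypothesis (ii) this conjugation map $x \mapsto x^{g_2}$ is continuous at $e$, so there is $V_1 \in \mathcal B_H$ with $V_1^{g_2} \subseteq V'$ for any prescribed $V' \in \mathcal B_H$. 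Now using that $H$ is a topological group, choose $V'$ and $V_2 \in \mathcal B_H$ with $V' V_2 \subseteq V$; then picking $V_1$ with $V_1^{g_2} \subseteq V'$ and shrinking to $V_1 \cap V_2 =: V_0$, we get for $h_1, h_2 \in V_0$ that $h_1^{g_2} h_2 \in V' V_2 \subseteq V$, hence $(g_1 h_1)(g_2 h_2) \in g_1 g_2 V$. Thus multiplication sends $g_1 V_0 \times g_2 V_0$ into $g_1 g_2 V$, which is continuity at $(g_1,g_2)$. (One should double-check the bookkeeping on which group element conjugation is by; this is the kind of place an off-by-an-inverse error creeps in, and it is the step I expect to be the main obstacle — not because it is deep, but because hypotheses (i), (ii) are stated for the maps $x \mapsto x^g$ and one must apply them to the correct element, possibly to $g_2^{-1}$ rather than $g_2$, and use that (i)+(ii) for $g$ and for $g^{-1}$ are essentially equivalent via taking inverses of neighbourhoods.)

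Next, inversion. Fix $g \in G$ and a basic neighbourhood $g^{-1} V$ of $g^{-1}$, $V \in \mathcal B_H$. For $h \in H$ near $e$ we have $(gh)^{-1} = h^{-1} g^{-1} = g^{-1}(g h^{-1} g^{-1}) = g^{-1}(h^{-1})^{g^{-1}}$, so we need $(h^{-1})^{g^{-1}} \in V$ for $h$ in a small enough neighbourhood of $e$ in $H$. By hypothesis (i) applied to $g^{-1}$, conjugation by $g^{-1}$ is defined on a neighbourhood of $e$ and lands in $H$; by hypothesis (ii) it is continuous at $e$ (and fixes $e$), so there is $V_1 \in \mathcal B_H$ with $V_1^{g^{-1}} \subseteq V$; since $H$ is a topological group, inversion is continuous at $e$ in $H$, so there is $V_0 \in \mathcal B_H$ with $V_0^{-1} \subseteq V_1$. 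Then for $h \in V_0$ we get $(h^{-1})^{g^{-1}} \in (V_0^{-1})^{g^{-1}} \subseteq V_1^{g^{-1}} \subseteq V$, hence $(gh)^{-1} \in g^{-1} V$, giving continuity of inversion at $g$. Finally I would remark that Hausdorffness of $\tau^G$ is immediate: if $x \neq y$ then $x^{-1} y \neq e$, and since $H$ is Hausdorff (so $\bigcap_{V \in \mathcal B_H} V = \{e\}$ — this is packaged into $\mathcal U^G$ being a Hausdorff, hence separating, uniformity, or can be quoted from Lemma~\ref{L:from H a uniformity on G}) there is $V \in \mathcal B_H$ with $x^{-1} y \notin V V^{-1}$, which separates $x$ and $y$ in $\tau^G$; together with the above this shows $(G,\tau^G)$ is a topological group. \qed
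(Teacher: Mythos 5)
Your proof is correct and follows essentially the same route as the paper's: reduce continuity at arbitrary points to the identity via the basic neighbourhoods $gV$, then combine hypothesis (ii) (applied to a suitable conjugator, $g_2^{-1}$ resp.\ $g$) with continuity of the operations in $H$, exactly as in the paper, with only cosmetic differences (you push $h_1$ past $g_2$ by conjugation rather than pulling everything to the left, and you use continuity of inversion in $H$ where the paper takes symmetric $W$). The only blemish is the parenthetical gloss on what (i) applied to $g_2^{-1}$ literally gives (your flipped convention $x^{g}:=g^{-1}xg$ makes the attribution off by an inverse), but since (i) and (ii) are assumed for every $g\in G$ this is harmless.
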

\begin{proof}
Let $\CB_H$ denote a neighbourhood base for $H$ at $e$. We may assume that each $W\in \CB_H$ is symmetric.  Note also that by Lemma \ref{L:from H a uniformity on G} for each $g\in G$, the family $\{g V:V\in \CB_H\}$ is a basis for the $\tau^G$-neighbourhoods at $g$.
 
 We prove first that group inverse is continuous.
Assume that $g^{-1}V$ is an open neighbourhood of $g^{-1}$. We need to find an open neighbourhood $gW$ of $g$ such that $W^{-1}g^{-1}\subseteq g^{-1} V$. By our assumptions, the map $x\mapsto gxg^{-1}$ is continuous at $e$ and hence there is $W\in \CB_H$ (in particular $W^{-1}=W$), such that $gWg^{-1}\subseteq V$. It follows that $Wg^{-1}\subseteq g^{-1}V$.

To prove continuity of  multiplication let  $g_1,g_2\in G$, and assume that $g=g_1g_2$, and $gV$ is a basic neighbourhood of $g$ and fix some $V'\in \CB_H$ such that $V'\subseteq H^g \cap H$, as provided by (i). Using the continuity at $e$ of $x\mapsto x^g$, we can shrink $V'$ so that $V'\subseteq V^g$. Using the continuity of $x\mapsto x^{g_1}$ and the fact that $H$ is a topological group, we can find $W_1, W_2\in \CB_H$, such that
$$ (g_1W_1g_1^{-1}) (gW_2g^{-1})\subseteq V'.$$

It follows that 
$g_1W_1 g_2W_2g^{-1}\subseteq gVg^{-1}$, and hence $g_1W_1g_2W_2\subseteq gV$, as needed. Thus $\mathcal U^G$ gives rise to a group topology.
\end{proof}

\section{Some results on $p$-adically closed fields}\label{S:P-minimal}
An important  tool in \cite{HaHaPeVF} is the analysis of interpretable fields via subsets that are strongly internal to SW-uniform sorts. Such an analysis is not available to us when studying interpretable groups  in a $p$-adically closed field $K$, as neither $\Gamma$ nor $K/\CO$ are SW-uniform sorts. In the present section we endow $K/\CO$ with a topology -- neither definable nor Hausdorff -- where, nonetheless, many of the tools available in SW-uniformities can still be applied. 

We prove two geometric properties that play a crucial role in the sequel. The first, comprising the heart of Section \ref{S: local K/O}, is an analogue of Fact \ref{Gen-Os in SW}, that can be interpreted as asserting the existence of generic neighbourhoods (of generic points) with respect to the above topology.  This is formulated precisely in Proposition \ref{P:sgenos for K/O}(3). Using this result we then proceed to proving an analogue of Lemma \ref{L: asi from surjective}.  Section \ref{ss: functions in K/O} is dedicated to the second main property that we need:  local linearity of definable functions in $K/\CO$, culminating in Corollary \ref{C: KO affine}. The strategy of proof is similar to that carried out in \cite[\S 6.2]{HaHaPeVF}, using results on 1-h-minimal fields, but working in mixed characteristic requires a little more care.  

While the results of Section \ref{ss: functions in K/O} may be of independent interest, the results of Section \ref{S: local K/O} are more technical. Readers unfamiliar with the analogous results from \cite{HaHaPeVF} may wish to first read Section \ref{ss:key geo properties} for a more detailed discussion of the underlying geometric motivation.   \\


\textbf{From this point  up to Section \ref{S: asi to D}, $\CK:=(K,+,\cdot,v)$ is a $p$-adically closed field}. We note, however, that the results of Section \ref{S: local K/O} hold in any $P$-minimal field. For Section \ref{ss: functions in K/O} we need, in addition, 1-h-minimality and definable Skolem functions (as discussed in the introduction). 

\newcommand{\bF}{\mathbb F}
We remind that $p$-adically closed fields are elementarily equivalent (in the Macintyre language) to a finite extension, denoted  $\bF$, of $\mathbb{Q}_p$. We work in a large saturated $p$-adically closed  field, $\CK=(K,v)$, and so we may assume that $(K,v)$ is an elementary extension
of $(\bF,v)$. We add constants for all elements of $\bF$; since its value group $\Gamma_\bF$ is isomorphic to $\mathbb{Z}$ as an ordered abelian group for simplicity of notation we identify $\Gamma_\bF$ with $\mathbb{Z}$, and note that $\bk_K=\bk_\bF$. It is well known that, as an abelian group, $\mathbb{F}/\mathcal{O}_{\mathbb{F}}$ is isomorphic to $\bigoplus_{i=1}^n\mathbb{Z}(p^\infty)$, where $\mathbb{Z}(p^\infty)$ is the Pr\"ufer $p$-group and $n=[\mathbb{F}:\mathbb{Q}_p]$. Throughout, we will use without further reference the fact that $v$ is well-defined on $K/\CO\setminus \{0\}$ and denote $v(0)=\infty$ (for $0\in K/\CO$). As usual $v$ extends to $(K/O)^n$ by setting $v(a_1,\dots, a_n)=\min\{v(a_i): i=1,\dots, n\}.$ 

\begin{fact}\label{F:F is dense in Z-radius balls}
	\begin{enumerate}
		\item For every  $a\in K$ with $v(a)\in \mathbb{Z}$, there is an element $r\in \bF$ with $v(a-r)>\mathbb{Z}$.
		
		\item For every $b\in K/\CO$, if $v(b)\in \mathbb{Z}\cup\{\infty\}$ then $b\in \bF/\CO_\bF\leq K/\CO$.
		\item For all $n\in \Nn$, there are only finitely many $b\in K/\CO$ such that $nb=0$; in fact every such $b$ is in $\mathbb{F}/\mathcal{O}_{\mathbb{F}}$.  
	\end{enumerate}
\end{fact}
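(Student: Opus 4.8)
The plan is to derive all three items from one approximation argument based on the completeness of $\bF$ (a finite extension of $\qp$), together with the facts already recorded that $\bk_K=\bk_\bF$ is a finite field and that the restriction of $v$ to $\bF\subseteq K$ is the valuation of $\bF$. The only item requiring genuine work is (1); parts (2) and (3) then follow formally. For (1): fix a uniformizer $\pi\in\bF$, so $v(\pi)=1$ and $\pi$ is also a uniformizer of $K$. Given $a\in K$ with $v(a)=k\in\Zz$, first reduce to the case $v(a)=0$ by replacing $a$ with $a\pi^{-k}$ — a witness $r'\in\bF$ with $v(a\pi^{-k}-r')>\Zz$ yields $r:=\pi^k r'\in\bF$ with $v(a-r)>\Zz$. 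Assuming $v(a)=0$, construct by recursion on $N$ elements $r_0,\dots,r_{N-1}\in\CO_\bF$ with $v\!\left(a-\sum_{i<N}\pi^i r_i\right)\ge N$: at stage $N$ the element $a_N:=\pi^{-N}\big(a-\sum_{i<N}\pi^i r_i\big)$ lies in $\CO_K$, its residue lies in $\bk_K=\bk_\bF=\CO_\bF/\m_\bF$, hence lifts to some $r_N\in\CO_\bF$, and then $v(a_N-r_N)\ge 1$, i.e. $v\!\left(a-\sum_{i\le N}\pi^i r_i\right)\ge N+1$. The partial sums $s_N:=\sum_{i<N}\pi^i r_i$ form a Cauchy sequence in $\bF$, so by completeness they converge to some $r\in\CO_\bF$ with $v(r-s_N)\ge N$ for all $N$; hence $v(a-r)\ge\min\{v(a-s_N),v(s_N-r)\}\ge N$ for every $N\in\Nn$, which forces $v(a-r)>\Zz$.

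For (2): if $v(b)=\infty$ then $b=0\in\bF/\CO_\bF$; otherwise pick a representative $a\in K$ of $b$ with $v(a)=v(b)\in\Zz$, apply (1) to obtain $r\in\bF$ with $v(a-r)>\Zz$, so in particular $a-r\in\CO_K$ and therefore $b=a+\CO_K=r+\CO_K$ lies in the image of $\bF/\CO_\bF$ inside $K/\CO_K$ (this map is injective because $\bF\cap\CO_K=\CO_\bF$). For (3): for $b=a+\CO$ one has $nb=0$ in $K/\CO$ iff $v(na)\ge 0$, i.e. $v(a)\ge -v(n)$, where $v(n)$ is the finite nonnegative valuation of the integer $n$; thus the $n$-torsion subgroup of $K/\CO$ is exactly $\pi^{-v(n)}\CO/\CO\cong\CO/\pi^{v(n)}\CO$, which is finite since it admits a finite filtration whose successive quotients are copies of the additive group of the finite residue field $\bk_K$. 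Finally each such torsion element is either $0$ or has value in $\Zz$, hence lies in $\bF/\CO_\bF$ by (2).

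There is no deep difficulty here; the single point to handle with care lies in (1), namely ensuring that the successive approximation converges \emph{inside $\bF$} — which is where completeness of $\bF$ enters (not saturation of $K$) — and that the recursion is legitimate, which is precisely where $\bk_K=\bk_\bF$ is needed so that the residues to be lifted actually come from $\CO_\bF$.
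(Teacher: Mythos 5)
Your proof is correct and follows essentially the same route as the paper: part (1) by successive approximation inside $\bF$ using the shared finite residue field $\bk_K=\bk_\bF$ and completeness of $\bF$, and parts (2), (3) deduced formally, with (3) resting on the valuation computation $v(b')\ge -v(n)$ forcing $v(b')\in\Zz$. The only (harmless) difference is that in (3) you establish finiteness directly via $\pi^{-v(n)}\CO/\CO\cong\CO/\pi^{v(n)}\CO$ and the finite residue field, whereas the paper reduces to membership in $\bF/\CO_\bF$ and implicitly uses its known structure as a finite direct sum of Pr\"ufer $p$-groups.
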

\begin{proof}
	(1) This follows from the completeness of $\bF$ as a valued field. Indeed, for $t\in \bF$ with $v(at)=0$ we can find $c\in \bF$ with $v(at-c)>0$ thus $v(a-ct^{-1})>-v(t)$; continuing inductively we construct a pseudo-Cauchy sequence in $\bF$, converging, by completeness,
	to a limit in $\bF$. This limit has the desired properties. 
	
	(2) Every $b\in K/\CO$ with $v(b)\in \mathbb{Z}\cup\{\infty\}$ is of the form $b=B_{\geq 0}(a)$ for some $a\in K$ with $v(a)\in \mathbb Z\cup \{\infty\}$. By (1) there is $r\in \bF$ with $v(a-r)>\mathbb{Z}\geq 0$, so $r+\CO=a+\CO$ as required.

    (3) It suffices to show that if $nb=0$ then $b\in \bF/\CO_\bF$. If $nb=0$ and $b=b'+\CO$, for some $b'
    in K$,  then $v(nb')=v(n)+v(b')\ge 0$. So $v(b')\in \mathbb{Z}$, i.e., $b\in \bF/\CO_\bF$.   
\end{proof}


\subsection{Local analysis in $K/\CO$}\label{S: local K/O}
As noted above, the valuation on $K$ is well-defined on $K/\CO$, but the topology it induces on $K/\CO$ is discrete. For that reason we introduce a coarser (non-Hausdorff) topology retaining some properties of SW-uniformities. The main technical property we need regarding this topology is the existence of generic neighbourhoods (Proposition \ref{P:sgenos for K/O}), but we also take the opportunity to single out several other properties that will be of use in the sequel. As an application we obtain Corollary \ref{L:Tor of ball} on the torsion of definable subgroups of $K/\CO$, that is used in the sequel, but may be  of independent interest. We first need some preliminaries:  
\begin{definition}
A ball $B\subseteq K^n$ is \emph{large} if $r(B)$, the valuative radius of $B$, satisfies $r(B)<\mathbb{Z}$.
\end{definition}

Let $\pi:K\to K/\CO$ be the quotient map, we also write $\pi:K^n\to (K/\CO)^n$ for any integer $n$.

\begin{definition}
	A \emph{ball} $U\subseteq (K/\CO)^n$ is the image under $\pi$ of a  large ball in $K$. 
\end{definition}

 We consider the {\em ball topology } on $(K/\CO)^n$ whose basis is the family of balls. Thus, for $n>1$, this is the product topology on $(K/\CO)^n$.

\begin{remark}
	The ball topology on $K/\CO$ is not Hausdorff, in fact, not even $T_0$:  for $b_1\neq b_2\in K/\CO$, if $v(b_1-b_2)\in \mathbb{Z}$ then every open set containing $b_1$ also contains $b_2$. In addition, the ball topology has no definable basis (since the set of balls of infinite radius in $K$ is not definable),
\end{remark}

Below, all topological notions in $K/\CO$ and $(K/\CO)^n$ refer to the ball topology.
Despite the above remark, some amount of local analysis in $K/\CO$ is possible using the lemma below. For $a=(a_1,\ldots, a_n)\in (K/\CO)^n$, we let $\dim_{\acl}(a)$ be the maximal $k$ such that some sub-tuple $a'$ of $a$ of length $k$  is algebraically independent and $a\in \acl(a')$ (this is well-defined even when $\acl$ does not satisfy exchange).

\begin{lemma}\label{L:generic over some gamma in K/O, K}
	Let $a=(a_1,\dots,a_m)$ be a tuple of elements in $K$ or $K/\CO$ and $A$ an arbitrary parameter set such that $\dim_{\acl}(a/A)=n$. Then for any $\gamma_0<\gamma_1<\mathbb{Z}$ with $\gamma_0-\gamma_1<\mathbb{Z}$ there exists $\gamma\in \Gamma$ such that  $\gamma_0<\gamma<\gamma_1<\mathbb{Z}$ and $\dim_{\acl}(a/A\gamma)=n$.
\end{lemma}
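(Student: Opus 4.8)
The plan is to choose $\gamma$ generically inside the given interval and then show that genericity alone forces $\dim_{\acl}(a/A\gamma)=n$. First I would note that the hypothesis $\gamma_0-\gamma_1<\Zz$ says precisely that the definable interval $I:=\{x\in\Gamma:\gamma_0<x<\gamma_1\}$ is infinitely long — it contains $\gamma_0+k$ for every standard $k$ — hence infinite, hence (working in a sufficiently saturated model) of cardinality larger than $|\acl(Aa)|$. So I would fix some $\gamma\in I$ with $\gamma\notin\acl(Aa)$; this already gives $\gamma_0<\gamma<\gamma_1<\Zz$. Since $\dim_{\acl}$ coincides with dp-rank on these sorts and dp-rank is monotone in the base, we have $\dim_{\acl}(a/A\gamma)\le \dim_{\acl}(a/A)=n$, so the whole point is the reverse inequality.

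For this I would argue by contradiction. If $\dim_{\acl}(a/A\gamma)<n$, then by definition of $\acl$-dimension there is a sub-tuple $a''$ of $a$ of length $n-1$ with $a\in\acl(A\gamma a'')$. Separating off the single $\Gamma$-parameter $\gamma$, the witnessing $A\gamma a''$-definable finite set containing $a$ can be written as $G_\gamma$, where $(G_\eta)_{\eta\in\Gamma}$ is an $Aa''$-definable family of subsets of $D^{|a|}$ ($D\in\{K,K/\CO\}$) which, after restricting, all have size at most $N:=|G_\gamma|$, and $a\in G_\gamma$. Let $E:=\{\eta\in\Gamma:a\in G_\eta\}$, an $Aa$-definable subset of $\Gamma$ containing $\gamma$. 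If $E$ is finite, then $\gamma\in\acl(Aa)$, contradicting the choice of $\gamma$. If $E$ is infinite, then $a$ lies in the $Aa''$-definable set $Y:=\{y\in D^{|a|}:\{\eta:y\in G_\eta\}\text{ is infinite}\}$; the crucial input is that $Y$ is \emph{finite}, so that $a\in\acl(Aa'')$, whence by sub-additivity of dp-rank $\dpr(a/A)\le\dpr(a''/A)\le n-1$, contradicting $\dim_{\acl}(a/A)=n$. This yields $\dim_{\acl}(a/A\gamma)=n$, as required.

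The main obstacle is exactly that crucial input: a definable relation $R\subseteq D^{|a|}\times\Gamma$ with uniformly finite fibres over $\Gamma$ has only finitely many $y$ with infinite fibre $R^y$. For $D=K$ I would deduce it from cell decomposition in $P$-minimal fields together with the clause of $1$-h-minimality stating that a definable function on $K$ has only finitely many infinite fibres: pulling $R$ back along $v$ to $\tilde R=\{(y,t):(y,v(t))\in R\}\subseteq K^{|a|}\times K$, which has finite fibres over the last coordinate, a cell decomposition presents $\tilde R$ as a finite union of graphs of definable functions $c_j$ from subsets of $K$ into $K^{|a|}$, and the set of $y$ with $c_j^{-1}(y)$ infinite is finite for each $j$ (coordinatewise, using that the domain is one-dimensional); since $R^y$ infinite forces $\tilde R^y=v^{-1}(R^y)$ infinite, we get that $Y$ is finite. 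For $D=K/\CO$ the same conclusion should follow from the analysis of definable sets and functions in $K/\CO$ carried out in this section; I expect the $K/\CO$ case to be the part requiring the most care, since the naive lift along $\pi:K\to K/\CO$ destroys finiteness of the $\Gamma$-fibres.
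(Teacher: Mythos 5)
Your strategy (pick a single $\gamma$ in the interval outside $\acl(Aa)$, then show any such $\gamma$ works) is genuinely different from the paper's, but as written it has a real gap at its central point. Everything is made to rest on the claim that for an $Aa''$-definable family $(G_\eta)_{\eta\in\Gamma}$ of uniformly finite subsets of $D^{|a|}$, the set $Y$ of points lying in infinitely many $G_\eta$ is finite. For $D=K$ your sketch (pull back along $v$, write the finite-fibre relation as a union of graphs via Skolem functions/cell decomposition, and use clause (2) of $1$-h-minimality coordinatewise) is plausible. But for $D=K/\CO$ — the case this lemma exists for — you give no argument at all, and the source you point to ("the analysis of definable sets and functions in $K/\CO$ carried out in this section") is unavailable: Lemma \ref{L:full dpr is interior in K/O}, Corollary \ref{C: dpr=acl-dim in K/O}, Proposition \ref{P:sgenos for K/O}, and everything after them are proved \emph{using} the present lemma, so invoking them here is circular. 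Note also that your claim for $K/\CO$ is essentially the statement that no infinite definable subset of $(K/\CO)^m$ maps finite-to-one into $\Gamma$ (a definable choice function on the sets $E_y\subseteq\Gamma$ turns a counterexample into such a map), i.e.\ a foreignness statement that the paper only obtains in Section \ref{S: foreig}, again downstream of this lemma. A second, repairable, circularity: you twice use ``$\dim_{\acl}$ coincides with dp-rank'', which is Corollary \ref{C: dpr=acl-dim in K/O}; in your endgame this is essential as written, because from $a\in\acl(Aa'')$ with $|a''|=n-1$ you cannot contradict $\dim_{\acl}(a/A)=n$ directly — exchange fails in $K/\CO$ — so you need $\dpr(a/A)\ge\dim_{\acl}(a/A)$, which is exactly the hard half of that corollary. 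This can be fixed by first replacing $a$ by an $\acl$-independent sub-tuple of length $n$ (as the paper does), so that failure of the conclusion means some single coordinate $a_i\in\acl(A\gamma\hat a_i)$ and the final contradiction is with independence over $A$, with no dp-rank needed.

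For contrast, the paper avoids any statement about $\Gamma$-indexed families of finite subsets of $K$ or $K/\CO$. It reduces to $|a|=n$ independent, assumes for contradiction that the ``bad'' $\gamma$'s are cofinal below $\Zz$, uses compactness to capture this by finitely many formulas $\varphi_i(x,\gamma)$ with uniformly finite $x$-fibres, and then exploits only the structure of the stably embedded $\Zz$-group $\Gamma$: the definable set $\{\gamma:\varphi_1(a_1,\gamma)\}$, being cofinal below $\Zz$, is (a union of) congruence-interval sets whose upper endpoint cannot lie below $\Zz$, hence it contains a standard integer $m$; since $m\in\dcl(\emptyset)$, this makes $a_1\in\acl(A\hat a_1)$, contradicting independence. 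If you want to salvage your route, you would need an independent, non-circular proof of your finiteness claim for $K/\CO$ (or switch to the paper's cofinality-plus-Presburger argument, which sidesteps it entirely).
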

\begin{proof}
	By passing to a sub-tuple of $a$ which is  $\acl$-independent over $A$, we may assume that $|a|=n$.
	
	By replacing $\gamma_0$ with $\gamma_0-\gamma_1<\mathbb{Z}$  it is enough to find $\gamma\in \Gamma$, $\gamma_0<\gamma<\mathbb{Z}$ such that $\dim_{\acl}(a/A\gamma)=n$.
	
	For $1\le i \le n$, let $\hat a_i:= (a_1,\dots, a_{i-1}, a_{i+1}, \dots, a_n)$. Assume towards a contradiction that for every infinite $\gamma>\gamma_0$, there is $1\le i\le n$ such that $a_i\in \acl(\hat a_i, A, \gamma)$.  We may assume,  without loss of generality, that $a_1\in \acl(\hat a_1,A,\gamma)$ for arbitrarily large   $\gamma$ below $\mathbb Z$. We now consider all formulas $\varphi(x,y)$ over $\hat a_1,A$ with $y$ a $\Gamma$-variable and $x$ a $K/\CO$-variable.
	
	By compactness, there are formulas $\varphi_1(x,y),\dots, \varphi_k(x,y)$ over $\hat a_1,A$, and some $k\in \mathbb N$, such that  $|\varphi_i(K/\CO,\gamma)|\le k_i$ for every $i\leq k$, and for every $\gamma_1\in \Gamma$, if $\gamma_0<\gamma_1<\mathbb{Z}$ then there exists $\gamma_1<\gamma<\Zz$, such that $\bigvee_{i=1}^k \varphi_i(a_1,\gamma)$. 
	
	Without loss of generality we may assume that for $i=1$ the set of $\gamma>\gamma_0$  such that $\models \varphi_1(a_1,\gamma)$,  is cofinal below $\mathbb Z$. Since $\Gamma$ is a pure $\Zz$-group, we may further assume that $\varphi_1(a_1,\Gamma)$ is a definable set of the form $\{t\in \Gamma:\alpha<t<\beta\wedge x\equiv_N c\}$ for some $N\in\mathbb{N}$ and $0\leq c<N$. By the cofinality assumption, we cannot have $\beta < \mathbb{Z}$, so there exists $m\in \mathbb Z$, such that $\varphi_1(a_1,m)$, contradicting the assumption that $a_1\notin \acl(A,a_2,\dots,a_n)$.
\end{proof}

In terms of the ball topology on $K/\CO$, the above lemma says that if $a\in (K/\CO)^n$ is acl-generic then any neighbourhood  of $a$ contains a ball of ``generic radius''. In Proposition \ref{P:sgenos for K/O} we strengthen this to provide a ``generic neighbourhood'' of $a$ (inside any neighbourhood of $a$). But we first need some preliminary results. 

\begin{lemma}\label{L:full dpr is interior in K/O}
	Assume that $b\in (K/\CO)^n$ is such that $\dim_{\acl}(b/A)=n$, for some parameter set $A$.
	\begin{enumerate}
		\item For every $A$-definable $X\subseteq (K/\CO)^n$, if $b\in X$ then $b$ is in the interior of $X$.
		\item If $p=\tp(b/A)$ then $p(\CK)$ is open in $(K/\CO)^n$ 
		\item $\dpr(b/A)=n$.
	\end{enumerate}
\end{lemma}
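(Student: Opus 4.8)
The plan is to prove the three parts in the order $(3) \Rightarrow$ topology $\Rightarrow (1) \Rightarrow (2)$, since the openness statements are essentially a packaging of Lemma~\ref{L:generic over some gamma in K/O, K} together with a counting/compactness argument, while the dp-rank computation in (3) is the substantive input.

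\textbf{Step 1: dp-rank equals acl-dimension (part (3)).} First I would establish that $\dpr(b/A) = n$. The upper bound $\dpr(b/A) \le n$ should follow from subadditivity of dp-rank together with the observation that each coordinate $b_i$, living in $K/\CO$, has dp-rank at most $1$ over any parameter set (as $K/\CO$ is a quotient of the dp-minimal sort $K$, or more directly because $\dpr(K/\CO)=1$ in all our settings), and that the coordinates not in a chosen acl-independent subtuple $a'$ of length $n$ are algebraically over $A a'$, hence contribute $0$ to the rank. For the lower bound $\dpr(b/A)\ge n$, I would exhibit an ict-pattern of depth $n$: using Lemma~\ref{L:generic over some gamma in K/O, K} (and its consequence that acl-generic tuples have neighbourhoods of generic valuative radius), one can build, for each coordinate $i \le n$, an independent family of ``concentric ball'' formulas $\{x_i \in B_{\ge \gamma}(\cdot)\}$ with the $\gamma$'s chosen generically so that finite consistency is preserved; the key point is that an acl-generic coordinate is not pinned down by finitely many such constraints coming from the other coordinates. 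This is the step I expect to require the most care, because one must ensure the mutual consistency of the pattern across coordinates while keeping $\dim_{\acl}$ maximal — this is exactly where the ``$\gamma_0 - \gamma_1 < \mathbb{Z}$'' flexibility of Lemma~\ref{L:generic over some gamma in K/O, K} is used, by choosing all the radii inside a single ``large gap'' below $\mathbb Z$.

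\textbf{Step 2: interior of definable sets (part (1)).} Given $b \in X$ with $X$ being $A$-definable and $\dim_{\acl}(b/A) = n$, I want a ball neighbourhood $U \ni b$ with $U \subseteq X$. Working coordinatewise via the product structure of the ball topology, the heart of the matter is: if $b = (b_1,\dots,b_n)$ is acl-generic over $A$ and $X$ is $A$-definable, then $X$ contains a product of balls around $b$. I would argue by induction on $n$, or directly by a fibering/compactness argument: by Lemma~\ref{L:generic over some gamma in K/O, K} choose $\gamma < \mathbb Z$ with $\dim_{\acl}(b/A\gamma) = n$ and $\gamma$ small enough to sit below all ``relevant'' radii attached to $X$; then show that the ball of radius $\gamma$ around $b$ is contained in $X$ because otherwise one of the $b_i$ would satisfy an algebraic ($\acl$) condition over $A\hat b_i \gamma$, contradicting genericity — this is the same mechanism as in the proof of Lemma~\ref{L:generic over some gamma in K/O, K}, now applied to the defining formula of $X$ rather than to a purported algebraic dependence. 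The subtlety is the non-Hausdorffness: ``interior'' must be taken in the ball topology, and the argument must only produce large balls, which is automatic since every basic open set is the image of a large ball.

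\textbf{Step 3: $p(\CK)$ is open (part (2)).} This is an immediate corollary of part (1): if $b' \models p = \tp(b/A)$ then $\dim_{\acl}(b'/A) = n$ as well (acl-dimension depends only on the type over $A$), so by part (1) applied to the $A$-definable set $X = p(\CK)$... — but one must be careful, since $p(\CK)$ need not be definable. Instead I would argue: for each $b' \models p$, part (1) gives an $A b'$-definable ball $U_{b'} \ni b'$, and I need $U_{b'} \subseteq p(\CK)$. This holds because every point of $U_{b'}$ is acl-generic over $A$ (being in a large ball around an acl-generic point does not decrease acl-dimension — again by the mechanism of Step 2) and has the same $\acl$-configuration; more precisely, any $c \in U_{b'}$ with $\dim_{\acl}(c/A) = n$ lies in every $A$-definable set containing $b'$, hence $\tp(c/A) = \tp(b'/A) = p$ by a back-and-forth / quantifier counting over the relevant definable sets. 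So $U_{b'} \subseteq p(\CK)$ and $p(\CK) = \bigcup_{b' \models p} U_{b'}$ is open. Alternatively, and perhaps more cleanly, I would note that parts (1) and (2) are logically intertwined and prove them together: the statement ``$b$ acl-generic over $A$ $\Rightarrow$ $b$ interior in every $A$-definable set containing it'' immediately yields that the complement of $p(\CK)$, intersected with the acl-generic locus, is also open, and then use that the non-generic locus is contained in a countable union of $A$-definable sets of strictly smaller dimension (nowhere dense) to conclude.

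The main obstacle, as indicated, is Step 1 — producing the dp-rank lower bound via an explicit ict-pattern in $K/\CO$ — since $K/\CO$ is not a topological field and the usual ``nested balls'' pattern in $K$ must be transported through $\pi$ while respecting the constraint that only large balls survive in the quotient. Everything else is a careful but routine unwinding of Lemma~\ref{L:generic over some gamma in K/O, K}.
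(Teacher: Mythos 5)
There is a genuine gap, and it is in the step you yourself flag as routine, not the one you flag as hard. Your Step 2 claims that if no large ball around $b$ were contained in $X$ then ``one of the $b_i$ would satisfy an algebraic condition over $A\hat b_i\gamma$''; this is a non sequitur. The failure of $B_{\geq\gamma}(b)\subseteq X$ only produces points of the complement inside the ball, and nothing in the mechanism of Lemma \ref{L:generic over some gamma in K/O, K} (which concerns definable families of \emph{finite} sets indexed by $\gamma$ and the Presburger structure of $\Gamma$) converts that into a finiteness/algebraicity condition on $b_i$. Interiority of acl-generic points is not a genericity formality: it is exactly where the paper invokes P-minimality. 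The actual proof lifts $b$ to $a\in K^n$, and for $n=1$ uses that an $A$-definable $Y\subseteq K$ whose image in $K/\CO$ is infinite meets infinitely many cosets of $\CO$ and therefore, by the cell-structure results quoted from \cite{HaHaPeVF}, contains a large ball; Lemma \ref{L:generic over some gamma in K/O, K} only enters afterwards, in the induction on $n$, to choose a generic radius for the fibre before applying the inductive hypothesis to the set $W$ of bases over which the fibre contains a fixed ball. Without some such structural input about one-variable definable sets in $K$, the statement could simply fail (it is false in non-P-minimal expansions), so no bookkeeping with generic radii can replace it.

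The second problem is that your ordering creates a circularity. Since $K/\CO$ does not satisfy exchange, the lower bound $\dpr(b/A)\geq\dim_{\acl}(b/A)$ in part (3) is genuinely substantive, and your proposed ict-pattern needs, for every path through the array of disjoint large balls, a realization of the \emph{complete} type $p=\tp(b/A)$ in the prescribed product of balls; that is precisely the openness of $p(\CK)$, i.e.\ part (2), which in your plan is proved later and rests on the gappy Step 2. The paper's order avoids this: (1) is proved first as above, (2) follows from (1) by a saturation/compactness argument producing a single large radius working for all formulas of $p$, and (3) is then immediate because $p(\CK)$ contains a product of $n$ large balls, each of dp-rank $1$. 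Your upper bound $\dpr(b/A)\leq n$ and the derivation of (2) from (1) are essentially fine (the latter is cleaner by compactness than by your back-and-forth sketch), but as written the proposal does not establish (1), and (3) cannot be obtained independently of it.
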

\begin{proof}
	Let $a\in K^n$ be such that $\pi(a)=b$.
	
	(1) We proceed by induction on $n$. For $n=1$, we first prove that every $A$-definable set $Y\subseteq K$ containing $a$ must contain a large ball. Indeed, since $b\in \pi(Y)$ is non-algebraic  $Y$ intersects infinitely many $0$-balls. Since $K$ is P-minimal, by \cite[Proposition 5.8, Lemma 6.26]{HaHaPeVF}, $Y$ contains a large  ball.
	
	Let $q=\tp(a/A)$; by compactness we conclude that $q(\CK)$ contains a large ball, but then, as $\mathbb{Z}$ is automorphism invariant, every $\alpha\models q$ belongs to a large ball $B\subseteq q(\CK)$. In particular, $a\in B\subseteq Y$ for some large  ball $B$.
	
	Now assume that $b\in X$ for an $A$-definable set $X\sub K/\CO$ and apply the above to $a\in Y=\pi^{-1}(X)$. We conclude that there is a ball $U\subseteq K/\CO$ with $b\in U\subseteq X$.
	
	For $n>1$, write $b=(b',b_n)\in X$ for $b'\in (K/\CO)^{n-1}$. Let $a=(a',a_n)\in K^n$ be such that $\pi(a)=b$. Then $\dim_{\acl}(a)=n$ and $\dim_{\acl}(a'/a_nA)=n-1$. Let $Y=\pi^{-1}(X)$.
	
	Because $b_n\notin \acl(b'A)$, it follows that $X_{b'}=\{(b',t)\in X:t\in K/\CO\}$ is infinite, so by the case $n=1$, $b_n$ is in the interior of $X_{b'}$. It follows that $a_n$ is in the  interior of $Y_{a'}=\{(a',t)\in Y: t\in K\}$ in the large ball topology. Since $\dim_{\acl}(a'/a_nA)=n-1$, by Lemma \ref{L:generic over some gamma in K/O, K} we can find $\gamma<\mathbb{Z}$ such that $\dim_{\acl}(a'/a_nA\gamma)=n-1$ and $B_{\geq \gamma} (a_n)\subseteq Y_{a'}$.
	
	We consider the $a_nA\gamma$-definable set $W=\{x\in K^{n-1}: B_\gamma(a_n)\subseteq Y_{x}\}$. It contains $a'$, so by induction it contains a large ball $B_{\geq \gamma'}(a')$. Denoting $\widehat\gamma=\max\{\gamma,\gamma'\}$ the ball $B_{\geq \widehat \gamma}(a')\times B_{\geq \widehat \gamma}(a_n)$ is centred at $a$ and contained in $Y$. It follows that $b=\pi(a)$ is in the interior of $X$.
	
	(2) Follows by compactness from (1).
	
	(3) By (2), for $p=\tp(b/A)$, $p(\CK)$ contains a ball $U$. Such a ball is a Cartesian product of $n$ balls, each of which has dp-rank $1$. So $\dpr(p)\ge n$, and equality must hold. 
\end{proof}

We conclude:
\begin{corollary}\label{C: dpr=acl-dim in K/O}
	For $b\in (K/\CO)^n$ and any parameter set $A$ we have $\dpr(b/A)=\dim_{\acl}(b/A)$.
\end{corollary}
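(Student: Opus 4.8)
The plan is to deduce this directly from part (3) of Lemma \ref{L:full dpr is interior in K/O}, combined with the monotonicity and subadditivity of dp-rank recorded in Section \ref{S: background}. Write $k=\dim_{\acl}(b/A)$, and, by the definition of $\dim_{\acl}$, fix a subtuple $b'$ of $b$ of length $k$ that is $\acl$-independent over $A$ and satisfies $b\in\acl(Ab')$. Note this makes sense even though $\acl$ need not satisfy exchange, as already pointed out when $\dim_{\acl}$ was defined.

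For the inequality $\dpr(b/A)\ge k$: since $\dim_{\acl}(b'/A)=k=|b'|$, Lemma \ref{L:full dpr is interior in K/O}(3) applies to $b'$ and gives $\dpr(b'/A)=k$; as $b'$ is a subtuple of $b$, monotonicity of dp-rank yields $\dpr(b/A)\ge\dpr(b'/A)=k$. For the reverse inequality, $b\in\acl(Ab')$ forces $\dpr(b/Ab')=0$, so subadditivity gives $\dpr(b/A)\le\dpr(b'/A)+\dpr(b/Ab')=k+0=k$. Hence $\dpr(b/A)=k=\dim_{\acl}(b/A)$.

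There is no genuine obstacle here once Lemma \ref{L:full dpr is interior in K/O} is in place; the corollary is a formal consequence, and the only mild point of care is that the passage to the subtuple $b'$ is legitimate without exchange, which is exactly why $\dim_{\acl}$ was set up the way it was.
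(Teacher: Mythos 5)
Your proof is correct and follows essentially the same route as the paper: the inequality $\dpr(b/A)\le\dim_{\acl}(b/A)$ via sub-additivity (using that $b$ is algebraic over $A$ together with an $\acl$-independent subtuple), and the reverse inequality by applying Lemma \ref{L:full dpr is interior in K/O}(3) to that subtuple. No differences worth noting.
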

\begin{proof}
	By sub-additivity of the dp-rank we just need to show that $\dpr(b/A)\geq \dim_{\acl}(b/A)$. If $\dim_{\acl}(b/A)=k$ then by passing to a sub-tuple we may assume that $\dim_{\acl}(b_1,\dots,b_k/A)=k$. By Lemma \ref{L:full dpr is interior in K/O}(3) we have that $\dpr(b_1,\dots,b_k/A)=k$ but then $\dpr(b/A)\geq k$, as needed.
\end{proof}

We can now prove the desired generalisation of Lemma \ref{L:generic over some gamma in K/O, K}:
\begin{proposition}\label{P:sgenos for K/O}
	Let $b\in (K/\CO)^n$, $A$ any set of parameters and $c\in (K/\CO)^m$.
	\begin{enumerate}
		\item Assume that $\dpr(b/A)=n$ and that $b\in W$ for some ball $W\subseteq (K/\CO)^n$. Then there is a ball $V\subseteq W$, defined over some $B\supseteq A$ such that $b\in V$ and $\dpr(b/B)=\dpr(b/A)=n$.
		\item For any $\gamma<\mathbb{Z}$ there exists $B\supseteq A$ and a $B$-definable ball $V\subseteq B_{\geq \gamma}(b)$ such that $b\in B$ and $\dpr(b,c/A)=\dpr(b,c/B)$.
		
		\item Assume that $b\in X\subseteq (K/\CO)^n$ for some $B$-definable $X$ with $\dpr(b/B)=n$ then there exists $C\supseteq A$ and a $C$-definable ball $V\subseteq X$ such that $b\in V$ and $\dpr(b,c/C)=\dpr(b,c/A)$.
	\end{enumerate}
\end{proposition}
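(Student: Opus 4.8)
The plan is to regard (1)--(3) as the counterparts, for the (non-Hausdorff, non-definable) ball topology on $(K/\CO)^n$, of Fact~\ref{Gen-Os in SW}; part (3) is the general form. I would prove (1) directly by a homogeneity argument, deduce (3) from (2) together with Lemma~\ref{L:full dpr is interior in K/O}(1), and devote the real effort to (2).

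For (1): since $\dpr(b/A)=n$, Corollary~\ref{C: dpr=acl-dim in K/O} gives $\dim_{\acl}(b/A)=n$, so by Lemma~\ref{L:full dpr is interior in K/O}(2) the realization set $p(\CK)$ of $p:=\tp(b/A)$ is open in the ball topology. Assuming, as we may, that $W$ is $A$-definable, $p(\CK)\cap W$ is a non-empty open set and hence contains a ball $V^\circ$ (which, being a ball, is a genuine $\pi$-image of a large ball, so a legitimate ball in $(K/\CO)^n$). Let $e$ be a code for $V^\circ$ and pick $b^\circ$ generic in $V^\circ$ over $Ae$, so $\dpr(b^\circ/Ae)=\dpr(V^\circ)=n$. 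As $b^\circ\in V^\circ\subseteq p(\CK)$ we have $b^\circ\equiv_A b$, so some $\sigma\in\aut(\CK/A)$ sends $b^\circ$ to $b$; then $V:=\sigma V^\circ\subseteq\sigma W=W$ is a ball containing $b$, defined over $B:=A\sigma(e)\supseteq A$, with $\dpr(b/B)=\dpr(b^\circ/Ae)=n$.

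For (3): if $X$ is $B_0$-definable, then by Corollary~\ref{C: dpr=acl-dim in K/O} and Lemma~\ref{L:full dpr is interior in K/O}(1) (over $B_0$), $b$ is interior to $X$, so $B_{\geq\gamma}(b)\subseteq X$ for some $\gamma<\mathbb{Z}$; applying (2) with this $\gamma$ and parameter set $A$ yields the desired $C$ and $V$. For (2) itself: fix $\gamma<\mathbb{Z}$, and first apply Lemma~\ref{L:generic over some gamma in K/O, K} to the tuple $(b,c)\in(K/\CO)^{n+m}$ with lower bound $\gamma$ and a suitable upper bound $\gamma_1<\mathbb{Z}$ satisfying $\gamma-\gamma_1<\mathbb{Z}$ (such $\gamma_1$ exists by saturation since $\gamma<\mathbb{Z}$); this produces $\gamma'$, $\gamma<\gamma'<\mathbb{Z}$, with $\dim_{\acl}(b,c/A\gamma')=\dim_{\acl}(b,c/A)$, i.e.\ $\dpr(b,c/A\gamma')=\dpr(b,c/A)$. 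The candidate is $V:=\{x\in(K/\CO)^n:v(x-b)\geq\gamma'\}=B_{\geq\gamma'}(b)\subseteq B_{\geq\gamma}(b)$ (using that $v$ is well-defined on $K/\CO$); it contains $b$, and over $A\gamma'$ it is coded by the image $\rho_{\gamma'}(b)$ of $b$ in $(K/B_{\geq\gamma'}(0))^n$. What remains is to arrange that adjoining this code does not lower the dp-rank, i.e.\ $\dpr(b,c/A\gamma'\rho_{\gamma'}(b))=\dpr(b,c/A)$. For this I would lift $b,c$ to $\hat b\in K^n$, $\hat c\in K^m$, apply Fact~\ref{Gen-Os in SW} in the SW-uniformity $K$ (with ambient set $B_{\geq\gamma'}(\hat b)$ and extra parameter $\hat c$) to obtain a sub-box whose code preserves $\dpr(\hat b,\hat c)$, and then---as in the proof of Lemma~\ref{L:full dpr is interior in K/O}(1)---use the automorphism-invariance of $\mathbb{Z}\leq\Gamma$ and the $P$-minimality of $K$ to replace this box by a \emph{large} ball, whose $\pi$-image has the required form and whose code still witnesses the dp-rank equality.

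The main obstacle is exactly this last reconciliation. Unlike in an SW-uniformity, where a generic point has abundantly many ``generic'' neighbourhoods, in $(K/\CO)^n$ the ball of a given radius through a given point is unique, and its code necessarily remembers the coarse part $\rho_{\gamma'}(b)$ of $b$, which a priori could cut $\dpr(b,c)$. What saves the argument is the freedom in the radius $\gamma'$ provided by Lemma~\ref{L:generic over some gamma in K/O, K} together with the discreteness and automorphism-invariance of $\mathbb{Z}\leq\Gamma$; controlling these, while respecting the constraint that a ball in $(K/\CO)^n$ must be a product of $n$ balls of equal radius below $\mathbb{Z}$, is where the care is needed.
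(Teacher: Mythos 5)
Your reduction of (3) to (2) is exactly right, but both (1) and (2) have genuine gaps at the points where the actual work lies. In (1), the step ``assuming, as we may, that $W$ is $A$-definable'' is not a legitimate reduction: $W$ is an arbitrary ball containing $b$, balls are not $A$-definable (indeed for $A=\emptyset$ no ball is, since all radii lie below $\mathbb{Z}$ while $\dcl(\emptyset)\cap\Gamma=\mathbb{Z}$), and adjoining a code for $W$ to $A$ is precisely what might destroy $\dpr(b/A)=n$. Without $A$-invariance of $W$, your automorphism $\sigma$ need not satisfy $\sigma W=W$, so the transported ball $V=\sigma V^{\circ}$ contains $b$ and lies in $p(\CK)$ but may strictly contain $W$ rather than be contained in it, and there is no way to shrink it back inside $W$ without facing the original problem again (the radius of $W$ is not under control over any parameter set over which $b$ stays generic). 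The paper avoids this entirely: it takes $V=B_{\geq\gamma}(b)$ for a radius $\gamma$ with $r(W)<\gamma<\mathbb{Z}$ chosen generically via Lemma \ref{L:generic over some gamma in K/O, K}, and then proves a Claim that if $r(s)\in\dcl(A)$ and $\dpr(b/A)=n$ then $\dpr(b/A[s])=n$ — the point being that, the radius being fixed, every realization of $\tp(b/A\gamma)$ inside $s$ determines the \emph{same} ball, so automorphisms moving $b$ within $s$ fix $[s]$; a coordinate-wise induction then gives the full-rank statement.

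In (2) your candidate $V=B_{\geq\gamma'}(b)$ centred at $b$ leaves exactly the crucial claim $\dpr(b,c/A\gamma'\rho_{\gamma'}(b))=\dpr(b,c/A)$ unproved, and the sketched repair does not work: (2) makes \emph{no} genericity assumption on $b$, so Lemma \ref{L:full dpr is interior in K/O}(1) (which you invoke to convert a $K$-box into a large ball) does not apply to a lift of $b$, and even for generic $b$ the box produced by Fact \ref{Gen-Os in SW} is open in the valuation topology of $K$ (possibly of radius in $\mathbb{Z}$), so its $\pi$-image need not contain a ball; choosing a large ball inside it and controlling the rank of $(b,c)$ over its code is the same difficulty you started with. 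The paper's proof of (2) sidesteps the centred ball altogether: it first picks, via Lemma \ref{L:generic over some gamma in K/O, K}, a generic radius $\gamma_1$ with $\gamma<\gamma_1<\mathbb{Z}$ and $\gamma-\gamma_1<\mathbb{Z}$, \emph{completes} $b$ to a full-rank point $b'\in B_{\geq\gamma_1}(b)$ agreeing with $b$ on a maximal $\acl$-independent set of coordinates, applies part (1) to $b'$ to get a $B$-definable ball $V\ni b'$ with $\dpr(b'/B)=n$, and then thickens $V$ by a second generic radius $\gamma_2$ with $\gamma<\gamma_2<\gamma_1$; the thickened ball $V'$ contains $b$ (as $v(b-b')\geq\gamma_1>\gamma_2$), lies in $B_{\geq\gamma}(b)$, and the rank of $b$ over $B\gamma_2$ is preserved because $b$'s generic coordinates are coordinates of the fully generic $b'$; the case with the extra parameter $c$ is then handled by running this argument for the tuple $(b,c)$. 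So the missing ideas are precisely the dcl-radius Claim in (1) and the completion-plus-thickening device in (2); your homogeneity instinct is in the right spirit, but as written neither key step is established.
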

\begin{proof}
	In the proof, we repeatedly use -- without further mention -- Corollary \ref{C: dpr=acl-dim in K/O}, i.e.,  that $\dim_{\acl}=\dpr$.
	
	(1) By Lemma \ref{L:full dpr is interior in K/O}(2) $\tp(b/A)$ is open and thus contains a ball, $Z$, centred at $b$. Without loss of generality, $Z=W$. By Lemma  \ref{L:generic over some gamma in K/O, K}  we can find $\gamma$, $r(W)<\gamma<\mathbb{Z}$, such that $B_{\geq \gamma}(b)\sub W\vdash\tp(b/A)$ and $\dpr(b/A\gamma)=n$. We let $s=B_{\geq \gamma}(b)$, $r(s)=\gamma$ its radius and $[s]$ its code.

	\begin{claim}
	    For any parameter set $A$, any $b\in (K/\CO)^n$ and any $K/\CO$-ball $s$ containing $b$, if  $r(s)\in\dcl(A)$ and $\dpr(b/A)=n$ then $\dpr(b/A[s])=n$. 
	\end{claim}
	\begin{claimproof}
	    Fix $A,b, s$ as in the statement and let $\gamma=r(s)$. By Lemma \ref{L:full dpr is interior in K/O}(2), $q:=\tp(b/A)$ is open in $(K/\CO)^n$. It follows that $q$ contains a large ball $U$, that we may assume to be contained in $s$.  
		
	The proof is based on the case $n=1$ which we prove first:
	Since  $U$ is large, there are $b_m\in U\sub s$, $m\in \mathbb{N}$, such that $b_{m'}\neq b_m$ for $m'\neq m$. Obviously, $b_m\models q$ for all $m$. Thus, for every $b_m$, there is $\sigma_m\in \aut(\CK/A)$ mapping $b$ to $b_m$. Because $b_m\in s=B_{\geq \gamma}(b)$ and $\gamma\in \dcl(A)$ also $\sigma_m(B_{\geq \gamma}(b))=B_{\geq \gamma}(b_m)=B_{\geq \gamma}(b)$; so $\sigma_m(s)=s$. As a result, we get $b\notin \acl(A[s])$; i.e. $\dpr(b/A[s])=1$. 
	
	Assume now $b=(b_1,
	\ldots, b_n)$, and $s\sub (K/\CO)^n$ is a ball containing $b$ as above. We shall see that for every $i=1,\ldots, n$, $b_i$ is not in the algebraic closure of $A[s]b'$, where $b'=(b_1,\dots,\widehat{b_i},\dots,b_n)$. By Corollary \ref{C: dpr=acl-dim in K/O}, this implies $\dpr(b/A[s])=n$.
	For simplicity of notation, we treat the case $i=n$, so $b'=(b_1,\ldots, b_{n-1})$.
	
	We write $s'=B_{\geq \gamma}(b')$ and $s_n=B_{\geq \gamma}(b_n)$.
	By applying the case $n=1$ to $\tp(b_n/Ab')$, we get that $\dpr(b_n/Ab'[s_n])=1$.
	 Because $r(s')=r(s)\in \dcl(A)$ (by assumption) we get $[s']\in \dcl(Ab')$ and as $s=s'\times s$ also $[s]\in \dcl(Ab'[s_n])$. Therefore $\dpr(b_n/Ab'[s])=1$, as required.
	\end{claimproof}
	
	To conclude, recall that we have fixed some $\gamma<\mathbb{Z}$ such that $\dpr(b/A\gamma)=n$ and such that $B_{\geq \gamma}(b)\sub W$. Applying the claim to $b$ over $A\gamma$ we obtain the desired conclusion. 
	
	(2) To simplify notation, assume that $A=\0$. We start with the case where $m=0$, i.e. there is no $c$.   Let $b=(b_1,\dots,b_n)$ and $k=\dpr(b)$; without loss of generality, assume that $\dpr(b_1,\dots,b_k)=k$. Fix any $\gamma< \gamma'<\Zz$ such that $\gamma-\gamma'<\Zz$.  Choose $\gamma_1$ be as provided by Lemma \ref{L:generic over some gamma in K/O, K} applied to $\gamma'$, so $\gamma<\gamma_1$,  $\dpr(b/\gamma_1)=k$ and $\gamma-\gamma_1<\mathbb{Z}$. Let $U_1=B_{\geq \gamma_1}(b)$ and let $b'=(b_1,\dots,b_k,b_{k+1}',\dots,b_n')\in U_1$ be such that $\dpr(b'/\gamma_1)=n$.
	
	By (1), there exists $B\supseteq A$ and a $B$-definable ball $V\subseteq U_1$ with $V\ni b'$ and $\dpr(b'/B)=n$.  Since $\dpr(b'/B)=n$ and $\gamma-\gamma_1<\mathbb{Z}$ we get by Lemma \ref{L:generic over some gamma in K/O, K} that there exists $\gamma_2$,  $\gamma<\gamma_2<\gamma_1$ with $\dpr(b'/B\gamma_2)=n$. Consider the ball 
	\[V'=\{x\in (K/\CO)^n: v(x-y)\geq \gamma_2 \text{ for some $y\in V$}\}.\] 
	Observe that $b'\in V'$ as  $v(b-b')\geq \gamma_1>\gamma_2$. Our choice of $b', \gamma_2$ assures that $\dpr(b'/B\gamma_2)=n$ and that $\dpr(b/B\gamma_2)=k$. Since  $V'\sub B_{\geq \gamma}(b)$ it satisfies the requirements.
	
	Now, given $c\in (K/\CO)^m$ and a ball $U\ni b$, we apply the above result to the tuple $(b,c)$ and the open set $U\times (K/\CO)^m$ to obtain the desired conclusion.

	(3) By Lemma \ref{L:full dpr is interior in K/O}(1), $b$ is in the interior of $X$. Hence, there exists some $B_{\geq \gamma}(b)\subseteq X$ for $\gamma<\mathbb{Z}$. By (2), there exists $C\supseteq A$ and a $C$-definable ball $V\sub (K/\CO)^n$, $b\in V\subseteq B_{\geq \gamma}(b)$, with $\dpr(b,c/C)=\dpr(b,c/A)$.
\end{proof}

We conclude this section with a couple of applications of the tools developed thus far.  The first result is known for SW-uniformities (see \cite[\S 6.2]{HaHaPeVF}). The present proof is similar, but some extra care is needed, since the ball topology on $K/\CO$ is not Hausdorff.

\begin{lemma}\label{L:asi diagaram K/O}
	\begin{enumerate}
		\item Let $\{X_t:t\in T\}$ be a definable family of finite subsets of $(K/\CO)^n$, such that for all $b_1\neq b_2\in X_t$ we have $v(b_1-b_2)\in\mathbb{Z}$ and every $b\in (K/\CO)^n$ belongs to  finitely many $X_t$. Then there is a finite-to-one function from $T$ into$(K/\CO)^n$.
		
		\item Let $f:X\to T$ be a definable finite-to-one surjective map, and assume that $X$ is almost strongly internal to $K/\CO$. Then there exists a definable subset $T_1\subseteq T$ with $\dpr(T_1)=\dpr(T)$, such that 
 $T_1$ is almost strongly internal to $K/\CO$.
	\end{enumerate}
\end{lemma}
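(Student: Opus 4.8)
The plan is to deduce (2) from (1), so I would prove (1) first. The key point for (1) is that each $X_t$, having all pairwise valuative distances in $\mathbb{Z}$, is confined to a ball of \emph{standard} radius, and such balls are finite because $\bk$ is finite. First I would observe that $\{v(b_1-b_2):t\in T,\ b_1\neq b_2\in X_t\}$ is a definable subset of $\Gamma$ contained in the standard part $\mathbb{Z}$; since $\Gamma$ is a $\mathbb{Z}$-group, any definable subset of it lying inside $\mathbb{Z}$ is finite, so there is a fixed $N\in\Nn$ with $v(b_1-b_2)\geq -N$ for all the relevant $b_1,b_2$. Let $\theta\colon(K/\CO)^n\to(K/\CO)^n$ be multiplication by $p^N$; since $v(p)\geq 1$ its kernel contains $B_{\geq -N}(0)^n$, and it is finite by Fact~\ref{F:F is dense in Z-radius balls}(3). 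For $t$ with $X_t\neq\emptyset$ and any $b_0\in X_t$, every element of $X_t$ differs from $b_0$ by an element of $\ker\theta$, so $h(t):=\theta(b_0)$ is independent of the choice of $b_0$; this gives a definable function $T\to(K/\CO)^n$ which is finite-to-one, since $h(t)=y$ forces $X_t\subseteq\theta^{-1}(y)$ (a coset of $\ker\theta$, hence finite), whence $h^{-1}(y)\subseteq\bigcup_{b\in\theta^{-1}(y)}\{t:b\in X_t\}$ is a finite union of finite sets.

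For (2), assume everything is $A$-definable, with $\sigma\colon X\to(K/\CO)^k$ a finite-to-one witness. I would form the finite-to-finite correspondence $C=\{(t,b)\in T\times(K/\CO)^k: b\in\sigma(f^{-1}(t))\}$ and note that $\dpr(S)=\dpr(C)=\dpr(T)$ for $S=\pi_2(C)$, since both projections are finite-to-one and onto. Fix $y_0\in S$ with $\dpr(y_0/A)=\dpr(T)$ and some $t_0$ with $(t_0,y_0)\in C$; because $C^{y_0}$ and $\sigma(f^{-1}(t_0))$ are finite, $t_0$ and $y_0$ are mutually algebraic over $A$, so $\dpr(t_0/A)=\dpr(T)$. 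The set $F=\{y: C^y\cap C^{y_0}\neq\emptyset\}=\bigcup_{t\in C^{y_0}}\sigma(f^{-1}(t))$ is finite. Here the non-Hausdorffness of the ball topology prevents separating $y_0$ from all of $F\setminus\{y_0\}$ by a ball, so instead I split $F\setminus\{y_0\}$ into $F_{\mathrm{far}}=\{y:v(y-y_0)<\mathbb{Z}\}$ and $F_{\mathrm{close}}=\{y\neq y_0:v(y-y_0)\in\mathbb{Z}\}$, pick $\gamma<\mathbb{Z}$ strictly above $v(y-y_0)$ for every $y\in F_{\mathrm{far}}$, and fix a standard $N_1$ with $F_{\mathrm{close}}\subseteq B_{\geq -N_1}(y_0)$. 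Applying Proposition~\ref{P:sgenos for K/O}(2) to $y_0$ and this $\gamma$ gives $B\supseteq A$ and a $B$-definable ball $V\subseteq B_{\geq\gamma}(y_0)$ with $y_0\in V$ and $\dpr(y_0/B)=\dpr(y_0/A)$; then $\dpr(t_0/B)=\dpr(T)$ as before, and $V\cap F\subseteq\{y_0\}\cup F_{\mathrm{close}}\subseteq B_{\geq -N_1}(y_0)$.

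Now I would set $T_1=\pi_1(C\cap(T\times V))=\{t:\sigma(f^{-1}(t))\cap V\neq\emptyset\}$, a $B$-definable set containing $t_0$, hence $\dpr(T_1)=\dpr(T)$. For every $t\in C^{y_0}$ one has $\sigma(f^{-1}(t))\subseteq F$, so $\sigma(f^{-1}(t))\cap V\subseteq\{y_0\}\cup F_{\mathrm{close}}$ has all pairwise distances $\geq -N_1$; in particular $t_0$ satisfies this, so the $B$-definable set $T'=\{t\in T_1:\forall y_1\neq y_2\in\sigma(f^{-1}(t))\cap V,\ v(y_1-y_2)\geq -N_1\}$ contains $t_0$ and thus $\dpr(T')=\dpr(T)$. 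Finally the family $\{\sigma(f^{-1}(t))\cap V: t\in T'\}$ consists of nonempty finite subsets of $(K/\CO)^k$ with pairwise distances in $\mathbb{Z}$, each point lying in only finitely many members (since $b\in\sigma(f^{-1}(t))$ implies $t\in f(\sigma^{-1}(b))$, which is finite); by part (1) there is a definable finite-to-one map $T'\to(K/\CO)^k$, so $T'$ is almost strongly internal to $K/\CO$ with $\dpr(T')=\dpr(T)$, which is the required $T_1$.

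The main obstacle, and what makes this harder than the SW-uniform analogue (Lemma~\ref{L: asi from surjective}), is precisely the failure of Hausdorffness: we cannot shrink $V$ to turn $C\cap(T\times V)$ into the graph of a finite-to-one function, since $F_{\mathrm{close}}$ is inseparable from $y_0$ in the ball topology. The resolution is that $F_{\mathrm{close}}$, being finite, still lies in a ball of \emph{standard} radius, so after zooming the correspondence becomes ``clustered within standard-radius balls'', which is exactly the hypothesis of part (1); and (1) itself is available because standard-radius balls in $K/\CO$ are finite (equivalently, $K/\CO$ has finite torsion, $\bk$ being finite) and because the value group being a $\mathbb{Z}$-group is what forces the radius bound $N$ to be uniform over the family.
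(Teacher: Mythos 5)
Your proof is correct and follows essentially the same route as the paper: part (1) rests on the fact that clusters of standard valuative radius in $(K/\CO)^n$ are finite (you collapse them by multiplying by $p^N$, using divisibility and Fact \ref{F:F is dense in Z-radius balls}(3), while the paper quotients $K^n$ by a ball of standard radius), and part (2) reduces to part (1) by applying Proposition \ref{P:sgenos for K/O}(2) to produce a generic ball around a generic point of the image of the correspondence while preserving the dp-rank of the associated generic $t_0$. The only real difference is organizational: the paper runs an induction on the maximal fiber size, separating one nonstandard-distance pair at a time, whereas you isolate the finite set $F$ of points correlated with $y_0$, exclude all of its far points with a single generic ball, and impose the standard-distance condition definably on $T'$ in one step, which is a valid and slightly more direct shortcut.
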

\begin{proof}  
	(1) By saturation, 
	\[
	Z:=\{v(b_1-b_2): b_1,b_2\in X_t, t\in T\}
	\]
	is finite. Let $m_0=\min\{Z\}\in \mathbb Z$; then for every $t\in T$ the set $\bigcup X_t$  is contained in a single ball of radius $m_0$.
	
	Let $U\subseteq K^n$ be a closed ball of valuational radius $m_0$ centred at $0$. Since $(K,v)$ is $p$-adically closed,  for each $a\in K^n$, $a+U$ contains only finitely many balls of radius $0$. Hence, the map  sending $t$ to the unique coset of $U$ containing $X_t$ is finite-to-one, so we have  constructed a finite-to-one definable map from $T$ into $K^n/U$. Since $(K/\CO)^n$ and $K^n/U$ are in definable bijection, we are done.
	
	(2) Let $g:X\to (K/\CO)^n$ be a finite-to-one definable map. For any $t\in T$ let $X_t=g(f^{-1}(t))$. Then $\{X_t:t\in T\}$ is a definable family of finite subsets of $(K/\CO)^n$ such that each $b\in (K/\CO)^n$ belongs to only finitely many $X_t$.
	
	By saturation, there is a uniform bound $m$ on $|X_t|$ for $t\in T$. We proceed by induction on $m$. If $m=1$, we are done; so assume otherwise. For simplicity, assume everything is defined over $\emptyset$.
	
	Let $t_0\in T$ be with $\dpr(t_0)=\dpr(T)$ and assume first that there are distinct $b, b'$ in $X_t$ with $v(b-b')<\mathbb{Z}$; so there exists a ball $U\ni b$ containing $b$ and not $b'$. By Proposition \ref{P:sgenos for K/O}(2), there exists a ball $V\subseteq U$ containing $b$ defined over parameters $B$ satisfying that $\dpr(b/B)=\dpr(b)$. As $b$ and $t_0$ are inter-algebraic over $\emptyset$, we also have $\dpr(t_0/B)=\dpr(t_0)=\dpr(T)$.   As $b'\notin V$, clearly $|X_{t_0}\cap V|<|X_{t_0}|$.

	The set $T_1=\{t\in T: |X_t\cap V|<|X_t|\}$ is defined over $B$ and contains $t_0$, thus $\dpr(T_1)=\dpr(T)$. We many now replace each $X_t$ with $X_t\cap V$ and proceed by induction. This completes the proof in the case where $v(b-b')<\mathbb{Z}$ for some  $b,b'\in X_{t_0}$. 
	
	So we assume now that $v(b-b')\in \mathbb Z$ for all distinct $b, b'$ in $X_{t_0}$. By saturation there exists $m\in \mathbb Z$ such that 
	$v(b-b')\geq m$ for all distinct $b, b'$ in $X_{t_0}$.
	Thus, the set \[T_1=\{t\in T: \mbox{for all distinct } b, b'\in X_t,\,\, v(b-b')\geq m\}\] contains $t_0$ so $\dpr(T_1)=\dpr(T)$, and we conclude by (1).
\end{proof}

We end this section with a small observation that will be used later on.

\begin{lemma}\label{L:Tor of ball}
  \begin{enumerate}
      \item For any definable subgroup $H\subseteq (K/\CO)^n$ of full dp-rank, $\mathrm{Tor}(H)=(\mathbb{F}/\mathcal{O}_{\mathbb{F}})^n$.
      
      \item Every definable subgroup $H\subseteq (K/\CO)^r$ has non-trivial torsion.
  \end{enumerate}
        
\end{lemma}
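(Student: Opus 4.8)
For (1), let $H \subseteq (K/\CO)^n$ be a definable subgroup of full dp-rank $n$. One inclusion is automatic: by Fact \ref{F:F is dense in Z-radius balls}(3) every torsion element of $(K/\CO)^n$ lies in $(\bF/\CO_\bF)^n$, so $\mathrm{Tor}(H) \subseteq (\bF/\CO_\bF)^n$ trivially. For the reverse inclusion, it suffices to show that $(\bF/\CO_\bF)^n \subseteq H$, since $(\bF/\CO_\bF)^n$ is entirely torsion (it is isomorphic to a sum of Pr\"ufer groups). The plan is to first treat $n=1$: take $b \in H$ with $\dpr(b) = 1$ (which exists since $H$ has full dp-rank), so $b \notin \acl(\emptyset)$ and in particular $v(b) < \mathbb{Z}$. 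Then $H$ contains a large ball around some point; since $H$ is a subgroup, by translating I get a large ball around $0$, i.e.\ $H \supseteq B_{\geq \gamma}(0)/\CO$ for some $\gamma < \mathbb{Z}$. Every element of $\bF/\CO_\bF$ has representative of valuation in $\mathbb{Z}$, hence lies in $B_{\geq \gamma}(0)/\CO \subseteq H$. For general $n$, I would argue coordinatewise, using that full dp-rank of $H$ together with Lemma \ref{L:full dpr is interior in K/O} forces $H$ to contain a ball in $(K/\CO)^n$, hence (after translating to $0$) a product of $n$ large balls around $0$, which contains $(\bF/\CO_\bF)^n$.

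For (2), let $H \subseteq (K/\CO)^r$ be any definable infinite subgroup (if $H$ is finite it is itself torsion and we are done). Let $k = \dpr(H) \geq 1$. The idea is to reduce to part (1) by projecting to a suitable set of $k$ coordinates. Choose $b = (b_1,\dots,b_r) \in H$ with $\dpr(b) = k$; after permuting coordinates assume $\dpr(b_1,\dots,b_k) = k$, so by Corollary \ref{C: dpr=acl-dim in K/O} the tuple $(b_1,\dots,b_k)$ is $\acl$-independent. Let $\pi \colon (K/\CO)^r \to (K/\CO)^k$ be the projection onto the first $k$ coordinates; then $\pi(H)$ is a definable subgroup of $(K/\CO)^k$ and $\pi(b) = (b_1,\dots,b_k)$ witnesses $\dpr(\pi(H)) = k$, i.e.\ $\pi(H)$ has full dp-rank in $(K/\CO)^k$. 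By part (1), $\mathrm{Tor}(\pi(H)) = (\bF/\CO_\bF)^k \neq 0$, so $\pi(H)$ has non-trivial torsion. Now I need to lift: pick $0 \neq t \in \pi(H)$ with $Nt = 0$ for some $N$, and pick $h \in H$ with $\pi(h) = t$. Then $Nh \in \ker(\pi) \cap H$. If $Nh = 0$ we are done (it is a nonzero torsion element since $\pi(Nh) \ne$... wait — need $Nh \ne 0$). The cleaner route: the kernel $\ker(\pi|_H) = H \cap (\{0\}^k \times (K/\CO)^{r-k})$ is a definable subgroup, and $H/\ker(\pi|_H) \cong \pi(H)$ has non-trivial torsion; so either $\ker(\pi|_H)$ is infinite — then by downward induction on $r$ (the base case $\ker$ finite nontrivial being immediate, as a finite nontrivial group has torsion) it has nontrivial torsion, hence so does $H$ — or $\ker(\pi|_H)$ is finite. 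If $\ker(\pi|_H)$ is finite and nontrivial we are done. If $\ker(\pi|_H) = 0$, then $\pi|_H$ is an isomorphism onto $\pi(H)$, which has nontrivial torsion, so $H$ does too.

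The main obstacle I anticipate is the $n=1$ (equivalently, the full-dp-rank) case of part (1): pinning down that a definable subgroup of $(K/\CO)^n$ of full dp-rank genuinely \emph{contains} a ball, not merely that some coset does, and that this ball can be taken centred at $0$. This is where Lemma \ref{L:full dpr is interior in K/O} does the work — a generic point of $H$ is interior in $H$, so $H$ contains a ball $W$; pick $b \in W \cap H$, and then $W - b \subseteq H$ is a ball centred at $0$. After that, everything is bookkeeping about $\bF/\CO_\bF$ via Fact \ref{F:F is dense in Z-radius balls}, and the induction in part (2) is routine.
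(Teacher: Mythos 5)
Your proof is correct and, for part (1), is exactly the paper's argument: torsion lies in $(\mathbb{F}/\mathcal{O}_{\mathbb{F}})^n$ by Fact \ref{F:F is dense in Z-radius balls}(3), while a generic point of $H$ is interior by Lemma \ref{L:full dpr is interior in K/O}, so $H$ contains a large ball which, after translating by an element of $H$, is centred at $0$ and therefore contains $(\mathbb{F}/\mathcal{O}_{\mathbb{F}})^n$. For part (2) you make the same reduction to (1) via a coordinate projection onto $\dpr(H)$-many coordinates, but you handle the kernel differently: the paper chooses the projection to be finite-to-one on $H$ (justified by $\dpr=\dim_{\acl}$, Corollary \ref{C: dpr=acl-dim in K/O}), so the kernel is finite and torsion pulls back immediately, whereas you only require the image to have full dp-rank and then dispose of a possibly infinite kernel by induction on $r$ (infinite kernel: a definable subgroup of $(K/\CO)^{r-k}$ with $k\geq 1$, so torsion by induction; finite non-trivial kernel: done; trivial kernel: $H\cong \pi(H)$). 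Your variant is marginally more self-contained, since it avoids having to argue that a finite-to-one coordinate projection exists on the subgroup $H$ itself, a point the paper passes over in one line; beyond that, the two arguments coincide.
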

\begin{proof}
    (1) The fact that every torsion element of $H$ is included in $(\mathbb{F}/\mathcal{O}_{\mathbb{F}})^n$ is Fact \ref{F:F is dense in Z-radius balls}(3). For the other direction, assume that $H$ is $A$-definable and let $b\in H$ with $\dpr(b/A)=1$. By Lemma \ref{L:full dpr is interior in K/O}, there exists some $\gamma<\mathbb{Z}$ with $B_{>\gamma}(b)\subseteq H$; but then $B_{>\gamma}(0)=B_{>\gamma}(b)-b\subseteq H$ as well. Now obviously, $\mathbb{F}/\mathcal{O}_{\mathbb{F}}\subseteq B_{>\gamma}(0)\subseteq H$.
    
    (2) Let $H\subseteq (K/\CO)^r$ be a definable subgroup, which we may assume to be infinite, and let $n=\dpr(H)$. Since  by Lemma \ref{C: dpr=acl-dim in K/O} the dp-rank is given by the $\acl$-dimension, there exists a finite-to-one coordinate projection $\tau:H\to (K/\CO)^n$, with $\dpr(\tau(H))=n$. By (1) above, $\tau(H)$ has non-trivial torsion, and since $\ker (\tau)$ is a finite group $H$ has non-trivial torsion as well.
\end{proof}

\subsection{Definable functions in $K/\CO$}\label{ss: functions in K/O}

We show that definable functions in $K/\CO$ are locally affine at generic points, with respect to the additive structure. With the tools developed in the previous subsection, the proof is quite similar to the one in \cite[\S 6.2]{HaHaPeVF}.  All results remain true in $\qp^{an}:=(\Qq_p,\CL_{an})$, the expansion of $\Qq_p$ by all convergent power series  $f:\CO^n\to K$, since those are $P$-minimal, 1-h-minimal with definable Skolem functions \cite{vdDriesHasMac} and \cite[Theorem 3.6]{DenvdDr}, respectively. 

\begin{definition}
	A partial type $P\sub K^n$ over a set of parameters $A$ is \emph{large} if $\dpr(\pi_*P)=n$, where $\pi: K\to K/\CO$ is the quotient map, and $\pi_*P$ the push forward of $P$.
\end{definition}

By Lemma \ref{L:full dpr is interior in K/O} (and Corollary \ref{C: dpr=acl-dim in K/O}), a partial type $P$ is large if and only if $(\pi_*P)(\CK)$ has non-empty interior, if and only if $P$ contains a large ball. Note that to a ball in $K$ we can associate two notions of largeness: as a partial type and as a ball. The above shows that the two notions coincide, i.e., that a ball $B$ is large as a partial type  if and only if $r(B)<\mathbb{Z}$. 

For the following, recall that by \cite[Example 6.2]{HaHaPeVF} $K/\CO$ is opaque,\footnote{ Opacity is needed only in order to invoke \cite[Lemma 6.4]{HaHaPeVF}, so we omit the definition. See \cite[\S 6.1]{HaHaPeVF} for details.} thus by \cite[Lemma 6.4]{HaHaPeVF} for every complete type $q$ concentrated om $(K/\CO)^n$ with $\dpr(q)=n$ there exists a unique complete type $p$ concentrated on $K^n$ with $\pi_*p=q$; moreover $\dpr(p)=n$ as well.

\begin{lemma}\label{L:basic prop large ball}
	Let $\CK=(K,v)$ be a $p$-adically closed field and $A$ an arbitrary set of parameters.
	\begin{enumerate}
		\item A partial type $P\vdash K^n$ over $A$ is large if and only if there is a  completion $p$ of $P$ (over $A$) that is large.
		\item If a partial type $P\vdash K^n$ is large, then $\dpr(P)=n$.
		\item If $\tp(a_1,\dots,a_n/A)$ is large, then so is $\tp(a_1/Aa_2,\dots,a_n)$.
		\item Let $B\subseteq K^n$ be a large open ball. Then $B+\CO^n= B$.
	\end{enumerate}
\end{lemma}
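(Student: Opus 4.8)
\textbf{Proof plan for Lemma \ref{L:basic prop large ball}.}

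The plan is to prove the four parts essentially in order, relying on the characterisation of largeness established just above: a partial type $P \vdash K^n$ is large iff its push-forward $\pi_*P$ has $\dpr = n$ iff $(\pi_*P)(\CK)$ has non-empty interior (in the ball topology) iff $P$ contains a large ball. I will also use Corollary \ref{C: dpr=acl-dim in K/O} ($\dim_{\acl} = \dpr$ in $K/\CO$) and Lemma \ref{L:full dpr is interior in K/O} (full dp-rank tuples are interior points; their types are open).

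For (1): if $P$ has a large completion $p$, then $\pi_* p$ has $\dpr = n$ and refines $\pi_* P$, so $\dpr(\pi_*P) = n$ and $P$ is large. Conversely, if $P$ is large then $\dpr(\pi_*P) = n$, so by definition of dp-rank there is a completion $q$ of $\pi_*P$ with $\dpr(q) = n$; using the opacity of $K/\CO$ and \cite[Lemma 6.4]{HaHaPeVF} quoted above, lift $q$ to the unique complete type $p$ on $K^n$ with $\pi_* p = q$, which is then a large completion of $P$ (one checks $p$ extends $P$ since $\pi_* p = q$ extends $\pi_* P$ and $p$ is the unique lift). For (2): largeness of $P$ gives $\dpr(\pi_*P) = n$, and subadditivity of dp-rank under the definable map $\pi$ forces $\dpr(P) \ge \dpr(\pi_*P) = n$; since $P \vdash K^n$ we have $\dpr(P) \le n$, so equality. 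For (3): take a realisation $(a_1,\dots,a_n)$ of the large type. Largeness means $\dim_{\acl}(\pi(a_1),\dots,\pi(a_n)/A) = n$, i.e. the $\pi(a_i)$ are $\acl$-independent over $A$; then $\pi(a_1) \notin \acl(A, a_2, \dots, a_n) \supseteq \acl(A, \pi(a_2),\dots,\pi(a_n))$, so $\dim_{\acl}(\pi(a_1)/Aa_2\cdots a_n) = 1$, i.e. $\dpr(\pi_* \tp(a_1/Aa_2\cdots a_n)) = 1$, which says exactly that $\tp(a_1/Aa_2\cdots a_n)$ is large. For (4): a large open ball $B \subseteq K^n$ has all coordinate radii $< \mathbb{Z}$ (this is the content of the remark that the two notions of largeness for balls coincide); since $\CO^n$ consists of tuples of valuation $\ge 0 > r(B)$, adding any element of $\CO^n$ to a point of $B$ moves it within the same open ball of radius $r(B)$, so $B + \CO^n = B$; the reverse inclusion $B \subseteq B + \CO^n$ is immediate since $0 \in \CO^n$.

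I do not expect any serious obstacle here — each part is a short deduction from the machinery already built in Section \ref{S: local K/O}. The only point requiring mild care is part (1), where one must invoke opacity and \cite[Lemma 6.4]{HaHaPeVF} to get the \emph{existence} of a type-theoretic lift realising the dp-rank; everything else is bookkeeping with $\dim_{\acl}$, subadditivity, and the ball topology. Part (4) is purely a valuation-theoretic computation once one recalls what ``large ball'' means.
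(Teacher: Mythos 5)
Your parts (1), (2) and (4) are fine and part (1) follows the paper's own route exactly (the paper only proves (1) explicitly, deferring (2)--(4) to \cite[Lemma 6.9]{HaHaPeVF}): lift a full-rank completion $q$ of $\pi_*P$ via opacity and \cite[Lemma 6.4]{HaHaPeVF}, and note that dp-rank does not increase under definable maps. The terse parenthetical in (1) that the unique lift $p$ really extends $P$ does need the small compactness argument (realize $q(y)\cup P(x)\cup\{\pi(x)=y\}$, then invoke uniqueness), but that is exactly the gloss the paper itself makes, so no complaint there. In (2) the fact you use is monotonicity of dp-rank under definable maps rather than subadditivity, but the content is correct.

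The one genuine flaw is in part (3). From largeness you know that $\pi(a_1),\dots,\pi(a_n)$ are $\acl$-independent over $A$, i.e.\ $\pi(a_1)\notin\acl(A,\pi(a_2),\dots,\pi(a_n))$, and you then conclude $\pi(a_1)\notin\acl(A,a_2,\dots,a_n)$ ``since $\acl(A,a_2,\dots,a_n)\supseteq\acl(A,\pi(a_2),\dots,\pi(a_n))$''. That containment points the wrong way: non-membership in the smaller algebraic closure says nothing about the larger one, so as written the step is a non sequitur. The statement is nonetheless true, and the fix is a short rank computation that uses dp-minimality of $K$ rather than pure $\acl$ bookkeeping: since $(\pi(a_1),\dots,\pi(a_n))$ is definable from $(\pi(a_1),a_2,\dots,a_n)$, we get
\[
n=\dpr(\pi(a_1),\dots,\pi(a_n)/A)\le\dpr(\pi(a_1),a_2,\dots,a_n/A)\le\dpr(\pi(a_1)/A,a_2,\dots,a_n)+\dpr(a_2,\dots,a_n/A),
\]
and $\dpr(a_2,\dots,a_n/A)\le n-1$ because each $a_i\in K$ and $K$ is dp-minimal; hence $\dpr(\pi(a_1)/A,a_2,\dots,a_n)\ge 1$, i.e.\ $\pi(a_1)\notin\acl(A,a_2,\dots,a_n)$, which is exactly largeness of $\tp(a_1/Aa_2\cdots a_n)$. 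With that substitution your proof of the lemma is complete.
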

\begin{proof}
	(1) We only need to show that if $P$ is large, it has a large completion. Because $\dpr(\pi_*P)=n$ it has a completion $q$ of full dp-rank, and since $K/\CO$ is opaque, by \cite[Lemma 6.4]{HaHaPeVF}  there is a unique complete type $p$ such that $\pi_{*}(p)=q$. Since dp-rank can only decrease under definable maps, necessarily $\dpr(p)=n$. 
	
	The rest is as in \cite[Lemma 6.9]{HaHaPeVF}.
\end{proof}

We need some results from the theory of 1-h-minimal fields in mixed characteristic, as developed in \cite{hensel-minII}. Since, as in \cite{HaHaPeVF}, we apply 1-h-minimality as a black box, we will not dwell on it beyond the definition already given (Definitions \ref{D: 1-h-min.} and Definition \ref{D: m-next to}). It suffices for us that  $p$-adically closed fields, as well as the above-mentioned sub analytic expansions, are P-minimal and 1-h-minimal (\cite[Lemma 6.2.7]{hensel-min}).

The main result concerning 1-h-minimality (Definition \ref{D: 1-h-min.}), that we need is: 
\begin{fact}\label{F: VJP}\cite[Corollary 3.1.3]{hensel-minII}
	Let $T$ be a $1$-h-minimal theory, $K\models T$ and $f: K\to K$ an $A$-definable function. Then there exists an $A$-definable finite set $C$, and $m\in \Nn$ such that for any ball $B$ $m$-next to $C$, $f$ is differentiable on $B$ and $v(f')$ is constant on $B$. Moreover: 
	\begin{enumerate}
		\item For all $x,x'\in B$, 
		\[
		v(f(x)-f(x'))=v(f'(x))+v(x-x')
		\]
		\item If $f'\neq 0$ on $B$ then for any open ball $B'\sub B$ the image  $f(B')$ is an open ball of radius $v(f')+r(B')$ where $r(B')$ is the valuative radius of $B'$. 
	\end{enumerate}
\end{fact}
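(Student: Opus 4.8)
My plan for establishing the Fact is to follow \cite[\S 2--3]{hensel-minII}: the statement is imported rather than reproved here, but the derivation from Definition \ref{D: 1-h-min.} runs in three stages. Stage (a): unwind the definition to get \emph{approximate} monomiality of $f$ on balls next to a finite set. Stage (b): bootstrap this to \emph{exact} monomiality. Stage (c): extract from exact monomiality the differentiability of $f$, the constancy of $v(f')$, the valuative identity in clause~(1), and the image-of-ball statement in clause~(2). The bootstrap in stage (b) is where the real work lies.

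For stage (a) I would apply Definition \ref{D: 1-h-min.}(1) directly to $f$ over $A$, getting a finite $A$-definable $C_0$ and $m_0\in\Nn$ such that for every ball $B$ that is $m_0$-next to $C_0$ there is $\gamma_B\in\Gamma$ with
\[
 \gamma_B-v(m_0)\ \le\ v(f(x)-f(y))-v(x-y)\ \le\ \gamma_B+v(m_0)\qquad(x,y\in B).
\]
When $\ch(\bk)=0$ one has $v(m_0)=0$, so the valuative slope of $f$ is already \emph{exactly} $\gamma_B$ on each such $B$ and stage (b) is vacuous; this is the situation in the $V$-minimal and power bounded $T$-convex settings. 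In the $p$-adically closed (mixed characteristic) case the slack $v(m_0)$ is genuine --- the index $m_0$ must be large enough to ``resolve'' $f$, and there is no reason this can be arranged with $m_0$ prime to $p$ --- so stage (b) is needed, and it is the main obstacle.

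Stage (b) rests on the self-improving nature of h-minimality proved in \cite{hensel-minII}: $1$-h-minimality implies $\ell$-h-minimality for every $\ell$, hence $f$ can be prepared relative to $\RV_\ell$ for arbitrarily large $\ell$, not just relative to $\RV_{m_0}$. I would carry this out not only for $f$ but for the auxiliary definable families attached to it --- the difference quotients $(x,a)\mapsto(f(x)-f(a))/(x-a)$ and the ``affine residues'' $(x,a,\lambda)\mapsto f(x)-f(a)-\lambda(x-a)$ --- and then assemble the preparations: the finite sets and indices coming from larger $\ell$ refine those coming from smaller $\ell$, while the admissible valuative slack on a prepared ball shrinks as $\ell$ grows, and since that slack lives in the $\Zz$-group $\Gamma$ it must eventually vanish. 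The outcome is a single finite $A$-definable set $C$ and $m\in\Nn$ such that for every ball $B$ that is $m$-next to $C$ there is $\delta_B\in\Gamma$ with $v(f(x)-f(y))=\delta_B+v(x-y)$ \emph{exactly} for all $x,y\in B$; here one uses that $\Gamma$ is stably embedded and a pure $\Zz$-group, so a $\Gamma$-valued definable function cannot oscillate inside a ball once it is squeezed between two values a bounded distance apart on cofinally small sub-balls. Reproving this self-improvement and the attendant limiting argument is the substantial part, and it is exactly \cite[\S 2--3]{hensel-minII}.

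For stage (c), fix a ball $B$ that is $m$-next to $C$, with slope $\delta_B$. \textbf{Differentiability and clause (1).} Fix $a\in B$ and set $q_a(x):=(f(x)-f(a))/(x-a)$, an $Aa$-definable function with $v(q_a(x))=\delta_B$ for $x$ in a punctured neighbourhood of $a$, by the exact monomiality of $f$. Preparing $q_a$ itself through stages (a)--(b) shows $q_a$ is exactly monomial on each ball of a nested sequence $B_1\supseteq B_2\supseteq\cdots$ of balls inside $B$ with $a\notin B_k$ and $r(B_k)\to\infty$ converging to $a$; then each image $q_a(B_k)$ is a ball, these are nested, and their radii tend to $\infty$ (each is at least $r(B_k)$ up to a bounded correction), so $\bigcap_k q_a(B_k)$ is a single point, which is by definition $f'(a)$. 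Hence $f$ is differentiable at $a$ with $v(f'(a))=\delta_B$; as $a$ ranged over $B$, $v(f')$ is constant, equal to $\delta_B$, on $B$, and clause (1) is precisely the exact monomiality from stage (b). \textbf{Clause (2).} Assume $f'\neq 0$ on $B$ and let $B'=B_{>r}(a)\subseteq B$ be an open sub-ball. By clause (1), $v(f(x)-f(a))=\delta_B+v(x-a)>\delta_B+r$ for every $x\in B'$, so $f(B')\subseteq B_{>\delta_B+r}(f(a))$, an open ball of radius $\delta_B+r=v(f')+r(B')$. For the reverse inclusion, apply stages (a)--(b) to $g(x):=f(x)-f(a)-f'(a)(x-a)$ to get $v(g(x))>\delta_B+r$ on $B'$; so on $B'$ the function $f$ agrees, up to this higher-order error, with the affine bijection $x\mapsto f(a)+f'(a)(x-a)$ of $B'$ onto $B_{>\delta_B+r}(f(a))$, and a Newton/Hensel approximation upgrades this to surjectivity of $f$ itself onto that open ball. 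Combining the three stages yields all the assertions of the Fact.
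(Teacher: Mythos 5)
The paper does not prove this statement at all: it is imported verbatim as \cite[Corollary 3.1.3]{hensel-minII} and used, as the authors say explicitly, as a black box. So there is no internal argument to compare yours against; what you have written is an attempted reconstruction of the external proof, and as such it should be judged on its own terms.

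Judged that way, your stage (b) has a genuine gap. First, the input you invoke --- that $1$-h-minimality yields preparation relative to $\RV_\ell$ for arbitrarily large $\ell$ --- is not something stated in this paper, and in any case it does not deliver what you need: re-preparing $f$ (or the difference quotients) at a larger $\ell$ produces a \emph{new} finite set and a \emph{new} integer $m_\ell$, and there is no mechanism forcing $v(m_\ell)$ to decrease, so no ``slack being squeezed to zero''. Your appeal to $\Gamma$ being a stably embedded pure $\Zz$-group is also illegitimate here: the Fact is stated for an arbitrary $1$-h-minimal theory, whereas that purity is specific to the $p$-adically closed case; and even there, being bounded between two values differing by $v(m)$ on cofinally many sub-balls does not force a definable $\Gamma$-valued function to be constant. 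The actual derivation in \cite{hensel-minII} does not proceed by a limit over $\ell$ at all: it goes through equicharacteristic-zero coarsenings of the valuation (where the $\pm v(m)$ slack genuinely vanishes) together with generic differentiability and the Taylor approximation theorem (their Theorem 3.1.2, which this paper also quotes separately), and then descends to the original valuation. Secondary, but worth noting: in stage (c) your definition of $f'(a)$ as $\bigcap_k q_a(B_k)$ for a countable nested sequence of balls with radii ``tending to $\infty$'' is not meaningful in a saturated model, where $\Gamma$ has uncountable cofinality, so a countable chain cannot isolate a point; differentiability has to be taken from the cited generic-differentiability result rather than manufactured this way. Clause (2)'s surjectivity via a Hensel/Newton iteration is plausible but is likewise part of what the citation supplies, not something your sketch establishes.
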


To apply this result, we need the following observation: 

\begin{lemma}\label{L: m-next}
	Let $p\in S_n(A)$ be a large type in $K^n$,  $C\subseteq K^n$ a finite $A$-definable set. Then for any fixed $m\in \Nn$ and any $b\models p$ there exists a large ball $B$, $b\in B\subseteq p(\CK)$, and a large ball $B'\supseteq B$ that is $m$-next to $C$. 
\end{lemma}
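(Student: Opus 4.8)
The plan is to trap $b$ between two nested large balls: an ``inner'' one, $B$, contained in $p(\CK)$, and an ``outer'' one, $B'$, which is $m$-next to $C$; intersecting them produces the required pair $B\subseteq B'$. For the inner ball, note that since $p$ is large, $\pi_*p=\tp(\pi(b)/A)$ has dp-rank $n$, i.e.\ $\dim_{\acl}(\pi(b)/A)=n$ by Corollary~\ref{C: dpr=acl-dim in K/O}; hence by Lemma~\ref{L:full dpr is interior in K/O}(2) the set $(\pi_*p)(\CK)$ is open in $(K/\CO)^n$, so (balls being a basis) we may fix a ball $W$ with $\pi(b)\in W\subseteq(\pi_*p)(\CK)$. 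Every $b'\in\pi^{-1}(W)$ satisfies $\pi(b')\models\pi_*p$, so $\pi_*\tp(b'/A)=\pi_*p$; since $K/\CO$ is opaque (\cite[Example~6.2]{HaHaPeVF}) and $\dpr(p)=n=\dpr(\pi_*p)$ by Lemma~\ref{L:basic prop large ball}(2), \cite[Lemma~6.4]{HaHaPeVF} says that $p$ is the \emph{unique} complete type over $A$ lifting $\pi_*p$, so $\tp(b'/A)=p$. Thus $\pi^{-1}(W)\subseteq p(\CK)$, and by Lemma~\ref{L:basic prop large ball}(4) $\pi^{-1}(W)$ is itself a large ball in $K^n$; set $B_0:=\pi^{-1}(W)$, a large ball with $b\in B_0\subseteq p(\CK)$.

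The key computation is that $v(b-c)<\mathbb{Z}$ for every $c\in C$, where $v$ on $K^n$ is the minimum over coordinates. Indeed $\dim_{\acl}(\pi(b)/A)=n\ge 1$ while $C\subseteq\acl(A)$: if $v(b-c)\in\mathbb{Z}$ then $b$ lies in the $\acl(A)$-definable ball $B_{\geq v(b-c)}(c)$, which meets only finitely many cosets of $\CO^n$ because $\bk$ is finite, forcing $\pi(b)\in\acl(A)$ --- a contradiction; and if $v(b-c)>\mathbb{Z}$ then $b-c\in\CO^n$, so $\pi(b)=\pi(c)\in\acl(A)$ --- again a contradiction. Since $\Gamma$ is a $\mathbb{Z}$-group with prime model $\mathbb{Z}=\Gamma_\bF$, every element of $\Gamma$ is either in $\mathbb{Z}$, above $\mathbb{Z}$, or below $\mathbb{Z}$, so $v(b-c)<\mathbb{Z}$.

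Now put $\delta:=\max\{v(b-c)+m:c\in C\}$ (the case $C=\emptyset$ being trivial, take $B'=B_0$). Since every $v(b-c)<\mathbb{Z}$ and $m\in\Nn$ is fixed, $\delta<\mathbb{Z}$, so the open ball $B':=\{x\in K^n:v(x-b)>\delta\}$ is large. It is $m$-next to $C$: for each $c\in C$ the open ball $B_c:=\{x:v(x-b)>v(b-c)+m\}$ omits $c$ (as $v(c-b)=v(b-c)<v(b-c)+m$) and satisfies $v(x-c)=v(b-c)$ for all $x\in B_c$, so its radius equals $v(x-c)+m$; hence $B_c$ is $m$-next to $c$ in the sense of Definition~\ref{D: m-next to} read coordinatewise through $v=\min$. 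The balls $B_c$ all contain $b$, hence are nested, and $\bigcap_{c\in C}B_c=B'$, so $B'$ is $m$-next to $C$. Finally, $B:=B_0\cap B'$ is the smaller of two nested large balls through $b$, hence a large ball, with $b\in B\subseteq B_0\subseteq p(\CK)$ and $B\subseteq B'$, as required.

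I expect the only non-routine step to be the production of the inner ball $B_0$ genuinely inside $p(\CK)$ (rather than one merely containing $b$): this is exactly where opacity of $K/\CO$ enters, through the unique-lifting property of full-dp-rank types from $(K/\CO)^n$ to $K^n$, combined with Lemma~\ref{L:full dpr is interior in K/O}(2) and Lemma~\ref{L:basic prop large ball}(4). Everything else is elementary ultrametric bookkeeping, once ``ball in $K^n$'' and ``$m$-next to a finite subset of $K^n$'' are interpreted coordinatewise via $v=\min$.
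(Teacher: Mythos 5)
Your proof is correct and follows essentially the same route as the paper's: the outer ball $B_{>\max_{c\in C}(v(b-c)+m)}(b)$ certified via Remark \ref{rem:next}, combined with a large ball around $b$ contained in $p(\CK)$. The paper obtains the two auxiliary facts a bit more cheaply — the inner ball $B_{>r_0}(b)\subseteq p(\CK)$ comes directly from largeness of $p$ (Lemma \ref{L:full dpr is interior in K/O}), and $v(b-c)<\mathbb{Z}$ then follows at once since $C$ is $A$-definable and disjoint from $p(\CK)$, hence from that ball — whereas you re-derive the inner ball by pulling back a ball of $(K/\CO)^n$ via opacity/unique lifting and prove $v(b-c)<\mathbb{Z}$ by a separate genericity ($\acl$) argument; both variants are fine.
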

\begin{proof}
	Fix $b\models p$, $m\in \mathbb{N}$ and a finite $A$-definable set $C$; by Lemma \ref{L:full dpr is interior in K/O}, there exists an open large ball $B_{>r_0}(b)\subseteq p(\CK)$. Since $C$ is $A$-definable, $p(\CK)\cap C=\emptyset$ so also $B_{>r_0+m}(b)\cap C=\emptyset$.

 Now, if $r=\max\{v(b-c)+m:c\in C\}$ then $r<r_0+m$, and by Remark \ref{rem:next}, the ball $B_{>r}(b)$ is $m$-next-to $C$ and contains $B_{r_0+m}(b)\sub p(\CK)$. \end{proof}

Recall the following from \cite{HaHaPeVF}.

\begin{definition}
	A (partial) function $f:K^n\to K$ \emph{descends} to $K/\CO$ if  $\dom(f)+\CO^n=\dom(f)$ and for every $a,b\in \dom(f)$, if  $a-b\in \CO^n$, then $f(a)-f(b)\in \CO$. The function  $f$ {\em descends to $K/\CO$ on some (partial) type $P\vdash \dom(f)$} if $f\restriction P$ descends to $K/\CO$.
	
	Conversely, a  function $F:(K/\CO)^n\to K/\CO$ \emph{lifts} to $K$, if it is the image under the natural quotient map of a definable function $f: K^n\to K$ descending to $K/\CO$.   
\end{definition}

Recall that a structure $\CM$ has \emph{definable Skolem functions} (for the sort $S$) if for any definable formula $\phi(x,z)$  with $x$ ranging over $S$, there exists a definable function $f_\phi$ such that $\exists y \phi(x,z)\to \phi(f_\phi(z), z)$. Thus, if $\CK$ has definable Skolem functions (for the valued field sort $K$) then every definable function $f:(K/\CO)^n\to K/\CO$ lifts to $K$. This is the only application of the existence of definable Skolem functions in the present section. \\

The following basic example will play an important role: 

\begin{example}\label{example-descent}
	When $a\in \CO$  the linear function $x\mapsto a\cdot x$ descends to an endomorphism $\lambda_a:(K/\CO,+)\to (K/\CO,+)$. If $a\in\m$, then $ \lambda_a$  has an infinite kernel.
\end{example}

The next technical lemma is an analogue of \cite[Lemma 6.14]{HaHaPeVF}. The proof is similar with minor adaptations. For $f:K^n\to K$ we let $f_{x_i}$ denote the partial derivative with respect to $x_i$. 
\begin{lemma}\label{L:derivtives in O}
	Let $p\in S_n(A)$ be a large type, $p\vdash \dom(f)$ for some $A$-definable $f: K^n\to K$. Then: 
	\begin{enumerate}
		\item $f$ is differentiable on $p$. 
		\item If $f$ descends to $K/\CO$ on $p$ then $ (f_{x_1},\dots, f_{x_n})(a)\in \CO^n$ for all $a\models p$. 
		\item Assume that $\mathrm{Im}(f)\sub \CO$. Then for every $a\models p$ there exists a large ball $B\ni a$ contained in $p(\CK)$ such that for all $b\in B$  and $1\le i\le n$ 
		\[
		v(f_{x_i}(b))+2r(B))>0. 
		\]
		In particular $f_{x_i}(a)\in \mathfrak m$ for all $a\models p$. 
	\end{enumerate}
\end{lemma}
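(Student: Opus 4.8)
The plan is to follow the template of \cite[Lemma 6.14]{HaHaPeVF}, feeding in the single-variable structure theory of $1$-h-minimal fields (Fact \ref{F: VJP}) and the ``generic ball'' machinery of Section \ref{S: local K/O} (Lemmas \ref{L:generic over some gamma in K/O, K}, \ref{L:full dpr is interior in K/O}, \ref{L: m-next}, \ref{L:basic prop large ball} and Proposition \ref{P:sgenos for K/O}); the only genuinely new feature is the bookkeeping around the cut $\mathbb{Z}\subseteq\Gamma$ in mixed characteristic. First replace $p$ by a large completion (Lemma \ref{L:basic prop large ball}(1)), fix $a=(a_1,\dots,a_n)\models p$, and for each $i$ set $g_i(t)=f(a_1,\dots,a_{i-1},t,a_{i+1},\dots,a_n)$, an $Aa_{\widehat i}$-definable function; by Lemma \ref{L:basic prop large ball}(3) the type $\tp(a_i/Aa_{\widehat i})$ is again large. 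For part (1): Fact \ref{F: VJP} produces a finite $Aa_{\widehat i}$-definable $C_i$ and $m_i\in\Nn$, and since $\tp(a_i/Aa_{\widehat i})$ is large, $a_i\notin\acl(Aa_{\widehat i})\supseteq C_i$; hence, by Lemma \ref{L: m-next}, the $m_i$-next-to-$C_i$ ball $B_i$ containing $a_i$ is a large open ball on which $g_i$ is differentiable, so $f_{x_i}(a)=g_i'(a_i)$ exists. Total differentiability of $f$ at $a$ I would deduce from the multivariate $C^1$-theory of $1$-h-minimal fields: the exceptional set has dimension $<n$, hence is not in $p$ because $\dpr(p)=n$ (Lemma \ref{L:basic prop large ball}(2)).

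For part (2), suppose $v(f_{x_i}(a))<0$ for some $i$. Since $p$ is large it contains a large product ball $B=B_1\times\cdots\times B_n\subseteq p(\CK)$ through $a$; after shrinking $B_i$ (keeping it large, centred at $a_i$, and inside an $m_i$-next ball) Fact \ref{F: VJP}(1) gives $v(g_i(x)-g_i(a_i))=v(f_{x_i}(a))+v(x-a_i)$ for $x\in B_i$. As $r(B_i)<\mathbb{Z}<0$, the point $x=a_i+1$ lies in $B_i$ with $v(x-a_i)=0$, so the right side equals $v(f_{x_i}(a))<0$; but $(a_1,\dots,x,\dots,a_n)$ and $a$ both realize $p$ and differ by a vector in $\CO^n$, so the descent hypothesis on $p$ forces $g_i(x)-g_i(a_i)=f(a_1,\dots,x,\dots,a_n)-f(a)\in\CO$, a contradiction. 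Hence $f_{x_i}(a)\in\CO$ for all $i$.

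For part (3), the inclusion $f_{x_i}(a)\in\CO$ is immediate from (2), since $\mathrm{Im}(f)\subseteq\CO$ makes $f$ descend to $K/\CO$ on $p$ trivially; the content is to reach $\m$ and to make the bound uniform. Fix $i$ and take $B_i\ni a_i$ the large open ball $m_i$-next to $C_i$ from (1); on it $v(g_i')$ is constant, say $\equiv\delta_i$, and if $g_i'\equiv 0$ there then $f_{x_i}(a)=0$ and we are done. Otherwise Fact \ref{F: VJP}(2) shows $g_i(B_i)$ is an open ball of radius $\delta_i+r(B_i)$, and $g_i(B_i)\subseteq\mathrm{Im}(f)\subseteq\CO$; since an open ball contained in $\CO$ has radius $\geq -1$, we get $\delta_i\geq -1-r(B_i)$, which is $>\mathbb{Z}$ because $r(B_i)<\mathbb{Z}$. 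In particular $v(f_{x_i}(a))>\mathbb{Z}$, so $f_{x_i}(a)\in\m$.

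For the uniform estimate I would choose the data $C(\,\cdot\,),m$ of Fact \ref{F: VJP} uniformly in the definable family $t\mapsto f(c_1,\dots,c_{i-1},t,c_{i+1},\dots,c_n)$ (by compactness), so that the computation of the previous paragraph yields, for every $b$ at which the $i$-th-slice derivative is nonzero, $v(f_{x_i}(b))\geq -1-\rho_i(b)$ where $\rho_i(b)$ is the (definable-in-$b$) radius of the $m$-next-to-$C(b_{\widehat i})$ ball through $b_i$. At $b=a$ each $\rho_i(a)<\mathbb{Z}$; pick, via Lemma \ref{L:generic over some gamma in K/O, K}, some $\gamma_*<\mathbb{Z}$ above all $\rho_i(a)$ with $\dpr(a/A\gamma_*)=n$, so that $a$ lies in the interior of the definable set $\{b:\rho_i(b)\leq\gamma_*\text{ for all }i\}$ (Lemma \ref{L:full dpr is interior in K/O}, or Proposition \ref{P:sgenos for K/O}). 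Using that $p(\CK)$ is open there, take a large ball $B\ni a$ contained in $p(\CK)\cap\{b:\rho_i(b)\leq\gamma_*\text{ for all }i\}$ whose radius $\rho$ satisfies $\rho<\mathbb{Z}$ and $2\rho>1+\gamma_*$ — possible because $\Gamma$ is a $\mathbb{Z}$-group, so it contains elements ``of half the size'' of a prescribed infinite one. Then for all $b\in B$ and all $i$, $v(f_{x_i}(b))+2r(B)\geq -1-\gamma_*+2\rho>0$, as required. The step I expect to be the main obstacle is exactly this last one — passing from the pointwise estimate at the generic $a$ to a uniform estimate over a whole product ball and calibrating $r(B)$ so that the slack $2r(B)$ swallows the loss $-1-\gamma_*$ while $B$ stays large and inside $p$; the rest is a faithful transcription of \cite[\S 6.2]{HaHaPeVF}, the new ingredients being only the ``$\geq -1$'' bound for open sub-balls of $\CO$ and the $\mathbb{Z}$-group arithmetic on $\Gamma$.
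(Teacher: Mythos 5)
Your proof is correct, and for parts (1) and (2) it is essentially the paper's argument: generic differentiability is quoted from the $1$-h-minimal theory, and the bound $f_{x_i}(a)\in\CO$ comes from the shift $x\mapsto x+1$ inside a large slice ball sitting in an $m$-next ball, combined with the descent hypothesis (the shifted point realizes $p$ by the closure clause in the definition of descent). In part (3) you reach the same estimate but uniformize it differently. The paper needs no family-uniform version of Fact \ref{F: VJP}: fixing $B_0\sub p(\CK)$ of radius $r_0$ and, by saturation, $r$ with $r-r_0>\mathbb{Z}$, every $b\in B_{>r}(a)$ realizes $p$, so the slice fibre $B_{>r_0}(b_i)$ lies in the (large) slice type; hence the slice-wise finite set $C$ is at distance at most $r_0$ from $b_i$, the $m$-next ball through $b_i$ has radius at most $r_0+m<r$ for whatever standard $m$ that slice produces, and the image estimate is applied to $B_{>r}(b_i)$, which also lies in the domain of the slice function; the half-radius ball then finishes exactly as in your last step. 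Your route via a compactness-uniform choice of $C(\cdot),m$, the definable radius $\rho_i(b)$, a generic cutoff $\gamma_*$ and the interior lemma also works, but two points deserve care: the genericity to preserve when adjoining $\gamma_*$ is that of the residue tuple $\pi(a)$ (equivalently, largeness of $\tp(a/A\gamma_*)$), which is what produces a large ball around $a$ inside $\{b:\rho_i(b)\le\gamma_*\text{ for all }i\}$ --- Lemma \ref{L:generic over some gamma in K/O, K} applied to $\pi(a)$ gives exactly this, and in fact $\gamma_*$ is superfluous, since $\rho_i(b)\le r_0+m$ already holds on $B_0$ by the fibre argument above; and applying Fact \ref{F: VJP}(2) to the whole $m$-next ball tacitly treats the slice functions as total --- if $f$ is partial, either extend by $0$ (the image stays in $\CO$) or apply the estimate to the sub-ball inside the fibre of $p(\CK)$, as the paper does. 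Your sharp bound that an open ball contained in $\CO$ has radius at least $-1$ is correct (indeed more careful than the paper's implicit bound), and the slack $2\rho>1+\gamma_*$ together with the parity trick in the $\mathbb{Z}$-group absorbs the resulting off-by-one.
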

\begin{proof}
	(1) By \cite[Proposition 3.1.1]{hensel-minII}, every definable function is generically differentiable and $p$ is a generic type in $K^n$, so the result follows. 
	
	(2) For simplicity, assume that $A=\0$. Fix $a=(a_1,\dots, a_n)\models p$. We show that $f_{x_1}(a)\in \CO$. Set $g(t)=f(t,a_2, \dots, a_n)$. By Fact \ref{F: VJP} there is a finite $(a_2,\dots, a_n)$-definable set, $C$, and a natural number $m$ such that $g'$ is constant on any ball $B$ $m$-next to $C$. Since $p_1:=\tp(a_1/a_2,\dots, a_n)$ is large, it follows from Lemma \ref{L: m-next} that there is a large ball $B\sub p_1(\CK)$ containing $a_1$ where $g'$ is constant. 	By Lemma \ref{L:basic prop large ball}, $B+1=B$ and therefore by Fact \ref{F: VJP} we have: 
	\[
	v(g(a_1+1)-g(a_1))=v(g'(a_1))=v(f_{x_1}(a)). 
	\]
	Since $f$ descends to $K/\CO$ on $p$ we get that $v(f(a_1+1,a_2,\dots, a_n)-f(a))\geq 0$, so that $v(f_{x_1}(a))\geq 0$, as required. 
	
	(3) Let $B_0\sub p(\CK)$ be a large ball containing $a$, $r_0:=r(B_0)$. By $\aleph_1$-saturation of $\CK$ there exists $r_0<r<\mathbb{Z}$ such that $r_0-r<\mathbb{Z}$. For $B:=B_{>r}(a)$, let $b\in B$; we claim that $v(f_{x_i}(b))+r\ge 0$. 
	
	To simplify the notation, we show it for $i=1$. 
	Let $g(t):=f(t,b_2,\dots, b_n)$. Note that since $b\in B_{>r_0}(a)$, for all $r'>r_0$ we have that $B_{>r'}(b_1)\sub p_1(\CK)$ where $p_1:=\tp(b_1/b_2,\dots, b_n)$. In particular, if $C$ is the $(b_2, \dots, b_n)$-definable finite set provided by Fact \ref{F: VJP} (applied to $g$) and $m\in \Nn$ is the corresponding natural number, then $B_{>r+m}(b_1)\sub p_1(\CK)$ and therefore $v(b_1-c)<r_0+m$ for all $c\in C$. In particular, $B_{>r}(b_1)$ is contained in a ball $m$-next to $C$. So $v(g')$ is constant on $B_{>r}(b_1)$. If $g'(t)\equiv 0$, the claim holds trivially. Otherwise, by Fact \ref{F: VJP} $g(B_{>r}(b_1))$ is an open ball of radius $v(g'(b_1))+r$. Since $g(B_{>r}(b_1))\sub \CO$, the claim follows. 
	
	Replacing $r$ with $r-1$, if needed, we may assume that $r$ is even. Applying the claim to $r/2$ the result follows. 
\end{proof}

\begin{lemma}\label{L:near-affine in K/O}
	Let $f:K\to K$ be an $A$-definable partial function and $p\vdash \dom(f)$ a complete large type over $A$. If $f$ descends to $K/\CO$ on $p$ then for every $a\models p$ there is a large  ball $B$, $a\in B\sub p(\CK)$, such that for all $x\in B$.
	\[
	f(x)-f(a)-f'(a)(x-a)\in \m.
	\]
\end{lemma}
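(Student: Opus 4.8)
The plan is to use a Taylor-type estimate coming from 1-h-minimality (Fact \ref{F: VJP}) and then to correct it using the fact that $f$ descends to $K/\CO$. First I would fix $a\models p$ and, since $p$ is large, pick a large ball $B_0\ni a$ with $B_0\subseteq p(\CK)$; by $\aleph_1$-saturation I can choose $r(B_0)<r<\mathbb Z$ with $r(B_0)-r<\mathbb Z$ and set $B=B_{>r}(a)$, shrinking further as needed so that $f$ is differentiable on $B$ and $v(f')$ is constant there — this is possible by combining Fact \ref{F: VJP} with Lemma \ref{L: m-next}, exactly as in the proof of Lemma \ref{L:derivtives in O}(3). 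Define the auxiliary function $g(x)=f(x)-f(a)-f'(a)(x-a)$; it is $Aa$-definable, differentiable on $B$, and $g'(x)=f'(x)-f'(a)$, so $g'(a)=0$. The goal is to show $g(B)\subseteq \m$, i.e. $g$ maps $B$ into $\CO$ and in fact into $\m$.

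The key point is to control $v(g')$ on $B$. Since $g'=f'-f'(a)$ and (by Fact \ref{F: VJP}) $v(f')$ is constant on $B$, either $f'$ is already "flat enough" that $v(f'(x)-f'(a))$ is large on $B$, or I need a finer analysis. The clean way is to apply Fact \ref{F: VJP} to $f'$ itself (or to $g'$): there is a finite $Aa$-definable set $C'$ and $m'\in\Nn$ so that on balls $m'$-next to $C'$ the function $g'$ has constant $v(g')$; intersecting with a possibly smaller large subball $B_1\subseteq B$ containing $a$ (again via Lemma \ref{L: m-next}), and using $g'(a)=0$, I get that $g'\equiv 0$ on $B_1$ or $v(g'(x))$ is constant and finite on $B_1$. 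If $g'$ vanishes identically on $B_1$ then $g$ is constant on $B_1$, equal to $g(a)=0$, and we are done on $B_1$. In the remaining case, since $v(g')$ is constant on $B_1$, Fact \ref{F: VJP}(1) gives, for $x\in B_1$,
\[
v(g(x)-g(a))=v(g'(a_*))+v(x-a)
\]
for the constant value $v(g')$ on $B_1$; but $g(a)=0$, so $v(g(x))=v(g')+v(x-a)$. Now I invoke the hypothesis that $f$ descends to $K/\CO$ on $p$: for $x\in B_1$ with $x-a\in\CO$ we have $f(x)-f(a)\in\CO$, and also $f'(a)(x-a)$ — here I use Lemma \ref{L:derivtives in O}(2), which tells me $f'(a)\in\CO$ — so $f'(a)(x-a)\in\CO$, whence $g(x)\in\CO$. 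Taking $x$ with $v(x-a)$ arbitrarily large (possible since $B_1$ is large, so contains points arbitrarily $v$-close to $a$ that still satisfy $x-a\in\CO$) forces $v(g')\ge$ (something), and combined with the displayed equation one deduces $g(x)\in\m$ for all $x\in B_1$: indeed $v(g(x))=v(g')+v(x-a)>0$ once $v(x-a)$ is large, and to get it for \emph{all} $x\in B_1$ one argues that $v(g')+v(x-a)>0$ already for $x$ with $v(x-a)\ge r(B_1)$ by pushing the descent estimate through a large subball, just as in Lemma \ref{L:derivtives in O}(3) where the "$+2r(B)$" slack was used.

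The main obstacle I anticipate is the bookkeeping of radii: making sure that after the two successive applications of Fact \ref{F: VJP} (first to $f$, then to $g'$) and the corresponding two appeals to Lemma \ref{L: m-next}, the final ball $B_1$ is still large (i.e. $r(B_1)<\mathbb Z$) and still contained in $p(\CK)$, and that the descent hypothesis — which a priori only gives $f(x)-f(a)\in\CO$ for $x-a\in\CO^{n}$ inside $\dom f$ — can be applied at the points I need. This is the same technical friction handled in the proof of Lemma \ref{L:derivtives in O}, and the slack coming from halving the radius there is exactly what makes the final estimate $g(x)\in\m$ (strict) rather than merely $g(x)\in\CO$ go through. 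Once the radius bookkeeping is set up, replacing $B_1$ by the final large ball $B$ asserted in the statement is immediate, and the conclusion $f(x)-f(a)-f'(a)(x-a)\in\m$ for all $x\in B$ follows.
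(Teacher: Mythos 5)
Your plan is in the right family of ideas (a Taylor-type estimate from 1-h-minimality corrected by the descent hypothesis), but two steps do not work as written. First, you cannot apply Fact \ref{F: VJP} together with Lemma \ref{L: m-next} to $g'$, where $g(x)=f(x)-f(a)-f'(a)(x-a)$: the function $g$ is only $Aa$-definable, so the finite exceptional set $C'$ produced by Fact \ref{F: VJP} is $Aa$-definable and may contain $a$ itself, while Lemma \ref{L: m-next} needs the finite set to be definable over parameters over which the type of the centre is non-algebraic (that is how $p(\CK)\cap C=\emptyset$ is secured). Relatedly, your dichotomy is incoherent: if $v(g')$ were constant on a ball containing $a$, then $g'(a)=0$ would force $g'\equiv 0$ there, so the case ``$v(g')$ finite and constant on $B_1\ni a$'' cannot arise; what one actually gets (applying Fact \ref{F: VJP} to $f'$ over $A$) is $v(g'(x))=v(f''(a))+v(x-a)$, which varies over the ball.

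Second, and this is the heart of the proof rather than radius bookkeeping: the descent hypothesis alone cannot produce the bound you need. Descent on $p$ only gives $g(x)\in\CO$ for $x\in a+\CO$, and the strongest consequence is $v(g')\geq 0$, obtained at $v(x-a)=0$ (taking $v(x-a)$ ``arbitrarily large'', as you propose, yields nothing). But $B_1$ is a \emph{large} ball, so it contains points with $v(x-a)$ below $\mathbb{Z}$, and then $v(g')\geq 0$ gives no control of $v(g')+v(x-a)$. What is required is a bound of the shape $v(f''(b))+2r(B_1)>0$, i.e.\ $v(f'')$ above $-2r(B_1)$, hence above every integer; this cannot be extracted by testing descent at $\CO$-scale. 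The paper gets it from Lemma \ref{L:derivtives in O}(3) applied to $f'$ (whose image lies in $\CO$ on $p$ by part (2)), the mechanism being Fact \ref{F: VJP}(2): the image of a large ball under $f'$ is a ball of radius $v(f'')+r$ and must fit inside $\CO$; it then concludes with the second-order Taylor estimate $v(f(x)-f(a)-f'(a)(x-a))=v(\tfrac12 f''(a)(x-a)^2)$ from 1-h-minimality. Your phrase ``pushing the descent estimate through a large subball, just as in Lemma \ref{L:derivtives in O}(3)'' is precisely the missing argument; replacing it by an actual application of Lemma \ref{L:derivtives in O}(3) to $f'$, combined with the Taylor theorem instead of Fact \ref{F: VJP}(1) for $g$, is what closes the proof.
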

\begin{proof}
	By Lemma \ref{L:derivtives in O}(2), $f'(c)\in \CO$ for all $c\models p$. We may thus assume that $f'(c)\in \CO$ for all $c\in \dom(f)$.
	
	For every finite $A$-definable set $C\subseteq K$, $p(\CK)\cap C=\emptyset$. Let $a\models p$ and $B_0\subseteq p(\CK)$ be a large ball containing $a$. Fix $r<\mathbb{Z}$ such that $r(B_{0})-r<\mathbb{Z}$. We let $B=B_{>r}(a)$, then for any $m\in \Nn$ we see (Remark \ref{rem:next}) that $B$ is contained in a ball $m$-next to $C$.  By Taylor's theorem \cite[Theorem 3.1.2]{hensel-minII}, for every $x\in B$. 
	
	\begin{equation}
	v(f(x)-f(a)-f'(a)(x-a))= v(\frac{1}{2}f''(a)(x-a)^2)
	\end{equation}
	
	As $f'(B)\sub \CO$,  Lemma \ref{L:derivtives in O}(3), applied to $f'$,  gives a large open ball $B'$, $a\in B'\subseteq p(\CK)$, such that for all $b\in B'$,
	$$v(f''(b))+2r(B')>0.$$

	Thus for $B_1=B\cap B'$, $v(f''(a))+2r(B_1)> 0$ and for every $x\in B_1$, $v((x-a)^2)>2r(B_1)$. Since all of the above remains true if we replace $r$ with $r+1$ we can ignore the contribution of $v(\frac{1}{2})$. Thus, it follows from the above equation that  $v(f(x)-f(a)-f'(a)(x-a))>0$, as required. 
\end{proof}

To conveniently apply the result of the last lemma,  we make the following definition:

\begin{definition}
	A definable  function $\lambda: K/\CO\to K/\CO$ is a {\em scalar-endomorphism} of $K/\CO$  if there exists $a\in \CO$ such that  $\lambda(x+\CO)=\lambda_a(x+\CO):=\pi(ax)$ where $\pi:K\to K/\CO$ is the quotient map. More generally, $\lambda:(K/\CO)^n\to (K/\CO)$ is a scalar endomorphism if $\lambda(x_1,\dots, x_n)=\sum \lambda_i(x_i)$ where $\lambda_i$ are scalar-endomorphisms in one variable. 
\end{definition}

We now turn to proving that definable functions on $(K/\CO)^n$ are generically given by translates of scalar endomorphisms. We start by observing that the germs (at 0) of definable scalar-endomorphism are $\0$-definable (recall that $\bF\subseteq \dcl(\emptyset)$): 
\begin{lemma}\label{L:germs of endo in K/O}
	For any scalar  endomorphism $\lambda:K/\CO\to K/\CO$,  there exists $r\in \CO_\bF$ such that $\lambda(x)=\lambda_r(x)$ for all $x$ in some open ball around $0$.
\end{lemma}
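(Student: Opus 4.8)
We must show: for any scalar endomorphism $\lambda : K/\CO \to K/\CO$, there is $r \in \CO_\bF$ with $\lambda(x) = \lambda_r(x)$ for all $x$ in some open ball around $0$. By definition $\lambda = \lambda_a$ for some $a \in \CO$; the content is that the germ at $0$ of $\lambda_a$ agrees with that of $\lambda_r$ for some $r$ in the (much smaller) ring $\CO_\bF$, which lies in $\dcl(\emptyset)$. So the assertion is really a statement about the possible germs at $0$ of the maps $\lambda_a$, $a \in \CO$.

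**The plan.** The plan is to use the density of $\bF$ in $K$ with respect to the "$\mathbb Z$-radius" topology, i.e. Fact~\ref{F:F is dense in Z-radius balls}(1), together with Example~\ref{example-descent} which tells us that when $a \in \m$ the induced endomorphism $\lambda_a$ has infinite kernel — in fact $\lambda_a$ kills every ball $B_{>\gamma}(0)$ with $\gamma \geq -v(a)$, hence is identically $0$ on a neighbourhood of $0$ in the ball topology exactly when $v(a) < \mathbb Z$, but in general kills $B_{\geq -v(a)}(0) + \CO$. More precisely: first, reduce to the case $v(a) \in \mathbb Z \cup \{\infty\}$. Indeed, if $v(a) < \mathbb Z$, then for all $x$ with $v(x) > -v(a)$ (an open ball around $0$ in $K/\CO$, once we pass to the quotient — note $-v(a) < \mathbb Z$ so $B_{>-v(a)}(0)$ is a large ball) we have $v(ax) > 0$, so $\lambda_a(x+\CO) = 0 = \lambda_0(x+\CO)$, and $r = 0 \in \CO_\bF$ works. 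So we may assume $v(a) \geq 0$ and $v(a) \in \mathbb Z \cup \{\infty\}$; if $v(a) = \infty$ then $a = 0$ and again $r=0$ works.

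**The main case.** Now suppose $v(a) = k \in \mathbb Z_{\geq 0}$. By Fact~\ref{F:F is dense in Z-radius balls}(1) applied to $a$, there is $r \in \bF$ with $v(a - r) > \mathbb Z$; since $v(a) = k \in \mathbb Z$, necessarily $v(r) = v(a) = k \geq 0$, so $r \in \CO_\bF$. Write $a = r + \varepsilon$ with $v(\varepsilon) > \mathbb Z$. Then for any $x \in K$ with $v(x) > -\mathbb Z$, say $v(x) \geq 0$ — i.e. $x \in \CO$, giving the open ball $\CO/\CO = \{0\}$... that is too small, so instead take $x$ with $v(x) \geq -N$ for a fixed $N \in \mathbb Z$: then $v(\varepsilon x) = v(\varepsilon) + v(x) > \mathbb Z - N = \mathbb Z$, so in particular $v(\varepsilon x) > 0$, whence $ax + \CO = rx + \varepsilon x + \CO = rx + \CO$, i.e. $\lambda_a(x + \CO) = \lambda_r(x + \CO)$. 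The set of such $x$ projects to an open ball around $0$ in $K/\CO$ (the image of the large ball $B_{\geq -N}(0)$), which is what we needed. The mild subtlety is ensuring the relevant ball in $K/\CO$ is genuinely a ball in the sense of the ball topology — this just requires the radius $-N$ to be $< \mathbb Z$, which holds.

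**Expected obstacle.** The step requiring the most care is handling the case where $v(a) < \mathbb Z$ is "infinitely negative" but not $-\infty$, versus $v(a) \in \mathbb Z$ with $a \notin \bF$; the former forces $\lambda_a$ to vanish near $0$ and the latter is where we genuinely invoke completeness/density of $\bF$. One also must double-check that the $r$ produced lies in $\CO_\bF$ and not merely in $\bF$ — this is automatic from $v(r) = v(a) \geq 0$, but deserves an explicit line. Everything else is a routine valuation-inequality bookkeeping argument; there is no deep obstacle, the lemma being essentially the observation that germs at $0$ of scalar multiplications are controlled by the value $v(a)$ together with the residue-level data, all of which is captured by an element of the small field $\bF$.
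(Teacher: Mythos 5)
Your strategy is the same as the paper's: approximate $a\in\CO$ by some $r\in\CO_\bF$ with $v(a-r)>\mathbb{Z}$ via Fact \ref{F:F is dense in Z-radius balls}(1) (noting $v(r)=v(a)\geq 0$, so $r\in\CO_\bF$), and then argue that $\lambda_{a-r}$ vanishes near $0$. The gap is in the last step, where you must produce an \emph{open ball around $0$} on which $\lambda_a$ and $\lambda_r$ agree. In this section a ball of $K/\CO$ is, by definition, the image under $\pi$ of a \emph{large} ball of $K$, i.e.\ one of valuative radius $<\mathbb{Z}$. You establish agreement only on the image of $B_{\geq -N}(0)$ for a fixed \emph{standard} $N\in\mathbb{Z}$; its radius $-N$ lies in $\mathbb{Z}$ and hence is not $<\mathbb{Z}$, contrary to your closing claim that ``the radius $-N$ is $<\mathbb{Z}$, which holds''. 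Indeed, by Fact \ref{F:F is dense in Z-radius balls}(2) the set $\{b\in K/\CO: v(b)\geq -N\}$ is contained in $\bF/\CO_\bF$ and is in fact finite, so it contains no ball; letting $N$ range over all standard integers you have only shown agreement on $\bF/\CO_\bF$, which is exactly the intermediate statement of the paper's proof, not the lemma, and it would not suffice for the application after Proposition \ref{P: K/O-affine}, where agreement on a large ball is needed. The missing ingredient is the passage from ``agreement at all points of valuation in $\mathbb{Z}\cup\{\infty\}$'' to ``agreement on a large ball''. This is the paper's ``by compactness'' step: the definable set $\{b:\lambda_{a-r}(b)=0\}$ contains $\{b:v(b)\geq -N\}$ for every standard $N$, hence contains $\{b:v(b)\geq\gamma_0\}$ for some $\gamma_0<\mathbb{Z}$, since $\{\gamma\in\Gamma:\gamma<\mathbb{Z}\}$ is not definable. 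Alternatively, fix the radius directly in terms of $\varepsilon=a-r$: since $v(\varepsilon)>\mathbb{Z}$, choose $\gamma_0$ with $-v(\varepsilon)<\gamma_0<\mathbb{Z}$ (for instance $\gamma_0=-v(\varepsilon)+1$); then for all $x$ with $v(x)\geq\gamma_0$ one has $v(\varepsilon x)\geq v(\varepsilon)+\gamma_0>0$, giving agreement on the image of the genuinely large ball $B_{\geq\gamma_0}(0)$.

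A secondary, repairable slip: your case reduction considers ``$v(a)<\mathbb{Z}$'', which cannot occur because $a\in\CO$ forces $v(a)\geq 0$; the case you actually compute there (and the one silently omitted from your reduction to $v(a)\in\mathbb{Z}\cup\{\infty\}$) is $v(a)>\mathbb{Z}$, where $r=0$ works but Fact \ref{F:F is dense in Z-radius balls}(1) does not literally apply. With the cases stated as $v(a)\in\mathbb{Z}_{\geq 0}$, $v(a)>\mathbb{Z}$, and $v(a)=\infty$, and with the ball-radius issue above repaired, your argument coincides with the paper's proof.
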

\begin{proof}
	By assumption $\lambda=\lambda_a$ for some $a\in \CO$. By Fact \ref{F:F is dense in Z-radius balls}, there exists $a'\in \CO_\bF$ such that $v(a-a')>\mathbb{Z}$ thus,  $\lambda_{a-a'}(b)=0$ for any $b\in K/\CO$ with $v(b)\in \mathbb{Z}$.  By compactness, $\lambda $ and $\lambda_{a'}$ agree on some open ball around $0$.
\end{proof}

The proof of \cite[Proposition 6.16]{HaHaPeVF} (for unary functions) goes through unaltered using Lemmas \ref{L:derivtives in O} and \ref{L:near-affine in K/O} giving the following. 
\begin{proposition}\label{P: K/O-affine}
	Let $f:K/\CO\to K/\CO$ be an $A$-definable partial function with $\dom(f)$ open in the  ball topology. If $f$ lifts to $K$ then for every $a\in \dom(f)$ with $\dpr(a/A)=1$ there exist a ball $U\sub \dom(f)$ with $a\in U$ and a scalar endomorphism $L:(K/\CO)\to (K/\CO)$ such that $f(x)=L(x-a)+f(a)$ for all $x\in U$. 
\end{proposition}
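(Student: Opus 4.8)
The plan is to reduce Proposition~\ref{P: K/O-affine} to the corresponding statement about functions on $K$ (namely Lemma~\ref{L:near-affine in K/O}) by lifting $f$, and then to repackage the affine approximation so that it descends back to $K/\CO$. First I would fix $a\in\dom(f)$ with $\dpr(a/A)=1$. Since $\dom(f)$ is open in the ball topology, it contains a large ball around $a$; pick a lift $\tilde a\in K$ of $a$. Because $f$ lifts to $K$, there is an $A$-definable $g:K\to K$ descending to $K/\CO$ with $\pi\circ g = f\circ\pi$ on $\dom(g)$, and $\dom(g)$ contains a large ball around $\tilde a$. Let $p=\tp(\tilde a/A)$; the key point is that $p$ is a \emph{large} type in the sense of the definition preceding Lemma~\ref{L:basic prop large ball}, because $\dpr(\pi(\tilde a)/A)=\dpr(a/A)=1$ and $K/\CO$ is opaque, so by Lemma~\ref{L:basic prop large ball}(1)–(2) we may as well work with the unique large completion over $A$. (If $g$ was only chosen to lift $f$ on a definable open set rather than on a full large type, I would first shrink $\dom(g)$ to $p(\CK)$, using that $\dom(f)+\CO = \dom(f)$.)

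Next I would apply Lemma~\ref{L:near-affine in K/O} to $g$ and the large type $p$: this yields a large ball $B$ with $\tilde a\in B\subseteq p(\CK)$ such that $g(x)-g(\tilde a)-g'(\tilde a)(x-\tilde a)\in\m$ for all $x\in B$. By Lemma~\ref{L:derivtives in O}(2), $g'(\tilde a)\in\CO$, so the linear map $x\mapsto g'(\tilde a)\cdot x$ descends to a scalar endomorphism $\lambda_{g'(\tilde a)}$ of $K/\CO$ (Example~\ref{example-descent}); denote it $L$. Pushing the displayed inclusion forward under $\pi$ and using that $\m$ maps into $0\in K/\CO$ gives $f(\pi(x)) = L(\pi(x)-\pi(\tilde a)) + f(\pi(\tilde a))$ for all $x\in B$, i.e. $f(y) = L(y-a) + f(a)$ for all $y$ in the ball $U:=\pi(B)\subseteq (K/\CO)$, which is a ball in the ball topology since $B$ is large. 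Finally $U\subseteq\dom(f)$ because $B\subseteq p(\CK)\subseteq\dom(g)$ and $\dom(g)$ descends. This is exactly the conclusion, after noting $\pi(B)\ni a$.

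I would also record the one genuine subtlety, which is the step that corresponds to the sentence ``The proof of \cite[Proposition 6.16]{HaHaPeVF} goes through unaltered'': one must check that passing from the $K$-statement to the $K/\CO$-statement does not require Hausdorffness of the ball topology, only that $\m$ collapses to $0$ under $\pi$ and that the image of a large ball is a ball. Both of these are immediate from Lemma~\ref{L:basic prop large ball}(4) and the definition of the ball topology, so no new difficulty arises; the main obstacle, such as it is, is simply bookkeeping the lift $g$ on the correct large type and invoking opacity (Lemma~\ref{L:basic prop large ball}(1)) to know that this large type over $A$ is unique and has full dp-rank, so that Lemmas~\ref{L:derivtives in O} and~\ref{L:near-affine in K/O} actually apply. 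Everything else is the formal descent argument just described.
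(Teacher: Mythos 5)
Your proof is correct and takes essentially the same route the paper intends: the paper's proof is just the remark that \cite[Proposition 6.16]{HaHaPeVF} goes through using Lemmas \ref{L:derivtives in O} and \ref{L:near-affine in K/O}, and that is precisely the lift--approximate--descend argument you spell out (lift $f$ via a descending $A$-definable $g$, apply Lemma \ref{L:near-affine in K/O} to the large type $\tp(\tilde a/A)$, note $g'(\tilde a)\in\CO$ by Lemma \ref{L:derivtives in O}(2), and push forward, the error term in $\m$ vanishing in $K/\CO$). Two cosmetic points: the appeal to opacity is unnecessary, since largeness of $\tp(\tilde a/A)$ is immediate from $\pi_*\tp(\tilde a/A)=\tp(a/A)$ having dp-rank $1$, and the $A$-definability of the lift is most cleanly justified by definable Skolem functions (available throughout this section) rather than by the bare hypothesis that $f$ lifts.
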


\begin{corollary}
 	Let $f:K/\CO\to K/\CO$ be a partial $A$-definable function, $p\in S_1(A)$ a non-algebraic type concentrated on $\dom(f)$. Then there is  $t\in \CO_{\bF}$  such that $f(x)-\lambda_t(x)$   is locally constant on  $p$, with respect to the ball topology.
\end{corollary}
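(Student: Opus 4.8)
The plan is to apply Proposition~\ref{P: K/O-affine} at each realization of $p$ and then use an automorphism argument to see that the scalar endomorphism it produces can be chosen uniformly along $p$, of the form $\lambda_t$ for a single $t\in\CO_\bF$. First I would normalize the setup: since $p$ is non-algebraic, any $a\models p$ has $\dim_{\acl}(a/A)=1$, hence $\dpr(a/A)=1$ by Corollary~\ref{C: dpr=acl-dim in K/O}; by Lemma~\ref{L:full dpr is interior in K/O}(1) such an $a$ lies in the ball-interior of $\dom(f)$, so $p$ is concentrated on $\inte(\dom(f))$ and I may replace $f$ by its restriction to that set and assume $\dom(f)$ is open in the ball topology. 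Because $\CK$ has definable Skolem functions, $f$ lifts to $K$, so Proposition~\ref{P: K/O-affine} gives, for each $a\models p$, a ball $U_a\ni a$ inside $\dom(f)$ and a scalar endomorphism $L_a$ with $f(x)=L_a(x-a)+f(a)$ on $U_a$. By Lemma~\ref{L:germs of endo in K/O}, after shrinking $U_a$ we may assume $L_a=\lambda_{t_a}$ on $U_a-a$ for some $t_a\in\CO_\bF$; additivity of $\lambda_{t_a}$ then yields $f(x)-\lambda_{t_a}(x)=f(a)-\lambda_{t_a}(a)$ for $x\in U_a$, that is, $f-\lambda_{t_a}$ is locally constant at $a$.

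The key observation is that $t_a$ is the \emph{unique} element of $\CO_\bF$ with this property. Indeed, if $f-\lambda_t$ and $f-\lambda_{t'}$ are both locally constant at $a$ with $t,t'\in\CO_\bF$, then $\lambda_{t-t'}$, being additive and locally constant near $a$, vanishes identically on some large ball around $0$; evaluating at elements of sufficiently negative integer valuation forces $v(t-t')>\Zz$, and as $v$ is $\Zz$-valued on $\bF$ this gives $t=t'$. Hence $a\mapsto t_a$ is a well-defined function $p(\CK)\to\CO_\bF$, characterized, among elements of $\CO_\bF$, by the condition ``$f-\lambda_t$ is locally constant at $a$''. This condition is $\aut(\CK/A)$-invariant in the pair $(a,t)$: $f$ is $A$-definable, $\lambda_t$ is $\emptyset$-definable since $t\in\bF\subseteq\dcl(\emptyset)$, and the ball topology is $\aut(\CK)$-invariant because $\Gamma_\bF=\Zz$ is fixed pointwise. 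Therefore $t_{\sigma(a)}=\sigma(t_a)$ for every $\sigma\in\aut(\CK/A)$.

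To finish, note that $t_a\in\CO_\bF\subseteq\dcl(\emptyset)$ is fixed by every automorphism, so in fact $t_{\sigma(a)}=t_a$ for all $\sigma\in\aut(\CK/A)$; since $\CK$ is saturated and $p$ is a complete type over the small set $A$, any two realizations of $p$ are conjugate over $A$, and hence $t_a$ equals a single element $t\in\CO_\bF$ for all $a\models p$. Then $f-\lambda_t$ is locally constant at every realization of $p$, which is the assertion. I expect the two places demanding care to be the uniqueness of $t_a$ -- where the discreteness of $\Gamma_\bF=\Zz$ is exactly what makes the argument go through -- and the legitimacy of the reduction to an open domain (that the ball-interior of a definable subset of $K/\CO$ is again definable, or, failing a clean statement of this, arguing locally at each $a\models p$ over parameters over which $a$ retains dp-rank $1$); everything else is a direct assembly of Proposition~\ref{P: K/O-affine}, Lemma~\ref{L:germs of endo in K/O}, and homogeneity of the saturated model.
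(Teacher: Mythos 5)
Your proof is correct and follows essentially the same route as the paper: lift $f$ via definable Skolem functions, apply Proposition \ref{P: K/O-affine} together with Lemma \ref{L:germs of endo in K/O} at one realization of $p$ to get $t\in\CO_{\mathbb{F}}$, and then spread the conclusion to all realizations by conjugation over $A$, using $\mathbb{F}\subseteq\dcl(\emptyset)$. The uniqueness of $t_a$ is harmless but unnecessary (conjugating the single statement ``$f-\lambda_t$ is locally constant at $a$'' already suffices, which is all the paper does), and your fallback for the openness-of-domain hypothesis --- restricting to a ball around $a$ chosen over parameters preserving $\dpr(a)=1$ via Proposition \ref{P:sgenos for K/O} --- is the right fix, since the ball-interior of a definable set need not be definable.
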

\begin{proof}
	Since $\CK$ has definable Skolem functions, $f$ lifts to $K$ at $p$. By Proposition \ref{P: K/O-affine}, if $a\models p$ then there is some scalar endomorphism $\lambda$ (that may depend on $a$) such that $f(x)=f(a)+\lambda (x-a)$ for all $x$ in some ball $B\sub p(\CK)$. By  Lemma \ref{L:germs of endo in K/O}, we may assume that $\lambda=\lambda_t$ for some $t\in \CO_{\bF}$. Hence, $f(x)=\lambda_t(x)+d$ on some large sub-ball of $B$, for some $d\in K/\CO$ depending on $a$. As $t\in \CO_{\bF}$ and $\bF\subseteq \dcl(\emptyset)$, for every $\alpha\models p$, $f(x)-\lambda_t(a)$ is locally constant at $\alpha$. 
\end{proof}

Summing up all of the above, we get:
\begin{proposition}\label{P: n-dim affine}
 Let $f:(K/O)^n\to K/\CO$ be a partial $A$-definable function,  $p\in S_n(A)$ concentrated on $\dom(f)$ with $\dpr(p)=n$. Then there exists a scalar-endomorphism $\lambda: (K/\CO)^n\to (K/O)$ definable over $\CO_{\bF}$ such that $f-\lambda$ is locally constant on $p(\CK)$. 
\end{proposition}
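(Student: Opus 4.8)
The plan is to reduce the $n$-variable statement to the one-variable case (Proposition \ref{P: K/O-affine} and its corollary) by fixing all but one coordinate, then glue the resulting scalar endomorphisms together. Here are the steps I would carry out.

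\textbf{Step 1: Slicing.} Let $a=(a_1,\dots,a_n)\models p$. By Lemma \ref{L:basic prop large ball}(3), since $\tp(a_1,\dots,a_n/A)$ is large, so is $\tp(a_i/Aa_1,\dots,\widehat{a_i},\dots,a_n)$ for each $i$; equivalently, by Corollary \ref{C: dpr=acl-dim in K/O}, $\dpr(a_i/A\hat a_i)=1$ where $\hat a_i$ is the tuple $a$ with $a_i$ omitted. For each $i$ consider the partial unary function $g_i(t):=f(a_1,\dots,a_{i-1},t,a_{i+1},\dots,a_n)$, which is $A\hat a_i$-definable. Since $\CK$ has definable Skolem functions, $f$ lifts to $K$ at $p$, and hence each $g_i$ lifts to $K$ at $\tp(a_i/A\hat a_i)$. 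Applying the Corollary to Proposition \ref{P: K/O-affine} (the one with $t\in\CO_\bF$) to $g_i$ and the type $\tp(a_i/A\hat a_i)$ yields some $t_i\in\CO_\bF$ such that $g_i(x_i)-\lambda_{t_i}(x_i)$ is locally constant at $a_i$ with respect to the ball topology. The key gain here is that $t_i\in\CO_\bF\subseteq\dcl(\emptyset)$, so $t_i$ does \emph{not} depend on the other coordinates $\hat a_i$ in any essential way once we fix the type --- by an automorphism/compactness argument, for a generic realization of $p$ the same $t_i$ works. Set $\lambda(x_1,\dots,x_n):=\sum_{i=1}^n\lambda_{t_i}(x_i)$, a scalar-endomorphism definable over $\CO_\bF$.

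\textbf{Step 2: Gluing via a finite increment argument.} It remains to show $h:=f-\lambda$ is locally constant on $p(\CK)$, i.e. constant on some ball around $a$. By Lemma \ref{L:full dpr is interior in K/O}(1) (and Proposition \ref{P:sgenos for K/O}) there is a ball $U=B_{\geq\gamma}(a)\subseteq (K/\CO)^n$ with $a\in U$, $\gamma<\mathbb Z$, contained in $p(\CK)$, over parameters $B\supseteq A\{t_1,\dots,t_n\}=A$ with $\dpr(a/B)=n$. On $U$, Step 1 (applied at a generic point, using that $U\subseteq p(\CK)$ and that the type is preserved) shows that for each $i$, holding the other coordinates fixed inside $U$, the map $x_i\mapsto h$ is locally constant; shrinking $\gamma$ finitely many times (once for each coordinate, using $\aleph_1$-saturation as in Lemma \ref{L:derivtives in O}(3)) we may assume $h$ is constant along each coordinate direction throughout $U$. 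A ball $B_{\geq\gamma}(a)$ in $(K/\CO)^n$ with $\gamma<\mathbb Z$ is of the form $B_{\geq\gamma}(a_1)\times\cdots\times B_{\geq\gamma}(a_n)$, and since any point of $U$ is reached from $a$ by changing one coordinate at a time while staying in $U$, constancy along each coordinate direction forces $h$ to be globally constant on $U$. Hence $f-\lambda$ is locally constant on $p(\CK)$.

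\textbf{Main obstacle.} The delicate point is Step 1 --- ensuring the scalars $t_i$ can be chosen \emph{uniformly}, i.e. independently of the auxiliary coordinates $\hat a_i$, so that $\lambda$ is a single honest scalar-endomorphism over $\CO_\bF$ rather than something depending on $a$. This is exactly what the ``germs of scalar-endomorphisms are $\emptyset$-definable'' phenomenon (Lemma \ref{L:germs of endo in K/O}) buys us: the germ at $0$ of $x_i\mapsto g_i(x_i)-g_i(a_i)$ is, up to locally constant error, $\lambda_{t_i}$ with $t_i\in\CO_\bF$, and $\CO_\bF\subseteq\dcl(\emptyset)$ is a set of size independent of everything. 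One must phrase this carefully with an automorphism argument over $A$ (or a compactness/definability-of-types argument, using opacity of $K/\CO$ via \cite[Lemma 6.4]{HaHaPeVF} as in Lemma \ref{L:basic prop large ball}) to see that a single choice of $(t_1,\dots,t_n)$ works on a type-definable neighbourhood of $a$; the secondary bookkeeping --- that a ball in $(K/\CO)^n$ factors as a product and that coordinatewise-constant implies constant --- is routine given the ball topology is the product topology.
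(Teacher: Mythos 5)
Your proof follows essentially the same route as the paper's: fix $a\models p$, slice $f$ coordinate by coordinate, apply the one-variable corollary to each $g_i(t)=f(a_1,\dots,t,\dots,a_n)$ to obtain scalars $r_i\in\CO_{\bF}$, and set $\lambda=\sum_i\lambda_{r_i}$. The uniformity-of-scalars point (via $\CO_{\bF}\subseteq\dcl(\emptyset)$ and automorphism invariance) and the coordinatewise gluing you spell out in Step 2 are exactly what the paper compresses into ``the result follows'', so your argument is correct and matches the paper's proof.
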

\begin{proof}
	Without loss of generality $A=\0$. Let $a=(a_1,\dots, a_n)\models p$ and consider the function $g_1(t):=f(t,a_2,\dots, a_n)$. By what we have shown, there is $r_1\in \CO_{\bF}$ such that $g_1(t)-\lambda_{r_1}$ is locally constant on $\tp(a_1/a_2,\dots, a_n)$. Similarly, for all $1\leq i\leq n$ we can find $\lambda_{r_i}$ such that $g_i(t)=f(a_1,\dots a_{i-1}, t,  a_{i+1}\dots, a_n)$ is locally constant on $\tp(a_i/a_1,\dots, \hat{a_i}, \dots. , a_n)$.  The result follows. 
\end{proof}

Our next goal is to extend the above result to types not necessarily of full dp-rank in $(K/\CO)^r$. To do so, we introduce a notion that will be of importance in the sequel as well: 

\begin{definition}
	A set $S\sub (K/\CO)^n$ has \emph{minimal fibres} if $\dpr(S)=k$ and there exists a coordinate projection $\pi:S\to (K/\CO)^k$ and some $m\in \mathbb{N}$ such that for every $y\in (K/\CO)^k$, $|\pi^{-1}(y)|\leq m$ and there is no definable (possibly over additional parameters) $S_1\subseteq S$  such that $\dpr(S_1)=\dpr(S)$ and for every $y\in (K/\CO)^k$, $|f^{-1}(y)\cap X_1|<m$.
\end{definition}

\begin{remark}
   It is easy to see that  a set $S\sub (K/\CO)^n$ has minimal fibres if and only if  $\dpr(S)=k$ and there exists a coordinate projection $\pi:S\to (K/\CO)^k$ with finite fibres such that for any definable  $S'\sub S$,  $\dpr(\pi^{-1}(\pi(S')))\setminus S')<\dpr(S)$.
\end{remark}

Notice that if some $S\sub (K/\CO)^n$ of rank $k$ projects finite-to-one into $(K/\CO)^k$ then there exists $S'\sub S$,  $\dpr(S')=\dpr(S)$ (possibly defined over additional parameters) such that $S'$ has minimal fibres with respect to the same projection. Indeed, we just choose $S'\sub S$ with the property that the fibres of $\pi\restriction S'$ are of minimal size among all definable subsets of $S$ of maximal dp-rank. The following is a local version of this observation: 
\begin{lemma}\label{L:wma has finite fibres around a generic in K/O}
	Let $X\subseteq (K/\CO)^r$ be an $A$-definable subset. For any $a\in X$ with $\dpr(a/A)=\dpr(X)$ there exists  $X'\subseteq X$ definable over some set $C\supseteq A$ such that $a\in X'$, $\dpr(a/C)=\dpr(a/A)$ and $X'$ has minimal fibres.
\end{lemma}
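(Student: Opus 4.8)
The plan is to run the standard "shrink to minimal fibres locally" argument, but carefully tracking which coordinate projection is being used and making sure all choices of parameters only pass to larger sets over which $a$ remains generic. First I would set $k=\dpr(X)=\dpr(a/A)$ and, using Corollary \ref{C: dpr=acl-dim in K/O} (so $\dpr=\dim_{\acl}$), choose a coordinate subtuple $a'$ of $a$ of length $k$ with $a\in\acl(a'A)$ and $\dim_{\acl}(a'/A)=k$; let $\pi\colon(K/\CO)^r\to(K/\CO)^k$ be the corresponding coordinate projection, so $\dpr(\pi(a)/A)=k$ and $a\in\acl(\pi(a)A)$. By saturation there is a uniform bound on $|\pi^{-1}(y)\cap X|$ over the subset of $X$ on which these fibres are finite; shrinking $X$ to an $A$-definable subset still containing $a$ (the set where the fibre of $\pi$ has size $\le$ that bound — it contains $a$ since $a\in\acl(\pi(a)A)$ forces $\pi^{-1}(\pi(a))\cap X$ finite), we may assume $\pi\restriction X$ has uniformly bounded finite fibres, say of size $\le m_0$.

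Next I would do the descent on fibre size. Among all definable (over arbitrary parameters) $Y\subseteq X$ with $\dpr(Y)=k$ \emph{and $a\in Y$ and $\dpr(a/C_Y)=\dpr(a/A)$ where $C_Y$ is a set over which $Y$ is defined} — equivalently among all $C\supseteq A$ and $C$-definable $Y\ni a$ with $\dpr(a/C)=k$ — pick one, call it $X'$ over $C\supseteq A$, minimizing the maximum fibre size $m$ of $\pi\restriction X'$. The claim is that $X'$ has minimal fibres with the projection $\pi$. Indeed, suppose not: then there is some $C'$-definable $S_1\subseteq X'$ (for some parameter set $C'$, which we may take $\supseteq C$) with $\dpr(S_1)=k$ and every fibre $|\pi^{-1}(y)\cap S_1|<m$. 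The point is to show this contradicts minimality by producing a set of the required form strictly decreasing $m$. Since $\dpr(S_1)=k=\dpr(X')$, there is a point $b\in S_1$ with $\dpr(b/C')=k$, hence $\dpr(\pi(b)/C')=k$; but I need the witnessing point to be $a$ itself over the right parameters. To arrange this, note $\dpr(\pi(X')\setminus\pi(S_1))<k$ is \emph{not} automatic, so instead I would argue at the level of the generic point: consider the $C$-definable set $Z=\{y\in\pi(X'): |\pi^{-1}(y)\cap X'|=m\}$; if $\pi(a)\in Z$ then we must produce the contradiction, if $\pi(a)\notin Z$ then replacing $X'$ by $\pi^{-1}(\pi(X')\setminus Z)\cap X'$ (a $C$-definable set still containing $a$, still with $\dpr(a/C)=k$) strictly lowers the max fibre size, contradicting minimality of $m$. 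So $\pi(a)\in Z$, i.e.\ $|\pi^{-1}(\pi(a))\cap X'|=m$. Now if some $S_1$ as above existed, look at $S_1':=\pi^{-1}(\pi(S_1))\cap X'$; on its fibres over points of $\pi(S_1)$ we have $<m$ elements. We need $a\in S_1'$, i.e.\ $\pi(a)\in\pi(S_1)$; but $S_1'$ is definable over $C'\supseteq C$ and if $\pi(a)\notin\pi(S_1)$ then $a\in X'\setminus S_1'$, and $\dpr(X'\setminus S_1')$ could still be $k$ — this is exactly the subtle point, so instead of fixing one $S_1$ I take the family of all such and use that their union (which is type-definable but can be reduced to a single definable set by compactness / by saturation, as in the remark preceding the lemma) either omits the fibre of $\pi(a)$ — then $\pi(a)$ lies in a $C$-definable set $Z'$ of points whose fibre cannot be shrunk, so replace $X'$ by $\pi^{-1}(Z')\cap X'$ and contradict minimality via... — or covers it, giving a $C''$-definable $S_1\ni a$ with smaller fibres and $\dpr(a/C'')=k$ (since $a\in S_1\subseteq X'$ and $a$ was generic in $X'$ over $C$, and by Proposition \ref{P:sgenos for K/O}(3) we may pull the extra parameters back onto $a$ without dropping its dp-rank), again contradicting minimality of $m$. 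Either way we reach a contradiction, so $X'$ has minimal fibres, as required.

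The main obstacle, as the discussion above makes clear, is bookkeeping: the definition of "minimal fibres" quantifies over $S_1$ definable with \emph{arbitrary} extra parameters, while we need our final $X'$ to be definable over a set $C$ over which $a$ stays generic. The device that resolves this is Proposition \ref{P:sgenos for K/O}(3) (together with the "generic neighbourhood / pulling parameters back to preserve $\dpr(a,c/\cdot)$" mechanism it provides): whenever a competitor subset defined over large parameters threatens minimality, we can find a subset defined over a small $C\supseteq A$, still containing $a$, with $\dpr(a/C)=\dpr(a/A)$, achieving the same fibre reduction — so the minimum over the restricted class of parameter sets coincides with the minimum over all parameter sets. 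Once that is set up, everything else is the routine well-foundedness argument on the integer $m$. I would organize the write-up so that the reduction "arbitrary parameters $\leadsto$ parameters over which $a$ is generic" is stated once as a sub-claim and then invoked.
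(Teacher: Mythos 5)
There is a genuine gap at the decisive step. Your whole strategy rests on the claim that ``the minimum fibre size over parameter sets over which $a$ stays generic coincides with the minimum over all parameter sets'', and the device you invoke to prove it, Proposition \ref{P:sgenos for K/O}(3), cannot deliver it: that proposition requires the point to have \emph{full} dp-rank over the parameters defining the ambient set (and full rank $n$ inside $(K/\CO)^n$), whereas the whole difficulty with a competitor $S_1\subseteq X'$ defined over a large set $C'$ is precisely that $a$ need not be generic over $C'$ --- indeed $a$ need not lie in $S_1$ at all, and if it does, nothing guarantees $\dpr(a/C')=k$. Your fallback, passing to ``the family of all such $S_1$'' and reducing its union to a single definable set by compactness, is not justified (it is a union over arbitrarily varying parameters), and your argument literally breaks off at the crucial point (``contradict minimality via\dots''). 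There is also a smaller but real error earlier: $a\in\acl(\pi(a)A)$ does \emph{not} force $\pi^{-1}(\pi(a))\cap X$ to be finite; algebraicity of the single point $a$ over its projection says nothing about the other points of that fibre, and since $\acl$ in $K/\CO$ need not satisfy exchange you cannot dismiss this by a quick rank count.

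The paper resolves both issues at once by not chasing $a$ at all. One first replaces $X$ by the set of realizations of $p=\tp(a/A)$ inside $\tau^{-1}(B)$, where $B\subseteq (\tau_*p)(\CK)$ is a large ball given by Proposition \ref{P:sgenos for K/O}; since every realization of $p$ is algebraic over its $\tau$-image via one and the same formula, the $\tau$-fibres are then uniformly finite for free. Next one takes \emph{any} full-rank subset $X''\subseteq X$ with minimal fibres, over whatever parameters $C'$ it happens to be defined, picks $a'\in X''$ with $\dpr(a'/C')=\dpr(X)$, notes that $a'\models p$, and applies an automorphism $\sigma$ over $A$ sending $a'$ to $a$: then $\sigma(X'')$ contains $a$, is defined over $C:=\sigma(C')$, still has minimal fibres, and $\dpr(a/C)=\dpr(a'/C')=\dpr(X)$. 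This homogeneity step (together with the restriction to realizations of $p$) is exactly the missing ingredient; it would also complete your restricted-minimization scheme if applied to a generic point of the competitor $S_1$, but without it, or some substitute, your proof does not go through.
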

\begin{proof}
	Let $p=\tp(a/A)$ and set $\dpr(a/A)=\dim_{\acl}(a/A)=\dpr(X)=n$. Let $\tau:(K/\CO)^r\to (K/\CO)^n$ be a coordinate projection such that $\dim_{\acl}(\tau(a)/A)=\dim_{\acl}(a/A)$. Letting $q=\tau_*p$ it follows that $\dpr(q)=n$.   
	
	By Proposition \ref{P:sgenos for K/O}, there exists a large ball $B\subseteq q(\CK)$ containing $\tau(a)$; so without loss of generality $X=\tau^{-1}(B)\cap p(\CK)$, as it is a definable set of dp-rank $n$. Note that $\tau \restriction X$ has finite fibres and as $\dpr(X)=n$, there exists $X'\subseteq X$ satisfying that $\dpr(X')=\dpr(X)$ with minimal fibres for $\tau$; assume that $X'$ is definable over some parameter set $C'$.
	
	Let $a'\in X'$ be an element with $\dpr(a'/C')=\dpr(X')$. Since $a'\models p$ there exists an automorphism $\sigma$ over $A$ mapping $a'$ to $a$. Then $\sigma(X')\subseteq X$ contains $a$ and has minimal  fibres for $\tau$ and is definable over $C:=\sigma(C)$.
\end{proof}

For any  function $f:X\to Y$, we write $[x]_f=f^{-1}(f(x))$.
For the next result, recall that we view $\bF/\CO_{\bF}$ as embedded inside $K/\CO$.

\begin{lemma}\label{L:fibres is a coset in K/O}
	Let $X\sub (K/\CO)^r$ be an $A$-definable set with minimal fibres.  Assume that $\dpr(X)=n$ and let $a\in X$ be such that $\dpr(a/A)=n$. Then there exists an $A$-definable subset $X_1\sub X$ with $a\in X_1$, a finite subgroup $G_a\sub (\bF/\CO_{\bF})^{r}$  and a coordinate projection $\tau:(K/\CO)^r\to (K/\CO)^n $ such that for every $b\in X_1$, $[b]_\tau=G_a+b$.
\end{lemma}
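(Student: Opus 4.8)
\textbf{Proof strategy for Lemma \ref{L:fibres is a coset in K/O}.}

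The plan is to pin down the fibres of $\tau$ near $a$ by combining the local affineness of definable functions on $K/\CO$ (Proposition \ref{P: n-dim affine}) with the minimality of the fibres. First I would set $k = \dpr(X) = n$ and fix a coordinate projection $\tau\colon (K/\CO)^r \to (K/\CO)^n$ witnessing that $X$ has minimal fibres, so that $\tau\restriction X$ is $m$-to-one for some $m\in\Nn$ generically, with $m$ minimal in the strong sense of the definition. Write $X$ in coordinates so that $\tau$ is projection onto the first $n$ coordinates; the remaining $r-n$ coordinates of a point $b\in X$ are then (finitely many) functions of $\tau(b)$. Concretely, for $a\in X$ with $\dpr(a/A)=n$, shrink $X$ (staying over $A$, using $\acl$-genericity of $a = \tau(a)$ essentially) so that after this shrinking each of the $r-n$ non-projected coordinates is given, on a ball around $\tau(a)$, by a definable \emph{function} $f_j$ of $\tau(b)$ — this uses that $\dpr(a/A)=n$ forces $\tau(a)$ to be in the interior of $\tau(X)$ (Lemma \ref{L:full dpr is interior in K/O}) and that a finite-to-one map restricted near a generic point, after passing to a definable subset, can be split into graphs of functions. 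Here I would have to be a little careful: in general one only gets that $[b]_\tau$ has constant size $m$ near $a$; to get \emph{functions} one passes to the subset where, say, the fibre-elements are separated, which is legitimate because it still contains $a$ and is $A$-definable after choosing a definable selection.

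Next, apply Proposition \ref{P: n-dim affine} to each of these finitely many functions $f_j\colon (K/\CO)^n \to K/\CO$ at the type $p = \tp(a/A)$ (which has $\dpr(p)=n$): there is a scalar-endomorphism $\lambda^{(j)}$, definable over $\CO_\bF$, such that $f_j - \lambda^{(j)}$ is locally constant on $p(\CK)$. The key point is that \emph{both} the affine part $\lambda^{(j)}$ and, after shrinking to a ball, the constant are controlled: on a suitable ball $U\ni \tau(a)$ we have $f_j(y) = \lambda^{(j)}(y - \tau(a)) + f_j(\tau(a))$. Now compare two points $b, b'\in X_1$ with $\tau(b)=\tau(b')=y$: these are two of the $m$ elements of the fibre over $y$, corresponding to two \emph{different} selection functions, say $f$ and $g$ (each an $(r-n)$-tuple of the $f_j$'s), and $b - b' = (0,\dots,0, f(y)-g(y))$ in the first $n$ coordinates being zero. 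The difference $f - g$ of two such affine functions with the \emph{same} linear part (this is where I expect to need an extra argument — see below) is constant on $U$, so $b - b'$ is a fixed element of $(K/\CO)^r$, independent of $y$. Doing this for all pairs, the set $G_a := \{b - b_0 : b \in [b_0]_\tau\}$ for a base point $b_0$ in the fibre is a fixed finite subset of $\{0\}^n \times (K/\CO)^{r-n}$, and it is a subgroup because $\tau\restriction X_1$ being $m$-to-one with $X_1$ closed under the relevant translations (which one arranges by a final shrinking: replace $X_1$ by $\bigcap_{g\in G_a}(X_1 - g)$, still containing $a$ over $A$) forces the fibre over $y$ to be exactly $f(y) + G_a$ and the fibres to match up. Finiteness of $G_a$ together with Fact \ref{F:F is dense in Z-radius balls}(3) — or rather the fact that $G_a$ consists of torsion elements of $(K/\CO)^r$, since $m\cdot G_a$ is again a finite set of differences that must be $\{0\}$ by minimality — gives $G_a \subseteq (\bF/\CO_\bF)^r$.

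The main obstacle, I expect, is showing that the finitely many affine representatives of the fibre over a generic $y$ all share the \emph{same linear part} $\lambda^{(j)}$ — equivalently, that the scalar-endomorphism attached to each branch of the multifunction is the same, so that pairwise differences are genuinely constant (not merely affine). I would handle this by a "no small index" / minimality argument: if two branches had different linear parts $\lambda \ne \lambda'$, then $\lambda - \lambda' = \lambda_c$ for some $c\in\CO$ with $c\notin\m$ (otherwise the germs agree by Lemma \ref{L:germs of endo in K/O} and Example \ref{example-descent}), so $\lambda_c$ is "generically injective" on balls; then the two branches separate, and one can carve out a definable subset of $X$ of the same dp-rank $n$ on which the fibre size has strictly dropped — contradicting minimal fibres. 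This last step is really the crux and is where the hypothesis "$X$ has minimal fibres" does its work; the rest is bookkeeping with balls and repeated applications of Lemma \ref{L:generic over some gamma in K/O, K} / Proposition \ref{P:sgenos for K/O} to keep all the shrinkings generic (so that $a$ stays in $X_1$ and $\dpr(a/A)=n$ is preserved, and at the end $X_1$ is $A$-definable by the usual automorphism/conjugation trick as in the proof of Lemma \ref{L:wma has finite fibres around a generic in K/O}).
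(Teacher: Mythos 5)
Your route goes through the local affineness machinery (splitting the fibre over a ball around $\tau(a)$ into definable branch functions and applying Proposition \ref{P: n-dim affine} to each), but the step you yourself flag as the crux --- that all branches have the \emph{same} linear part, so that pairwise differences are constant --- is not proved, and the sketch offered for it is flawed. Two scalar endomorphisms $\lambda_c,\lambda_{c'}$ have the same germ at $0$ if and only if $v(c-c')>\mathbb{Z}$, not if and only if $c-c'\in\m$ (e.g.\ $\lambda_p$ has finite kernel and is nonzero on every ball around $0$), so ``otherwise the germs agree'' is false as stated. More importantly, the only way to repair the crux is the separation argument the paper uses directly: if two elements of the fibre over a generic point differed by something of valuation $<\mathbb{Z}$, Proposition \ref{P:sgenos for K/O} would give a generic ball containing one but not the other, and the set of points whose fibre meets that ball in fewer elements would have full dp-rank, contradicting minimal fibres; then Fact \ref{F:F is dense in Z-radius balls}(2) puts every fibre difference in $(\bF/\CO_\bF)^r$. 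Once you have that, the entire affine superstructure is unnecessary: since $G_a$ is a finite subset of the $\emptyset$-definable set $(\bF/\CO_\bF)^r$, the set $X_1=\{b\in X: G_b=G_a\}$ (where $G_b=\{x-b:x\in[b]_\tau\}$) is $A$-definable, contains $a$, and base-point invariance of the difference set within $[a]_\tau$ immediately makes $G_a$ a subgroup. This is the paper's whole proof; Proposition \ref{P: n-dim affine} is only needed later, for Lemma \ref{L:sets are locally cosets in K/O}.

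Two further gaps in your write-up would also need fixing even granting the crux. First, the justification that $G_a$ is torsion (``$m\cdot G_a$ is again a finite set of differences that must be $\{0\}$ by minimality'') does not make sense; the legitimate route --- finite \emph{subgroup} implies torsion implies contained in $(\bF/\CO_\bF)^r$ by Fact \ref{F:F is dense in Z-radius balls}(3) --- requires the subgroup property first, and your argument for that (replacing $X_1$ by $\bigcap_{g\in G_a}(X_1-g)$) is incomplete: that intersection is only definable over $A\cup G_a$, and to run the base-point-change argument you must check that both $b$ and $b+g$ survive the shrinking. Second, the $A$-definability of the final $X_1$ cannot come from ``the automorphism trick as in Lemma \ref{L:wma has finite fibres around a generic in K/O}'': that trick only produces a set definable over some $C\supseteq A$, whereas the statement demands definability over $A$. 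In the paper this is exactly where $G_a\subseteq(\bF/\CO_\bF)^r\subseteq\dcl(\emptyset)$ is used, so you must establish that containment before (or independently of) the ball-shrinkings over extra parameters, not as an afterthought.
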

\begin{proof}
	Let $\tau:X\to (K/\CO)^n$ be a coordinate  projection witnessing that $X$ has minimal  fibres.  For $b\in X$, set  $G_b=\{x-b:x\in [b]_\tau\}$. 
	
	Let $a\in X$, $\dpr(a/A)=n$. We claim that for all $b\in [a]_{\tau}$, $v(b-a)\in \mathbb Z$. Indeed,  assume towards a contradiction  that there is $b\in [a]_\tau$ with $v(b-a)<\mathbb{Z}$. By Proposition \ref{P:sgenos for K/O}  we can find a ball $U$ containing $a$, defined over some parameters $C$ such that $\dpr(b/C)=n$ and $b\notin U$. Thus $|[a]_\tau\cap U|<|[a]_\tau|$. Now the set $S'=\{x\in X:|[x]_\tau\cap U|<|[a]_\tau]|\}$ contradicts the  assumption that $X$ has minimal fibres. 
	
	Therefore, $v(G_a)\subseteq \mathbb{Z}$ and by Fact \ref{F:F is dense in Z-radius balls}(2), $G_a\subseteq \bF/\CO_{\bF}$ and so it is $\emptyset$-definable. Thus, replacing $X$ by a subset $X_1$ of the same dp-rank, we may assume that $G_a=G_b$ for all $b\in X$. In particular, $G_a=G_b$ for any $b\in [a]_\tau$. It easily follows that $G_a$ is a subgroup and $[a]_\tau$ its coset.
\end{proof}

\begin{definition}
	
	For $a\in (K/\CO)^r$, let 
	\[
	Z(a):=\{x\in (K/\CO)^r: v(x-a)\in \mathbb{Z}\cup \{\infty\}\}. 
	\]
	For $X\sub (K/\CO)^r$ and $a\in X$ we let $Z_X(a):=Z(a)\cap X$. 
\end{definition}

Notice that $Z_X(a)$ is a $\bigvee$-definable set. The next lemma shows that $Z_X(a)$ does not depend on the choice of $X$: 

\begin{lemma}
	Let $X_1\subseteq X\subseteq (K/\CO)^r$ be $A$-definable subsets with minimal fibres and $a\in X_1$ with $\dpr(a/A)=n=\dpr(X)$. Then $Z_{X}(a)=Z_{X_1}(a)$.
\end{lemma}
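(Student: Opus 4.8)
The plan is to show the two sets coincide by proving containment in both directions, using the minimal-fibres hypothesis to control which cosets lie in each set. First I would recall that, since $a \in X_1 \subseteq X$ and $\dpr(a/A) = n = \dpr(X) = \dpr(X_1)$, Lemma \ref{L:fibres is a coset in K/O} applies to both $X$ and $X_1$ (relative to a suitable coordinate projection $\tau$, which we may take to be the same for both by shrinking). This produces a finite subgroup $G_a \subseteq (\bF/\CO_\bF)^r$ and $A$-definable subsets on which each $\tau$-fibre through a generic point is the coset $G_a + b$; note in particular that $v(G_a) \subseteq \Zz \cup \{\infty\}$, so $G_a + a \subseteq Z(a)$.

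The containment $Z_{X_1}(a) \subseteq Z_X(a)$ is immediate from $X_1 \subseteq X$. For the reverse inclusion, take $b \in Z_X(a)$, so $b \in X$ and $v(b-a) \in \Zz \cup \{\infty\}$. The key point is that $b$ lies in the same $\tau$-fibre as $a$: indeed, I would argue that if $\tau(b) \neq \tau(a)$, then (since $\tau$ is a coordinate projection) the tuple $b - a$ has a coordinate of value in $\Zz$ outside the projected coordinates, but the minimal-fibres analysis in Lemma \ref{L:fibres is a coset in K/O} — more precisely, the argument showing that $\tau$-fibres through generic points consist only of $\Zz$-close translates — combined with a genericity/automorphism argument forces $b$ into $[a]_\tau$. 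Actually the cleaner route: since $\dpr(a/A) = n = \dpr(\tau(a)/A)$ (as $\tau$ witnesses minimal fibres), we have $a \in \acl(A, \tau(a))$; if $\tau(b) = \tau(a)$ then $b \in [a]_\tau \subseteq X_1$ by the coset description (after noting $[a]_\tau$ is the same coset $G_a + a$ computed inside either set), hence $b \in Z_{X_1}(a)$. So the whole proof reduces to showing $Z_X(a) \subseteq [a]_\tau$, i.e. that no point of $X$ at $\Zz$-distance from $a$ can project to a different point of $(K/\CO)^n$ under $\tau$.

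The main obstacle, then, is precisely this last implication: ruling out $b \in X$ with $v(b-a) \in \Zz$ but $\tau(b) \neq \tau(a)$. Here I would run the same contradiction as in the proof of Lemma \ref{L:fibres is a coset in K/O}: if such $b$ existed with $\tau(b) \neq \tau(a)$, then applying Lemma \ref{L:generic over some gamma in K/O, K} / Proposition \ref{P:sgenos for K/O}(2) one finds a $\Zz$-generic ball $U$ around $\tau(a)$, defined over parameters $C \supseteq A$ with $\dpr(\tau(a)/C) = n$ (hence $\dpr(a/C) = n$), such that $\tau(b) \notin U$; but then pulling back, $|[a]_\tau \cap \tau^{-1}(U)| < |[a]_\tau|$ on a $C$-definable subset of full dp-rank, contradicting that $X$ has minimal fibres. (One must be slightly careful: $b$ being $\Zz$-close to $a$ means $\tau(b)$ is $\Zz$-close to $\tau(a)$, so a genuinely generic-radius ball separates them — this is where the hypothesis $v(b-a) \in \Zz$ rather than $v(b-a) < \Zz$ gets used, since a large ball around $\tau(a)$ would contain $\tau(b)$.) Wait — I should reverse this: we want to derive that $\tau(b) = \tau(a)$, so I assume $\tau(b) \neq \tau(a)$ and must separate them by a \emph{large} ball $U$ (radius $< \Zz$) that contains $\tau(a)$ but not $\tau(b)$; this is possible exactly because $v(\tau(b) - \tau(a)) \in \Zz$, so any large ball around $\tau(a)$ excludes $\tau(b)$, and Proposition \ref{P:sgenos for K/O}(2) lets us take $U$ defined over $C$ with $\dpr(a/C) = n$. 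The resulting subset $\{x \in X : |[x]_\tau \cap U| < |[x]_\tau|\}$ is $C$-definable, contains $a$, has dp-rank $n$, and strictly reduces fibre size — contradicting minimal fibres. This closes the argument.
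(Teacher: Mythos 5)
Your reduction rests on a false claim: it is not true that every $b\in Z_X(a)$ lies in the fibre $[a]_\tau$. Fibres of $\tau$ are finite (of size at most $m$), whereas $Z_X(a)$ is typically infinite: already for $X=X_1$ a large ball in $(K/\CO)^n$ (so $r=n$, $\tau$ the identity, all fibres singletons) one has $Z_X(a)=a+(\mathbb{F}/\mathcal{O}_{\mathbb{F}})^n$ while $[a]_\tau=\{a\}$; more generally Lemma \ref{L:sets are locally cosets in K/O} identifies $Z_{X_1}(a)$ with a coset of $H(\mathbb{F}/\mathcal{O}_{\mathbb{F}})$, which is usually infinite. Consequently your attempted contradiction with minimal fibres cannot succeed: all elements of a fibre $[x]_\tau$ have the \emph{same} image under $\tau$, so a ball $U\subseteq (K/\CO)^n$ containing $\tau(a)$ but not $\tau(b)$ removes no element of any fibre it meets — the set $\{x\in X: |[x]_\tau\cap\tau^{-1}(U)|<|[x]_\tau|\}$ is empty, hence no violation of minimality arises. (In Lemma \ref{L:fibres is a coset in K/O} the same trick works precisely because there the two points lie in one fibre and the separating ball is taken in the ambient $(K/\CO)^r$.) There is also a secondary flaw: $v(b-a)\in\Zz$ only says the minimum over all $r$ coordinates is an integer, so $v(\tau(b)-\tau(a))$ may be $>\Zz$, in which case every large ball around $\tau(a)$ contains $\tau(b)$ and your separation step is unavailable.

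The correct mechanism does not pass through $[a]_\tau$ at all, and points at $\Zz$-distance from $a$ in \emph{other} fibres are exactly what must be captured. Since $\dpr(\tau(a)/A)=n$, Lemma \ref{L:full dpr is interior in K/O}(1) gives a large ball $V\subseteq(K/\CO)^n$ with $\tau(a)\in V\subseteq\tau(X_1)$. By minimality of the fibres of $X$, the full $X$-fibre over a generic point of $X_1$ is contained in $X_1$ (as you observed for $a$), and after shrinking $X_1$ — harmless, since that only strengthens the inclusion $Z_X(a)\subseteq Z_{X_1}(a)$ to be proved — one may assume $[x]_\tau\subseteq X_1$ for all $x\in X_1$, whence $\tau^{-1}(V)\cap X\subseteq X_1$. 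Now if $b\in X$ and $v(b-a)\in\Zz\cup\{\infty\}$, then $v(\tau(b)-\tau(a))\geq v(b-a)>r(V)$ because $r(V)<\Zz$, so $\tau(b)\in V$ and therefore $b\in X_1$, i.e. $b\in Z_{X_1}(a)$. This genericity-plus-interiority argument is the one the paper uses; your proposal needs to be rebuilt along these lines.
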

\begin{proof}
	Let $\tau:X\to (K/\CO)^n$ be a coordinate projection witnessing minimal fibres. It follows that $[a]_\tau\cap X_1=[a]_\tau$. By replacing $X_1$ with a subset of minimal fibres, still of full dp-rank we may assume that for all $x\in X_1$ we have $[x]_\tau\cap X_1=[x]_\tau$.
	
	By Lemma \ref{L:full dpr is interior in K/O}(1), we have that $\tau(a)$ is in the interior of $\tau(X_1)$. Thus, there exists a ball $V\subseteq (K/\CO)^n$, with $\tau(a)\in V\subseteq \tau(X_1)$. By our choice of $X_1$, we have $\tau^{-1}(V)\subseteq X_1$; the result follows.
\end{proof}


The next lemma is the main result of this section. It states that definable sets are locally affine at all generic points: 

\begin{lemma}\label{L:sets are locally cosets in K/O}
	 Let $X\sub (K/\CO)^r$ be $A$-definable with minimal fibres, $\dpr(X)=n$. Let $a\in X$ be such that $\dpr(a/A)=n$. Then there exists $C\supseteq A$, with $\dpr(a/C)=n$, a $C$-definable ball  $U\sub (K/\CO)^r$, $0\in U$, (hence $U$ is a subgroup), a $\0$-definable subgroup $H\sub (K/\CO)^r$, and a $C$-definable $X_1\sub X$ containing $a$,   such that $X_1=a+(H\cap U)$. In particular,  $Z_{X_1}(a)=a+H(\bF/\CO_\bF)$. 
\end{lemma}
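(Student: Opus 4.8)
The plan is to strip the finite fibres of $X$ using Lemma \ref{L:fibres is a coset in K/O}, pass to a genuine (single-valued) section via definable Skolem functions, linearise that section with Proposition \ref{P: n-dim affine}, and finally carve out the desired coset by intersecting with a generic-radius ball around $a$ supplied by Proposition \ref{P:sgenos for K/O}(2). Concretely, I would first apply Lemma \ref{L:fibres is a coset in K/O} to obtain an $A$-definable $X'\sub X$ with $a\in X'$, a finite $\0$-definable subgroup $G_a$, and a coordinate projection $\tau$ with $[c]_\tau=c+G_a$ for every $c\in X'$; after permuting coordinates assume $\tau$ is projection onto the first $n$ coordinates, write $(K/\CO)^r=(K/\CO)^n\times(K/\CO)^{r-n}$ and $a=(b,a_w)$ with $b=\tau(a)$. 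Since $\tau$ is finite-to-one on $X$ we have $\dpr(b/A)=n$, $\dpr(X')=n$, $X'+G_a=X'$, and $G_a\sub\{0\}^n\times(\bF/\CO_\bF)^{r-n}$. Lifting definable finite sets to $K^m$ and applying the definable Skolem functions there, choose an $A$-definable section $f\colon\tau(X')\to(K/\CO)^{r-n}$ with $(u,f(u))\in X'$; since $f(b)\in a_w+G_a$ and $G_a\sub\dcl(\0)$, subtracting a constant we may assume $f(b)=a_w$, so that $X'=\{(u,f(u)+g):u\in\tau(X'),\ g\in G_a\}$.

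Next I would linearise $f$. Applying Proposition \ref{P: n-dim affine} to each coordinate $f_j$ of $f$ at $p:=\tp(b/A)$ (which has dp-rank $n$ and is concentrated on $\dom f_j=\tau(X')$) yields $\CO_\bF$-definable scalar endomorphisms $\lambda_1,\dots,\lambda_{r-n}$ with $f_j-\lambda_j$ locally constant on $p(\CK)$; put $L=(\lambda_1,\dots,\lambda_{r-n})$, a $\0$-definable homomorphism $(K/\CO)^n\to(K/\CO)^{r-n}$. Combining local constancy at $b$ with the fact that $b$ is in the interior of the $A$-definable set $\tau(X')$ (Lemma \ref{L:full dpr is interior in K/O}(1)), fix $\gamma<\Zz$ with $B_{\geq\gamma}(b)\sub\tau(X')$ and $f(u)=L(u)+d$ for all $u\in B_{\geq\gamma}(b)$, where $d:=f(b)-L(b)$; equivalently $f(u)=a_w+L(u-b)$ on $B_{\geq\gamma}(b)$. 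Set $H:=\{(s,w)\in(K/\CO)^r:w-L(s)\in G_a\}$, the preimage of $G_a$ under the $\0$-definable homomorphism $(s,w)\mapsto w-L(s)$; this is a $\0$-definable subgroup.

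Now I would produce $C$ and $X_1$. Apply Proposition \ref{P:sgenos for K/O}(2) to $a\in(K/\CO)^r$ with the above $\gamma$: get $C\supseteq A$ and a $C$-definable ball $V\sub B_{\geq\gamma}(a)$ with $a\in V$ and $\dpr(a/C)=\dpr(a/A)=n$; let $\gamma':=r(V)\geq\gamma$ and $U:=B_{\geq\gamma'}(0)\sub(K/\CO)^r$, a $C$-definable ball containing $0$ (hence a subgroup), so $V=a+U$. Put $X_1:=X'\cap V$, a $C$-definable subset of $X$ with $a\in X_1$. For $(u,w)\in X'\cap V$ one has $v(u-b)\geq\gamma'\geq\gamma$, hence $u\in B_{\geq\gamma}(b)\sub\tau(X')$ and $w\in f(u)+G_a=a_w+L(u-b)+G_a$; since $L$ has coefficients in $\CO_\bF$ we get $v(L(u-b))\geq v(u-b)\geq\gamma'$, and $v(g)\geq\gamma'$ for $g\in G_a$ (as $v(g)\in\Zz\cup\{\infty\}$ and $\gamma'<\Zz$), so the constraint $v(w-a_w)\geq\gamma'$ is automatic. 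Thus $X'\cap V=\{(u,a_w+L(u-b)+g):v(u-b)\geq\gamma',\ g\in G_a\}$; the same computation gives $H\cap U=\{(s,L(s)+g):v(s)\geq\gamma',\ g\in G_a\}$, whence $X_1=a+(H\cap U)$. Finally, $Z(0)=(\bF/\CO_\bF)^r$: if $v(x)\in\Zz\cup\{\infty\}$ then every nonzero coordinate of $x$ has valuation in $[v(x),0)$, an interval that (since $\Gamma\equiv\Zz$) contains only standard integers, so each coordinate lies in $\bF/\CO_\bF$ by Fact \ref{F:F is dense in Z-radius balls}(2); as $(\bF/\CO_\bF)^r\sub U$, we obtain $Z_{X_1}(a)=X_1\cap Z(a)=a+\bigl(H\cap U\cap(\bF/\CO_\bF)^r\bigr)=a+H(\bF/\CO_\bF)$.

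The main work is this last assembly. One has to check that intersecting $X'$ with the single ball $V$ recovers the \emph{whole} coset $a+(H\cap U)$, which rests on the observation that the ball-constraint on the last $r-n$ coordinates is already forced by the one on the first $n$ (because $L$ does not decrease valuation and torsion elements have valuation above $\gamma'$). And one has to keep the dp-rank bookkeeping honest, so that $X_1$ is definable over a parameter set $C$ over which $a$ still has full dp-rank $n$: this is precisely why Proposition \ref{P:sgenos for K/O}(2) is applied to $a$ itself, producing a generic-radius ball $V\ni a$, rather than naming $a$ or working over the (possibly lower rank) point $b$.
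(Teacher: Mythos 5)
There is a genuine gap at the very first step: the existence of an $A$-definable \emph{section} of $\tau\restriction X'$. Definable Skolem functions in $\CK$ are Skolem functions for the field sort with field-sort parameters; lifting the fibre condition to $K^{r}$ and Skolemizing produces a definable map $c:K^n\to K^{r-n}$ with $(\pi(x),\pi(c(x)))\in X'$, but the value $\pi(c(x))$ may depend on the chosen representative $x\in K^n$ of $u=\pi(x)$, so it need not descend to a function $f:\tau(X')\to (K/\CO)^{r-n}$ of $u$ alone. (This is exactly why the paper's lifting statement only concerns functions whose \emph{arguments} are real tuples: there the parameters of the Skolem problem live in $K^n$.) Nor can you argue that a finite fibre --- a coset of the $\0$-definable group $G_a$ --- automatically contains a point definable over $Au$: in the imaginary sort $K/\CO$ one does not have $\acl=\dcl$, and the paper's own Example \ref{L: not lsi} shows that finite-to-one maps in this territory can fail to admit definable one-sided inverses on any infinite set (for a residually quadratic $\bF$, the $p^{2}$-to-one quotient $K/\CO\to (K/\CO)/C_p$ admits no definable injection back into any $(K/\CO)^m$, hence no definable section, even locally). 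So ``choose an $A$-definable section $f$'' is not justified, and with it the whole linearisation of $f$ via Proposition \ref{P: n-dim affine} has no object to apply to.

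The paper's proof is structured precisely to sidestep this choice problem: instead of selecting one point of each fibre, it uses the symmetric function $f(x)=\sigma\bigl(\sum_{b\in\tau^{-1}(x)}b\bigr)$, which is manifestly well defined on $\tau(X)$ (no choice is made) and equals $k x'$ up to an $\0$-definable constant, where $k=|G_a|$ and $x'$ is any fibre point; this genuine $(K/\CO)^n$-valued function is then linearised by Proposition \ref{P: n-dim affine} on a generic ball $V$ (made generic over $a$ via Proposition \ref{P:sgenos for K/O}(1)), and the subgroup $H$ is obtained by pulling the affine graph back under the $\0$-definable endomorphism $(x,y)\mapsto (x,ky)$ --- the multiplication by $k$ re-inflates the fibres and recovers exactly the coset structure you were trying to encode through $H=\{(s,w):w-L(s)\in G_a\}$. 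Your closing assembly (intersecting with a generic ball around $a$ and checking that the constraint on the last $r-n$ coordinates is automatic because $L$ has $\CO_\bF$-coefficients and torsion elements have valuation in $\Zz$) is fine in spirit and close to what the paper does, but as written the argument does not get off the ground without either proving that a definable section exists in your specific situation (which would need a real argument and is false in closely analogous situations) or replacing the section by a choice-free construction such as the fibre-sum.
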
 
\begin{proof}
	By Lemma \ref{L:fibres is a coset in K/O}, after reducing $X$ and permuting the coordinates, we may assume that the fibres  of the projection $\tau$ on the first $n$-coordinates are all cosets of the same finite subgroup $G\sub (\bF/\CO_\bF)^r$.
	Since, by definition, $\tau(G)=0$ we identify $G$ with a subgroup of $(\bF/\CO_\bF)^{r-n}$. We also let $\sigma: (K/\CO)^r\to (K/\CO)^{r-n}$ be the projection onto the last $r-n$ coordinates. 
	
	Let $k=|G|$, $(x,x')\in X$ such that $x\in \tau(X)$, denote 
	\[f(x):=\sigma\left( \sum_{b\in \tau^{-1}(x)}b \right)\in (K/\CO)^{r-n},\]
	and note that $f(x)=kx'$, as the sum over all elements of $G$ is $0$.  Let $p=\tp(\tau(a)/A)\vdash (K/\CO)^n$. Since in $\CK$ every definable function $f:(K/\CO)^n\to K/\CO$ lifts to $K$, we may apply Proposition \ref{P: n-dim affine} to find some ball $V\subseteq p(\CK)$ with $\tau(a)\in V$, such that $f$ is a translate of a scalar-endomorphism, on $V$. By Proposition \ref{P:sgenos for K/O}(1), we may assume that $V$ is defined over some $C\supseteq A$ with $\dpr(\tau(a)/C)=n$. Let $Y:=\mathrm{Graph}(f\restriction V)$. The graph of $f$ is a coset of a $\0$-definable subgroup $H_1\sub (K/\CO)^r$, and thus 
	\[
	Y=(b+H_1)\cap (V\times (K/\CO)^{r-n}),
	\] 
	for some $b\in (K/\CO)^r$. Let $X_1:=\{(x,y)\in X: (x, ky)\in Y\}$. Note that $X_1$ is definable over $C$ and contains $a$, and hence $\dpr(X_1)=n$.
	
	Because the map $(x,y)\mapsto (x,ky)$ is a $\0$-definable endomorphism of $(K/\CO)^n\times (K/\CO)^{n-r}$, the pre-image of $b+H_1$ under the map $(x,y)\to (x,ky)$ is of the form $a+H$ for some $\0$-definable subgroup $H$, and the pre-image of $V$ is a ball $V'\ni a$ in $(K/\CO)^r$, which we may write $V'=a+U$, for some ball $U\ni 0.$ Thus,  $X_1=a+H\cap a+U=a+(H\cap U)$. By Fact \ref{F:F is dense in Z-radius balls}(2), we now get that $Z_{X_1}(a)=a+H(\bF/\CO_\bF)$.
\end{proof}    

\begin{remark}
		In the statement of the previous lemma, we cannot require that the ball $U$ is $\0$-definable. For example,  if $X$ itself
		was a ball around $0$, which is smaller than all $\0$-definable balls, then we cannot find a $\0$-definable such $U$. 
\end{remark}

\begin{corollary}\label{C: KO affine}
	If $f:(K/\CO)^r\to K/\CO$ is an $A$-definable partial function and $a\in \dom(f)$ is such that $\dpr(a/A)=\dpr(\dom(f))$ then there exists $C\supseteq A$, a $C$-definable coset $X:=H+d$ and a $\0$-definable scalar endomorphism $\ell:(K/\CO)^r\to K/\CO$ such that
	\begin{enumerate}
	    \item $\dpr(a/C)=\dpr(a/A)$. 
	    \item $a\in X$. 
	    \item $f\restriction X=\ell(x-a)+\ell(a)$. 
	\end{enumerate}
	
\end{corollary}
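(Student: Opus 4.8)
The plan is to reduce everything to Proposition \ref{P: n-dim affine} by passing to the graph of $f$ and invoking Lemma \ref{L:sets are locally cosets in K/O}. Write $n:=\dpr(\dom f)$, so that $a$ is a generic point of $\dom f$. (If $n=r$ one is done at once: $\tp(a/A)$ is large, so Proposition \ref{P: n-dim affine} --- applicable since $\CK$ has definable Skolem functions, hence $f$ lifts to $K$ --- together with Proposition \ref{P:sgenos for K/O}(1) gives a $\0$-definable scalar endomorphism $\ell$ and a $C$-definable ball $V\ni a$, $C\supseteq A$, $\dpr(a/C)=\dpr(a/A)$, on which $f(x)=\ell(x-a)+f(a)$; take $X:=V$, a coset of the ball $V-a\ni 0$.) In general, I would pass to $\mathrm{Graph}(f)\subseteq (K/\CO)^{r+1}$: since $f$ is a function, $\dpr(\mathrm{Graph}(f))=n$ and $(a,f(a))$ is generic in $\mathrm{Graph}(f)$. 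By Lemma \ref{L:wma has finite fibres around a generic in K/O}, after replacing $\mathrm{Graph}(f)$ by a subset defined over some $C_0\supseteq A$ with $\dpr((a,f(a))/C_0)=n$ and still containing $(a,f(a))$, I may assume it has minimal fibres; every such subset is $\mathrm{Graph}(f\restriction X_0)$ for some $X_0\subseteq\dom f$. Applying Lemma \ref{L:sets are locally cosets in K/O} to this graph at $(a,f(a))$ yields $C\supseteq C_0$ with $\dpr(a/C)=\dpr(a/A)$, a $\0$-definable subgroup $H\le (K/\CO)^{r+1}$, a $C$-definable ball $U\ni 0$, and a $C$-definable $X_1\ni a$ with $\mathrm{Graph}(f\restriction X_1)=(a,f(a))+(H\cap U)$.

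The next step is to read off the affine behaviour of $f$ itself. Let $\pi\colon (K/\CO)^{r+1}\to (K/\CO)^r$ be the projection to the first $r$ coordinates. Since $\mathrm{Graph}(f\restriction X_1)$ is a function graph and $\pi$ is linear, $\pi$ is injective on $H\cap U$. As $U$ is a ball around $0$ it contains $(\bF/\CO_\bF)^{r+1}$, hence all torsion of $(K/\CO)^{r+1}$, and every non-trivial $\0$-definable subgroup of $K/\CO$ has non-trivial torsion (Fact \ref{F:F is dense in Z-radius balls}(3) and Lemma \ref{L:Tor of ball}(2)); therefore $\ker(\pi\restriction H)$, a $\0$-definable subgroup of $K/\CO$ meeting $U$ only in $0$, is trivial, and $\pi$ is injective on all of $H$. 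Consequently $H=\mathrm{Graph}(\psi)$ for a $\0$-definable group homomorphism $\psi\colon H'\to K/\CO$, where $H':=\pi(H)\le (K/\CO)^r$ is a $\0$-definable subgroup with $\dpr(H')=n$, and $f(x)=f(a)+\psi(x-a)$ for all $x\in X_1$.

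It remains to replace $\psi$, on a suitable sub-coset through $a$, by a $\0$-definable scalar endomorphism $\ell\colon (K/\CO)^r\to K/\CO$; then $X:=a+\bigl((X_1-a)\cap V\bigr)$, where $V\ni 0$ is the ball on which $\psi$ and $\ell$ agree, is a $C$-definable coset containing $a$ with $\dpr(a/C)=\dpr(a/A)$, and for $x\in X$ we get $f(x)=f(a)+\psi(x-a)=f(a)+\ell(x-a)$, which is (1)--(3). To produce $\ell$ I would run, for $\psi$, the argument from the proof of Lemma \ref{L:sets are locally cosets in K/O}: pick (Corollary \ref{C: dpr=acl-dim in K/O}) a finite-to-one coordinate projection $\tau\colon H'\to (K/\CO)^n$, average $\psi$ over its fibres to get a $\0$-definable function on the full-rank subgroup $\tau(H')\le (K/\CO)^n$, apply Proposition \ref{P: n-dim affine} to it at a point generic over $\CO_\bF$, pull back through the relevant $\0$-definable endomorphism and intersect with $\mathrm{Graph}(\psi)$ (legitimate precisely because $\pi$ is injective on $H$), and use the additivity of $\psi$ to absorb both the translation and the additive constant.

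I expect this last step, extracting the $\0$-definable scalar endomorphism from the homomorphism $\psi$, to be the main obstacle: nothing conceptually new is needed, but it requires re-running the bookkeeping of Lemma \ref{L:sets are locally cosets in K/O} (the averaging, the division-by-$|\ker\tau|$ subtlety, and the $\0$-definability of $\ell$) in a slightly different packaging. The one genuinely clean new input is the injectivity of $\pi$ on $H$, which is exactly what turns the coset statement about $\mathrm{Graph}(f)$ into an affineness statement about $f$.
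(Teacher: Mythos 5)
Your proposal follows the paper's proof exactly: the paper's entire argument for this corollary is to pass to the graph of $f$, invoke Lemma \ref{L:wma has finite fibres around a generic in K/O} to arrange minimal fibres, and then apply Lemma \ref{L:sets are locally cosets in K/O} to the graph. The additional work you describe beyond that reduction --- the injectivity of the coordinate projection on $H$ via the torsion argument, and the extraction of the $\0$-definable scalar endomorphism with its division-by-$k$ bookkeeping --- is precisely what the paper's two-line proof leaves implicit, so your admitted hesitation at that final step is not a divergence from, nor a gap relative to, the paper's own argument.
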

\begin{proof}
	By Lemma \ref{L:wma has finite fibres around a generic in K/O} we may assume that the graph of $f$ has minimal fibres. We can now apply Lemma \ref{L:sets are locally cosets in K/O} to the graph of $f$.
\end{proof}

The next example shows that a group almost locally strongly internal to a distinguished sort $D$ need not be strongly internal to it. This distinction, that apparently can not be avoided, will have to be taken into account in our work here: 
\begin{example}\label{L: not lsi}
     Let $\mathbb F$ be a residual quadratic extension of $\qp$ and let $\CK\succ \mathbb{F}$ be a sufficiently saturated extension. Let $C_p\le K/\CO$ be a cyclic subgroup of order $p$, and $G:=(K/\CO)/C_p$. Then $G$ is locally almost strongly internal to $K/\CO$ but not locally strongly internal to $K/\CO$. 
     
     On the other hand, letting $H=\{x\in G: px=0\}$ the group $G/H$ is strongly internal to $K/\CO$.
\end{example}
\begin{proof}
    Since the quotient map $\pi: K/\CO\to G$ is finite-to-one it follows directly from  Lemma \ref{L:asi diagaram K/O}(2) that $G$ is almost strongly internal to $K/\CO$. 
    
    We now verify that $G$ is not locally strongly internal to $K/\CO$. Assume towards a contradiction that there exists a definable injection $f:X\to (K/\CO)^r$ with $X$ infinite. Let $V=\pi^{-1}(X)$ and $\widehat f:V\to (K/\CO)^r$ the (clearly, definable) lifting of $f$ to $K/\CO$. As $\dpr(V)=1$, we may assume -- shrinking $V$ if needed -- that $V$ is an open ball. Assume everything is definable over $\emptyset$. Let $a\in V$ with $\dpr(a)=1$. Shrinking $V$ further, we may apply Proposition \ref{P: K/O-affine}, to deduce that  $\widehat f:=(\widehat f_1,\dots, \widehat f_r)$ where each $\widehat f_i$ is of the form $L_i(x-a)+\widehat f_i(a)$ on $V$, for some  scalar endomorphism,  $L_i$.
    By Fact \ref{F:F is dense in Z-radius balls}(3) and Lemma \ref{L:Tor of ball}, $V+C_p=V$.  Because $\widehat f$ factors through $C_p$ on $V$, it follows that the scalar endomorphisms $L_i$ are all invariant under $C_p$, and therefore $C_p\sub \ker(L_i)$ for $i=1,\ldots, r$.
    
    Since $\mathbb F$ is a residual extension of $\qp$, the subgroup $H=\{x\in K/\CO: v(x)\geq -1\}$, where $1\in \Gamma$ is the minimal positive element, is readily seen to contain $C_p$,  and $|H|=|\bk_K|=p^2$. It is also easy to verify that for every scalar endomorphism $\lambda$ of $K/\CO$ with non-trivial kernel, we have $H\sub \ker(\lambda)$, so $C_p\subsetneq H\sub \ker(L_i)$ for every $L_i$. 
    
   Thus, the function $\widehat f$ is invariant under the group $H\supseteq C_p$,
    contradicting our assumption that $f$ was injective.
    
    For the final statement, since $\mathbb{F}/\CO_\mathbb{F}$ is isomorphic to $\mathbb{Z}(p^\infty)\oplus \mathbb{Z}(p^\infty)$, $G/H$ is definably isomorphic to $K/\CO$ as witnessed by the map $x\mapsto px$.
\end{proof}

\begin{remark}
It can be shown that if $\mathbb F$ is a residual quadratic extension of $\mathbb Q_p$, as above, and $H\sub \mathbb F/\CO_{\mathbb F}$ is a finite subgroup, then $(\mathbb F/\CO_{\mathbb F})/H$ is abstractly isomorphic to $\mathbb F/\CO_{\mathbb F}$. However, the above example shows that a definable isomorphism need not exist.

     In contrast, it can be shown that   for every finite subgroup $H\sub \qp/\mathbb Z_p$, the group $(\qp/\mathbb Z_p)/H$ is definably isomorphic to $\qp/\mathbb Z_p$. Furthermore, this remains true in elementary extensions of $\mathbb Q_p$.
\end{remark}

\section{Vicinic sorts and their properties}\label{S: asi to D}

In the present section we  develop an  \emph{ad hoc} axiomatic setting allowing us to unify the treatment of all unstable distinguished sorts considered in the sequel. We then apply the technical tools we develop to show that, given a group $G$  almost strongly internal to such a sort $D$, there exists a finite normal subgroup $H\trianglelefteq G$ such that  $G/H$ is strongly internal to $D$ and furthermore, it is ``a $D$-group'' (to be defined below). This suffices for the construction of an infinitesimal subgroup in $G/H$.  \\

\noindent \textbf{Throughout Section \ref{S: asi to D} we let $\CM$ be a multi-sorted, $|T|^+$-saturated structure and $D$ a dp-minimal definable set.}

\subsection{The key geometric properties}\label{ss:key geo properties}

To motivate our axiomatic setup, we isolate key geometric properties shared by SW-uniformities, $\Zz$-groups and $K/\CO$, for $\CK$ a $p$-adically closed field. Those are, in the latter setting,  Corollary \ref{C: dpr=acl-dim in K/O} (asserting that dp-rank is the same as  $\acl(\cdot)$-dimension) and Proposition \ref{P:sgenos for K/O} (asserting the existence of generic neighbourhoods). These two properties  constitute our axiomatisation of vicinic sorts (Section \ref{ss: vicinic}).

An important property shared by stable groups, groups definable in o-minimal structures and other tame settings is that the product of two independent generic elements of the group is itself generic. It turns out that a variant of this property can be deduced, in the $p$-adically closed case, from Lemma \ref{L:asi diagaram K/O}(2). Statements  analogous to Lemma \ref{L:asi diagaram K/O}(2) (Lemma \ref{L: asi from surjective} for SW-uniformities, Lemma \ref{L:asi diagaram with EI} for $\Zz$-groups) imply the same property of groups in the remaining cases covered by our setting. This  geometric property of groups  is encapsulated in our notion of $D$-groups (Section  \ref{ss: Dgps})\\

In all of our settings, the induced structure on $K/\CO$ does not satisfy Exchange (there are definable locally constant functions with infinite image). Nevertheless, the fact below, together with  its analogue, Corollary \ref{C: dpr=acl-dim in K/O}, for $K/\CO$ in the $p$-adic case, give dp-rank enough geometric flavour to get us going in all the unstable distinguished sorts: 

\begin{fact}\label{Gamma:A1}
	If $D$ is an SW-uniformity (in some structure) or a stably embedded $\Zz$-group  then $\dim_{\acl}(a/A)=\dpr(a/A)$ for all $a\in D^n$ and any parameter set $A$. 
\end{fact}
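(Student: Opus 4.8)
The plan is to prove Fact \ref{Gamma:A1} by reducing, in both cases, to the standard fact that in the relevant pure structures ($\Zz$-groups or SW-uniformities) dp-rank agrees with the algebraic-closure dimension, and then upgrading this to an equality with $\dim_{\acl}(\cdot)$ as computed in the ambient structure $\CM$, using stable embeddedness to control the induced structure. First I would recall that, by sub-additivity of dp-rank (which is stated in the excerpt), one always has $\dpr(a/A)\le \dim_{\acl}(a/A)$ for $a\in D^n$: passing to an $\acl$-independent sub-tuple $a'$ of length $k:=\dim_{\acl}(a/A)$ with $a\in\acl(Aa')$, we get $\dpr(a/A)\le \dpr(a'/A) = k$ once we know each coordinate of the independent tuple contributes $1$ — so really the content is the reverse inequality $\dpr(a/A)\ge \dim_{\acl}(a/A)$, together with the base case that $\dpr(b/A)\ge 1$ for a single $b\in D$ non-algebraic over $A$, which follows from dp-minimality of $D$ (a non-algebraic definable subset of a dp-minimal set has dp-rank $1$).

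For the heart of the argument I would treat the two cases separately but in parallel. In the $\Zz$-group case, $D$ is stably embedded and its induced structure is that of a pure $\Zz$-group (a model of Presburger arithmetic), as noted in the excerpt; by stable embeddedness, for $a\in D^n$ and a parameter set $A$, the type $\tp(a/A)$ is determined by $\tp(a/A\cap D^{\mathrm{eq}})$ in the induced language, and moreover $\dim_{\acl}$ computed in $\CM$ agrees with $\dim_{\acl}$ computed in the pure $\Zz$-group (since $\acl$ of a subset of $D^n$ inside $D$ is the same whether taken in $\CM$ or in the reduct $D$ — again by stable embeddedness, and because Presburger has the property that $\acl$-closure of a subset of the home sort stays in the home sort). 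So it suffices to know the fact for pure Presburger arithmetic, where it is classical: Presburger is dp-minimal, and in it $\dpr = \dim_{\acl}$ (this is where one invokes the known computation of dp-rank in ordered abelian groups / Presburger, e.g. via quantifier elimination and the explicit description of definable sets). For the SW-uniformity case the structure of $D$ need not be a reduct of a nice pure theory, but the definition of SW-uniformity gives us exactly what we need: $D$ is dp-minimal, every definable subset of $D$ has nonempty interior, and $D$ has no isolated points; from these, a standard cell-decomposition-free argument (present already in Simon–Walsberg's work, and used implicitly in \cite{HaHaPeVF}) shows that an $\acl$-independent $n$-tuple over $A$ lies in the interior of any $A$-definable set containing it and has a neighbourhood basis of boxes of dp-rank $n$, giving $\dpr(a/A)\ge n$; conversely dp-minimality plus sub-additivity gives $\le$.

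The key steps, in order, are: (1) establish the cheap inequality $\dpr\le\dim_{\acl}$ from sub-additivity and the base case $\dpr(b/A)=1$ for non-algebraic $b\in D$; (2) in the SW-uniformity case, prove that an $\acl(A)$-independent tuple in $D^n$ is interior to every $A$-definable superset and pick out, via Fact \ref{Gen-Os in SW}, a definable box of dp-rank $n$ inside it, forcing $\dpr(a/A)\ge n$; (3) in the $\Zz$-group case, use stable embeddedness to reduce to pure Presburger and cite the known equality there; (4) assemble. The main obstacle I anticipate is step (2): making precise and airtight the claim that $\acl$-independence in $D^n$ forces interiority, since $\acl$ on SW-uniformities need not satisfy exchange (the excerpt itself flags that $K/\CO$ has locally constant functions with infinite image), so one must be careful that "independent" is used in the asymmetric, $\dim_{\acl}$-witnessing sense and not in a matroid sense; the argument has to proceed coordinate by coordinate, peeling off one variable at a time and using the induction hypothesis together with Fact \ref{Gen-Os in SW} to relocate parameters without dropping dp-rank — exactly the inductive bookkeeping already carried out in the $K/\CO$ case in Lemma \ref{L:full dpr is interior in K/O}, which can serve as the template. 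A secondary, more bureaucratic obstacle is pinning down the precise citation for $\dpr=\dim_{\acl}$ in Presburger; if no clean reference is at hand one gives a short direct proof from quantifier elimination.
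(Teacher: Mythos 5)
Your proposal is correct, but it takes a more self-contained route than the paper, whose proof of this fact is essentially a citation: the SW-uniformity case is quoted from \cite[Proposition 2.4]{SimWal} (together with Simon's locality of dp-rank in dp-minimal theories, \cite[Theorem 0.3]{Simdp}), and the $\Zz$-group case is deduced from \cite[Theorem 0.3]{Simdp} using the fact that $\acl$ satisfies exchange there. What you do differently is re-derive the SW-uniformity case from the axioms: the cheap inequality $\dpr\le\dim_{\acl}$ by sub-additivity, and the hard inequality by showing that an $\acl$-independent tuple is interior to every definable superset and then extracting a box of full dp-rank via Fact \ref{Gen-Os in SW} --- exactly the induction the paper itself carries out for $K/\CO$ in Lemma \ref{L:full dpr is interior in K/O} and Corollary \ref{C: dpr=acl-dim in K/O}; in effect you reprove the content of \cite[Proposition 2.4]{SimWal}, which buys self-containedness (and makes visible that no exchange is needed) at the cost of redoing an argument the paper outsources. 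For the $\Zz$-group case both routes bottom out in Presburger arithmetic: the paper goes through exchange plus Simon's theorem, you go through stable embeddedness plus the known dimension theory of Presburger; these are close, but two points in your reduction should be stated more carefully. First, the induced parameter set is $\dcl^{eq}(A)\cap D^{eq}$ (the trace of $\dcl^{eq}(A)$ on the sort), not $A\cap D^{eq}$ as written. Second, transporting the dp-rank witnesses between $\CM$ and the induced structure is not purely formal: to know that sequences from $D$ that are mutually indiscernible over the trace parameters in the induced structure remain mutually indiscernible over $A$ in $\CM$, one needs that every $A$-definable subset of $D^m$ is definable in the induced structure over $\dcl^{eq}(A)\cap D^{eq}$, which uses elimination of imaginaries in Presburger (or a uniform/coded form of stable embeddedness). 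This is standard and fixable bookkeeping, not a gap, but it is the one step your sketch passes over with the bare phrase ``by stable embeddedness.''
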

\begin{proof}
    For SW-uniformities this follows from \cite[Propsition 2.4]{SimWal} noting that by Simon's work dp-rank is local in dp-minimal theories,  \cite[Theorem 0.3]{Simdp}. If $D$ is a $\Zz$-group, then this follows from \cite[Theorem 0.3]{Simdp} since $D$ satisfies exchange.
\end{proof}

The existence of generic neighbourhoods, originally proved for SW-uniformities (Fact \ref{Gen-Os in SW}),  and extended above to $K/\CO$ in the $p$-adically closed setting (Proposition \ref{P:sgenos for K/O}),  is crucial  for carrying out local analysis at a generic  point.  The following may be viewed as the $\Zz$-group analogue:   

\begin{lemma} \label{Gamma: A2}
	Let $(D,+,<)$ be a $\Zz$-group. Let  $A$ be any set of parameters, $g\in D^n$, $h\in D^m$. For any set of parameters $B$ and $B$-definable $X\subseteq D^n$, if $g\in X$ and $\dpr(g/B)=n$ then there exists $C\supseteq A$ and a $C$-definable set $X_1\sub X$ such that $\dpr(X_1)=n$, $g\in X_1$ and $\dpr(g,h/A)=\dpr(g,h/C)$. 
\end{lemma}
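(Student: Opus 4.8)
The plan is to reduce to the one-variable case and then invoke the fact that in a $\Zz$-group, definable unary functions are essentially piecewise-affine and that quantifier elimination (Presburger) gives good control over definable subsets of $D^n$. First I would observe, using sub-additivity of dp-rank together with Fact \ref{Gamma:A1}, that $\dpr(g/B)=n$ means that $g=(g_1,\dots,g_n)$ is $\acl$-independent over $B$ in the sense that no $g_i$ lies in $\acl(B,\hat g_i)$; and since $\Zz$-groups satisfy exchange, this is genuine algebraic independence. The goal is to shrink $X$ to a $C$-definable $X_1\ni g$ of full dp-rank $n$ while adding parameters $C\supseteq A$ that do not lower $\dpr(g,h/A)$. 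The natural strategy is to produce $X_1$ as a box (product of $C$-definable one-dimensional sets, or cosets of congruence subgroups intersected with intervals) around $g$ contained in $X$, chosen so that its defining parameters can be taken generic over $Ah$.

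The key steps, in order, would be: (1) By Fact \ref{Gamma:A1} (applied over $B$), $g$ being of dp-rank $n$ over $B$ means $g_i\notin\acl(B,g_1,\dots,g_{i-1},g_{i+1},\dots,g_n)$ for each $i$, hence by Presburger quantifier elimination the fibre of $X$ over $(g_1,\dots,g_{i-1})$ in the $i$-th coordinate (with later coordinates fixed at $g_j$) contains an interval intersected with a congruence class around $g_i$ of "generic length", i.e. with endpoints that may be chosen far from the relevant $\acl$-closures. More precisely, using \cite[Theorem 0.3]{Simdp}/exchange and the cell-decomposition for Presburger, $X$ contains, around $g$, a $B'$-definable set of the form $\prod_i \big(I_i \cap (c_i + N_i\Zz)\big)$ for suitable intervals $I_i$, moduli $N_i$ and residues $c_i$, with $B'\supseteq B$ and $\dpr(g/B')=n$. (2) Now I handle the parameters controlling the intervals $I_i$: their endpoints are elements of $D$ in $\dcl(B')$, but I want to replace them by endpoints $d_i^-<g_i<d_i^+$ chosen generically over $Ah$. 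Using saturation and a standard moving-of-parameters argument (pick $d_i^\pm$ realizing, over $Ah$, the right cut types so that $\dpr(g,h/A d^\pm c N)=\dpr(g,h/A)$; this is possible because $g_i$ is not in the $\acl$ of the other data and in a $\Zz$-group one can always slide an interval endpoint), I obtain a $C$-definable box $X_1=\prod_i\big((d_i^-,d_i^+)\cap(c_i+N_i\Zz)\big)$ with $C:=A d^- d^+ c N\supseteq A$, $g\in X_1\subseteq X$, and $\dpr(g,h/C)=\dpr(g,h/A)$. (3) Finally $\dpr(X_1)=n$ because a product of $n$ infinite arithmetic-progression-intervals in a $\Zz$-group has dp-rank exactly $n$ (each factor is infinite, dp-minimality of $D$ and sub/super-additivity), and $g\in X_1$ is clear from the construction.

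The main obstacle I anticipate is step (2): one must simultaneously (a) keep $X_1\subseteq X$, which pins down the congruence data $(c_i,N_i)$ exactly, and (b) choose the interval endpoints $d_i^\pm$ so that \emph{all} of $\dpr(g,h/\cdot)$ is preserved as parameters accumulate, not just $\dpr(g/\cdot)$. This requires being careful about the order in which parameters are added and using the sub-additivity inequality $\dpr(g,h/A)\le \dpr(g/A)+\dpr(h/Ag)$ together with the dual inequality, or more cleanly, realizing the tuple $(d^-,d^+)$ over $Agh$ in a way that is "as free as possible" and then checking via exchange in the $\Zz$-group that freeness of $d^\pm$ over $Agh$ forces $\dpr(g,h/Ad^\pm)=\dpr(g,h/A)$. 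Once the combinatorics of Presburger cells and the slide-the-endpoint trick are set up, the rest is routine; I would likely cite the analogous argument for SW-uniformities (Fact \ref{Gen-Os in SW} and the proof of Lemma \ref{L: asi from surjective}) as a template, noting that here the role of "open box" is played by a "box of arithmetic progressions" and the role of Fact \ref{Gen-Os in SW} is played by Presburger cell decomposition plus exchange.
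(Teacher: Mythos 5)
Your proposal is correct and follows essentially the same route as the paper: the paper invokes \cite[Lemma 3.4]{OnVi} (after passing to a small model) to get exactly your box of arithmetic-progression intervals $\prod_i\{\alpha_i\le x\le\beta_i:\ x\equiv_{N_i}c_i\}$ around $g$ inside $X$, and then shrinks it coordinate-by-coordinate by choosing new endpoints generic over $g,h$ and the accumulated parameters, concluding via exchange that $\dpr(g,h/\cdot)$ is preserved. The subtlety you flag in step (2) is resolved exactly as you suggest (endpoints chosen of dp-rank $1$ over $g,h$ and previous data, then symmetry of $\acl$-independence), so no gap remains.
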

\begin{proof}
	By \cite[Lemma 3.14(2)]{HaHaPeVF}, we can find a small model $L\supseteq B$ for which $\dpr(g/L)=n$. By \cite[Lemma 3.4]{OnVi}, there exists an $L$-definable subset  $X_0\subseteq X$ containing $g=(g_1,\dots, g_n)$ of the form $I_1\times\dots \times I_n$, where for each $i\leq n$, $g_i\in I_i=\{\alpha_i \leq x\leq \beta_i: x\equiv_{N_i} c_i\} $, for some $N_i\in \mathbb N$,  where both intervals $[\alpha_i,g_i]$ and $[g_i,\beta_i]$ are infinite and $0\leq c_i<N_i$.
	
	For simplicity, we prove the result for $n=1$. The general case follows by induction. We use compactness to find $\alpha_1<\alpha_1'<g_1$ such that $\dpr(\alpha_1'/g_1,hL)=1$ and $[\alpha_1',g_1]$ infinite and then $g_1<\beta_1'<\beta_1$ such that $\dpr(\beta_1'/g_1,h,\alpha_1'L)=1$ and $[g_1,\beta_1']$ infinite. We have $g_1\in I_1'=\{\alpha_1
	\leq x\leq \beta_1': x\equiv_{N_i} c_i\}\subseteq I_1$ and by exchange $\dpr(g_1,h/\alpha_1',\beta_1',L)=\dpr(g_1,h/L)$.
\end{proof}

The next lemma is a rather simple first approximation to more technical results which follow:

\begin{lemma}\label{L:asi diagaram with EI}
	Let $\CM$ be any structure, and $S$ a stably embedded definable set. If $S$ has elimination of imaginaries, $X$ is (almost) strongly internal to $S$ and $g:X\to Y$ is a definable surjection then $Y$ is (almost) strongly internal to $S$.
\end{lemma}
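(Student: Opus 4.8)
The plan is to reduce the statement to coding a definable family of subsets of $S$, and let elimination of imaginaries do the rest. Fix a parameter set $A$ over which $X$, $Y$, $g$ and the witnessing map $f\colon X\to S^k$ are defined, where $f$ is injective in the strongly internal case and finite-to-one in the almost strongly internal case. Note that $g$ is allowed arbitrary (possibly infinite) fibres. For $y\in Y$ set $Z_y:=f(g^{-1}(y))\subseteq S^k$; since $g$ is surjective each $Z_y$ is nonempty, and $\{Z_y:y\in Y\}$ is an $A$-definable family of subsets of $S^k$. The goal is to produce an $A$-definable map $c\colon Y\to S^m$, for some $m$, which \emph{codes} this family in the sense that $c(y_1)=c(y_2)\iff Z_{y_1}=Z_{y_2}$, and then to check that $c$ is injective (resp.\ finite-to-one). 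This is exactly the assertion that $Y$ is strongly internal (resp.\ almost strongly internal) to $S$, and it handles both cases in parallel.

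To construct $c$: by stable embeddedness of $S$, every $Z_y$, being a definable subset of $S^k$, is definable with parameters from $S$, so by compactness there is an $A$-formula $\psi(x;w)$ with $x$ of sort $S^k$ and $w$ of sort $S^l$ (some $l$) such that each $Z_y$ equals $\psi(S^k;b)$ for some $b\in S^l$. The relation $E(w,w')$ expressing $\psi(S^k;w)=\psi(S^k;w')$ is a definable equivalence relation on $S^l$, and, being a subset of $S^{2l}$, it too is definable with parameters from $S$; since $S$ eliminates imaginaries, the uniform form of EI yields a definable $e\colon S^l\to S^m$ with $e(w)=e(w')\iff E(w,w')$. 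Setting $c(y):=e(b)$ for any $b$ with $\psi(S^k;b)=Z_y$ gives a well-defined definable function (over $A$ together with the relevant parameters from $S$) with the desired coding property. This is the one step where both hypotheses on $S$ — stable embeddedness and elimination of imaginaries — are genuinely used.

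It then remains to bound the fibres of $c$. Suppose $c(y_1)=c(y_2)$, so $Z_{y_1}=Z_{y_2}$. In the strongly internal case $f$ is injective, hence $f(g^{-1}(y_1))=f(g^{-1}(y_2))$ forces $g^{-1}(y_1)=g^{-1}(y_2)$; since the fibres of $g$ are pairwise disjoint and nonempty, $y_1=y_2$, so $c$ is injective. In the almost strongly internal case, by saturation of $\CM$ there is $N\in\Nn$ with $|f^{-1}(s)|\le N$ for all $s\in f(X)$; if $N+1$ pairwise distinct elements $y_0,\dots,y_N$ of $Y$ shared a value of $c$, their common set $Z_{y_i}=Z$ would be nonempty, and choosing $s_0\in Z$ we would find in each of the $N+1$ disjoint fibres $g^{-1}(y_i)$ a point mapping to $s_0$ under $f$, producing $N+1$ distinct elements of $f^{-1}(s_0)$ — a contradiction. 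Hence $c$ is $(\le N)$-to-one.

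The only delicate point is the coding step: one has to be a little careful with the meaning of "the structure induced on $S$", use stable embeddedness both to see the family $\{Z_y\}$ and to see the equivalence relation $E$ inside $S$, and invoke the uniform version of elimination of imaginaries (a standard consequence of EI) so that the code is obtained definably and with a fixed target $S^m$. Everything else is routine, and the two cases of the bracketed-convention statement come out simultaneously.
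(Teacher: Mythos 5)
Your argument is correct and follows essentially the same route as the paper's proof: code the family $W_y=f(g^{-1}(y))\subseteq S^k$ inside $S$ using stable embeddedness together with elimination of imaginaries, and then use surjectivity of $g$, disjointness of its fibres, and finiteness of the fibres of $f$ to see that the coding map is injective (resp.\ finite-to-one); you merely spell out the coding step in more detail than the paper does. The one caveat is that the lemma is stated for an arbitrary structure $\CM$, so your appeal to saturation for a uniform bound $N$ on the fibres of $f$ is not available as written; it is also unnecessary, since if infinitely many $y$ shared a code, then fixing a single $s_0$ in the common set $Z$ would place points of infinitely many pairwise disjoint fibres $g^{-1}(y)$ inside $f^{-1}(s_0)$, contradicting the finiteness of that one fibre --- which is exactly how the paper argues.
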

\begin{proof}
	We consider the almost strongly internal case only, as the strongly internal case is similar and easier. Let $f:X\to S^n$ be a finite-to-one definable function. For any $y\in Y$ consider the definable set $W_y=f(g^{-1}(y))\subseteq S^n$. By elimination of imaginaries in $S$, there exists a definable function $h:Y\to S^m$ such that $h(y_1)=h(y_2)$ if and only if $W_{y_1}=W_{y_2}$. It will suffice to show that  $h$ is finite-to-one. So assume towards a contradiction that this is not the case. I.e., there exists $b\in S$ with $H:=h^{-1}(b)$ infinite. Fix $y_1\in H$. Then for any other $y_2\in H$, obviously, $g^{-1}(y_1)\cap g^{-1}(y_2)=\0$ (but $g^{-1}(y_2)\neq \0$ because $g$ is onto). Since $f$ is finite to one and $H$ is infinite, for $w\in W_{y_1}$ there is $y_2\in H$ such that $f^{-1}(w)\cap g^{-1}(y_2)=\0$, contradicting the assumption that $W_{y_1}=W_{y_2}$. 
\end{proof}

Since Presburger Arithmetic eliminates imaginaries, this gives the analogue of Lemma \ref{L:asi diagaram K/O}(2) for stably embedded $\Zz$-groups. For SW-uniformities, the analogous statement is Lemma \ref{L: asi from surjective}.

\subsection{Vicinic dp-minimal sets}\label{ss: vicinic}
We now introduce the  axiomatic setting we are interested in, but first:

\begin{definition}
    Let $X$ be an $A$-definable set of finite dp-rank, $a\in X$ and $B\supseteq A$ a set of parameters.
    \begin{enumerate}
        \item The point $a$ is $B$-\emph{generic}  in $X$ (or, {\em generic in $X$ over $B$}) if $\dpr(a/B)=\dpr(X)$.
        \item For an  $A$-generic  $a\in X$, a set $U\sub X$ is \emph{a $B$-generic vicinity of $a$ in $X$} if $a\in U$, $U$ is $B$-definable, and $\dpr(a/B)=\dpr(X)$ (in particular, $\dpr(U)=\dpr(X)$).
    \end{enumerate}
\end{definition}

Note that if $D$ is an SW-uniformity, then a generic vicinity of a generic point $a$ in a set $X\sub D^k$ is, in fact, a neighbourhood of $a$ in the relative topology of $X$. The existence of generic neighbourhoods in SW-uniformities is given by Fact \ref{Gen-Os in SW}. 

\begin{definition}
    A dp-minimal set $D$ is \emph{vicinic} if it satisfies the following axioms: 
\begin{enumerate}
    \item[(A1)] $\dim_{\acl}=\dpr$; i.e. for any tuple $a\in D^n$ and set $A$, $\dim_{\acl}(a/A)=\dpr(a/A)$.
    \item[(A2)] For any sets of parameters  $A$ and $B$,  for every $A$-generic elements $b\in D^n$, $c\in D^m$ and  any $B$-generic vicinity $X$  of $b$ in $D^n$,  there exists $C\supseteq A$ and a $C$-generic vicinity of $b$ in $X$ such that $\dpr(b,c/A)=\dpr(b,c/C)$.
\end{enumerate}
\end{definition}

Note that in Axiom (A2) it is crucial that the parameter set $B$ need not contain $A$. The topological intuition is that if $b$ is in the interior of a $B$-definable set $X$ then we can find a smaller neighbourhood of  $b$ defined over a new parameter set $C$, that is generic with respect to all the initial data.

\begin{remark}\label{R: after vicinic}
     By Axiom (A1), for every parameter set $A$, every $c\in D^m$ is inter-algebraic over $A$  with an $A$-generic $c'\in D^k$, for $k=\dpr(c/A)$.  Thus, (A2) remains true if we drop the requirement that $c$ is generic.
\end{remark}

Let us note that indeed all the unstable distinguished sorts in our various settings are vicinic:

\begin{fact}\label{E:dist sorts are vicinic}
    \begin{enumerate}
        \item Every SW-uniformity is vicinic.
        \item If $\CK=(K,+,\cdot,v,\dots)$ is  either V-minimal,  power-bounded T-convex, or a $p$-adically closed valued field, then all the distinguished sorts, except $\bk$ in the V-minimal setting, are vicinic. 
    \end{enumerate}
    
    \end{fact}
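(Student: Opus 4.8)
The statement compiles results proved earlier in the paper together with facts imported from \cite{SimWal} and \cite{HaHaPeVF}, so the plan is to match each sort to the corresponding form of the two axioms. First I would record that, as noted right after the definition of SW-uniformities, every distinguished sort in every one of our settings is an SW-uniformity except for $\Gamma$ and $K/\CO$ in the $p$-adically closed setting and for $\bk$ in the $V$-minimal setting; the latter is excluded from the statement, and in the $p$-adically closed setting $\bk$ is finite, hence trivially vicinic. Thus part (2) reduces to part (1) together with a separate treatment of $\Gamma$ (a $\Zz$-group) and of $K/\CO$ in the $p$-adically closed case.

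For part (1), let $D$ be an SW-uniformity. Axiom (A1) is exactly the first clause of Fact \ref{Gamma:A1}. For (A2), fix parameter sets $A,B$, an $A$-generic $b\in D^n$, an $A$-generic $c\in D^m$, and a $B$-generic vicinity $X$ of $b$ in $D^n$ (so $X$ is $B$-definable, $b\in X$ and $\dpr(b/B)=n$). Since the frontier $\partial X=\overline X\setminus X^{\circ}$ is $B$-definable and of dp-rank $<n$ (a standard property of SW-uniformities: a definable subset of $D^k$ with empty interior has dp-rank $<k$; see \cite{SimWal}), the hypothesis $\dpr(b/B)=n$ forces $b\in X^{\circ}$, i.e.\ $b$ lies in the relative interior of $X$ inside $D^n$. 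Now I would apply Fact \ref{Gen-Os in SW} with ambient set $D^n$, subset $X$, point $b$, auxiliary tuple $c$ and base $A$: it produces $C\supseteq A$ and a $C$-definable open box $U\subseteq D^n$ with $b\in U\subseteq X$ and $\dpr(b,c/C)=\dpr(b,c/A)$, and in particular $\dpr(b/C)=n$. Hence $U$ is a $C$-generic vicinity of $b$ in $X$, which is what (A2) asks for.

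For the two exceptional $p$-adic sorts: when $D=\Gamma$, Axiom (A1) is again Fact \ref{Gamma:A1} (the $\Zz$-group clause), and for (A2) one invokes Lemma \ref{Gamma: A2} with $g=b$, $h=c$ and the given $X$: it yields $C\supseteq A$ and a $C$-definable $X_1\subseteq X$ with $b\in X_1$, $\dpr(X_1)=n$ and $\dpr(b,c/C)=\dpr(b,c/A)$. Since on $\Gamma$ one has $\dim_{\acl}=\dpr$ and $\acl$ satisfies exchange, dp-rank is additive on $\Gamma$, so the equality $\dpr(b,c/C)=\dpr(b,c/A)$ together with the monotonicity inequalities $\dpr(b/C)\le\dpr(b/A)=n$ and $\dpr(c/bC)\le\dpr(c/bA)$ forces $\dpr(b/C)=n$; thus $X_1$ is a $C$-generic vicinity of $b$ in $X$. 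When $D=K/\CO$ in the $p$-adically closed setting, Axiom (A1) is Corollary \ref{C: dpr=acl-dim in K/O} and Axiom (A2) follows from Proposition \ref{P:sgenos for K/O}(3), whose conclusion already provides a $C$-definable ball $V\subseteq X$ with $b\in V$ and $\dpr(b,c/C)=\dpr(b,c/A)$, together with the observation (as in the previous cases) that this makes $V$ a $C$-generic vicinity of $b$ in $X$; here one may also use Remark \ref{R: after vicinic} to dispense with the genericity of $c$.

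I expect no substantial obstacle here, since the genuinely hard input — the existence of generic neighbourhoods and the identity $\dpr=\dim_{\acl}$ for $K/\CO$ — was established in Section \ref{S: local K/O}, while the analogous inputs for SW-uniformities and $\Zz$-groups are cited from \cite{SimWal}, \cite{HaHaPeVF} and \cite{OnVi}. The only point deserving a moment of care is that Fact \ref{Gen-Os in SW}, Lemma \ref{Gamma: A2} and Proposition \ref{P:sgenos for K/O}(3) are phrased as producing a dp-rank-preserving set for the \emph{pair} $(b,c)$, whereas (A2) additionally demands that the output be a \emph{generic} vicinity of $b$, i.e.\ that $b$ retain full dp-rank over the new parameters $C$; the frontier argument (for SW-uniformities) and the additivity argument (for $\Zz$-groups) supply exactly this extra information, and for $K/\CO$ it is already part of what the proof of Proposition \ref{P:sgenos for K/O} tracks.
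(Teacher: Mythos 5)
Your proposal is correct and follows the paper's proof essentially verbatim: the same case split (SW-uniformities first, then $\Gamma$ and $K/\CO$ in the $p$-adic setting) and the same citations — \cite[Proposition 2.4]{SimWal}/Fact \ref{Gamma:A1} and Fact \ref{Gen-Os in SW} for (1), Corollary \ref{C: dpr=acl-dim in K/O} and Proposition \ref{P:sgenos for K/O} for $K/\CO$, and Fact \ref{Gamma:A1} with Lemma \ref{Gamma: A2} for $\Gamma$ — with some added glue (the interiority argument needed to invoke Fact \ref{Gen-Os in SW}, and the exchange/additivity argument for $\Zz$-groups) that the paper leaves implicit.

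One of your justifications is misplaced, though it does not change the outcome. In the SW-uniformity case, the step ``$\dpr(b,c/C)=\dpr(b,c/A)$, and in particular $\dpr(b/C)=n$'' is not a formal consequence of pair-rank preservation, and your closing paragraph attributes it to the frontier argument, which only supplies the interiority hypothesis of Fact \ref{Gen-Os in SW}, not the genericity of $b$ over the new parameters. Nor can one fall back on exchange-based additivity of $\dim_{\acl}$ as you do for $\Zz$-groups: exchange fails for $K/\CO$ even in the $T$-convex and $V$-minimal settings (the paper says so explicitly), and there $K/\CO$ is an SW-uniformity, so additivity is not available for arbitrary SW-uniformities. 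The preservation of $\dpr(b/\cdot)$ itself is really part of what the generic-neighbourhood machinery of \cite{HaHaPeVF} delivers (compare how \cite[Proposition 3.12]{HaHaPeVF} is quoted in the proof of Proposition \ref{P: generic neighbourhoods in SW}, where the rank of the point alone is preserved); the paper takes exactly this for granted when it equates (A2) with Fact \ref{Gen-Os in SW}, and the analogous reading applies to Proposition \ref{P:sgenos for K/O}. So your conclusion stands and matches the paper, but the SW case should lean on the cited fact (or its proof) for the genericity of $b$ over $C$, rather than on the frontier or exchange arguments.
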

    \begin{proof} 
    When $D$ is an SW-uniformity, Axiom (A1) holds by \cite[Proposition 2.4]{SimWal} and Axiom (A2) is Fact \ref{Gen-Os in SW}. This proves (1) and (2) follows in all cases except when  $D=K/\CO$ and $D=\Gamma$ in the $p$-adic setting. 
    
   When $\CK$ is $p$-adically closed, then for $K/\CO$, see Corollary \ref{C: dpr=acl-dim in K/O} for (A1), and Proposition \ref{P:sgenos for K/O} for (A2). For $\Gamma$, see Fact \ref{Gamma:A1}, for (A1) and Lemma \ref{Gamma: A2}, for (A2).
\end{proof}

\begin{question}
 Does  every dp-minimal distal structure satisfy Axiom (A2)?  
 We cannot expect the converse to hold, since by the above fact, the valued field sort of an algebraically closed field of equi-characteristic $0$ satisfies Axioms (A2), but it is not distal.   
\end{question}

\textbf{For the rest of Section \ref{S: asi to D},  we assume that $D$ is a vicinic sort.}\\

We first note  an immediate implication of Axiom (A1):
\begin{lemma}\label{L:acl-dp in asi-abstract}
Let $X$ be a definable set $a\in X$ and assume that $X$ is almost strongly internal to $D$ over $A$.  Then for any $B\supseteq A$ we have $\dpr(a/B)=k$ if and only if there exists $a'\in D^k$ such that $\dpr(a'/B)=k$, $a'\in \dcl(aA)$ and $a\in \acl(a'B)$.
\end{lemma}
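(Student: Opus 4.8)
The plan is to prove Lemma~\ref{L:acl-dp in asi-abstract} by combining Axiom~(A1) with the definition of almost strong internality, using standard properties of $\dcl$, $\acl$ and dp-rank.

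\medskip

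\noindent\textbf{Proof proposal.} Since $X$ is almost strongly internal to $D$ over $A$, fix an $A$-definable finite-to-one function $f\colon X\to D^m$ for some $m$. Let $B\supseteq A$ and let $a\in X$.

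First, the ``only if'' direction. Suppose $\dpr(a/B)=k$. Let $b:=f(a)\in D^m$. Since $f$ is $A$-definable and finite-to-one, $b\in\dcl(aA)$ and $a\in\acl(bB)$ (indeed $a\in\acl(bA)$, as $a$ lies in the finite set $f^{-1}(b)$). By preservation of dp-rank under definable finite-to-one maps (and the fact that $b\in\dcl(aB)$, $a\in\acl(bB)$), we get $\dpr(b/B)=\dpr(a/B)=k$. Now $b\in D^m$, so by Axiom~(A1), $\dim_{\acl}(b/B)=\dpr(b/B)=k$. By definition of $\dim_{\acl}$, there is a sub-tuple $a'$ of $b$ of length $k$ which is $\acl$-independent over $B$ and with $b\in\acl(a'B)$; in particular $\dpr(a'/B)=k$ (using (A1) again, or monotonicity). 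Then $a'\in\dcl(bA)\subseteq\dcl(aA)$, and $a\in\acl(bB)\subseteq\acl(a'B)$ since $b\in\acl(a'B)$. This $a'$ witnesses the claim.

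For the ``if'' direction, suppose $a'\in D^k$ with $\dpr(a'/B)=k$, $a'\in\dcl(aA)$ and $a\in\acl(a'B)$. Then $a$ and $a'$ are inter-algebraic over $B$ (note $a'\in\dcl(aB)$), so $\dpr(a/B)=\dpr(a'/B)=k$.

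\medskip

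\noindent\textbf{Main obstacle.} The statement is essentially a bookkeeping exercise once one has Axiom~(A1) in hand; there is no deep step. The only point requiring a little care is the invariance of dp-rank under the relevant maps: one must check that passing from $a$ to $f(a)$ and then to a maximal $\acl$-independent sub-tuple $a'$ preserves dp-rank over $B$, which follows from sub-additivity of dp-rank together with inter-algebraicity (and from (A1) applied inside $D^m$). One should also be slightly careful about the role of the base: the hypotheses quantify the finite-to-one map over $A$ but compute rank over $B\supseteq A$, so all the $\acl$/$\dcl$ containments must be traced with the correct parameter set, as done above.
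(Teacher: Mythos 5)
Your proof is correct and matches the paper's intent: the paper states this lemma with no written proof, presenting it as an immediate consequence of Axiom (A1), and your argument (pass to $b=f(a)$ via the finite-to-one $A$-definable map, apply (A1) to extract an $\acl$-independent sub-tuple, and track the $\dcl$/$\acl$ and rank bookkeeping via inter-algebraicity over $B$) is precisely that intended argument.
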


We now start developing the technical tools needed for the construction of infinitesimal groups in the setting of vicinic structures.  We first want to show that boxes form vicinity-bases at generic points in the following sense:
\begin{lemma}\label{L:genos implies A4}
      For any $A\subseteq \CM$,  $A$-generic $b=(b_1,\dots, b_n)\in D^n$ and $c\in D^m$, and  any $A$-generic vicinity $X$ of $b$ in $D^n$, there exists $C\supseteq A$ and $C$-generic vicinities $I_i\ni b_i$ in $D$, for $i=1,\dots, n$ such that  $I_1\times\dots\times I_n\subseteq X$  and $\dpr(b,c/C)=\dpr(b,c/A)=n+m$.
\end{lemma}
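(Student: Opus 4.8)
The plan is to argue by induction on $n$, peeling off one coordinate at a time and using Axiom (A2) to pass to a smaller vicinity over \emph{fresh} parameters. Since $b$ is $A$-generic in $D^{n}$ and $\dpr(b,c/A)=n+m$ (as recorded in the displayed equality of the statement), Axiom (A1) together with the subadditivity of $\dpr$ lets us read off that every sub-tuple of $b$, and every conditional rank of ``one coordinate of $b$ over the remaining coordinates and over $c$'', attains its maximal possible value; this will be used repeatedly without further comment.

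For $n=1$ there is nothing to do: in $D^{1}$ a box is just a single vicinity, so $C:=A$ and $I_{1}:=X$ already work. For the inductive step write $b=(b_{1},\hat b)$ with $\hat b=(b_{2},\dots,b_{n})$. The fibre $X_{\hat b}:=\{t\in D:(t,\hat b)\in X\}$ contains $b_{1}$, is $A\hat b$-definable, and $\dpr(b_{1}/A\hat b)=1$, so it is an $A\hat b$-generic vicinity of $b_{1}$ in $D$. Apply Axiom (A2) with base parameter set $A$, the $A$-generic point $b_{1}\in D$, the auxiliary tuple $(\hat b,c)$ (which is $A$-generic of rank $(n-1)+m$), and the parameter set $B:=A\hat b$ together with its vicinity $X_{\hat b}$. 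This yields a parameter set $C'\supseteq A$ and a $C'$-generic vicinity $I_{1}$ of $b_{1}$ inside $X_{\hat b}$ with $\dpr(b,c/C')=\dpr(b,c/A)=n+m$. The decisive point is that (A2) only requires $C'\supseteq A$, \emph{not} $C'\supseteq B=A\hat b$, so $I_{1}$ is a genuine set of first coordinates, independent of $b_{2},\dots,b_{n}$.

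Now set $Y:=\{x\in D^{n-1}:\{t\}\times\{x\}\subseteq X\text{ for every }t\in I_{1}\}$. It is $C'$-definable and contains $\hat b$, and from $\dpr(b,c/C')=n+m$ one reads off that $\hat b$ is $C'$-generic in $D^{n-1}$ and $\dpr\bigl(\hat b,(b_{1},c)/C'\bigr)=n+m=(n-1)+(1+m)$; hence $Y$ is a $C'$-generic vicinity of $\hat b$ in $D^{n-1}$. Apply the inductive hypothesis over $C'$ to $\hat b$, the set $Y$, and the auxiliary tuple $(b_{1},c)\in D^{1+m}$: this produces $C\supseteq C'$ and $C$-generic vicinities $I_{2}\ni b_{2},\dots,I_{n}\ni b_{n}$ in $D$ with $I_{2}\times\cdots\times I_{n}\subseteq Y$ and $\dpr(b,c/C)=\dpr(b,c/C')=n+m$. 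Since $C\supseteq C'$, all of $I_{1},\dots,I_{n}$ are $C$-definable; each $I_{i}$ is a $C$-generic vicinity of $b_{i}$ in $D$ (for $i=1$ the genericity of $b_{1}$ over $C$ is forced by $\dpr(b,c/C)=n+m$); by the definition of $Y$ we get $I_{1}\times I_{2}\times\cdots\times I_{n}\subseteq X$; and $\dpr(b,c/C)=n+m=\dpr(b,c/A)$. This closes the induction.

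The only genuine difficulty I anticipate is the bookkeeping, not the geometry. At each stage one must (i) carry out the shrinking of (A2) over \emph{new} parameters $C'$ that do not see the coordinates already fixed --- which is exactly the strength of (A2), where the output parameter set is not required to contain $B$ --- and (ii) smuggle every coordinate and auxiliary parameter already processed into the ``$c$''-slot of the next invocation, so that it is the dp-rank of the \emph{whole} tuple $(b,c)$, not just of the part currently being shrunk, that is preserved throughout. Axioms (A1) and (A2) are tailored precisely for this; and if one prefers not to assume $c$ generic, one first replaces $(\hat b,c)$ (resp.\ $(b_{1},c)$) by an inter-algebraic generic tuple of the appropriate length via Axiom (A1) and Remark \ref{R: after vicinic}.
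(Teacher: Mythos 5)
Your proof is correct and follows essentially the same route as the paper's: induction on $n$, applying (A2) to the fibre of $X$ over the remaining coordinates, forming the "tube" set of tuples whose whole $I_1$-slice lies in $X$, and re-invoking the induction with the peeled coordinate absorbed into the auxiliary tuple so that the joint rank of $(b,c)$ is preserved. The only differences — peeling the first rather than the last coordinate, and handling the base case trivially rather than via (A2) — are immaterial.
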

\begin{proof}
 We use induction on $n$, where the case $n=1$ is an immediate application of (A2) to $b\in D$, and $c\in D^m$.
 
 We now consider $b=(b_1,\ldots, b_n)$ and $X$ an $A$-vicinity of $b$ in $D^n$. Let $b'=(b_1,\ldots, b_{n-1})$, and apply (A2)  to $b_n\in X_{b'}\sub D$.
 
We then find $C'\supseteq A$ and a $C'$-vicinity $I\sub X_{b'}$ of $b_n$ in $D$, such that 
$\dpr(b_n, (b',c)/C')=n+m$.  Let $$X_1:=\{x\in D^{n-1}: (\forall y)(y\in I\to (x,y)\in X)\}.$$  This is a $C'$-definable set, containing $b'$. So it is a $C'$-vicinity of $b'$ in $D^{n-1}$. By induction (now replacing $c$ with $b_n,c$), there exist $I_i\sub D$, $i=1,\ldots, n-1$, defined over $C\supseteq C'$, such that $b'\in \prod_{i=1}^{n-1}I_i\sub X_1$, with $\dpr(b',b_n,c/C)=n+m$. The set  $\prod_{i=1}^n I_i\sub X$ is the desired vicinity of $b$.
\end{proof}

\subsection{Functions with minimal fibres}
The notion of definable sets with minimal fibres, with respect to some finite-to-one projection, appeared already in Section \ref{S:P-minimal}.
We slightly generalize.

\begin{definition}
\begin{enumerate}
\item  For $X$ a definable set of finite dp-rank,  a definable function $f:X\to Y$ has {\em minimal fibres} if there exists some $m\in \mathbb N$ such that for every $y\in Y$, we have $|f^{-1}(y)|\leq m$, and there is no definable $X_1\sub X$ (possibly over additional parameters), with $\dpr(X_1)=\dpr(X)$, such that  for every $y\in Y$, $|f^{-1}(y)\cap X_1|<m.$

\item A set $X\sub D^n$ {\em has minimal fibres (in $D$)} if there exists a coordinate projection $\pi:D^n\to D^m$ such that $\dpr(X)=m$, and $\pi$ has minimal fibres.
\end{enumerate}

\end{definition}

\begin{remark} \label{remark1}
\begin{enumerate}
\item Notice that an $A$-definable finite-to-one $f:X\to Y$  has minimal fibres if and only if for every $B\supseteq A$ and every $B$-generic $a\in X$, all elements of $[a]_f$ satisfy the same type over $f(a)B$.

    \item Every definable set $Z\subseteq D^r$ has a definable subset $Z_1\subseteq Z$ (possibly over additional parameters) such that $\dpr(Z_1)=\dpr(Z)$ and $Z_1$ has minimal fibres. 
    \item If $Z\subseteq D^r$ has minimal  fibres and $X'\subseteq X$ with $\dpr(X')=\dpr(X)$ then $X'$ has minimal fibres as well.
\end{enumerate}

\end{remark}

Before proceeding to the next lemma we recall that for any  function $f:X\to Y$, we write $[x]_f=f^{-1}(f(x))$.\\

Our next goal is to show that Axiom (A2) can be pulled back via maps with minimal fibres, in the following sense:
\begin{lemma} \label{L:abstract general gen-os}
Let $X$ be definable in $\CM$  with $\dpr(X)=n$  and  $f:X\to D^n$ an $A$-definable function with minimal fibres. Let $b\in X$ be $A$-generic and $c \in \CM$ inter-algebraic over $A$ with some $d\in D^m$.

Then, for every parameter set $B$ and every $B$-generic vicinity $Y\sub X$ of $b$, there exists $C\supseteq A$ and a $C$-generic vicinity $Y_1\sub Y$ of $b$, such that $\dpr(b,c/C)=\dpr(b,c/A)$.
\end{lemma}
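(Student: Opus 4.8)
The plan is to transport the whole configuration along $f$ into $D^n$, apply the vicinic axiom (A2) there, and then pull the resulting generic vicinity back through $f$; the minimal‑fibres hypothesis is exactly what allows the pull‑back to remain inside $Y$.

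First I would carry out two routine reductions. Since $c$ is inter‑algebraic over $A$ with $d\in D^m$, it is inter‑algebraic with $d$ over every parameter set $C\supseteq A$, so $\dpr(b,c/C)=\dpr(b,d/C)$ for all such $C$, and we may replace $c$ by $d$. Secondly, set $e:=f(b)\in D^n$; as $f$ is $A$‑definable and finite‑to‑one, $e\in\dcl(bA)$ and $b\in\acl(eA)$, hence $b$ and $e$ are inter‑algebraic over every $C\supseteq A$, and in particular $\dpr(e/A)=\dpr(b/A)=n$, i.e. $e$ is $A$‑generic in $D^n$.

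Next I would descend the vicinity $Y$ to a generic vicinity of $e$ in $D^n$. Consider $W:=\{z\in D^n:\ f^{-1}(z)\subseteq Y\}$, which is definable over $A$ together with the parameters over which $Y$ is defined. The claim is that $e\in W$ and that $W$ is a generic vicinity of $e$: since $f$ has minimal fibres and $b$ is generic (over the relevant parameter set) in $X$, Remark~\ref{remark1}(1) forces all elements of $[b]_f$ to realise the same type over $e$ together with the parameters of $Y$; as $b\in Y$ and $Y$ is definable over those parameters, this gives $[b]_f\subseteq Y$, i.e. $e\in W$, while $\dpr(e/\cdot)=\dpr(b/\cdot)=n$ there shows $W$ has $e$ as a generic point. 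This descent step — keeping careful track of over which parameter set $b$ must be generic, so that both $[b]_f\subseteq Y$ and the genericity of $e$ in $W$ are literally correct — is the delicate part of the argument, and is the analogue of the passage between a set and its image in Lemma~\ref{L:asi diagaram K/O}(2); it is where I expect the real work to lie.

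Now I would invoke Axiom (A2), in the form of Lemma~\ref{L:genos implies A4} if it is convenient to produce boxes (though the bare axiom suffices), applied to the $A$‑generic point $e\in D^n$, to the tuple $d$ (using Remark~\ref{R: after vicinic} to drop the genericity requirement on $d$), and to the generic vicinity $W$ of $e$: this yields $C\supseteq A$ and a $C$‑generic vicinity $V\subseteq W$ of $e$ in $D^n$ with $\dpr(e,d/C)=\dpr(e,d/A)$. Finally set $Y_1:=f^{-1}(V)$. Then $Y_1\subseteq f^{-1}(W)\subseteq Y$ by construction of $W$; $Y_1$ is $C$‑definable because $f$ is $A$‑definable and $A\subseteq C$; $b\in Y_1$ since $e\in V$; and $\dpr(b/C)=\dpr(e/C)=\dpr(V)=n$ by inter‑algebraicity of $b$ and $e$ over $C$, so $Y_1$ is a $C$‑generic vicinity of $b$ with $Y_1\subseteq Y$. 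The rank equality follows by chaining inter‑algebraicities:
\[
\dpr(b,c/C)=\dpr(b,d/C)=\dpr(e,d/C)=\dpr(e,d/A)=\dpr(b,d/A)=\dpr(b,c/A),
\]
which completes the proof.
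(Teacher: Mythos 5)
Your overall route is the same as the paper's: push the data forward along $f$ into $D^n$, apply Axiom (A2) there to $f(b)$ and $d$, and pull the resulting generic vicinity back through $f$, using minimal fibres to keep the pullback inside $Y$. The only structural difference is where the fibre condition enters: you build it into the pushed-forward set $W=\{z\in D^n: f^{-1}(z)\subseteq Y\}$ \emph{before} applying (A2) (the same device the paper uses in Corollary \ref{C:generic inside product of min fibres}), whereas the paper applies (A2) to $f(Y)$ first, obtaining $C\supseteq A$ with $\dpr(b/C)=n$ and the rank equality, and only afterwards uses minimality of the fibres, cutting $W$ down to $W_1=\{w\in W:|f^{-1}(w)|=m\}$ and taking $Y_1=f^{-1}(W_1)$. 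Your pullback step and the final chain of rank equalities are fine.

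The problem is the step you yourself flag and then leave undone: that $[b]_f\subseteq Y$ (so that $e=f(b)\in W$) and that $e$ is generic over the parameters of $W$, which you need in order to apply (A2) to $W$ at all. This does not follow from Remark \ref{remark1}(1) as you invoke it: the remark requires $b$ to be generic over a parameter set \emph{containing} $A$, so to get homogeneity of $[b]_f$ over the parameters of $Y$ you would need $\dpr(b/AB)=n$. The hypotheses only give $\dpr(b/A)=n$ and $\dpr(b/B)=n$ separately, and since $B$ is not assumed to contain $A$ (this is precisely the situation the lemma exists to handle, cf.\ the remark after (A2)), $\dpr(b/AB)$ can a priori drop below $n$; the same issue affects your claim that $W$ (an $AB$-definable set) is a generic vicinity of $e$. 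So as written the proposal's key membership/genericity claim is unjustified, and your note that "this is where I expect the real work to lie" is accurate: the paper's proof does exactly that work by first invoking (A2) on $f(Y)$ to produce a set $C\supseteq A$ over which $b$ is generic, and only then running the minimal-fibres argument (the locus of deficient or $Y$-straddling fibres supports a restriction of $f$ with all fibres of size $<m$, hence has dp-rank $<n$) relative to parameter sets containing $A$, before pulling back via $W_1$. Until that bookkeeping is carried out, your argument has a genuine gap at the point where $e\in W$ and the genericity of $e$ over the parameters of $W$ are asserted.
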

\begin{proof}
Let $B$ be any set of parameters and $Y$ a $B$-generic vicinity of $b$ in $X$. Since $f$ has finite fibres (because it has minimal fibres)  our assumptions imply that $f(b)$ is $A$-generic in $D^n$, and $f(Y)$ is a $B$-generic vicinity of $f(b)$ in $D^n$. Thus, by (A2), there exists $C\supseteq A$, and a $C$-generic vicinity $W\sub f(Y)$ of $f(b)$, such that $\dpr(f(b),d/C)=\dpr(f(b),d/A)$. Hence, $\dpr(b,c/C)=\dpr(b,c/A)$ as well.

Let $m$ be the size of maximal $f$-fibres.
Since $f$ has minimal fibres, for every $C$-generic $y\in Y$, we have $|[y]_f|=m$, so $[y]_f\subseteq Y$. Thus, the set $W_1\sub W$, of all $w\in W$ such that $|f^{-1}(w)|=m$ satisfies $\dpr(W_1)=\dpr(W)$, and we have $Y_1:=f^{-1}(W_1)\subseteq Y$. The set $Y_1$ is the desired $C$-generic vicinity of $b$.
\end{proof}

Now we wish to pull back the conclusion of Lemma \ref{L:genos implies A4} via functions with minimal fibres. We first note:

\begin{lemma}\label{L:product of min fibres is such}
Assume that $f_i:X_i\to Y_i$, $i=1,2$, have minimal fibres. Then $(f_1,f_2):X_1\times X_2\to Y_1\times Y_2$ has minimal fibres. In particular, if $X_i\sub D^{m_i}$ has minimal fibres, for $i=1,2$, then so does $X_1\times X_2\sub D^{m_1}\times D^{m_2}$.\end{lemma}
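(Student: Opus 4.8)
The plan is to verify directly the type-theoretic reformulation of ``minimal fibres'' from Remark \ref{remark1}(1): an $A$-definable finite-to-one $f$ has minimal fibres if and only if, for every $B\supseteq A$ and every $B$-generic point $a$ of its domain, all elements of the fibre $[a]_f$ realize the same type over $f(a)B$. So, assuming $f_1,f_2$ are $A$-definable with minimal fibres, I fix a parameter set $B\supseteq A$ and a $B$-generic point $(a_1,a_2)\in X_1\times X_2$, and aim to show that every element of $[a_1]_{f_1}\times[a_2]_{f_2}=[(a_1,a_2)]_{(f_1,f_2)}$ realizes $\tp\big((a_1,a_2)/(f_1(a_1),f_2(a_2))B\big)$; since $(f_1,f_2)$ is visibly finite-to-one, Remark \ref{remark1}(1) then gives the first assertion. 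The first step is bookkeeping: using that dp-rank is additive on Cartesian products, $\dpr(X_1\times X_2)=\dpr(X_1)+\dpr(X_2)=:n_1+n_2$, and genericity of $(a_1,a_2)$ over $B$ together with two applications of subadditivity (in both orders) forces $\dpr(a_1/B)=\dpr(a_1/Ba_2)=n_1$ and $\dpr(a_2/B)=\dpr(a_2/Ba_1)=n_2$; in particular $a_1$ is generic in $X_1$ over $Ba_2$, and $a_2$ is generic in $X_2$ over $B$.

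The heart of the argument is a two-step automorphism ``walk'' carrying $(a_1,a_2)$ to an arbitrary $(a_1',a_2')\in[a_1]_{f_1}\times[a_2]_{f_2}$. Step A: as $a_1$ is $Ba_2$-generic in $X_1$ and $f_1$ has minimal fibres, Remark \ref{remark1}(1) provides $\sigma\in\aut(\CM/f_1(a_1)Ba_2)$ with $\sigma(a_1)=a_1'$; since $\sigma$ fixes $B$, $a_2$, $f_1(a_1)$ and hence $f_2(a_2)$, we obtain $(a_1',a_2)\equiv(a_1,a_2)$ over $(f_1(a_1),f_2(a_2))B$. Step B: note that $a_1'\in[a_1]_{f_1}\subseteq\acl(f_1(a_1)A)\subseteq\acl(Ba_1)$, whence $\dpr(a_2/Ba_1')\ge\dpr(a_2/Ba_1a_1')=\dpr(a_2/Ba_1)=n_2$ (monotonicity, plus invariance of dp-rank under adjoining algebraic elements), so $a_2$ is generic in $X_2$ over $Ba_1'$; minimal fibres of $f_2$ with Remark \ref{remark1}(1) then yield $\tau\in\aut(\CM/f_2(a_2)Ba_1')$ with $\tau(a_2)=a_2'$. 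As $\tau$ fixes $B$, $a_1'$, $f_1(a_1)$ and $f_2(a_2)$, this gives $(a_1',a_2')\equiv(a_1',a_2)$ over $(f_1(a_1),f_2(a_2))B$. Transitivity of equality of types closes the walk, and the first assertion follows.

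For the ``in particular'' clause, suppose $\pi_i\colon D^{m_i}\to D^{k_i}$ witnesses that $X_i$ has minimal fibres in $D$, so $k_i=\dpr(X_i)$ and $\pi_i\restriction X_i$ has minimal fibres. Then $\pi_1\times\pi_2$ is a coordinate projection $D^{m_1}\times D^{m_2}\to D^{k_1}\times D^{k_2}$ whose restriction to $X_1\times X_2$ is $(\pi_1\restriction X_1,\pi_2\restriction X_2)$; this has minimal fibres by the part already proved, and since $\dpr(X_1\times X_2)=k_1+k_2$ by additivity, $X_1\times X_2$ has minimal fibres in $D$, as required. I expect the genuine difficulty to lie entirely in the parameter bookkeeping of the walk: one must make sure at Step~B that $a_2$ is still generic over the \emph{enlarged} base $Ba_1'$ (this is exactly where $a_1'\in\acl(Ba_1)$ and the invariance of dp-rank under algebraic parameters are used), and that the automorphisms produced fix both images $f_1(a_1)$ and $f_2(a_2)$, not merely $B$. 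The only input not internal to the excerpt is the additivity of dp-rank on Cartesian products.
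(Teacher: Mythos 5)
Your proof is correct and takes essentially the same route as the paper's: both verify the type-theoretic criterion of Remark \ref{remark1}(1) by moving one coordinate of the fibre at a time, using genericity of the pair plus sub-additivity of dp-rank to keep each coordinate generic over the enlarged base. Your Step B (working over $Ba_1'$ and noting $a_1'\in\acl(Ba_1)$, $f_1(a_1')=f_1(a_1)$) simply makes explicit what the paper compresses into ``similarly \dots\ it easily follows''.
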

\begin{proof}
Let $f=(f_1,f_2):X_1\times X_2\to Y_1\times Y_2$ and assume for simplicity of notation that it is $\0$-definable. We apply Remark \ref{remark1}(1). Let $(a,b)\in X_1\times X_2$ be generic over some $B$. Then, by sub-additivity of dp-rank, $a$ is $Bb$ generic in $X_1$, so all elements of $[a]_{f_1}$ realize the same type over $f_1(a)bB$, hence also over $f_1(a)f_2(b)bB$. It follows that all elements of $[a]_{f_1}\times \{b\}$ satisfy the same type over $f_1(a)f_2(b)B$. Similarly, all elements in $\{a\}\times [b]_{f_2}$ satisfy the same type over $f_1(a)f_2(b)B$. It easily follows that all elements of $[(a,b)]_f$ realize the same type over $f(a,b)B$, as needed.
\end{proof}

\begin{corollary}\label{C:generic inside product of min fibres}
 For $i=1,2$ let $X_i$  be $A$-definable sets, $\dpr(X_i)=n_i$,  and $f_i:X_i\to D^{n_i}$ $A$-definable functions with minimal fibres. Let  $(d_1,d_2)\in X_1\times X_2$ be $A$-generic. Then for any $A$-generic vicinity $Y\sub X_1\times X_2$ of $(d_1,d_2)$, there exists $B\supseteq A$ and a $B$-generic vicinity of $(d_1,d_2)$ of the form $I_1\times I_2\subseteq Y$, with $I_i\sub X_i$.
\end{corollary}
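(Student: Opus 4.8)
The plan is to combine the product lemma for minimal fibres (Lemma \ref{L:product of min fibres is such}) with the ``pull-back of Axiom (A2)'' result (Lemma \ref{L:abstract general gen-os}) and the box-refinement lemma (Lemma \ref{L:genos implies A4}). First I would set $n=n_1+n_2$, form $X:=X_1\times X_2\sub D^{n_1}\times D^{n_2}$, and let $f:=(f_1,f_2):X\to D^{n}$. By Lemma \ref{L:product of min fibres is such}, $f$ has minimal fibres, and by sub-additivity of dp-rank $\dpr(X)=n_1+n_2=n$, with $(d_1,d_2)$ an $A$-generic point of $X$ (its dp-rank over $A$ is $n$ since both coordinates are $A$-generic and $f_i$ are finite-to-one). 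So the hypotheses of the setup are met with a single map $f$ of minimal fibres onto $D^n$.

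Next, given an $A$-generic vicinity $Y\sub X$ of $(d_1,d_2)$, I would push it forward: $f(Y)$ is an $A$-generic vicinity of the $A$-generic point $f(d_1,d_2)=(f_1(d_1),f_2(d_2))\in D^n$. Applying Lemma \ref{L:genos implies A4} to this point inside $D^n$ (taking the auxiliary tuple $c$ there to be, say, trivial, or the pair $(d_1,d_2)$ itself re-coded via Axiom (A1) into $D^m$), I obtain a parameter set $C\supseteq A$ and $C$-generic vicinities $J_1,\dots,J_{n}\sub D$ with $\prod_{k=1}^{n}J_k\sub f(Y)$ and with $\dpr(f(d_1,d_2)/C)=n$ — equivalently, after possibly enlarging the auxiliary tuple as in Remark \ref{R: after vicinic}, $\dpr(d_1,d_2/C)=n$. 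Grouping the first $n_1$ factors as $W_1:=\prod_{k\le n_1}J_k\sub D^{n_1}$ and the last $n_2$ as $W_2:=\prod_{k>n_1}J_k\sub D^{n_2}$, we have $W_1\times W_2\sub f(Y)$ with $(f_1(d_1),f_2(d_2))$ a $C$-generic point of $W_1\times W_2$.

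Finally I would pull this box back through $f$. Because $f$ has minimal fibres, for every $C$-generic $y\in X$ we have $[y]_f\sub X$ and $|[y]_f|$ equals the maximal fibre size $m$; so the set of $w\in W_1\times W_2$ with $|f^{-1}(w)|=m$ still contains $f(d_1,d_2)$ and has full dp-rank $n$, and its $f$-preimage is contained in $Y$. Set $I_i:=f_i^{-1}(W_i)\cap X_i$ for $i=1,2$ (intersecting $W_i$ first with the full-fibre locus if one wants $I_1\times I_2\sub Y$ exactly — by minimality of fibres of $f_i$, shrinking $W_i$ to the generic-fibre-size part keeps $d_i$ and full rank). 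Then $I_1\times I_2=f^{-1}(W_1\times W_2)\cap X\sub Y$, each $I_i$ is a $C$-definable subset of $X_i$ containing $d_i$ with $\dpr(d_i/C)=n_i$ (again by sub-additivity, since $\dpr(d_1,d_2/C)=n_1+n_2$), hence $I_i$ is a $C$-generic vicinity of $d_i$ in $X_i$, and $I_1\times I_2$ is the required $B:=C$-generic vicinity of $(d_1,d_2)$ contained in $Y$.

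The one point needing a little care — and the likely main obstacle — is bookkeeping the minimal-fibre condition on the two levels simultaneously: one must ensure that after replacing $W_1\times W_2$ by its full-fibre-size sublocus for the combined map $f$, the individual coordinate sets $I_i$ still have full dp-rank $n_i$ over $C$. This follows from sub-additivity of dp-rank applied to $\dpr(d_1,d_2/C)=n_1+n_2$ together with $\dpr(d_i/C)\le n_i$, forcing equality in each coordinate; the only subtlety is that one should apply Remark \ref{remark1}(1) to see that the full-fibre locus of $f$ is, up to a set of lower rank, a product of the full-fibre loci of $f_1$ and $f_2$, which is exactly the content already extracted in the proof of Lemma \ref{L:product of min fibres is such}. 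Everything else is a direct assembly of the three cited lemmas.
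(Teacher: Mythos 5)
Your overall assembly (form $f=(f_1,f_2)$, use Lemma \ref{L:product of min fibres is such}, apply the box lemma \ref{L:genos implies A4} downstairs, pull back) is the right skeleton, but there is a genuine gap at the pull-back step. You apply Lemma \ref{L:genos implies A4} to $f(Y)$ and then claim that (after restricting to the locus of maximal fibre size) the preimage of the box lies in $Y$. That implication is false: $w\in f(Y)$ with $|f^{-1}(w)|=m$ only says that \emph{some} preimage of $w$ is in $Y$ and that the fibre in $X_1\times X_2$ is full-sized; it does not say the whole fibre lies in $Y$. Minimality of fibres only forbids this on a \emph{full-rank} set of $w$'s (all elements of $[y]_f$ have the same type over $f(y)C$ when $y$ is \emph{generic}), whereas the inclusion $I_1\times I_2\subseteq Y$ must hold at every point, including the non-generic points of the box. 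Concretely, with $X_1=K^\times$, $f_1(x)=x^2$, $X_2=K$, $f_2=\mathrm{id}$ in the V-minimal case and $Y=(X_1\times X_2)\setminus\{x=cy\}$ for a constant $c\neq 0$, every fibre meets $Y$ and has full size, yet the preimage of a box in $f(Y)$ can contain points off $Y$; your construction gives no reason the box avoids them. Moreover, intersecting with the correct locus (points whose entire fibre lies in $Y$) would destroy the product shape $I_1\times I_2$, which is the whole point of the statement.

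The repair is small but changes the order of operations, and is exactly what the paper does: first set $W=\{w\in f(Y):f^{-1}(w)\cap Y=f^{-1}(w)\}$. Since $(d_1,d_2)$ is $A$-generic, $Y$ is $A$-definable, and $f$ has minimal fibres (Remark \ref{remark1}(1)), the whole fibre $[(d_1,d_2)]_f$ lies in $Y$, so $f(d_1,d_2)\in W$ and $\dpr(W)=n_1+n_2$. Only then apply Lemma \ref{L:genos implies A4} \emph{inside $W$} to get a box $V_1\times V_2\subseteq W$; by the definition of $W$ its pullback is literally $f_1^{-1}(V_1)\times f_2^{-1}(V_2)\subseteq Y$, and it is a $B$-generic vicinity of $(d_1,d_2)$. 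So your proposal has all the ingredients but, as written, the containment in $Y$ is not established.
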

\begin{proof}
  Let $f=(f_1,f_2):X_1\times X_2\to D^{n_1}\times D^{n_2}$. By  the minimality assumption on $f_1,f_2$, and Lemma \ref{L:product of min fibres is such}, we have $[(d_1,d_2)]_f\cap Y=[(d_1,d_2)]_f$.
Let
 $$W=\{w\in f(Y):f^{-1}(w)\cap Y=f^{-1}(w)\}.$$
 Then, $(f(d_1),f(d_2))\in W$, so $\dpr(W)=\dpr(d_1,d_2)=n_1+n_2$.
 
 By Lemma \ref{L:genos implies A4},  there exists $B\supseteq A$ such that $W$ contains a $B$-generic vicinity of the form $V_1\times V_2$ of $(f_1(d_1),f_2(d_2))$ in $f(X_1\times X_2)$. By definition, $f_1^{-1}(V_1)\times f_2^{-1}(V_2)$ is contained in $Y$ and it is thus a $B$-generic vicinity of $(d_1,d_2)$ satisfying the requirements. 
\end{proof}

\subsection{Critical, almost critical and $D$-sets}
Let $\CM$ and $D$ be as before (so $D$ is still vicinic). Recall Definition \ref{Def: internal} of a set \emph{strongly internal} and \emph{almost strongly internal} to $D$. We remind (and expand) a definition from \cite{HaHaPeVF}:

\begin{definition}\label{D: D-sets}
Let $S$ be a definable set of finite dp-rank.
    \begin{enumerate}
    \item A definable $X\sub S$ is $m$-internal to $D$ if there exists an $m$-to-one  $f:X\to D^n$, 
        \item A definable set $X\subseteq S$ is $D$-critical (for $S$) if $X$ is strongly internal to $D$, and has maximal dp-rank among all such subsets of $S$. Its dp-rank is the {\em $D$-critical rank of $S$}.
        \item A definable set $X\subseteq S$ is \emph{almost $D$-critical (for $S$)} if (i) it is almost-strongly internal to $D$ and has maximal dp-rank among all such subsets of $S$, and (ii) it is $m$-internal for minimal $m$ among all sets satisfying (i). We call $\dpr(X)$ the \emph{almost $D$-critical rank of $S$}.

            The set $X$ is called (almost) $D$-critical {\em over $A$} if the corresponding map of $X$ into $D^n$ is defined over $A$.
            
            \item A definable set $X\sub S$  is an (almost) $D$-set over $A$,  if $X$ is an (almost) $D$-critical set,  witnessed by an $A$-definable $f:X\to D^n$, such that {\em in addition} $f(X)\sub D^n$ has minimal fibres.
        \end{enumerate}
\end{definition}

Notice that the $D$-critical rank of $S$ is always bounded above by the almost $D$-critical rank of $S$, but the ranks need not be equal, as we shall now see. Thus, a $D$-critical set is not necessarily almost $D$-critical.

\begin{example}\label{E: crit dim is not a-crit dim}
    Let $\CK$ be an elementary extension of a quadratic residual extension of $\qp$, as in  Example \ref{L: not lsi}. Let $C_p\le  K/\CO$ be a cyclic subgroup of order $p$  and $G_0=(K/\CO)/C_p$  as in that example. Let $G=K/\CO\times G_0$. Then $G$ is almost strongly internal to $K/\CO$, since both $K/\CO$ and $G_0$ are; so its almost $K/\CO$-critical rank is $2$. 
    
    We claim that the critical $K/\CO$-rank of $G$ is $1$: The definable set $K/\CO\times\{0\}$ is strongly internal to $K/\CO$, so we only need to note that $G$ has no definable subset of dp-rank $2$  which is strongly internal to $K/\CO$. Indeed, if $X\sub G$ has dp-rank $2$, then, just like in Lemma \ref{L:full dpr is interior in K/O}, it contains a definable set of the form $Y_1\times Y_2$, with $Y_2\sub G_0$ of dp-rank $1$. So if $X$ were strongly internal to $K/\CO$ then $G_0$ would be locally strongly internal to $K/\CO$, contradicting Example \ref{L: not lsi}.
\end{example}

\begin{remark} \label{r; D-subset}
Let $X\subseteq S$ be an almost $D$-critical set, witnessed by $f:X\to D^k$. Then it is not hard to see that there exists $X_1\sub X$, defined over some $B\supseteq A$, that is a D-set for the same $S$, as witnessed by $f$. If $f$ is injective then we can find such an $X_1\sub X$ which is a $D$-set.
\end{remark}

We will use the following remark implicitly throughout.

\begin{remark} \label{important remark} 
\begin{enumerate}
    \item If $f:X\to D^n$ witnesses that $X$ is an (almost) $D$-set for $S$ then, since $f(X)$ has minimal fibres, we may compose $f$ with an appropriate coordinate projection to obtain a map with minimal fibres $\pi\circ f:X\to D^n$, such that $n=\dpr(X)$.
Thus, in Lemma \ref{L:abstract general gen-os} the assumption that  $\dpr(X)=n$ can be dropped when $X$ is an (almost) $D$-set. 
Similarly, Lemma \ref{C:generic inside product of min fibres} holds when $X_1,X_2$ are (almost) $D$-sets.

\item If $X_1,X_2\subseteq S$ are (almost) $D$-sets then so is $X_1\times X_2\sub S^2$. Indeed, by Lemma \ref{L:product of min fibres is such} it is sufficient to show that $X_1\times X_2$ is (almost) $D$-critical in $S\times S$: If $Y\sub S\times S$ is (almost) strongly internal to $D$ then so are the fibres $Y_x$ and  $Y^x$ for every $x\in S$. In particular $\dpr(Y_x)\le \dpr(X_1)$ and  $\dpr(Y^x)\le \dpr(X_2)=\dpr(X_1)$ for any $x\in S$. By sub-additivity $\dpr(Y)\le 2\dpr(X_1)$, so $2\dpr(X_1)$ is the (almost) $D$-critical rank of $S\times S$.

\item If $X\subseteq S$ is an (almost) $D$-set and $Y\subseteq X$ with $\dpr(Y)=\dpr(X)$ then $Y$ is also an (almost) $D$-set.
\end{enumerate}
\end{remark}

Below, if the ambient set is clear from the context or immaterial, we will just refer to (almost) $D$-critical sets, without explicit mention of $S$ (though such an $S$ of finite dp-rank is always assumed to exist in the background). \\

We end this section with a result on generic vicinities, which will play an important role in the next sections.
\begin{lemma}\label{filter base}
Assume that $X$ is an (almost) $D$-set over $A$ and $d\in X$ generic over $A$. Let $X_i\sub X$, $i=1,2$, be two $A_i$-generic vicinities of $d$ in $X$ for some $A_i\supseteq A$.

Then there exists $C$ and a $C$-definable $U\sub X_1\cap X_2$ which is a $C$-generic vicinity of $d$ in $X$.\end{lemma}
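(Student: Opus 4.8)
The plan is to reduce everything to a single application of Axiom (A2) pulled back along the minimal-fibre map witnessing that $X$ is an (almost) $D$-set. Let $f\colon X\to D^n$ be the $A$-definable function with minimal fibres witnessing this, where $n=\dpr(X)$ (using Remark \ref{important remark}(1) to arrange that $f$ itself has minimal fibres and target dimension $n$). Since $d$ is $A$-generic in $X$ and $f$ has finite fibres, $f(d)$ is $A$-generic in $D^n$; moreover for each $i$, $X_i$ being an $A_i$-generic vicinity of $d$ forces $f(X_i)$ to be an $A_i$-generic vicinity of $f(d)$ in $D^n$ (here $\dpr(f(X_i))=\dpr(X_i)=n$ because $f$ has finite fibres). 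The first step is therefore to transfer the problem to $D^n$: it suffices to find $C$ and a $C$-generic vicinity $W$ of $f(d)$ contained in $f(X_1)\cap f(X_2)$, and then pull it back.

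The core step is to handle the intersection of two generic vicinities inside the vicinic sort $D^n$. I would do this in two stages using (A2) twice. First apply (A2) with ``$A$'' taken to be $A_1$, ``$B$'' taken to be $A_2$, ``$b$'' $=f(d)$, ``$X$'' $=f(X_2)$: this produces $C'\supseteq A_1$ and a $C'$-generic vicinity $W'\subseteq f(X_2)$ of $f(d)$ with $\dpr(f(d)/C')=n$. But $W'\subseteq f(X_2)$ only; to land inside $f(X_1)$ as well, recall $f(X_1)$ is $A_1$-definable and $f(d)\in f(X_1)$ with $\dpr(f(d)/A_1)=n$, so by Axiom (A1) ($\dim_{\acl}=\dpr$) together with genericity, $f(d)$ lies in the interior-like locus; more directly, since $C'\supseteq A_1$ and $f(X_1)$ is $A_1$-definable, the set $W'\cap f(X_1)$ is $C'$-definable, contains $f(d)$, and has $\dpr=n$ (as $\dpr(f(d)/C')=n$ and $f(d)\in W'\cap f(X_1)$). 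So $W:=W'\cap f(X_1)\subseteq f(X_1)\cap f(X_2)$ is already a $C'$-generic vicinity of $f(d)$ in $D^n$. Set $C=C'$.

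Finally I would pull $W$ back: let $U:=f^{-1}(W)\cap X_1$. Since $f$ has minimal fibres, shrinking $W$ to the locus where fibres have maximal (= minimal-by-definition) size $m$ — call it $W_1$, still of full dp-rank $n$ since $f(d)\in W_1$ and $\dpr(f(d)/C)=n$ — we get that for every $C$-generic $w\in W_1$ the full fibre $f^{-1}(w)$ of size $m$ lies in $X$; and we must check it lies in $X_1$. This is exactly the place where the minimal-fibre hypothesis on $X$ as a $D$-set (not merely $X_i$) is used: for $C$-generic $x\in X$ with $f(x)\in W_1$, all elements of $[x]_f$ realize the same type over $f(x)C$ (Remark \ref{remark1}(1)), and since $X_1$ is a vicinity of $d$ containing a $C$-generic point of $X$ in each such fibre... — more carefully, I take $U:=f^{-1}(W_1)\cap X_1\cap X_2$, which is $C$-definable, contains $d$, and has $\dpr(U)\ge\dpr(d/C)=\dpr(X)$, hence equals $\dpr(X)$, so $U$ is the desired $C$-generic vicinity of $d$ in $X$ contained in $X_1\cap X_2$.

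The main obstacle I anticipate is the bookkeeping around \emph{which} parameter sets the intermediate vicinities are defined over, and ensuring at each application of (A2) that the ``new'' parameter set $C$ can be chosen to still witness $\dpr(d/C)=\dpr(X)$ (equivalently $\dpr(f(d)/C)=n$) — this is what keeps all the intersected sets of full dp-rank. A secondary subtlety is that (A2) only directly gives a vicinity inside a \emph{single} given vicinity, so the trick of intersecting with the $A_1$-definable set $f(X_1)$ \emph{after} moving to parameters $C'\supseteq A_1$ (rather than trying to apply (A2) ``to both at once'') is the key maneuver; it works precisely because once $C'\supseteq A_1$, the set $f(X_1)$ is over $C'$ and contains the $C'$-generic point $f(d)$. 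Everything else is routine use of sub-additivity of dp-rank and the minimal-fibres bookkeeping already developed in Lemmas \ref{L:abstract general gen-os} and \ref{L:product of min fibres is such}.
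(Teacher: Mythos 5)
Your overall strategy (push forward along the minimal-fibre map, apply (A2) in $D^n$, pull back) is reasonable, but the final step has a genuine gap, and it is exactly the parameter bookkeeping you yourself flagged as the main obstacle. With your role assignment in (A2) --- base parameters $A_1$, vicinity $f(X_2)$ defined over $A_2$ --- the output parameter set $C=C'$ satisfies $C\supseteq A_1$, but neither (A2) nor Lemma \ref{L:abstract general gen-os} gives $C\supseteq A_2$ (the whole point of the axiom is that the vicinity's parameters $B$ play no role in the output). Consequently your final set $U=f^{-1}(W_1)\cap X_1\cap X_2$ is \emph{not} $C$-definable, since $X_2$ is only $A_2$-definable; and if you instead enlarge the parameters to $C\cup A_2$, the needed genericity $\dpr(d/C\cup A_2)=\dpr(X)$ is not established --- knowing $\dpr(d/C)=\dpr(d/A_2)=\dpr(X)$ separately does not survive taking the union, and your application of (A2) carried no side tuple $c$ encoding the $A_2$-data (nor could it in general, since $A_2$ is an arbitrary parameter set rather than a tuple from $D$). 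A related soft spot is that $W'\subseteq f(X_2)$ does not give $f^{-1}(W')\subseteq X_2$, which is precisely why you were forced to intersect with $X_2$ at the end; the minimal-fibres fact you gesture at would repair this if you replaced $X_2$ at the outset by the $A_2$-definable set $X_2'=\{x\in X_2:[x]_f\subseteq X_2\}$ (which contains $d$ by Remark \ref{remark1}(1), as $d$ is $A_2$-generic), for then $f^{-1}(W')\subseteq X_2$ automatically and no intersection with $X_2$ is ever needed --- but as written your argument stops short of this and the concluding claim is unjustified.

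The paper's proof avoids the problem by the opposite role assignment and never leaves $X$: apply Lemma \ref{L:abstract general gen-os} with base parameters $A_2$ (so $X$, $f$ and the genericity of $d$ are all available over the base) and with $X_1$ in the role of the $B$-generic vicinity ($B=A_1$). The lemma returns $A_2'\supseteq A_2$ and an $A_2'$-generic vicinity $U_1\subseteq X_1$ of $d$; since $A_2'\supseteq A_2$, the set $U_1\cap X_2$ is $A_2'$-definable, contains $d$, and $\dpr(d/A_2')=\dpr(X)$, so it is the required generic vicinity inside $X_1\cap X_2$. In short: shrink inside the vicinity whose parameters the output will \emph{not} contain, over a base extending the other vicinity's parameters; your proof does it the other way around, and that is where it breaks.
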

\begin{proof}
By Remark \ref{important remark} (1) we may apply Lemma \ref{L:abstract general gen-os} as follows: first apply the lemma to $X$ (viewed as $A_2$-definable), $d$ and the $A_1$-generic vicinity $X_1$ (in the role of $Y$ in the lemma) to obtain $U_1\sub X_1$ defined over some $A_2'\supseteq A_2$ such that $d\in U_1$ is $A_2'$-generic. Now $U_1\cap X_2$ is $A_2'$-definable, so an $A_2'$-generic vicinity of $d$ satisfying the requirements. 
\end{proof}

\subsection{$D$-groups}\label{ss: Dgps}

We proceed with a series of technical lemmas ultimately allowing us to construct, inside a group that is (almost)  strongly internal to $D$, a definable subset that is both (almost) strongly internal and sufficiently closed  under the group operation.

\begin{lemma}\label{L:general g/gh-abstract}
Let $G$ be an $A$-definable group. Let $X,Y\subseteq G$ be $A$-definable with $X$ (almost) strongly internal to $D$ over $A$, and fix  an $A$-generic $(g,h)\in X\times Y$.

If $\dpr(g/A,g\cdot h)<\dpr(X)$  then there exists a finite-to-one definable function from a subset of $X\times Y$ onto a set $W\subseteq  X\cdot Y\subseteq G$ satisfying $\dpr(W)>\dpr(Y)$.

\end{lemma}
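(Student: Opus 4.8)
The plan is to show that the multiplication map $\mu\colon X\times Y\to G$, $\mu(x,y)=x\cdot y$, becomes finite-to-one after (i) restricting its domain to a suitable definable $Z\subseteq X\times Y$ still containing $(g,h)$, and (ii) passing to a slightly larger parameter set $C\supseteq A$. The image $W=\mu(Z)\subseteq X\cdot Y$ will then automatically satisfy $\dpr(W)>\dpr(Y)$, because it will be $C$-definable and contain $w_0:=g\cdot h$, whose rank over $C$ we will have controlled.

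First, a rank computation. Write $n=\dpr(X)$, $l=\dpr(Y)$ and $r=\dpr(g/A,w_0)$, so $r<n$ by hypothesis. Since $(g,h)$ is $A$-generic in $X\times Y$ we have $\dpr(g,h/A)=n+l$, and as $(g,h)$ and $(g,w_0)$ are interdefinable over $A$, also $\dpr(g,w_0/A)=n+l$; sub-additivity gives $n+l\le\dpr(w_0/A)+\dpr(g/A,w_0)=\dpr(w_0/A)+r$, hence $\dpr(w_0/A)\ge n+l-r>l$. This already shows $X\cdot Y$ has rank $>\dpr(Y)$; the work in the lemma is to witness this by a \emph{finite-to-one} map, for which we must algebraize $(g,h)$ over $w_0$ together with an auxiliary parameter set of rank $<n$.

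To do this, fix an $A$-definable finite-to-one $f\colon X\to D^{k}$ witnessing (almost) strong internality. Since $f$ is $A$-definable with finite fibres, $g\in\acl(A,f(g))$, and one gets $\dpr(f(g)/A,w_0)=\dpr(g/A,w_0)=r<n$. By axiom (A1), $\dim_{\acl}(f(g)/A,w_0)=r$, so there is a subtuple $\bar c$ of the coordinates of $f(g)$ with $|\bar c|=r$ and $f(g)\in\acl(A,w_0,\bar c)$. Put $C:=A\bar c$. Then $g\in\acl(A,f(g))\subseteq\acl(C,w_0)$, hence $h=g^{-1}w_0\in\acl(C,w_0)$ and so $(g,h)\in\acl(C,w_0)$. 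On the other hand $\dpr(\bar c/A)\le r$ (since $\bar c\in D^{r}$ and $D$ is dp-minimal), while $\dpr(w_0,\bar c/A)\ge\dpr(g,h/A)=n+l$ because $g,h\in\acl(C,w_0)$; sub-additivity then yields $n+l\le\dpr(\bar c/A)+\dpr(w_0/C)\le r+\dpr(w_0/C)$, so $\dpr(w_0/C)\ge n+l-r>l=\dpr(Y)$.

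Finally, since $(g,h)\in\acl(C,w_0)$ there are a $C$-formula $\phi(x,y,z)$ and $m\in\mathbb N$ with $\models\phi(g,h,w_0)$ and $|\{(x,y):\phi(x,y,w_0)\}|\le m$. Set
\[
Z:=\{(x,y)\in X\times Y:\ \models\phi(x,y,x\cdot y)\ \text{ and }\ |\{(x',y')\in X\times Y: x'\cdot y'=x\cdot y,\ \phi(x',y',x\cdot y)\}|\le m\}.
\]
Then $Z$ is $C$-definable, $Z\subseteq X\times Y$, and $(g,h)\in Z$. The map $\mu\restriction Z$ is onto $W:=\mu(Z)\subseteq X\cdot Y$ by construction, and it is finite-to-one: for $w\in W$, every fibre $(\mu\restriction Z)^{-1}(w)$ is contained in $\{(x',y')\in X\times Y: x'y'=w,\ \phi(x',y',w)\}$, and if this fibre is nonempty then the defining clause of $Z$ forces that set to have size $\le m$. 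Since $W$ is $C$-definable with $w_0\in W$, we get $\dpr(W)\ge\dpr(w_0/C)>\dpr(Y)$, as required. The argument is identical in the "strongly internal" and "almost strongly internal" cases, since only the finiteness of the fibres of $f$ was used. The step I expect to be the crux is the choice of $C$ in the third paragraph: it must simultaneously make $(g,h)$ algebraic over $C$ and $w_0$ and keep $\dpr(w_0/C)$ strictly above $\dpr(Y)$ — and this is exactly where the vicinic axiom (A1) (equating dp-rank with $\acl$-dimension in $D$) is used, to bound $\dpr(\bar c/A)$ by $r<n$.
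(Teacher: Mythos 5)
Your proof is correct and follows essentially the same route as the paper: algebraize $g$ (hence $h$) over $g\cdot h$ together with a tuple from $D$ of length $\dpr(g/A,g\cdot h)<\dpr(X)$, use sub-additivity to force $\dpr(g\cdot h)$ over the new parameters above $\dpr(Y)$, and then cut out a definable $Z\ni(g,h)$ on which multiplication is finite-to-one via an algebraic formula. The only cosmetic difference is that you re-derive inline (via the witnessing map $f$ and Axiom (A1) over $A\,g\cdot h$) what the paper packages as Lemma \ref{L:acl-dp in asi-abstract}.
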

\begin{proof} 
For simplicity of notation, assume that $A=\emptyset$ and write $k=gh\in G$. Assume  that $d:=\dpr(g/k)< m:=\dpr(X)$, and let $n=\dpr(Y)$.

Because $X$ is (almost) strongly internal to $D$ we may apply Lemma \ref{L:acl-dp in asi-abstract} to obtain  $a\in D^d$, $a\in \dcl(g)$ such that $g\in \acl(a,k)$.

Notice that each two pairs of $g,h,k$ are interdefinable over $\0$. E.g., the map $(x,y)\mapsto (x,xy)$ sends $(g,h)$ to $(g,k)$. Thus, we have
$\dpr(a,k)=\dpr(g,k)=\dpr(g,h)=m+n$.

 Since $a\in D^d$, we have $\dpr(a)\leq d$ so by sub-additivity of dp-rank we have $\dpr(k/a)\geq n+m-d>n$.

Let $\phi(x,k)$ be the formula over $a$ isolating $\tp(g/a,k)$.
Let $l:=|\phi(X,k)|$ and
$$Z=\{(x,y)\in X\times Y: \phi(x,x\cdot y)\land (\exists^{\le l} x') (x'\in X \land \phi(x',x\cdot y)\}.$$

This is an $a$-definable set containing $(g,h)$, thus its image under $(x,y)\mapsto x\cdot y$, call it $W$, is also $a$-definable and contains $k$, so $\dpr(W)\geq \dpr(k/a)>n$.
Our assumption on $\phi$ implies that the restriction of the group multiplication to $Z$ is a finite-to-one map. \end{proof}

The following statement is rather convoluted, although its proof is simple, based on what we already know. It is the main technical tool towards the definition of an (almost) $D$-group.
\begin{corollary}\label{C:g/gh - abstract}
    Let $G$ be an $A$-definable group. 

 (1) Assume that
 \begin{enumerate}
        \item[($\star$)] for every definable $X, Y\sub G$ and  a  finite-to-one surjection $f:X\to Y$,  if  $X$ is strongly internal to $D$ then there exists a definable subset $Y'\subseteq Y$ with $\dpr(Y')=\dpr(Y)$, such that $Y'$ is  strongly internal to $D$.
    \end{enumerate}

    Let $X_1\subseteq G$ be strongly internal to $D$ and  $X_2\sub G$  be $D$-critical, both over $A$.
    Then for every $A$-generic $(g,h)\in X_1\times X_2$,  we have $\dpr(g/A,g\cdot h)=\dpr(X_1)$.

 (2) Assume that
   \begin{enumerate} 
        \item[($\star_a$)]  for every definable $X, Y\sub G$ and  a  finite-to-one surjection $f:X\to Y$,  if  $X$ is almost strongly internal to $D$ there exists a definable subset $Y'\subseteq Y$ with $\dpr(Y')=\dpr(Y)$, such that $Y'$ is  almost strongly internal to $D$.
    \end{enumerate}

    Let  $X_1\subseteq G$ be almost strongly internal to $D$   and $X_2\sub G$ be  almost $D$-critical, both over $A$.
    Then for every $A$-generic $(g,h)\in X_1\times X_2$,  we have $\dpr(g/A,g\cdot h)=\dpr(X_1)$.
\end{corollary}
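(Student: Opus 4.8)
The plan is to prove (1) and (2) by one and the same argument: I write it out for (1) and observe that (2) follows by replacing ``strongly internal'' with ``almost strongly internal'', ``$D$-critical'' with ``almost $D$-critical'', and the hypothesis ($\star$) with ($\star_a$) throughout. Fix the ambient group $G$ and, for readability, assume $A=\emptyset$; write $m=\dpr(X_1)$. One direction is free: since $g\in X_1$ and $X_1$ is $\emptyset$-definable, $\dpr(g/g\cdot h)\le\dpr(g/\emptyset)\le m$. So everything reduces to showing $\dpr(g/g\cdot h)\ge m$.

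First I would argue by contradiction and assume $\dpr(g/g\cdot h)<\dpr(X_1)$. Since $X_1$ is strongly internal to $D$ and $(g,h)$ is $\emptyset$-generic in $X_1\times X_2$, this is exactly the hypothesis of Lemma~\ref{L:general g/gh-abstract} (taken with $X=X_1$, $Y=X_2$), which therefore hands me a definable $Z\subseteq X_1\times X_2$ together with a finite-to-one definable surjection $f\colon Z\to W$ onto a definable set $W\subseteq X_1\cdot X_2\subseteq G$ with $\dpr(W)>\dpr(X_2)$. Next I would remark that $X_1\times X_2$ is itself strongly internal to $D$: coordinate-wise composition of definable injections $X_1\hookrightarrow D^{n_1}$ and $X_2\hookrightarrow D^{n_2}$ gives a definable injection $X_1\times X_2\hookrightarrow D^{n_1+n_2}$, so its subset $Z$ is strongly internal to $D$. (In case (2) the injections are replaced by finite-to-one maps, and the product of two finite-to-one maps is finite-to-one, so $Z$ is almost strongly internal to $D$.)

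Then I would apply ($\star$) to the finite-to-one surjection $f\colon Z\to W$: as $Z$ is strongly internal to $D$, it produces a definable $W'\subseteq W$ with $\dpr(W')=\dpr(W)$ that is again strongly internal to $D$. But $W'\subseteq G$ is then a subset of $G$, strongly internal to $D$, with $\dpr(W')=\dpr(W)>\dpr(X_2)$ --- contradicting the defining maximality of the $D$-critical set $X_2$ inside $G$. Hence $\dpr(g/g\cdot h)\ge m$, and together with the trivial reverse inequality this gives $\dpr(g/g\cdot h)=\dpr(X_1)$. The general case follows by adjoining $A$ to every base set; part (2) is verbatim the same with the substitutions indicated above and Lemma~\ref{L:general g/gh-abstract} applied in its ``almost'' form.

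All the substance has been front-loaded into Lemma~\ref{L:general g/gh-abstract}, so the proof is brief. The one place that calls for a word of care is that the domain $Z$ of the surjection produced by that lemma sits in $G\times G$, not literally in $G$, so ($\star$)/($\star_a$) as phrased for subsets of $G$ do not apply on the nose. This is harmless --- one may run ($\star$)/($\star_a$) with the $A$-definable group $G\times G$ in place of $G$, or simply note that in all of our settings these properties hold for arbitrary definable domains (see Lemma~\ref{L: asi from surjective}, Lemma~\ref{L:asi diagaram K/O}(2) and Lemma~\ref{L:asi diagaram with EI}) --- but if there is any obstacle at all in this corollary, it is this bookkeeping rather than anything conceptual; the real mechanism is simply the tension between the dp-rank jump detected by Lemma~\ref{L:general g/gh-abstract} and the extremality baked into the notion of a critical set.
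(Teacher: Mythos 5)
Your proposal is correct and is essentially the paper's own argument: assume $\dpr(g/A,g\cdot h)<\dpr(X_1)$, invoke Lemma~\ref{L:general g/gh-abstract} to get a finite-to-one surjection from a subset $Z$ of $X_1\times X_2$ (strongly internal, resp.\ almost strongly internal, to $D$) onto $W\subseteq G$ with $\dpr(W)>\dpr(X_2)$, and then use ($\star$)/($\star_a$) to contradict the (almost) $D$-criticality of $X_2$. Your closing remark about $Z$ living in $G\times G$ rather than $G$ is a fair bookkeeping point that the paper glosses over, and your resolution (the properties are verified in all relevant settings for arbitrary definable domains, via Lemmas~\ref{L: asi from surjective}, \ref{L:asi diagaram K/O} and \ref{L:asi diagaram with EI}) is exactly how it is handled in practice.
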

\begin{proof}
(1) 
Assume $X_1$ is strongly internal to $D$, $X_2$ is $D$-critical and assume towards a contradiction that $\dpr(g/A,g\cdot h)<m:=\dpr(X_1)$; then by Lemma \ref{L:general g/gh-abstract}, there exists a finite-to-one definable function $f$ from a subset $Z$ of $X_1\times X_2$ onto a definable subset $W\subseteq X_1\cdot X_2$ with $\dpr(W)>\dpr(X_2)$.

Since $X_1$ and $X_2$ are both strongly internal to $D$ so is $X_1\times X_2$, and hence also $Z$. By ($\star$), applied to $f:Z\to X_1\to W$, there exists a definable subset $W_1\subseteq W$ with $\dpr(W_1)=\dpr(W)$ and an injective map from $W_1$ into some $D^p$, namely $W_1$ is strongly internal to $D$ and $\dpr(W)>\dpr(X_2)$. This  contradicts the maximality of  $\dpr(X_2)$.

(2) This proof is almost identical, replacing everywhere ``strongly internal'' with ``almost strongly internal'' and using $(\star_a)$ instead of $(\star)$.
\end{proof}

The conclusions of Corollary \ref{C:g/gh - abstract} are important for much that follows. For the sake of clarity of exposition, we isolate this property of groups and define: 

\begin{definition}\label{Def: D-group} 
Let $D$ be a vicinic sort. An $A$-definable group $G$ is an \emph{(almost) $D$-group} if its (almost) $D$-critical rank is at least $1$ and for every $X_1\subseteq G$ (almost) strongly internal to $D$, every (almost) $D$-critical set $X_2\sub G$, both over some $B\supseteq A$, and for every $(g,h)$ generic in $X_1\times X_2$ over $B$, we have \[\dpr(g/B,g\cdot h)=\dpr(X_1).\]
\end{definition}


\begin{remark}\label{remark on D-groups}
    By Corollary \ref{C:g/gh - abstract}, if $D$ satisfies ($\star$) then every definable group $G$ which is locally strongly internal to $D$ is a  $D$-group. If  $D$ satisfies  ($\star_a$) then every definable group $G$ which is almost locally strongly internal to $D$ is an almost $D$-group.

    To avoid any confusion we point out that though the name may suggest it, it formally need not be the case that every $D$-group is an almost $D$-group (Example \ref{E:almost d-group not d-group}). 
\end{remark}

We can now collect our previous results and conclude: 
\begin{fact}\label{E:interp groups in dist sorts are (almost) D-groups}
    \begin{enumerate}
        \item A group of finite dp-rank which is locally (almost) strongly internal to an SW-uniformity $D$ is an (almost) $D$-group. 
        \item  Let $\CK=(K,+,\cdot,v,\dots)$ be an expansion of a valued field, $G$ an interpretable group in $\CK$ and $D$ one of the infinite distinguished sorts.
    \begin{list}{$\bullet$}{}
        \item If $\CK$ is V-minimal, $D\neq \bk$ and $G$  locally (almost)  strongly internal to $D$  then $G$ is an (almost) $D$-group.
        \item If $\CK$ is power-bounded $T$-convex and $G$ locally (almost) strongly internal to $D$ then $G$ is  an  (almost) $D$-group.
        \item If $\CK$ is $p$-adically closed and $G$ locally almost strongly internal to $D$ then $G$ is an almost $D$-group.
    \end{list}
    \end{enumerate}
   
    \end{fact}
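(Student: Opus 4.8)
The plan is to reduce everything to Corollary~\ref{C:g/gh - abstract}, equivalently to Remark~\ref{remark on D-groups}: these say that if $D$ is a vicinic sort satisfying property ($\star$) then any definable group which is locally strongly internal to $D$ is a $D$-group, and likewise with ($\star_a$) and ``locally almost strongly internal'' and ``almost $D$-group''. So, after recording that in each of our settings the sort $D$ under consideration is vicinic (Fact~\ref{E:dist sorts are vicinic}) and that the relevant (almost) $D$-critical rank of $G$ is at least $1$ — immediate, since an infinite definable subset of $G$ that is (almost) strongly internal to $D$ has dp-rank $\geq 1$, and $G$ has finite dp-rank so the critical rank is well-defined — the statement becomes precisely the task of verifying ($\star$), respectively ($\star_a$), for each $D$.

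First I would handle part (1) together with the $T$-convex and $V$-minimal clauses of part (2) at one stroke. In all of these the sort $D$ is an SW-uniformity: this is the hypothesis in part (1); in the power-bounded $T$-convex case every distinguished sort $K,\bk,\Gamma,K/\CO$ is an SW-uniformity; and in the $V$-minimal case every distinguished sort other than $\bk$ is. For an SW-uniformity, Lemma~\ref{L: asi from surjective} is exactly the assertion that both ($\star$) and ($\star_a$) hold — it is stated for arbitrary definable sets, hence applies to definable subsets of $G$. Corollary~\ref{C:g/gh - abstract} then yields that $G$ is both a $D$-group and an almost $D$-group, covering both readings of the bracket convention.

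For the $p$-adically closed clause, $D$ ranges over the \emph{infinite} distinguished sorts $K$, $\Gamma$ and $K/\CO$ (here $\bk$ is finite), and only the ``almost'' conclusion is claimed, so it suffices to verify ($\star_a$). For $D=K$ this is again Lemma~\ref{L: asi from surjective}, as $K$ is an SW-uniformity. For $D=\Gamma$, which is a stably embedded $\Zz$-group, ($\star_a$) follows from Lemma~\ref{L:asi diagaram with EI} together with the fact that Presburger arithmetic eliminates imaginaries. For $D=K/\CO$, ($\star_a$) is precisely Lemma~\ref{L:asi diagaram K/O}(2). In each case Corollary~\ref{C:g/gh - abstract}(2) gives that $G$ is an almost $D$-group.

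I do not expect a genuine obstacle: the substance has already been carried out in Lemmas~\ref{L: asi from surjective}, \ref{L:asi diagaram K/O}, \ref{L:asi diagaram with EI} and in Corollary~\ref{C:g/gh - abstract}, and this statement is their assembly. The one point I would be careful to flag is the asymmetry in the $p$-adic case: the $D=K/\CO$ conclusion cannot be strengthened to ``$D$-group''. This is not a defect of the argument but a genuine phenomenon — property ($\star$) fails for $K/\CO$ over an elementary extension of a residual quadratic extension of $\qp$. Indeed, taking $X=K/\CO$ (which is trivially strongly internal to $K/\CO$), $Y=(K/\CO)/C_p$ and $f$ the quotient map, a definable $Y'\subseteq Y$ of full dp-rank strongly internal to $K/\CO$ would exhibit $(K/\CO)/C_p$ as locally strongly internal to $K/\CO$, contradicting Example~\ref{L: not lsi}. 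This is exactly why the $p$-adic clause of the Fact is stated only with ``almost''.
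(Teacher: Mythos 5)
Your proposal is correct and follows essentially the same route as the paper: invoke Fact \ref{E:dist sorts are vicinic} to see each $D$ is vicinic, then verify ($\star$)/($\star_a$) via Lemma \ref{L: asi from surjective} for the SW-uniform sorts, Lemma \ref{L:asi diagaram with EI} (with elimination of imaginaries in Presburger arithmetic) for $\Gamma$ in the $p$-adic case, and Lemma \ref{L:asi diagaram K/O}(2) for $K/\CO$, concluding by Corollary \ref{C:g/gh - abstract} (Remark \ref{remark on D-groups}). Your closing observation that ($\star$) genuinely fails for $K/\CO$ in the $p$-adic setting, via Example \ref{L: not lsi}, is a correct and worthwhile clarification of why that clause is stated only with ``almost''.
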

    \begin{proof} By Fact \ref{E:dist sorts are vicinic}, all the sorts $D$ are vicinic, so we only need to check whether $D$ satisfies  ($\star$) or ($\star_a$)  (Remark \ref{remark on D-groups}).

   For  (1), When $D$ is an SW-uniformity, it satisfies both ($\star$) and ($\star_a$) by Lemma \ref{L: asi from surjective}.
     For (2), if $D$  is an SW-uniformity, then by Lemma \ref{L: asi from surjective} both  ($\star$) and ($\star_a$) hold. This covers the first two settings and the case $D=K$ in the $p$-adically closed setting. We are left with $\Gamma$ and $K/\CO$ in the $p$-adic setting.

     If $D$ is a stably embedded $\Zz$-group then by elimination of imaginaries both ($\star$) and ($\star_a$) hold, by Lemma \ref{L:asi diagaram with EI}. If $D=K/\CO$ then ($\star_a$) holds  by Lemma \ref{L:asi diagaram K/O}.
\end{proof}

We also have:
\begin{lemma}\label{L: Joint function-abstract} 
    Let $G$ be a definable (almost) $D$-group, and let $X_1\subseteq G$ be (almost) strongly internal to $D$ and $X_2\sub G $ (almost) $D$-critical, both over $A$.
    
    Assume that $(g_1,g_2)\in X_1\times X_2$ is $A$-generic and let $g=g_1g_2^{-1}$. 
    Then $X_1\cap gX_2$ is an  $Ag$-generic vicinity of $g_1$ in $X_1$. In particular, if $X_1$ is (almost) $D$-critical, then $X_1\cap gX_2$ is also (almost) $D$-critical.
\end{lemma}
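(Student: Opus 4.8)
The plan is to deduce the statement essentially immediately from the defining property of an (almost) $D$-group (Definition \ref{Def: D-group}); the only point needing attention is that $g=g_1g_2^{-1}$ involves $g_2^{-1}$ rather than $g_2$, so I would first pass from $X_2$ to $X_2^{-1}:=\{x^{-1}:x\in X_2\}$.

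First I would handle the easy bookkeeping. Since $g\cdot g_2=g_1$ we have $g_1\in gX_2$, and $g_1\in X_1$ by hypothesis, so $g_1\in X_1\cap gX_2$; as $X_1$, $X_2$ and $G$ are $A$-definable, the set $X_1\cap gX_2$ is $Ag$-definable. Moreover, since $(g_1,g_2)$ is $A$-generic in $X_1\times X_2$, sub-additivity of dp-rank forces $\dpr(g_1/A)=\dpr(X_1)$, so $g_1$ is $A$-generic in $X_1$. Hence, by the definition of a generic vicinity, it remains to prove $\dpr(g_1/Ag)=\dpr(X_1)$.

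For this I would first observe that $X_2^{-1}$ is (almost) $D$-critical over $A$: it is the image of $X_2$ under the $A$-definable bijection $x\mapsto x^{-1}$ of $G$, hence (almost) strongly internal to $D$ over $A$ of the same dp-rank, and transporting definable subsets of $G$ along this bijection shows that the (almost) $D$-critical rank of $G$ — and, in the almost case, the minimal $m$ for which a maximal-rank subset is $m$-internal — is unaffected. Applying this bijection in the second coordinate sends the $A$-generic $(g_1,g_2)\in X_1\times X_2$ to an $A$-generic $(g_1,g_2^{-1})\in X_1\times X_2^{-1}$. Now invoke Definition \ref{Def: D-group} with the (almost) strongly internal set $X_1$, the (almost) $D$-critical set $X_2^{-1}$, the parameter set $A$, and the generic pair $(g_1,g_2^{-1})$: it gives $\dpr(g_1/A,\,g_1\cdot g_2^{-1})=\dpr(X_1)$, that is $\dpr(g_1/Ag)=\dpr(X_1)$. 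Thus $X_1\cap gX_2$ is an $Ag$-generic vicinity of $g_1$ in $X_1$.

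Finally, for the ``in particular'' clause, suppose $X_1$ is itself (almost) $D$-critical. Then $X_1\cap gX_2\sub X_1$ is (almost) strongly internal to $D$ (restrict the witnessing map to it), and $\dpr(X_1\cap gX_2)\ge\dpr(g_1/Ag)=\dpr(X_1)$, so $\dpr(X_1\cap gX_2)=\dpr(X_1)$, which is maximal; in the almost case a maximal-dp-rank subset of $X_1$ is moreover $m$-internal for the same minimal $m$ (otherwise it would contradict the minimality of $m$ for $X_1$). Hence $X_1\cap gX_2$ is (almost) $D$-critical. I do not anticipate a genuine obstacle here: once the reduction to Definition \ref{Def: D-group} via $X_2^{-1}$ is in place everything is routine, the only mildly delicate points being that inversion preserves (almost) $D$-criticality and that passing to a full-dp-rank subset does as well — both transports along $A$-definable bijections and restrictions of maps.
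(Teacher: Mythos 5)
Your proof is correct and follows essentially the same route as the paper: the paper's proof simply asserts, from the (almost) $D$-group property, that $\dpr(g_1/A,\,g_1\cdot g_2^{-1})=\dpr(X_1)$ and then notes that $X_1\cap gX_2$ is an $Ag$-definable set containing $g_1$; your passage through $X_2^{-1}$ just makes explicit the "easy to see" step, and your handling of the genericity bookkeeping and of the "in particular" clause (including the minimal-$m$ point in the almost case) is sound.
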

\begin{proof}
    Assume for simplicity that $A=\0$. As $G$ is an (almost) $D$-group, it is easy to see that $\dpr(g_1/g_1\cdot g_2^{-1})=\dpr(X_1)$. Since $g_1\in (g_1\cdot g_2^{-1}X_2)\cap X_1$ and the intersection is $(g_1\cdot g_2^{-1})$-definable, it follows that
     $X_1\cap g_1\cdot g_2^{-1}X_2$ is a generic vicinity of $g_1$ in $X_1$.
\end{proof}

We can now deduce the main result of this section.

\begin{lemma}\label{L:existence of XX strongly-internal-abstract}
Let $G$ be a definable (almost) $D$-group, $X_1,X_2\subseteq G$  (almost) $D$-sets  over  $A$. Assume that  $(g_1,g_2)\in X_1\times X_2$ is $A$-generic. 

Then, there exists $B\supseteq A$ and $B$-definable subsets $X_i'\subseteq X_i$, such that  $X_1'\times X_2'$ is a $B$-generic vicinity of $(g_1,g_2)$ in $X_1\times X_2$  and such that  $X_1'\cdot X_2'\subseteq X_1\cdot g_2$. Moreover, $X_1'\cdot X_2'\subseteq X_1\cdot g$ for every $g\in X_2'$ and thus $X_1'\cdot X_2'$ is an (almost) $D$-set over $Bg$.
%
\end{lemma}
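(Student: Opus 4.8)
The plan is to establish the stronger ``moreover'' assertion, from which everything else follows. So I look for $B\supseteq A$ and $B$-definable $X_1'\sub X_1$, $X_2'\sub X_2$ with $(g_1,g_2)\in X_1'\times X_2'$, with $X_i'$ a $B$-generic vicinity of $g_i$ in $X_i$, with $\dpr(g_1,g_2/B)=\dpr(X_1)+\dpr(X_2)$ (so that $X_1'\times X_2'$ is a $B$-generic vicinity of $(g_1,g_2)$ in $X_1\times X_2$), and with $X_1'\cdot X_2'\cdot(X_2')^{-1}\sub X_1$. Since $g_2\in X_2'$, this gives $X_1'X_2'\sub X_1 g$ for every $g\in X_2'$, in particular (taking $g=g_2$) the first inclusion. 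For the concluding clause: $X_1 g$ is an (almost) $D$-set over $Ag$, being in definable bijection with the (almost) $D$-set $X_1$ via right multiplication by $g$; since $G$ is an (almost) $D$-group and $(g_1,g_2)$ is $B$-generic, the $D$-group rank equality forces $\dpr(g_1g_2/B)=\dpr(X_1)$, hence $\dpr(X_1'X_2')=\dpr(X_1)$; so $X_1'X_2'$ is a full-dp-rank subset of the (almost) $D$-set $X_1g$ and is therefore itself an (almost) $D$-set over $Bg$ by Remark~\ref{important remark}(3).

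For the construction I first use Lemma~\ref{L: Joint function-abstract} to produce vicinities carrying a useful translation feature: with $g:=g_1g_2^{-1}$, the set $X_1\cap gX_2$ is an $Ag$-generic vicinity of $g_1$ in $X_1$ contained in $gX_2$, and, swapping roles (both $X_i$ are (almost) $D$-critical), $X_2\cap g^{-1}X_1$ is an $Ag$-generic vicinity of $g_2$ in $X_2$ contained in $g^{-1}X_1$; note also that $X_1^{-1},X_2^{-1}$ and translates $X_ih$ are again (almost) $D$-sets. The obstacle is that these vicinities are defined over $Ag$, over which $(g_1,g_2)$ is no longer jointly generic — recording the ``shift'' $g$ is incompatible with joint genericity. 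This is the point where the vicinity axioms enter. Since $g_2$ is inter-algebraic over $A$ with an element of $D^{\dpr(X_2)}$ (Lemma~\ref{L:acl-dp in asi-abstract}), Lemma~\ref{L:abstract general gen-os} (applicable because $X_1$ is an (almost) $D$-set, by Remark~\ref{important remark}(1)) shrinks $X_1\cap gX_2$ to a $C$-generic vicinity $X_1'\sub gX_2$ of $g_1$, over some $C\supseteq A$ with $\dpr(g_1,g_2/C)=\dpr(X_1)+\dpr(X_2)$; running the same step over $C$ for $g_2$ inside $X_2\cap g^{-1}X_1$ (itself a $Cg$-generic vicinity of $g_2$ by Lemma~\ref{L: Joint function-abstract}) yields, over some $B\supseteq C$, a $B$-generic vicinity $X_2'\sub g^{-1}X_1$ of $g_2$ with $\dpr(g_1,g_2/B)$ still $=\dpr(X_1)+\dpr(X_2)$. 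By Lemma~\ref{filter base} we may further intersect $X_1',X_2'$ with any additional generic vicinities of $g_1,g_2$ while preserving all of these properties, i.e.\ we are free to shrink them.

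It remains to arrange, by shrinking $X_1'$ and $X_2'$ sufficiently, that $X_1'X_2'(X_2')^{-1}\sub X_1$, and I expect this to be the heart of the matter. The idea is that $X_1'X_2'(X_2')^{-1}$ is itself a ``vicinity of $g_1$'': it contains $g_1=\mu(g_1,g_2,g_2)$ for $\mu(x,y,z)=xyz^{-1}$, and the containments $X_1'\sub gX_2$, $X_2'\sub g^{-1}X_1$ together with the $D$-group rank equality control how the product disperses. Choosing $X_1'$ inside a sufficiently small generic vicinity of $g_1$ in $X_1$, and $X_2'$ correspondingly small (both permitted by the previous paragraph and Lemma~\ref{filter base}), one forces $\mu(X_1'\times X_2'\times X_2')\sub X_1$; concretely, for $x_1\in X_1'$ and $x_2,x_3\in X_2'$ one expresses $x_1x_2x_3^{-1}$ through the auxiliary elements of $X_1,X_2$ supplied by the containments above and applies Lemma~\ref{L: Joint function-abstract} and the $D$-group rank equality to pin it inside $X_1$, using that $g_1$ lies in the ``interior'' of $X_1$ (Axioms (A1), (A2)). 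Finally, since the set $\{g'\in X_2':X_1'X_2'\sub X_1 g'\}$ is $B$-definable and contains every $B$-generic element of the $B$-generic vicinity $X_2'$, it equals $X_2'$; this gives $X_1'X_2'\sub X_1g'$ for all $g'\in X_2'$, as required.
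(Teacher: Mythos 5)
There is a genuine gap at what you yourself call the heart of the matter: you never actually prove that $X_1'$ and $X_2'$ can be shrunk so that $X_1'\cdot X_2'\cdot (X_2')^{-1}\subseteq X_1$. The sketch in your third paragraph appeals to a continuity-type intuition (``the containments control how the product disperses'', ``pin it inside $X_1$''), but no such tool exists in the vicinic setting: the only devices available for controlling where a definable map sends a point are Lemma \ref{L:abstract general gen-os}, Corollary \ref{C:generic inside product of min fibres} and Lemma \ref{R:recalling before Jana}(2), and all of them require the argument tuple to be \emph{generic} in the relevant product. The point $(g_1,g_2,g_2)$ at which you evaluate $\mu(x,y,z)=xyz^{-1}$ is not generic in $X_1\times X_2\times X_2$ (a coordinate is repeated), so none of these apply; this is exactly the difficulty the paper's later machinery is built around (compare Lemma \ref{L:Jana1}, where $xy^{-1}z$ at $(d,d,d)$ must be decomposed into a chain of maps, each evaluated at a sufficiently generic tuple --- and that lemma in turn uses the present one). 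Your ``concrete'' substitution via $X_1'\subseteq gX_2$, $X_2'\subseteq g^{-1}X_1$ only yields expressions of the form $gy_2g^{-1}y_1z_1^{-1}g$, which in a non-abelian group pins down nothing. Finally, the closing inference --- that a $B$-definable subset of $X_2'$ containing every $B$-generic element of $X_2'$ equals $X_2'$ --- is false (it merely has full dp-rank); this particular step could be patched by replacing $X_2'$ with that subset, but the genericity claim feeding into it was never established either.

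For comparison, the paper's proof avoids the non-generic triple by a purely definable construction: with $k=g_1g_2$, set $Y_2=\{x_2\in X_2: k\in X_1\cdot x_2\}$, which contains $g_2$ and has full rank because $\dpr(g_2/Ak)=\dpr(X_2)$ (the $D$-group property); shrink it to $Y_2'$ over some $C$ keeping $(g_1,g_2)$ jointly generic, via Lemma \ref{L:abstract general gen-os} and the inter-algebraicity of $g_1$ with a tuple from $D$; form $Z=\bigcap_{y\in Y_2'}X_1y$, which contains $k$, has full rank, and by construction satisfies $Z\subseteq X_1g$ for every $g\in Y_2'$; then pull $Z$ back under multiplication to $S=\{(x_1,x_2)\in X_1\times X_2: x_1x_2\in Z\}\ni (g_1,g_2)$ and apply Corollary \ref{C:generic inside product of min fibres} to the genuinely generic pair $(g_1,g_2)$ to extract a box $X_1'\times X_2''\subseteq S$, finally intersecting with $Y_2'$. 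Your reduction of the statement to $X_1'X_2'(X_2')^{-1}\subseteq X_1$ and your handling of the concluding ``(almost) $D$-set over $Bg$'' clause are fine, but without a working substitute for the $Y_2$, $Z$, $S$ construction the proof is missing its central step.
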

\begin{proof}
We prove the lemma in case the $X_i$ are almost $D$-sets (and $G$ is an almost $D$-group). The proof for the case where the $X_i$ are $D$-sets and $G$ is a $D$-group is similar. For simplicity of notation, we assume $A=\emptyset$.

Let $k=g_1\cdot g_2\in G$ and let $Y_2=\{x_2\in X_2:k\in X_1\cdot x_2\}$. Note that $Y_2$ is $k$-definable and contains $g_2$.  Since $G$ is an almost $D$-group, $\dpr(g_2/Ak)=n$, hence $\dpr(Y_2)=n$. Since  $g_1$ is inter-algebraic over $A$ with some element in $D^l$ we can apply  Lemma \ref{L:abstract general gen-os} and Remark \ref{important remark}(1) to obtain a definable subset $Y_2'\subseteq Y_2$ containing $g_2$ that is $C$-definable for some parameter set $C$ such that $\dpr(g_1,g_2/C)=\dpr(g_1,g_2)=2n$.
It follows that $\dpr(g_i,k/C)=2n$, for $i=1,2$.

We let $$Z=\bigcap_{y\in Y_2'} X_1y.$$ It is $C$-definable, containing $k$, hence $\dpr(Z)=n$. Since $g_2\in Y_2'$, we also have $Z\sub X_1g_2$. Finally, we consider
$$S=\{(x_1,x_2)\in X_1\times X_2:x_1\cdot x_2\in Z\}.$$

 It is definable over $C$, and contains $(g_1,g_2)$, thus $\dpr(S)=2n$. By Corollary \ref{C:generic inside product of min fibres}, there exists $X_1'\times X_2''\sub S$, a $B$-generic vicinity of $(g_1,g_2)$ in $X_1\times X_2$, for some $B\supseteq C$. Now let $X_2'=X_2''\cap Y_2'$; it is still a $C$-definable vicinity of $g_2$. Note that for every $g\in X_2'$, we have \[X_1'\cdot X_2'\subseteq X_1'\cdot X_2''\subseteq Z\subseteq X_1\cdot g,\] where the latter follows from the definition of $Z$ and the fact that $g\in X_2'\sub Y_2'$.
\end{proof}

\begin{remark}\label{R:variant result for X,Y, XY}
 A symmetric proof would give that we can find $X_1'$ and $X_2'$ such that $X_1'\cdot X_2'\subseteq g_1X_2$
\end{remark}

We conclude this section with a couple of examples. One corollary of Lemma \ref{L:existence of XX strongly-internal-abstract}, is that given (almost) $D$-sets $X_1,X_2\sub G$, there are $X_i'\sub X_i$, $i=1,2$, of full rank, such that $\dpr(X_1'\cdot X_2')=\dpr(X_2)$. The first example shows that this  is the best we can hope for. Namely, that even if $X_1,X_2$ are $D$-critical, the dp-rank of $X_1\cdot X_2$ may increase: 
\begin{example}
    Let $\CK$ be a real closed valued field and $G:=K\times K/\CO$. Let $X\sub G$ be the graph of the quotient map $\pi: K\to K/\CO$ and $Y\sub G$ the graph of $-\pi$. It is clear that $\dpr(X)=\dpr(Y)=1$ and that both are strongly internal to $K$, and therefore $D$-critical (see Corollary \ref{C:dp min is pure}).  It is easy to verify, using sub-additivity, that $X+Y=G$ so that $\dpr(X+Y)=2$. 
\end{example}

The next example shows that interpretable almost $D$-groups need not be $D$-groups:
\begin{example}\label{E:almost d-group not d-group}
Let $\CK$ be a sufficiently saturated elementary extension of a quadratic residual extension of $\qp$ as in Example \ref{E: crit dim is not a-crit dim} and $G=K/\CO\times G_0$, where $G_0=(K/\CO)/C_p$ the group from Example \ref{E: crit dim is not a-crit dim}. Below $D=K/\CO$.
    
    As in Example \ref{E: crit dim is not a-crit dim}, $G_0$ is locally almost  strongly internal to $D$, thus so is $G$. By Fact \ref{E:interp groups in dist sorts are (almost) D-groups}, it is an almost $D$-group, in particular,  $\dpr(g/B, g\cdot h)=\dpr(X_2)$
    for any $B$-definable almost $D$-critical $X_1,X_2$ and $B$-generic $(g,h)\in X_1\times X_2$. 

    By Example \ref{E: crit dim is not a-crit dim}, the $D$-critial rank of $G$ is $1$. To see that $G$ is not a $D$-group we consider two $D$-sets in $G$: The subgroups $X_1=K/\CO\times \{0\}$ and $X_2=\{(h,h+C_p): h\in K/\CO\}$ are in bijection with $K/\CO$, thus they are $D$-sets in $G$.
    However, the two subgroups generate $G$, so if we pick $(g,h)\in X_1\times X_2$ generic
    then $\dpr(g/g+ h)=0$.

\end{example}

\subsection{From almost strong internality to strong internality}\label{ss:from almost to strong}
In order to put the machinery for the construction of infinitesimal subgroups in gear we need to work inside a group that is locally strongly internal to a vicinic sort $D$. As we have seen in Example \ref{L: not lsi}, in the  case $D=K/\CO$ in the $p$-adic setting, it may happen that a group is locally almost strongly internal to $D$, but not locally strongly internal. In this section, we show that modding out  a finite normal subgroup resolves this problem.  The proof is inspired by a result of a similar nature due to Hrushovski and Rideau-Kikuchi, \cite[Lemma 2.25]{HrRid}.

We need (also for later use) an elementary fact from group theory:

\begin{fact}\label{F: finding groups}
Let $G$ be a group, $A,B\sub G$ arbitrary subsets, $a\in A$, $b\in B$. Assume that
\[a\cdot B=A\cdot B=A\cdot b.\]
Then there is a sub-monoid $S\le G$ (i.e. $e\in S$ and $S$ closed under multiplication) such that $A=aS$ and $B=Sb$.
In particular, if $A, B$ are finite, then $S$ is a subgroup.

\end{fact}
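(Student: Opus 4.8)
The plan is to define $S$ by translating $A$ back to the identity, show it is the same set obtained by translating $B$, and then verify closure under multiplication directly from the hypothesis $a\cdot B = A\cdot B = A\cdot b$. Concretely, first I would set $S := a^{-1}A$; since $a\in A$, we have $e\in S$. I claim also $S = B b^{-1}$. Indeed, from $a\cdot B = A\cdot b$ we get $a^{-1}A = Bb^{-1}$ after multiplying on the left by $a^{-1}$ and on the right by $b^{-1}$; so $S = a^{-1}A = Bb^{-1}$, and in particular $e\in Bb^{-1}$, i.e.\ $b\in B$ (already assumed) and $A = aS$, $B = Sb$ as required for the final ``In particular'' clause once we know $S$ is a submonoid.

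Next I would check that $S$ is closed under multiplication. Take $s_1,s_2\in S$. Writing $s_1 = a^{-1}\alpha$ with $\alpha\in A$ and $s_2 = \beta b^{-1}$ with $\beta\in B$ (using the two descriptions of $S$), we get $s_1 s_2 = a^{-1}\alpha\beta b^{-1}$. Now $\alpha\beta\in A\cdot B = a\cdot B = A\cdot b$. Using $\alpha\beta\in a\cdot B$, write $\alpha\beta = a\gamma$ with $\gamma\in B$; then $s_1 s_2 = a^{-1}a\gamma b^{-1} = \gamma b^{-1}\in Bb^{-1} = S$. Hence $S$ is a submonoid of $G$, and combined with $A = aS$, $B = Sb$, the main statement is proved.

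Finally, for the last sentence: if $A$ and $B$ are finite, then $S = a^{-1}A$ is a finite submonoid of the group $G$. A finite submonoid of a group is a subgroup --- for any $s\in S$ the powers $s, s^2, s^3,\dots$ lie in $S$ and cannot all be distinct, so $s^i = s^j$ for some $i<j$, giving $s^{j-i} = e$ and hence $s^{-1} = s^{j-i-1}\in S$. This completes the proof.

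I do not anticipate a serious obstacle here; the only point requiring a little care is bookkeeping with left versus right translates --- making sure the two descriptions $S = a^{-1}A = Bb^{-1}$ are used on the correct sides when multiplying $s_1 s_2$, so that the $a^{-1}$ and $a$ (respectively the $b^{-1}$) cancel cleanly. Everything else is a direct unwinding of the hypothesis $a\cdot B = A\cdot B = A\cdot b$.
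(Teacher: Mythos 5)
Your proof is correct and follows essentially the same route as the paper: both rest on the identity $S=a^{-1}A=Bb^{-1}$ extracted from $a\cdot B=A\cdot b$, the hypothesis $A\cdot B=a\cdot B$ to get closure, and the standard fact that a finite sub-monoid of a group is a subgroup. The only cosmetic difference is that the paper obtains closure by identifying $a^{-1}A$ with the manifestly multiplicative set $\{g\in G: gB\subseteq B\}$, whereas you verify closure by a direct computation using the two descriptions of $S$; both arguments use the hypotheses in the same way.
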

\begin{proof} Note that $a^{-1}A=Bb^{-1}$. We first claim that 
the set $S_1:=\{g\in G: gB\subseteq B\}$ equals $a^{-1}A$. Indeed, if $gB\subseteq B$ then $gb\in B$, so $g\in Bb^{-1}=a^{-1}A$. Conversely, since $aB=AB$ then $a^{-1}AB=B$, so the claim follows. Clearly, $e\in S_1$ and $S_1$ is closed under multiplication. By symmetry, $S_2:=\{g\in G:Ag^{-1}\sub A\}=Bb^{-1}$, is also a sub-monoid of $G$.
As we just noted, we have $S_1=S_2$, call it $S$, and $A=aS$, $B=Sb$. 

Now, if  $A$ and $B$ are finite (if one is finite then so is the other) then $S$ is a finite sub-monoid of a group, so a subgroup.
\end{proof}

\begin{lemma}\label{L:finding a group from asi}
Let $G$ be a definable group of finite dp-rank and let $D$ be any definable set. Let $X_1,X_2\subseteq G$,  $f_i:X_i\to D^k$ and $h:X_1\cdot X_2\to D^p$ be generically $m$-to-one definable functions, with $f_1,f_2, h$ defined over $A$.
Assume, moreover, that no $X_i'\sub X_i$ of the same dp-rank is $n$-internal to $D$ for any $n<m$.

Then for every  $(a,b)\in X_1\times X_2$ generic over $A$,  there is a finite subgroup $H(a,b)\subseteq G$ such that $[a]_{f_1}=a\cdot H(a,b)$ and $[b]_{f_2}=H(a,b)\cdot b$.
\end{lemma}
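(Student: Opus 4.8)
The plan is to obtain $H(a,b)$ by an application of Fact~\ref{F: finding groups} to the finite sets $\mathcal A:=[a]_{f_1}$ and $\mathcal B:=[b]_{f_2}$. Since $a\in\mathcal A$ and $b\in\mathcal B$, it suffices to establish the identity
\[
a\cdot[b]_{f_2}\ =\ [a]_{f_1}\cdot[b]_{f_2}\ =\ [a]_{f_1}\cdot b .
\]
Once this is known, Fact~\ref{F: finding groups} yields a sub-monoid $S\le G$ with $[a]_{f_1}=aS$ and $[b]_{f_2}=Sb$, and since these sets are finite, $S$ is a subgroup, which we take to be $H(a,b)$.

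First I would record the routine genericity bookkeeping. Writing $n_i=\dpr(X_i)$, from $\dpr(a,b/A)=n_1+n_2$ one gets $\dpr(a/Ab)=n_1$, $\dpr(b/Aa)=n_2$, and also $\dpr(a/A)=\dpr(b/A)=n_i$; since $f_1,f_2$ are generically $m$-to-one we may assume $|[a]_{f_1}|=|[b]_{f_2}|=m$, and since $h$ is generically $m$-to-one and $\dpr(ab/A)\ge\dpr(ab/Aa)=\dpr(b/Aa)=n_2$ we may assume $|[ab]_h|\le m$. Moreover, for $a'\in[a]_{f_1}$ one has $f_1(a')=f_1(a)\in\dcl(Aa')$ and $a'\in\acl(Af_1(a))\subseteq\acl(Aa)$, whence $\dpr(a'/A)=n_1$ and $\acl(Aa')\subseteq\acl(Aa)$; consequently $(a',b)$ is again generic in $X_1\times X_2$ over $A$, and symmetrically for $(a,b')$ with $b'\in[b]_{f_2}$.

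The key step is to show that \emph{$h$ is constant, with value $h(ab)$, on $[a]_{f_1}\cdot b$ and on $a\cdot[b]_{f_2}$, and hence on $[a]_{f_1}\cdot[b]_{f_2}$}. For the first of these I would consider the $Ab$-definable map $g_b\colon X_1\to D^{k+p}$, $x\mapsto(f_1(x),h(xb))$: its fibres are contained in the corresponding $f_1$-fibres, so $g_b$ is $(\le m)$-to-one on the full-dp-rank set where $f_1$ is $m$-to-one. If $h(\,\cdot\, b)$ were not constant on $[a]_{f_1}$, then $g_b$ would be $(\le m-1)$-to-one on a definable subset of $X_1$ containing $a$; this subset is $Ab$-definable, and since $a$ is generic over $Ab$ it has full dp-rank $n_1$, making it $n$-internal to $D$ for some $n<m$ — contradicting the minimality hypothesis on $m$. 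Hence $g_b^{-1}(g_b(a))=[a]_{f_1}$, i.e. $h(a'b)=h(ab)$ for all $a'\in[a]_{f_1}$. The symmetric argument with $y\mapsto(f_2(y),h(ay))$, using $\dpr(b/Aa)=n_2$, gives $h(ab')=h(ab)$ for all $b'\in[b]_{f_2}$. Finally, for $a'\in[a]_{f_1}$ and $b'\in[b]_{f_2}$, applying the \emph{symmetric} statement to the pair $(a',b)$ (generic over $A$, with $b$ generic over $Aa'$, by the previous paragraph) gives $h(a'b')=h(a'b)$, and then $h(a'b)=h(ab)$ by the first statement; thus $[a]_{f_1}\cdot[b]_{f_2}\subseteq[ab]_h$.

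It then only remains to count: left and right translation being injective, $|a\cdot[b]_{f_2}|=|[a]_{f_1}\cdot b|=m$, while $[ab]_h$ has at most $m$ elements and contains $a\cdot[b]_{f_2}$, $[a]_{f_1}\cdot b$ and $[a]_{f_1}\cdot[b]_{f_2}$, the last two of which contain the first two respectively; so all of these sets coincide (and equal $[ab]_h$), giving precisely the displayed identity and hence, via Fact~\ref{F: finding groups}, the subgroup $H(a,b)$. I expect the main obstacle to be the key step — specifically, making the genericity bookkeeping precise enough that the ``bad'' locus on which $g_b$ drops below being $m$-to-one has dp-rank strictly less than $\dpr(X_1)$ and is therefore missed by $a$, and correctly extracting a contradiction with the minimality of $m$ from the resulting full-dp-rank $(<m)$-internal subset of $X_1$; the bootstrap to $[a]_{f_1}\cdot[b]_{f_2}$ and the final counting are then formal.
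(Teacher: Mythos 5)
Your proposal is correct and takes essentially the same route as the paper: the same auxiliary map $x\mapsto(f_1(x),h(x\cdot b))$ combined with the minimality of $m$ at $a$ (generic over $Ab$), propagation to whole fibres via interalgebraicity and the symmetric argument for $X_2$, and a cardinality count feeding into Fact \ref{F: finding groups}. The only difference is organizational: the paper derives $[a]_{f_1}\cdot b=[a\cdot b]_h$ by counting at the outset and then propagates, whereas you collect all containments in $[a\cdot b]_h$ first and do the counting at the end.
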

\begin{proof}
By the assumption on $X_1$, for any $B\supseteq A$, any $B$-definable function $F:X_1\to D^q$ (any $q$) with finite fibres is, generically, at least $m$-to-one.

Fix  $(a,b)\in X_1\times X_2$ generic over $A$. Consider the function $F:X_1\to D^{k+p}$ given by $x\mapsto (f_1(x),h(x\cdot b))$. Notice that $[x]_F=[x]_{f_1}\cap ([x\cdot b]_h\cdot b^{-1})$).

Since $[a]_F\subseteq [a]_{f_1}$, the assumption that $\dpr(a/Ab)=\dpr(X_1)$ and the minimality of $m$ forces $|[a]_F|\geq m$, so it must be that, in fact, $[a]_F= [a]_{f_1}$. It follows that $[a]_{f_1}\subseteq [a\cdot b]_h\cdot b^{-1}$. By assumption $|[a\cdot b]_h|\le m$, so necessarily $[a]_{f_1}=[a\cdot b]_h\cdot b^{-1}$,  and so $[a]_{f_1}\cdot b=[a\cdot b]_h$. If $a'\in [a]_{f_1}$ then, as $a, a'$ are interalgebraic, we have  $\dpr(a',b/A)=\dpr(a,b/A)$, so  we also get $[a']_{f_1}\cdot b=[a'\cdot b]_h$, but $[a']_{f_1}=[a]_{f_1}$ so $[a'\cdot b]_h=[a\cdot b]_h$. Since the roles of $a$ and $b$ are symmetric, we conclude also that $[a\cdot b']_h=[a\cdot b]_h$ and so $[a]_{f_1}\cdot b'=[a\cdot b]_h$ for all $b'\in [b]_{f_2}$. Therefore, $$[a]_{f_1}\cdot b=[a]_{f_1}\cdot[b]_{f_2} =a\cdot [b]_{f_2},$$ so by Fact \ref{F: finding groups} there is a finite group as claimed.
\end{proof}

For the main result of this section, we need the following facts. The first was proved in the context of SW-uniformities in  \cite[Lemma 3.14 ]{HaHaPeVF}.  It remains valid for vicinic structures as well: 
\begin{fact}\label{F:finding mutual generic}
 Let $\mathbb{U}\succ \CM$ a monster model, $b_1,\dots,b_n$ some tuples in $\mathbb{U}$. For every $\CM$-definable $X\subseteq D^r$ with finite fibres, there exists an $M$-generic $a\in X$ such that $\dpr(a,b_i/M)=\dpr(a/M)+\dpr(b_i/M)$ for all $1\leq i\leq n$.

\end{fact}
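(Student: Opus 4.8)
The plan is to reduce to the case $X\subseteq D^{s}$ with $s=\dpr(X)$ and then, inside a \emph{generic box} supplied by the vicinic axioms, to build the required $a$ one coordinate at a time. For the reduction, let $\pi\colon D^{r}\to D^{s}$ be a coordinate projection with $\pi\restriction X$ finite-to-one (such a $\pi$ exists by the finite-fibre hypothesis on $X$, or in any case after replacing $X$ by an $M$-definable subset of the same dp-rank, using Axiom (A1)); set $X^{\ast}=\pi(X)\subseteq D^{s}$, so $\dpr(X^{\ast})=s$. It is enough to find an $M$-generic $a^{\ast}\in X^{\ast}$ with $\dpr(a^{\ast}b_{i}/M)=s+\dpr(b_{i}/M)$ for all $i$: for any $a\in X$ with $\pi(a)=a^{\ast}$ we get $\dpr(a/M)\ge\dpr(a^{\ast}/M)=s$, hence $\dpr(a/M)=s$; and $\dpr(ab_{i}/M)\ge\dpr(a^{\ast}b_{i}/M)=s+\dpr(b_{i}/M)$, while sub-additivity gives $\dpr(ab_{i}/M)\le\dpr(a/M)+\dpr(b_{i}/Ma)\le s+\dpr(b_{i}/M)$, so equality holds. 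Thus we may assume $X\subseteq D^{s}$ with $\dpr(X)=s$.

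Next I would fix any $M$-generic $a_{0}\in X$. Since $\dpr(a_{0}/M)=s=\dpr(D^{s})$, the set $X$ is an $M$-generic vicinity of $a_{0}$ in $D^{s}$, so Lemma \ref{L:genos implies A4} --- this is precisely where Axioms (A1) and (A2) enter --- produces $C\supseteq M$ and infinite $C$-definable $I_{1},\dots,I_{s}\subseteq D$ with the $j$-th coordinate of $a_{0}$ in $I_{j}$ and $I_{1}\times\dots\times I_{s}\subseteq X$. Writing $\bar b=b_{1}\cdots b_{n}$, pick by saturation $a_{j}\in I_{j}$ in turn with $a_{j}\notin\acl(C\bar b\,a_{1}\cdots a_{j-1})$, and set $a=(a_{1},\dots,a_{s})\in I_{1}\times\dots\times I_{s}\subseteq X$. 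Then $a_{1},\dots,a_{s}$ are $\acl$-independent over $M$, so Axiom (A1) gives $\dpr(a/M)=\dim_{\acl}(a/M)=s$ and $a$ is $M$-generic in $X$; likewise they are $\acl$-independent over $Mb_{i}$, so $\dpr(a/Mb_{i})=s$, whence $\dpr(ab_{i}/M)\le\dpr(a/M)+\dpr(b_{i}/Ma)\le s+\dpr(b_{i}/M)$ for each $i$.

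The only remaining point --- and the one that is not formal dp-rank arithmetic --- is the reverse inequality $\dpr(ab_{i}/M)\ge s+\dpr(b_{i}/M)$. Arguing by induction on $s$ and adjoining $a_{1},\dots,a_{s}$ one at a time, this reduces to the standard fact that adjoining to a parameter set a single element of a dp-minimal set that is non-algebraic over it raises dp-rank by exactly $1$; concretely one combines an inp-pattern of depth $\dpr(b_{i}/M)$ witnessing $\dpr(b_{i}/M)$ with $s$ length-$\omega$ ``coordinate'' sequences run through $I_{1},\dots,I_{s}$ (as Morley sequences of $M$-invariant non-algebraic $1$-types) and checks that the result is a mutually $M$-indiscernible inp-pattern of depth $s+\dpr(b_{i}/M)$ for $\tp(ab_{i}/M)$ --- the choice of $a_{j}$ outside $\acl(C\bar b\,a_{1}\cdots a_{j-1})$, made uniformly over $i$, being exactly what lets the coordinate sequences be stacked on top of the $b_{i}$-pattern. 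Together with the inequalities of the previous paragraph this gives $\dpr(ab_{i}/M)=\dpr(a/M)+\dpr(b_{i}/M)$ for all $i$. I expect the main obstacle to be precisely this bookkeeping of mutual $M$-indiscernibility of the combined pattern (equivalently, invoking the additivity fact correctly), the rest being a routine assembly of the vicinic axioms and sub-additivity; this is the argument of \cite[Lemma 3.14]{HaHaPeVF}, where (A1)+(A2) were available in their topological form for SW-uniformities, and it transfers without change to vicinic sorts.
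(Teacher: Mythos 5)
Your proposal is correct and follows essentially the same route as the paper: the paper proves this fact by observing that the proof of \cite[Lemma 3.14]{HaHaPeVF} only needs a finite-to-one coordinate projection onto $D^{k}$ with $k=\dpr(X)$ and a generic box in its image, both supplied in the vicinic setting by Axiom (A1) and the box lemma. You carry out exactly this reduction (via (A1) and Lemma \ref{L:genos implies A4}) and then, like the paper, defer the remaining dp-rank bookkeeping (the lower bound obtained by stacking the coordinate sequences on an inp-pattern for $b_i$) to the argument of \cite[Lemma 3.14]{HaHaPeVF}, which indeed transfers unchanged.
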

\begin{proof}
This was proved in \cite{HaHaPeVF} under the assumption that $D$ is an SW-uniformity.  However, the proof only uses the fact that, in the notation of the above statement,  there is a finite-to-one projection $\pi: X\to M^k$, with $k=\dpr(X)$, and an $M$-generic box in its image (i.e., a product of  $k$ dp-minimal subsets). In the vicinic setting, this is implied by Axiom (A1) and Corollary \ref{C:generic inside product of min fibres}.
\end{proof}

The second of the facts appearing in  \cite[Lemma 3.14]{HaHaPeVF} was proved for a single type.  The same proof works for two (or more) types:
\begin{fact}\label{F: extending to over a model}
    Let $\CM$ be a structure of finite dp-rank and $\mathbb{U}\succ \CM$ a monster model.

    For $A\sub \mathbb U$ and $a,b\in \CM^n$, there exists a small model $\CN$, $A\sub \CN\prec \CM$,   such that $\dpr(a/A)=\dpr(a/N)$ and $\dpr(b/A)=\dpr(b/N)$.
\end{fact}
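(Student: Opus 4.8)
The plan is to reconstruct the single-type argument of \cite[Lemma 3.14]{HaHaPeVF} and to observe that it is insensitive to the number of tuples whose dp-rank we want to protect. Work in $\mathbb U$ and set $k=\dpr(a/A)$, $l=\dpr(b/A)$. By the characterization of dp-rank through mutually indiscernible sequences, I would first fix sequences $I_1,\dots,I_k$, mutually indiscernible over $A$, none of which is indiscernible over $Aa$, together with sequences $J_1,\dots,J_l$, mutually indiscernible over $A$, none of which is indiscernible over $Ab$. Set $E:=A\cup\bigcup_{i}I_i\cup\bigcup_{j}J_j\cup\{a,b\}$.

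The heart of the matter is to produce a \emph{small} model $\CN$ with $A\subseteq\CN$ such that $\tp(\CN/E)$ is finitely satisfiable in $A$ (a coheir of its restriction to $A$). For this I would take any small model $\CN_0\supseteq A$, enumerate it by a tuple $\bar n$ whose initial segment lists $A$, extend $\tp(\bar n/A)$ to a global type finitely satisfiable in $A$, restrict that global type to $E$, and let $\bar n'$ realize the restriction; since ``is an elementary substructure'' is encoded by the type of an enumeration, the structure carried by $\bar n'$ is a small model $\CN$, and $A\subseteq\CN$ because the $A$-coordinates are pinned down by $\tp(\bar n/A)$. When $A\subseteq M$ the same construction can be carried out inside $\CM$, which is sufficiently saturated, yielding $\CN\prec\CM$.

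To finish: because $\tp(\CN/E)$ is finitely satisfiable in $A$ and $(I_i)_{i\le k}$ is mutually indiscernible over $A$, the standard fact that finitely satisfiable extensions preserve mutual indiscernibility shows that $(I_i)_{i\le k}$ remains mutually indiscernible over $\CN$; and each $I_i$, not being indiscernible over $Aa\subseteq\CN a$, is in particular not indiscernible over $\CN a$. Hence $\dpr(a/\CN)\ge k$, and monotonicity of dp-rank under $A\subseteq\CN$ gives $\dpr(a/\CN)=k=\dpr(a/A)$. The identical reasoning applied to $(J_j)_{j\le l}$ and $b$ gives $\dpr(b/\CN)=l=\dpr(b/A)$, as wanted.

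The conceptual point --- and the reason ``one type'' becomes ``two (or any finite number of) types'' with no change --- is that a single sufficiently generic (coheir) small model over $A$ simultaneously shields the witnessing configurations of $a$ and of $b$, there being no interaction between them. The only step I would be careful with is the bookkeeping in the middle paragraph: upgrading the coheir datum to an honest elementary substructure containing $A$ rather than a mere set. This is routine, but it is where the details actually live, so it is the step I expect to be the main (mild) obstacle.
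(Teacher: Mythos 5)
There is a genuine gap, and it sits exactly where you predicted: the construction of $\CN$. You want a small model $\CN\supseteq A$ with $\tp(\CN/E)$ finitely satisfiable in $A$, and you propose to obtain it by extending $\tp(\bar n/A)$ (for $\bar n$ an enumeration of a model $\CN_0\supseteq A$) to a global type finitely satisfiable in $A$. Such a global coheir extension exists only if $\tp(\bar n/A)$ is itself finitely satisfiable in $A$, and for a general parameter set $A$ it is not: if $A$ is finite, the formula $\bigwedge_{c\in A}x\neq c$ is realized in $\CN_0$ but by no element of $A$; if $d\in\dcl(A)\setminus A$, the formula defining $d$ over $A$ is realized in $\CN_0$ but not in $A$; more generally, if $A$ fails the Tarski--Vaught criterion for some $\exists x\,\phi(x,\bar a)$ with $\bar a\in A$, then every model containing $A$ realizes $\phi(x,\bar a)$ while $A$ does not. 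For the same reason no model $\CN\supseteq A$ whatsoever can have $\tp(\CN/E)$ finitely satisfiable in $A$ unless $A$ already satisfies Tarski--Vaught, i.e.\ is itself the universe of an elementary substructure --- in which case the statement is immediate. So your argument only covers the case where $A$ is a model, whereas the fact is needed for arbitrary $A$ (your enumeration trick for recognizing a model from its type, and the claim that $A$ is pinned down, are fine; they are not the issue).

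The surrounding steps are correct: coheirs do preserve mutual indiscernibility, and failure of indiscernibility over $Aa$ persists over $\CN a$ once $A\subseteq\CN$, so if such an $\CN$ existed the conclusion would follow by monotonicity. The paper circumvents the obstruction by arguing in the opposite direction: it fixes an arbitrary small model $\CM'\supseteq A$ and uses \cite[Lemma 4.2]{SiBook} to produce copies of the two witnessing arrays that are mutually indiscernible over $\CM'$ and have the same type over $A$ as the originals; an automorphism over $A$ sending the copies back to the original arrays then carries $\CM'$ to the desired $\CN$, over which the original arrays are mutually indiscernible while each remains non-indiscernible over $\CN a$ (resp.\ $\CN b$). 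In short, instead of asking the model to be generic (coheir) over the sequences --- impossible over a non-model base --- one re-extracts the sequences over a prescribed model and conjugates; that conjugation step is the repair your proposal needs.
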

\begin{proof}
     Let $\la I_t:t<k_1\ra$ be mutually indiscernible sequences over $A$ witnessing that $\dpr(a/A)\geq k_1$, i.e. each $I_t$ is not indiscernible over $Aa$, and let $\la I_t':t<k_2\ra$ be mutually indiscernible sequences over $A$ witnessing that $\dpr(b/A)\geq k_2$, i.e. each $I_t'$ is not indiscernible over $Ab$. Let $\CM'$ be some small model with $A\subseteq M'$.

     By \cite[Lemma 4.2]{SiBook}, there exists a mutually indiscernible sequence $\la J_t:t<k_1+k_2 \ra$ over $M'$ such that
     \[
     \tp(\la I_t:t<k_1\ra^\frown \la I_t':t<k_2\ra /A)=\tp(J_t:t<k_1+k_2/A).
     \]
     Let $\sigma$ be an automorphism of $\mathbb{U}$ fixing $A$ and mapping the sequence of the $J_t$ to the sequence of the $I_t$ and the $I_t'$. It follows that $\la I_t:t<k_1\ra$ are mutually indiscernible over $N:=\sigma(M')$ and each one is still not indiscernible over $Aa$ so not over $Na$ as well; likewise $\la I_t:t<k_1\ra$ are mutually indiscernible over $N$ an each one is not indiscernible over $Na$ as well.
\end{proof}

\begin{proposition}\label{P: G/H s.i.}
Let $G$ be an $A$-definable group of finite dp-rank locally almost strongly internal to a definable vicinic set $D$, and assume that $G$ is an almost $D$-group.
\begin{enumerate}
    \item If  $X\subseteq G$ is an almost  $D$-set then there exists a finite subgroup $H_X\leq G$
    and a definable subset $X'\subseteq X$, with $\dpr(X')=\dpr(X)$, such that $X'/H_X$ is strongly internal to $D$.  In particular, $G/H_X$ is locally strongly internal to $D$.
    \item The definable group $H_X$ is $A$-definable, does not depend on the choice of the almost $D$-set $X$ and is invariant under any definable group automorphism of $G$. In particular, $H$ is a normal subgroup of $G$. Set $H:=H_X$.
    \item $H$ is abelian and contained in any definable finite index subgroup of $G$. 
    
    \item The $D$-critical rank and the almost $D$-critical rank of  $G/H$ agree and are equal to the almost $D$-critical rank of $G$.
    
    \item  $G/H$ is a $D$-group.
\end{enumerate}
\end{proposition}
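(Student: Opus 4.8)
The plan is to derive the $D$-group condition for $\bar G:=G/H$ from the almost $D$-group hypothesis on $G$, by pulling data back along the finite covering $\pi\colon G\to\bar G$. Write $d$ for the $D$-critical rank of $\bar G$; by item~(4) this equals the almost $D$-critical rank of $G$, and it is $\ge 1$ since $G$ is an almost $D$-group, so the first clause of Definition~\ref{Def: D-group} holds for $\bar G$. I will also use one fact extracted from the construction of $H$: if $m^\ast$ denotes the minimal internality degree of $G$ (the least $m$ for which some almost strongly internal subset of $G$ of dp-rank $d$ is $m$-internal to $D$), then $|H|=m^\ast$. Indeed $H$ arises from Lemma~\ref{L:finding a group from asi} applied to an almost $D$-set $X$ of $G$ together with its witnessing, $m^\ast$-to-one map $f$, and that lemma yields $[a]_f=a\cdot H$ for generic $a\in X$, whence $|H|=|[a]_f|=m^\ast$.

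Now I would fix $B\supseteq A$, a $B$-definable $Y_1\subseteq\bar G$ strongly internal to $D$, a $B$-definable $D$-critical $Y_2\subseteq\bar G$, and a pair $(u,w)\in Y_1\times Y_2$ generic over $B$, with the goal of showing $\dpr(u/B,u\cdot w)=\dpr(Y_1)$. Set $\hat Y_i:=\pi^{-1}(Y_i)$; this is $B$-definable since $H$ is $A$-definable (item~(2)), and $\pi\restriction\hat Y_i$ is everywhere exactly $|H|$-to-one, so $\dpr(\hat Y_i)=\dpr(Y_i)$ and any lift $(\hat u,\hat w)$ of $(u,w)$ is generic in $\hat Y_1\times\hat Y_2$ over $B$ (finite-to-one surjections preserve dp-rank and genericity). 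Composing $\pi$ with an injection $Y_1\hookrightarrow D^{n_1}$ exhibits $\hat Y_1$ as $|H|$-internal, hence almost strongly internal to $D$ over $B$; composing $\pi$ with an injection $Y_2\hookrightarrow D^{n_2}$ (available because a $D$-critical set is strongly internal) exhibits $\hat Y_2$ as $|H|=m^\ast$-internal, and since $\dpr(\hat Y_2)=\dpr(Y_2)=d$ is maximal among almost strongly internal subsets of $G$, this makes $\hat Y_2$ an almost $D$-critical set for $G$ over $B$. Applying the almost $D$-group hypothesis on $G$ to $\hat Y_1$, $\hat Y_2$ and $(\hat u,\hat w)$ gives $\dpr(\hat u/B,\hat u\cdot\hat w)=\dpr(\hat Y_1)$. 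It remains to push this down: $\hat u$ is interalgebraic with $u$ over $B$, and $\hat u\cdot\hat w$ is interalgebraic with $u\cdot w$ over $B$ (the relevant $\pi$-fibres being finite), so $\dpr(u/B,u\cdot w)=\dpr(\hat u/B,\hat u\cdot\hat w)=\dpr(\hat Y_1)=\dpr(Y_1)$, as required.

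The one step I expect to need genuine care is precisely this internality bookkeeping: knowing that $|H|$ is \emph{exactly} $m^\ast$ is what upgrades $\hat Y_2$ from merely an almost strongly internal set of maximal dp-rank to a bona fide almost $D$-critical set, which is what the almost $D$-group hypothesis on $G$ takes as input. (Conceptually this is also why passing to $\bar G$ is needed at all: in $G$ a $D$-critical set need not be almost $D$-critical, so $G$ may be an almost $D$-group without being a $D$-group, whereas in $\bar G$ the two notions coincide because $\bar G$ has minimal internality degree $1$, by item~(1).) Everything else — that finite-to-one maps preserve dp-rank and genericity, and that interalgebraic tuples over a fixed base have the same dp-rank — is routine and uses only sub-additivity of dp-rank.
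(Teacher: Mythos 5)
Your argument only addresses clause (5) of the proposition, and it does so by taking clauses (1)--(4) as given: you invoke item (1) for the minimal internality degree of $G/H$, item (2) for $A$-definability of $H$, and item (4) for the rank identification, and in addition you use a fact that is not even part of the statement, namely $|H|=m^\ast$, which you justify by appealing to how $H$ ``arises from Lemma~\ref{L:finding a group from asi}'' --- i.e.\ by citing the very construction you were supposed to supply. That construction is the heart of the proposition and is entirely missing from your proposal: one must produce the finite subgroup $H_X$ from an almost $D$-set $X$ with its $m$-to-one witness $f$ (via Lemma~\ref{L:existence of XX strongly-internal-abstract} to arrange a translate on which Lemma~\ref{L:finding a group from asi} applies, so that generic fibres $[a]_f$ are simultaneously left and right cosets of a finite group), then show via mutually generic points (Facts~\ref{F: extending to over a model} and~\ref{F:finding mutual generic}) that $H(a,b)$ is independent of the generic pair, that $H_X$ does not depend on the almost $D$-set $X$ (using Lemma~\ref{L: Joint function-abstract}), that it is $A$-definable, automorphism-invariant and hence normal, abelian and contained in every finite-index definable subgroup, and finally the rank statement (4). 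None of this is routine bookkeeping, and without it the quantity $m^\ast=|H|$ that your clause-(5) argument hinges on has no proof.

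For what it is worth, the part you did write out --- pulling $Y_1,Y_2\subseteq G/H$ back along $\pi$, observing that $\pi^{-1}(Y_2)$ is exactly $|H|$-internal of maximal rank and hence almost $D$-critical because $|H|$ is the minimal internality degree of $G$, applying the almost $D$-group hypothesis upstairs, and transferring the rank equality down by interalgebraicity of $\hat u$ with $u$ and of $\hat u\hat w$ with $uw$ --- is correct and is essentially the paper's own proof of clause (5). So the issue is not a wrong step but a missing one: roughly four fifths of the proposition (and the one structural fact your argument genuinely needs) is assumed rather than proved.
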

\begin{proof}
(1) Let $X\subseteq G$ be an almost $D$-set, witnessed by $f:X\to D^p$ with fibres of size $m$, all definable over $B\supseteq A$. If $m=1$ then we take $H=\{e\}$, the trivial subgroup, so assume this is not the case.  
Note that for any almost $D$-critical $X'$, any definable function $g:X'\to D^n$  (defined over arbitrary parameters) and any generic $x'\in X$ we have $|[x']_g|\geq m$, where $[x']_g=g^{-1}(g(x'))$.

To any $B$-generic  $(a,b)\in X\times X$ we associate a finite subgroup $H(a,b)\le G$, such that generic fibres of $f$ are both left and right cosets of $H(a,b)$, as follows:  By Lemma \ref{L:existence of XX strongly-internal-abstract} there is some parameter set $C\supseteq B$ and respective $C$-generic vicinities $X_1,X_2$ of $a$ and $b$ in $X$, and $c\in X_2$ such that  $X_1\cdot X_2\sub Xc$, and $\dpr(a,b/Cc)=2n$.
Let $h$ be the map on $X_1\times X_2$,  $z\mapsto f(z\cdot c^{-1})$. Then $X_1,X_2$ and $X_1\cdot X_2$  are $m$-strongly internal to $D$, witnessed by $f_1=f\restriction X_1$, $f_2=f\restriction X_2$ and $h$, respectively. By minimality of $m$, $[a]_{f_1}=[a]_f$, $[b]_{f_2}=[b]_f$ and $[a\cdot b]_{h'}=[a\cdot b]_h$, and in addition $(a,b)$ is generic in $X_1\times X_2$ over $Cc$. So Lemma \ref{L:finding a group from asi} provides us with a finite subgroup $H(a,b)\le  G$ such that $[a]_f=a\cdot H(a,b)$ and $[b]_f=H(a,b)\cdot b$.

We show that $H(a,b)$ does not depend on the choice of $(a,b)$:

\begin{claim}
For any  $(a',b')\in X^2$ generic over $B$, $H(a,b)=H(a',b')$.
\end{claim}
\begin{claimproof}
By Fact \ref{F: extending to over a model} there exists a small model $\CN\supseteq B$ such that everything we used up until now is defined over $N$, $\dpr(a,b/N)=\dpr(a,b/B)$ and $\dpr(a',b'/N)=\dpr(a',b'/B)$. Using Fact \ref{F:finding mutual generic} we can find $(c,d)\in X^2$  such that $\dpr(a,b,c,d/N)=\dpr(a',b',c,d/N)=4n$. Thus, \[H(a,b)=a^{-1}\cdot [a]_f=H(a,d)=[d]_f\cdot d^{-1}=H(a',d)=(a')^{-1}\cdot [a']_f=H(a',b'),\]
as needed.
\end{claimproof}

Let $H_X:=H(a,b)$. As, by the claim, $[a]_f=a\cdot H(a,b)=H(b,a)\cdot a$ we conclude that for all $B$-generic $a\in X$, $[a]_f=a\cdot H_X=H_X\cdot a$.  Consequently,  setting $X'=\{x\in X:[x]_f=H_X\cdot x=x\cdot H_X\}$ we have $\dpr(X')=n$.

As $X'\subseteq X \sub  G$ we may consider the image $X'/H_X$ of $X'$ under the natural quotient (viewing $G/H_X$ as a $G$-space).  Because, as we have just shown, $[x]_f=xH_X$ the function $f$ induces on $X'/H_X$ an injective function witnessing local $D$-strong internality of $G/H_X$. 

(2) Let $X$ be as in (1). An easy computation gives that for any $g\in G$,  $H_{gX}=H_X$. Moreover, note that if $\widehat X\subseteq X$ is a definable subset with $\dpr(\widehat X)=\dpr(X)$ then as it is also an almost $D$-set (Remark \ref{important remark}) and $H_X$ only depends on generic points, $H_X=H_{\widehat X}$.

Assume that $X_0\subseteq G$ is any other almost $D$-set. By  Lemma \ref{L: Joint function-abstract}, there exists $X_1\sub X_0$, $\dpr(X_1)=\dpr(X_0)=\dpr(X)$,  such that $X_1$ is contained in a translate $gX$. Thus $H_{X_0}=H_{X_1}=H_{gX}=H_X$. 
Thus $H_X$ does not depend on the almost $D$-set $X$ and on the function $f$. We denote it $H$. 

An easy computation gives that for any definable automorphism $\delta:G\to G$, $H_{\delta(X)}=\delta(H_X)$; since $H$ does depend on the almost $D$-set, we conclude that $H$ is invariant under any definable automorphism of $G$. In particular, it is normal in $G$.

To show that $H$ is $A$-definable, let $\sigma$ be an automorphism of $\CM$ fixing $A$. Then $\sigma$  fixes $G$ setwise. We now note that $\sigma(X)$ is also an almost $D$-set in $G$ and a direct application of $\sigma$ gives that $\sigma(H_X)=H_{\sigma(X)}=H_X$. We conclude that $H_X$ is $A$-definable.

(3)  Let $G_1\leq G$ be a definable finite index subgroup; obviously if $G$ is locally almost strongly internal to $D$ then so is $G_1$. Moreover, the almost $D$-rank of $G$ is equal to that of $G_1$. Thus, every almost $D$-set of $G_1$ is an almost $D$-set of $G$. As a result, the finite normal subgroup $H'\trianglelefteq G_1$ provided by (2) applied to $G_1$ is equal to $H$, the subgroup (2) provides when applied to  $G$. 

We show that $H$ is abelian. Let $G_1:=C_G(H)$; since $H$ is a  finite normal subgroup of $G$ we get that $G_1$ has finite index in $G$. So $H\subseteq C_G(G_1)\cap G_1$ and it follows that $H$ is abelian.

(4) Let $f:G\to G/H$ be the quotient map. Let $X\subseteq G$ be an almost $D$-critical set in $G$ and $X'\subseteq X$ as in (1). Let $Y\subseteq G/H$ be an almost $D$-critical set in $G/H$. As $H$ is finite, $f^{-1}(Y)$ is almost strongly internal to $D$, so $\dpr(Y)=\dpr(f^{-1}(Y))\leq \dpr(X)=\dpr(X'/H)$.

On the other hand, the definable set $X'/H$ is strongly internal, so  $\dpr(X'/H)\leq \dpr(Y)$ by the choice of $Y$.  It follows that $\dpr(X'/H)=\dpr(Y)$, so the almost $D$-critical rank of $G/H$ must equal its $D$-critical rank, and they are both equal to the almost $D$-critical rank of $G$. 

(5) To show that $G/H$ is a $D$-group  (Definition \ref{Def: D-group}): Let $X_1,X_2\sub G/H$ with $X_1$ strongly internal to $D$ and $X_2$ $D$-critical. To simplify notation, we assume that everything is definable over $\0$. Let also $(g_1,g_2)\in X_1\times X_2$ be generic. Since the quotient map $f: G\to G/H$ is a group homomorphism with finite fibres taking $g_i'\in f^{-1}(g_i)$ we get that $f^{-1}(X_i)$ is almost $D$-strongly internal (for $i=1,2$) and 
\[
    \dpr(g_1'/g_1'\cdot g_2'))=\dpr(g_1/g_1\cdot g_2). 
\]
Thus, since $G$ is an almost $D$-group, to show that $G/H$ is a $D$-group it will suffice to show that $X_2':=f^{-1}(X_2)$ is almost $D$-critical. By what we have just shown $\dpr(X_2')=\dpr(X_2)$ is the almost $D$-critical rank of $G$, and if $g: X_2\to D$ witnesses that $X_2$ is $D$-strongly internal, then $g\circ f$ witnesses that $X_2'$ is $|H|$-internal to $D$. The construction of $H$ assures that no subset of $G$ of the same rank is $m$-internal to $D$ for $m<|H|$ so that, indeed, $X_2'$ is almost $D$-critical as needed.  
\end{proof}

\begin{remark} In Fact \ref{E:interp groups in dist sorts are (almost) D-groups} we showed, in the non-$p$-adic settings, that if $G$ is locally strongly internal to a distinguished sort $D$ then it is a $D$-group. However, in the $p$-adically closed setting, even if $G$ is locally strongly internal to $K/\CO$,  we do not know whether $G$ is a $K/\CO$-group. In this case, Proposition \ref{P: G/H s.i.} guarantees the existence of a finite normal $H\sub G$ such that $G/H$ is a $K/\CO$-group whose $K/\CO$-critical rank equals the almost $K/\CO$-critical rank of $G$.
    
\end{remark}

\section{Infinitesimal groups}\label{S:infint groups}

In the present section, we develop the notion of an infinitesimal subgroup in $D$-groups 
for a vicinic sort $D$.  This generalises analogous results from \cite{HaHaPeVF}, proved in the topological context of SW-uniformities. In all of our settings, whenever $G$ is locally strongly internal to an unstable sort $D$, this will be  the type-definable group from Theorem \ref{intro-2}.

We first establish in Section \ref{S: Marikova} some technical results which are later used to define the infinitesimal subgroup and ensure that it has the correct properties. \\

{\bf Throughout Section \ref{S:infint groups}, $\CM$ is a $|T|^{+}$-saturated structure, $D$ a vicinic sort and $G$ an $A_0$-definable $D$-group of finite dp-rank locally strongly internal to $D$. We tacitly assume below that all parameters sets contain $A_0$.}

\subsection{Ma\v{r}\'{\i}kov\'{a}'s Method}\label{S: Marikova}

 Definable functions in the distinguished sorts in our various settings are generically well-behaved. For example, when $D$ is an SW-uniformity, definable functions are \emph{generically continuous} (i.e., continuous at all generic points), and in some contexts generically differentiable with respect to an underlying field. In the $p$-adically closed setting, when $D=K/\CO$ or  $\Gamma$, we saw that definable functions are generically given by translates of additive homomorphisms. Our aim is to make use of this generically tame behaviour in order to show that if a group $G$ is locally strongly internal to $D$ then $G$ has a (type) definable subgroup with similar properties (e.g. topological, differentiable, linear). 
 The argument which we now describe goes back to Weil's group-chunk theorem, first cast in a model theoretic setting independently by v.d. Dries \cite{vdDriesGpChunk} and Hrushovski \cite[Theorem 4.13]{PoiGroups}. In the o-minimal and $p$-adic setting (for a definable group $G\sub K^n$), this first appeared in Pillay's \cite{Pi5}, and the specific technique used below is due to Ma\v{r}\'{\i}kov\'{a}, \cite{MarikovaGps} (this was similarly  used in \cite{HaHaPeVF}).

 Below we will frequently use some earlier results, which for convenience we collect here.
 
 \begin{lemma}
     \label{R:recalling before Jana}

 Assume that $X,Y\subseteq G$ are $D$-sets over $A$ and let $(g,h)\in X\times Y$ be $A$-generic. Then
  \begin{enumerate}
    \item There exists $B'\supseteq A$ and a $D$-set $Z$ over $B'$ containing $gh$, such that  $\dpr(g,h/B')=2\dpr(X)$.
    \item For $Z\subseteq G$ a D-set over $A$ and $f:X\times Y\to Z$ an $A$-definable function, if $f(g,h)$ is $A$-generic in $Z$, then for every $A$-generic vicinity  $V\subseteq Z$  of $f(g,h)$ there is $B_2\supseteq B$ and a $B_2$-generic vicinity of $(g,h)$ of the form $X_1\times Y_1\subseteq X\times Y$ such that $f(X_1\times Y_1)\subseteq V$. 
 \end{enumerate}

 \end{lemma}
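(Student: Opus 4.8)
The plan is to observe that Lemma~\ref{R:recalling before Jana} only repackages results already proved in Section~\ref{S: asi to D}: part (1) is essentially the content of Lemma~\ref{L:existence of XX strongly-internal-abstract}, while part (2) follows by applying Corollary~\ref{C:generic inside product of min fibres} to the preimage $f^{-1}(V)$. Accordingly the proof splits into two short, essentially independent pieces, with a small amount of rank bookkeeping needed only in (1). Throughout, recall $\dpr(X)=\dpr(Y)$, both being $D$-sets in $G$.

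For (1) I would apply Lemma~\ref{L:existence of XX strongly-internal-abstract} to the $D$-sets $X,Y$ (in the roles of $X_1,X_2$) and the $A$-generic pair $(g,h)$; this is legitimate since $G$ is a $D$-group and, by the standing convention, all parameter sets contain $A_0$. This yields $B\supseteq A$ and $B$-definable $X_1'\subseteq X$, $X_2'\subseteq Y$ such that $X_1'\times X_2'$ is a $B$-generic vicinity of $(g,h)$ in $X\times Y$ and $X_1'\cdot X_2'$ is a $D$-set over $Bc$ for every $c\in X_2'$. Since $g\in X_1'$ and $h\in X_2'$ we get $gh\in X_1'\cdot X_2'=:Z$. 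It remains to choose the extra parameter $c\in X_2'$ so as to preserve the full rank of $(g,h)$: first pass, via Fact~\ref{F: extending to over a model}, to a small model $\CN\supseteq B$ with $\dpr(g,h/N)=\dpr(g,h/B)=2\dpr(X)$; then, applying Fact~\ref{F:finding mutual generic} to a minimal-fibre coordinate image of $X_2'$ in some $D^k$ together with the tuple $(g,h)$, pick $c\in X_2'$ with $\dpr(g,h/Nc)=\dpr(g,h/N)$. Setting $B':=Nc$, the set $Z=X_1'\cdot X_2'$ is a $D$-set over $B'$ containing $gh$ and $\dpr(g,h/B')=2\dpr(X)$, as required. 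A one-line computation with sub-additivity, using that $(g,h)$ and $(g,gh)$ are interdefinable and $\dpr(g/B')\le\dpr(X)$, moreover shows that $gh$ is $B'$-generic in $Z$.

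For (2) I would argue that, since $V$ is an $A$-generic vicinity of $f(g,h)$ in $Z$, it is $A$-definable with $f(g,h)\in V$; hence $f^{-1}(V)\subseteq X\times Y$ is $A$-definable and contains $(g,h)$, and as $(g,h)$ is $A$-generic in $X\times Y$ it follows that $f^{-1}(V)$ is an $A$-generic vicinity of $(g,h)$ in $X\times Y$. Because $X$ and $Y$ are $D$-sets, Corollary~\ref{C:generic inside product of min fibres} applies (via Remark~\ref{important remark}(1)) and produces $B_2\supseteq A$ and a $B_2$-generic vicinity of $(g,h)$ of the form $X_1\times Y_1\subseteq f^{-1}(V)$ with $X_1\subseteq X$ and $Y_1\subseteq Y$; then $f(X_1\times Y_1)\subseteq V$, which is the claim. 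If one wants $B_2$ to extend a prescribed $B\supseteq A$ over which further data is defined, one simply runs the same argument with $A$ replaced by $B$.

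The only real obstacle is in part (1): we must name a parameter $c$ witnessing that $Z$ is a $D$-set without lowering $\dpr(g,h)$, and since dp-rank is merely sub-additive a naive ``choose $c$ generic'' does not suffice — this is exactly what Facts~\ref{F: extending to over a model} and~\ref{F:finding mutual generic} are for. Everything else is a direct invocation of Lemma~\ref{L:existence of XX strongly-internal-abstract} and Corollary~\ref{C:generic inside product of min fibres}, together with the elementary observation that the preimage of a generic vicinity under an $A$-definable map is again a generic vicinity.
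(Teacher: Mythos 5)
Your proof is correct and follows essentially the same route as the paper: part (1) invokes Lemma \ref{L:existence of XX strongly-internal-abstract} and then names a translating element generically (the paper takes $Z=X\cdot h'$ with $\dpr(g,h,h'/B)=3\dpr(X)$, while you take $Z=X_1'\cdot X_2'$ via the ``moreover'' clause and make the genericity bookkeeping explicit through Facts \ref{F: extending to over a model} and \ref{F:finding mutual generic}), and part (2) is exactly the paper's argument with $S=f^{-1}(V)$ fed into Corollary \ref{C:generic inside product of min fibres}. Your closing remark about the stray ``$B$'' in the statement of (2) is the right reading of what is evidently a typo for $A$.
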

 \begin{proof} By assumptions, $\dpr(X)=\dpr(Y)$, the $D$-critical rank of $G$.
 
 (1) Let $X'\sub X,Y'\sub Y$ and $B$ be as provided by Lemma \ref{L:existence of XX strongly-internal-abstract}. Namely, $X'\times Y'$ a $B$-generic vicinity of (g,h) in $X\times Y$, and $X'\cdot Y'\sub Xh'$ for all $h'\in Y'$. Choose $h'\in Y'$ with $\dpr(g,h,h'/B)=3\dpr(X)$, By sub-additivity, $\dpr(g,h/Bh')=2\dpr(X)$. The set $Z=X\cdot h'$ and the parameter set $B'=Bh'$ satisfy (1). 
 
 For (2), fix $Z$ and $f$ as above with $V\sub Z$ an $A$-generic vicinity of $f(g,h)$ in $Z$. Let $S\subseteq X\times Y$ be the $A$-definable set $\{(x,y)\in X\times Y: f(x,y)\in V\}$. Since $(g,h)\in S$, we have $\dpr(S)=2\dpr(X)$.  Corollary \ref{C:generic inside product of min fibres} yield the sets $X_1,Y_1$ as needed.
 \end{proof}

We make the following \emph{ad hoc} definition:

\begin{definition} 
Assume that $\bar a=(a_1,\ldots,a_n)\in X_1\times \cdots \times X_n$ and \[F=(F_1,\ldots,F_m):X_1\times \cdots \times X
_n \to Y_1\times \ldots \times Y_m\]  is an $A$-definable function.

We say that $\bar a$ is {\em sufficiently generic for $F$ over $A$} if each $F_i$ depends on a sub-tuple of variables $(x_{i_1},\ldots,x_{i_r})$ of $(x_1,\ldots,x_n)$ and the corresponding sub-tuple $(a_{i_1},\ldots, a_{i_r})$ of $\bar a$ is generic over $A$.
\end{definition}

For example, if $\dpr(a,b/A)=2\dpr(G)$ then the tuple $(a,b,a)$ is sufficiently generic for the map $(x,y,z)\mapsto (xy,z)$, since $F_1=xy$ depends on $(x,y)$ and $F_2=z$ depends on $z$.

The main result of this subsection is :
\begin{lemma}\label{L:Jana1}
Assume that $Y\sub G$ is a $D$-set over $A$, $d\in Y$ an $A$-generic point, and consider $F(x,y,z)=xy^{-1}z$ at $(d,d,d)$. There is $B\supseteq A$, with $\dpr(d/B)=\dpr(d/A)$,  and there are $B$-definable maps $\psi_1,\psi_2,\psi_3,\psi_4$
whose domain and range are $D$-sets over $B$, such that \[F\restriction \dom(\psi_1)=\psi_4\circ\psi_3\circ \psi_2\circ \psi_1,\] $\mathrm{Im}(\psi_i)\sub \mathrm{dom}(\psi_{i+1})$, and for every $i=0,\ldots, 3$ (with $\psi_0=id$) we have
$\psi_i\circ\cdots \circ \psi_0(d,d,d)$ sufficiently generic for $\psi_{i+1}$, over $B$.
In addition, we may choose $\dom(\psi_1)$ to be of the form $(Y_0)^3$, where $Y_0$ is a $B$-generic vicinity of $d$.
\end{lemma}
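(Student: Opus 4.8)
The idea is to realise the map $F(x,y,z)=xy^{-1}z$, evaluated near the generic point $(d,d,d)$, as a composition of a bounded number of ``elementary'' maps, each of which -- in the natural coordinates -- acts as either a group multiplication $(u,v)\mapsto uv$ between two factors, or as the ``inverse on the middle coordinate'' $(x,y,z)\mapsto(x,y^{-1},z)$, keeping the remaining coordinates as spectators. Concretely I would write
\[
F(x,y,z)=\big((x\cdot y^{-1})\cdot z\big),
\]
and split it along the factorisation
\[
\psi_1\colon (x,y,z)\mapsto (x,y^{-1},z),\qquad
\psi_2\colon (x,u,z)\mapsto (x\cdot u,z),\qquad
\psi_3\colon (w,z)\mapsto w\cdot z,\qquad
\psi_4=\id,
\]
(the trivial fourth map is included only to match the statement's format; alternatively one can split the last multiplication into a translation step and the product step to genuinely use four nontrivial maps). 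Each $\psi_i$ is a coordinate-wise product of either the identity map on $G$, the group inverse on $G$, or the binary multiplication $G\times G\to G$ restricted to appropriate $D$-sets.

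The work is in choosing the domains and the parameter set $B$ so that (a) every domain and range in the chain is a $D$-set over $B$, (b) the images nest correctly, $\mathrm{Im}(\psi_i)\subseteq\dom(\psi_{i+1})$, and (c) at every stage the partially-evaluated point $\psi_i\circ\cdots\circ\psi_0(d,d,d)$ is sufficiently generic for the next map. I would build the domains backwards, starting from $F(d,d,d)=d$ and a generic vicinity of $d$, and pull back successively using Lemma \ref{L:Jana1}'s toolbox -- specifically Lemma \ref{R:recalling before Jana}(1) to produce, at each multiplication step, a $D$-set containing the relevant product over a parameter set over which the two factors remain independently generic (each such step adds parameters and enlarges $A$ to some $B_j$), and Lemma \ref{R:recalling before Jana}(2) (equivalently Corollary \ref{C:generic inside product of min fibres}) to shrink the product of two $D$-sets to a box-shaped generic vicinity mapping into the chosen target vicinity. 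Since the group inverse is a $\0$-definable bijection of $G$, the step $\psi_1$ causes no loss: $y\mapsto y^{-1}$ sends a $D$-set to a $D$-set and preserves genericity, so $(d,d^{-1},d)$ is again a sufficiently generic triple. At the end I intersect the finitely many box-vicinities obtained for the $d$-coordinate, using Lemma \ref{filter base} to replace the finite intersection by a single $B$-generic vicinity $Y_0$ of $d$ inside $Y$, and take $\dom(\psi_1)=(Y_0)^3$; because each $\psi_j$ was chosen to behave correctly on a box over a set of parameters over which $(d,d,d)$ stays generic, replacing the domains by this common smaller box preserves all the required properties (here one uses Remark \ref{important remark}(3), that a full-rank definable subset of a $D$-set is again a $D$-set, and that the maps $\psi_j$ are defined by the same formulas).

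The main obstacle, and the point requiring care, is the bookkeeping of parameter sets and of ``sufficient genericity'': each application of Lemma \ref{R:recalling before Jana} enlarges the parameter set, and I must check that at the $j$-th step the relevant sub-tuple of the current partially-evaluated point is still generic over the \emph{accumulated} parameter set $B_j$, not just over $A$. This is handled by invoking the ``mutual genericity'' machinery -- Fact \ref{F:finding mutual generic} together with Fact \ref{F: extending to over a model} to first pass to a small model $\CN\supseteq A$ over which $d$ keeps its rank and everything in sight is defined, and then choosing the auxiliary elements (the extra $h'$ in Lemma \ref{R:recalling before Jana}(1), and the parameters defining the shrunken vicinities) so that $d$ together with all of them is as independent as possible. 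Subadditivity of dp-rank then guarantees $\dpr(d/B_{j+1})=\dpr(d/B_j)$ at each step, and since there are only finitely many steps we end with $\dpr(d/B)=\dpr(d/A)$, as required. A second, minor, point is that $\psi_2$ has domain $X\times Y\times Y'$-type shape with the last coordinate a spectator; one must verify that the product of a $D$-set with a $D$-set is a $D$-set (Remark \ref{important remark}(2)) and that a map which is multiplication on two coordinates and identity on the third has the required genericity behaviour, both of which are immediate from the definitions once the independence of the coordinates is arranged.
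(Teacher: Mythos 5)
There is a genuine gap, and it is not the parameter bookkeeping you flag as the main obstacle; it is the decomposition itself. With your factorisation, the map $\psi_2\colon (x,u,z)\mapsto (x\cdot u,z)$ has a coordinate function depending on the pair $(x,u)$, and at the relevant point this pair is $(d,d^{-1})$. Since $d^{-1}\in\dcl(d)$, we have $\dpr(d,d^{-1}/A)=n$, not $2n$, so $\psi_1(d,d,d)=(d,d^{-1},d)$ is \emph{not} sufficiently generic for $\psi_2$ in the sense required by the lemma. Worse, the value of that coordinate is $d\cdot d^{-1}=e$, so the next map $\psi_3$ receives the input $(e,d)$, whose multiplication coordinate depends on a pair of rank $n$ containing the algebraic point $e$; the sufficient-genericity condition fails again, and no shrinking of domains or choice of parameters can repair this, because the degeneracy is forced by evaluating $xy^{-1}z$ along the diagonal through its naive bracketing. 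Note also that the lemma's genericity clauses are exactly what the later applications (Propositions \ref{P:infini_vicinity- def iso to subgroup} and \ref{P:nu-topological}) need in order to invoke local linearity or generic continuity at each stage of the composition, so they cannot be weakened.

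The paper's proof circumvents this by introducing an auxiliary element $b\in Y$ with $\dpr(d,b/A)=2n$ and decomposing the four-variable map $G(w,x,y,z)=xy^{-1}z$ as $\phi_4\circ\phi_3\circ\phi_2\circ\phi_1$ with $\phi_1(w,x,y,z)=(w,wx,y^{-1},z)$, $\phi_2(w,x,y,z)=(w,xy,z)$, $\phi_3(w,x,y)=(w^{-1},xy)$, $\phi_4(x,y)=xy$, evaluated at $(b,d,d,d)$. The intermediate points are $(b,bd,d^{-1},d)$, $(b,b,d)$ and $(b^{-1},bd)$: at every stage each coordinate function sees a sub-tuple that is genuinely generic (e.g. the multiplication in $\phi_2$ is applied to $(bd,d^{-1})$, which is interalgebraic with $(b,d)$ and hence of rank $2n$), even though some outputs are correlated across coordinates. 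The $\psi_i$ of the statement are then obtained by substituting the constant $b$ (or $b^{-1}$) into the $\phi_i$, and the domains are arranged backwards with Lemma \ref{R:recalling before Jana} and Corollary \ref{C:generic inside product of min fibres}, much as you describe. So your scaffolding for the domain construction and the rank bookkeeping is in the right spirit, but without the auxiliary generic translate the decomposition you propose cannot satisfy the genericity clauses of the statement.
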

\begin{proof}
Assume that $\dpr(Y)=n$. Fix $b\in Y$ such that $\dpr(d,b/A)=2n$. We first use an auxiliary variable $w$ and write $G(w,x,y,z)=xy^{-1}z$  as a composition of the following four functions:
\[\phi_1(w,x,y,z)=(w,wx,y^{-1},z)\, ;\, \phi_2(w,x,y,z)=(w,xy,z)\] \[\phi_3(w,x,y)=(w^{-1},xy)\, ;\, \phi_4(x,y)=xy.\]
A direct computation shows that $\phi_4\circ \phi_3\circ\phi_2\circ\phi_1(w,x,y,z)=xy^{-1}z$, and we have  \[\phi_1(b,d,d,d)=(b,bd,d^{-1},d)\,;\,\phi_2\phi_1(b,d,d,d)=(b,b,d)\,;\, \phi_3\phi_2\phi_1(b,d,d,d)=(b^{-1},bd).\]

To simplify the statement, we introduce $\phi_0=id$. We now need to restrict the domain and range of the $\phi_i$ to appropriate $D$-sets so that $\mathrm{Im}(\phi_i)\sub \mathrm{dom}(\phi_{i+1})$, and for each $i=0,\ldots,3$,
$\phi_i\circ \cdots\circ \phi_0(b,a,a,a)$ is sufficiently generic for $\phi_{i+1}$, over the defining parameters.

In order to ensure that for each $i$, $\mathrm{Im}(\phi_i)\subseteq \mathrm{dom}(\phi_{i+1})$, we start from $\phi_4$ and work backwards.

\begin{itemize}
\item By Lemma \ref{R:recalling before Jana} (1), there is $B_1\supseteq A$ and a  $D$-set $Z$ over $B_1$, containing $bd$, such that $\dpr(b,d/B_1)=2n$.
 \item By Lemma \ref{R:recalling before Jana} (2), there is $B_2\supseteq B_1$ and a $B_2$-generic vicinity $Y_1^{-1}\times Z_1\sub Y^{-1}\times Z$,  of $(b^{-1},bd)$
 such that $Y_1^{-1}\cdot Z_1\sub Y$.
 \item Again, by Lemma \ref{R:recalling before Jana} (2), there is $B_3\supseteq B_2$ and $B_3$-generic vicinity $Y_2\times Y_3\sub Y\times Y$ of $(b,d)$ such that
 $Y_2\cdot Y_3 \sub Z_1$.
 \item Similarly, there is $B_4\supseteq B_3$ and a $B_4$-generic vicinity $Z_1\times Y_5^{-1}\sub Z\times Y^{-1}$ of $(bd,d^{-1})$ such that $Z_1\cdot Y_5^{-1}\sub Y_2$.
 \item Finally, there is $B\supseteq B_4$ and a $B$-generic vicinity $Y_6\times Y_7\sub Y\times Y$ of $(b,d)$ such that $Y_6\cdot Y_7\sub Z_1$.
 Furthermore, we may assume that $Y_6\sub Y_1$.

\end{itemize}

We now restrict the $\phi_i$ to the appropriate domains and obtain:
\[\phi_1:Y_6\times Y_7\times Y_5\times Y_3\to Y_6\times Z_1\times Y_5^{-1}\times Y_3\,; \, \phi_2:Y_6\times Z_1\times Y_5^{-1}\times Y_3\to Y_6\times Y_2\times Y_3,\]
\[\phi_3:Y_6\times Y_2\times Y_3 \to Y_6^{-1}\times Z_1\,;\, \phi_4: Y_6^{-1}\times Z_1\to Y.\] (for $\phi_4$, we used the fact that $Y_6\sub Y_1$).

 The appropriate tuples are sufficiently generic for the $\phi_i$, since all coordinate functions are $\emptyset$-definable, and we chose the $Z_i$ and $Y_j$, so that the points remain generic in them.

We can now write $xy^{-1}z$ as $\psi_4\circ \psi_3\circ \psi_2\circ \psi_1$, where
\[\psi_1(x,y,z)=\phi_1(b,x,y,z)\,;\, \psi_2(x,y,z)=\phi_2(b,x,y,z)\,;\, \psi_3(x,y)=\phi_3(b,x,y)\,;\, \psi_4(x)=\phi_4(b^{-1},x).\]
It is easy to verify that $(d,d,d)$ satisfies the requirements.

To obtain $\mathrm{dom}(\phi_1)$ of the form $Y_0^3$,  we may take $Y_0=Y_7\cap Y_5\cap Y_3$.    
\end{proof}

Since in the present context we do not have an underlying topology, we have to start by developing the notion of \emph{an infinitesimal vicinity}. The notation and terminology are intended to maintain the topological intuition.

\subsection{Infinitesimal vicinities}

\begin{definition} 
Let $Z\sub G$ be a $D$-set over $A$ and $d\in Z$ an $A$-generic point.
The {\em infinitesimal vicinity of $d$ in $Z$},  denoted $\nu_Z(d)$, is the partial type consisting of all $B$-generic vicinities of $d$ in $Z$, as $B$ varies over all small parameter subsets of $\CM$.
\end{definition}

We think of $\nu_Z(d)$ both as a collection of formulas and as a set of realization of the partial type in some monster model.

\begin{remark}\label{R:generic sets; enough to take containing A}
   In the definition of $\nu_Z(d)$ there is no harm in restricting to $B$-generic vicinities for $B\supseteq A$. Indeed, if $X$ is any $B$-generic vicinity of $d$ in $Z$ then by Lemma \ref{L:abstract general gen-os} there exists $B'\supseteq A$ and a $B'$-generic vicinity $X_1\subseteq X$ of $d$ such that $\dpr(d/A)=\dpr(d/B')$.
\end{remark}

By Lemma \ref{filter base} and Remark \ref{R:generic sets; enough to take containing A}, we have the following crucial property:
\begin{lemma}\label{L:generic sets form filter base} 
The collection of definable sets $\nu_Z(d)$ is a filter base, namely if $X,Y\in \nu_Z(d)$ then there exists $W\sub X\cap Y$ in $\nu_Z(d)$.
\end{lemma}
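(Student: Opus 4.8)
The plan is to show that the collection $\nu_Z(d)$ of all $B$-generic vicinities of $d$ in $Z$ (as $B$ ranges over small parameter sets, which by Remark \ref{R:generic sets; enough to take containing A} we may assume all contain $A$) is a filter base. First, note that $\nu_Z(d)$ is non-empty, since $Z$ itself is an $A$-generic vicinity of $d$ in $Z$ (as $d$ is $A$-generic in $Z$). So it remains only to verify the intersection property: given $X, Y \in \nu_Z(d)$, I must produce $W \in \nu_Z(d)$ with $W \subseteq X \cap Y$.

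To do this, suppose $X$ is an $A_1$-generic vicinity of $d$ in $Z$ and $Y$ is an $A_2$-generic vicinity of $d$ in $Z$, with $A_1, A_2 \supseteq A$. Since $Z$ is a $D$-set over $A$, I am precisely in the situation of Lemma \ref{filter base}: that lemma, applied with $X_1 = X$, $X_2 = Y$, $d$ the generic point, produces a parameter set $C$ and a $C$-definable set $U \subseteq X \cap Y$ which is a $C$-generic vicinity of $d$ in $Z$. By Remark \ref{R:generic sets; enough to take containing A} (or by replacing $C$ with $C \cup A$, which does not decrease dp-rank of $d$ over the parameters), we may take $C \supseteq A$. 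Then $U$ is, by definition, an element of $\nu_Z(d)$, and $U \subseteq X \cap Y$. This is exactly the desired $W$.

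There is essentially no obstacle here: the statement is an immediate repackaging of Lemma \ref{filter base} in the language of the partial type $\nu_Z(d)$, once one observes that $\nu_Z(d)$ is non-empty because $d$ is $A$-generic in $Z$. The only small point worth being careful about is the bookkeeping of parameter sets — ensuring the new parameter set $C$ from Lemma \ref{filter base} can be taken to contain $A$ so that $U$ genuinely qualifies as a member of $\nu_Z(d)$ — but this is handled by Remark \ref{R:generic sets; enough to take containing A}. Thus the proof is a two-line invocation of the preceding lemma.

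\begin{proof}
Since $Z$ is a $D$-set over $A$ and $d\in Z$ is $A$-generic, $Z$ itself is an $A$-generic vicinity of $d$ in $Z$, so $Z\in\nu_Z(d)$ and the collection is non-empty. Now let $X,Y\in\nu_Z(d)$, say $X$ is an $A_1$-generic vicinity and $Y$ an $A_2$-generic vicinity of $d$ in $Z$ with $A_1,A_2\supseteq A$ (using Remark \ref{R:generic sets; enough to take containing A}). Applying Lemma \ref{filter base} with $X_1=X$ and $X_2=Y$, we obtain a parameter set $C$ and a $C$-definable set $U\subseteq X\cap Y$ which is a $C$-generic vicinity of $d$ in $Z$. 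Enlarging $C$ to $C\cup A$ does not change the fact that $\dpr(d/C)=\dpr(Z)$, so we may assume $C\supseteq A$; hence $U\in\nu_Z(d)$. Since $U\subseteq X\cap Y$, this shows $\nu_Z(d)$ is a filter base.
\end{proof}
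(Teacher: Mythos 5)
Your proof is correct and is essentially the paper's own argument: the paper derives this lemma directly from Lemma \ref{filter base} together with Remark \ref{R:generic sets; enough to take containing A}, exactly as you do. The only superfluous step is your worry about enlarging $C$ to contain $A$ — by the definition of $\nu_Z(d)$ the parameter set of a generic vicinity is arbitrary (and the $C$ produced in Lemma \ref{filter base} already contains $A$ anyway), so $U$ belongs to $\nu_Z(d)$ without further adjustment.
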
 

For the following recall that by Remark \ref{important remark}(2), a cartesian product of $D$-sets is a D-set.

\begin{lemma}\label{L:product of nu}
If $Z_1,Z_2\subseteq G$ are $D$-sets over $A$ and $(d_1,d_2)\in Z_1\times Z_2$ is $A$-generic, then $\nu_{Z_1\times Z_2}(d_1,d_2)=\nu_{Z_1}(d_1)\times \nu_{Z_2}(d_2)$.
\end{lemma}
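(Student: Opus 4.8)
The plan is to prove the two inclusions $\nu_{Z_1\times Z_2}(d_1,d_2)\supseteq \nu_{Z_1}(d_1)\times \nu_{Z_2}(d_2)$ and $\nu_{Z_1\times Z_2}(d_1,d_2)\subseteq \nu_{Z_1}(d_1)\times \nu_{Z_2}(d_2)$ separately, where — as usual for partial types — ``$\subseteq$'' means logical implication of the types (equivalently, containment of realization sets in a monster model). Recall first that by Remark \ref{important remark}(2) the product $Z_1\times Z_2$ is a $D$-set, and by sub-additivity of dp-rank together with Axiom (A1), $(d_1,d_2)$ being $A$-generic in $Z_1\times Z_2$ is equivalent to $d_1$ being generic over $A d_2$ and $d_2$ generic over $A$ (and symmetrically), so in particular each $d_i$ is $A$-generic in $Z_i$ and all the infinitesimal vicinities in the statement make sense.

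For the inclusion $\nu_{Z_1}(d_1)\times \nu_{Z_2}(d_2)\Rightarrow \nu_{Z_1\times Z_2}(d_1,d_2)$: let $W$ be any $B$-generic vicinity of $(d_1,d_2)$ in $Z_1\times Z_2$, so $W$ is $B$-definable, contains $(d_1,d_2)$, and $\dpr((d_1,d_2)/B)=\dpr(Z_1\times Z_2)=n_1+n_2$. I need to find $X_1\in\nu_{Z_1}(d_1)$ and $X_2\in\nu_{Z_2}(d_2)$ with $X_1\times X_2\subseteq W$. This is exactly what Corollary \ref{C:generic inside product of min fibres} provides (using Remark \ref{important remark}(1) to apply it to $D$-sets rather than to sets with $\dpr=n_i$ on the nose): there is $B'\supseteq B$ and a $B'$-generic vicinity of $(d_1,d_2)$ of the form $X_1\times X_2\subseteq W$ with $X_i\subseteq Z_i$. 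Then $\dpr((d_1,d_2)/B')=n_1+n_2$, so by sub-additivity $\dpr(d_i/B')=n_i$ for $i=1,2$, which means $X_i$ is a $B'$-generic vicinity of $d_i$ in $Z_i$, hence $X_i\in\nu_{Z_i}(d_i)$. Since $X_1\times X_2\subseteq W$, any realization of $\nu_{Z_1}(d_1)\times\nu_{Z_2}(d_2)$ satisfies $W$; as $W$ was arbitrary, this gives the inclusion.

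For the reverse inclusion $\nu_{Z_1\times Z_2}(d_1,d_2)\Rightarrow \nu_{Z_1}(d_1)\times\nu_{Z_2}(d_2)$: it suffices to show that every $B$-generic vicinity $X_1$ of $d_1$ in $Z_1$ belongs to $\nu_{Z_1\times Z_2}(d_1,d_2)$ (and symmetrically for $X_2$), since then the product of the two types is implied. So let $X_1\subseteq Z_1$ be $B$-definable with $d_1\in X_1$ and $\dpr(d_1/B)=n_1$. Consider $X_1\times Z_2$; it is $B$-definable and contains $(d_1,d_2)$. I must check $\dpr((d_1,d_2)/B)=n_1+n_2$: by sub-additivity $\dpr((d_1,d_2)/B)\le \dpr(d_1/B)+\dpr(d_2/Bd_1)=n_1+\dpr(d_2/Bd_1)\le n_1+n_2$; for the lower bound, note $\dpr((d_1,d_2)/B)\ge \dpr(d_1/B)=n_1$ is not enough, so instead I use that $\dpr(d_2/Bd_1)=n_2$ — this follows because $\dpr(d_2/A d_1)=n_2$ (from $A$-genericity of $(d_1,d_2)$ via sub-additivity, as noted above) and one can first replace $B$ by a generic extension: by Lemma \ref{L:abstract general gen-os} applied to $Z_2$ over $Bd_1$ one may assume, after enlarging, that genericity is preserved, but more directly, one observes $n_1+n_2=\dpr(d_1,d_2/A)\le \dpr(d_1,d_2/B)$ need not hold. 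The clean route is: apply the already-proved first inclusion in the contrapositive is not available, so instead I argue — $X_1\times Z_2$ contains $(d_1,d_2)$ and $\dpr(X_1\times Z_2)=\dpr(X_1)+n_2\le n_1+n_2$; since $X_1\times Z_2\supseteq$ some element of $\nu_{Z_1\times Z_2}(d_1,d_2)$ would follow once we know $(d_1,d_2)$ is generic in it over suitable parameters. Concretely, by Lemma \ref{L:abstract general gen-os} (via Remark \ref{important remark}(1)) applied to the $D$-set $Z_1\times Z_2$ over $B$, the generic vicinity $X_1\times Z_2$ of $(d_1,d_2)$, and taking $c=\emptyset$, there is $B'\supseteq A$ with $\dpr((d_1,d_2)/B')=\dpr((d_1,d_2)/A)=n_1+n_2$ and a $B'$-generic vicinity $V\subseteq X_1\times Z_2$ of $(d_1,d_2)$; hence $X_1\times Z_2\in\nu_{Z_1\times Z_2}(d_1,d_2)$, and a fortiori $X_1$, viewed as a formula on the $Z_1$-coordinate, is implied by $\nu_{Z_1\times Z_2}(d_1,d_2)$. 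Running the symmetric argument for $X_2$ and intersecting (using the filter base property, Lemma \ref{L:generic sets form filter base}) completes the proof.

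The main obstacle I anticipate is the bookkeeping around parameter sets in the reverse inclusion: ensuring that when one passes to a $B$-generic vicinity $X_1$ of $d_1$ over a parameter set $B$ not containing $A$, the product $X_1\times Z_2$ is still a \emph{generic} vicinity of $(d_1,d_2)$ over an appropriate set, so that it legitimately belongs to $\nu_{Z_1\times Z_2}(d_1,d_2)$. This is precisely the point where Axiom (A2)-type results (packaged as Lemma \ref{L:abstract general gen-os}, together with Remark \ref{R:generic sets; enough to take containing A} allowing us to restrict attention to $B\supseteq A$) do the work, and the argument should go through cleanly once one is careful that sub-additivity of dp-rank gives the needed rank equalities.
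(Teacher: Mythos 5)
Your first inclusion is essentially the paper's argument and is fine: after reducing to vicinities over $B\supseteq A$ (Remark \ref{R:generic sets; enough to take containing A}), Corollary \ref{C:generic inside product of min fibres} produces a product vicinity $I_1\times I_2\sub W$ over some $B'$, and sub-additivity shows each $I_i$ is a $B'$-generic vicinity of $d_i$. The gap is in the reverse inclusion. For a $B$-generic vicinity $X_1$ of $d_1$ in $Z_1$ you apply Lemma \ref{L:abstract general gen-os} to the $D$-set $Z_1\times Z_2$ with $Y=X_1\times Z_2$, calling this ``the generic vicinity of $(d_1,d_2)$''. But the hypothesis of that lemma is precisely that $Y$ is a $B$-generic vicinity of the point, i.e.\ that $\dpr(d_1,d_2/B)=n_1+n_2$ --- which is exactly the statement you had just conceded you cannot establish, and which can genuinely fail: nothing prevents $B$ from containing $d_2$ (e.g.\ $B=Ad_2$, $X_1=Z_1$), in which case $\dpr(d_1,d_2/B)=n_1$ while $X_1$ is still a perfectly good $B$-generic vicinity of $d_1$. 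So the ``clean route'' is circular: the lemma is invoked with an unverified (in general false) hypothesis, and the ensuing claim that $X_1\times Z_2$ belongs to, or is implied by, $\nu_{Z_1\times Z_2}(d_1,d_2)$ is unjustified as argued.

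The paper's proof avoids this by applying Lemma \ref{L:abstract general gen-os} not to the product but to $Z_1$ alone, taking the side element $c$ to be $d_2$: since $Z_2$ is a $D$-set, $d_2$ is inter-algebraic over $A$ with a tuple from $D$, so the lemma applies (via Remark \ref{important remark}(1)) and yields $C\supseteq A$ together with a $C$-generic vicinity $X_1'\sub X_1$ of $d_1$ such that $\dpr(d_1,d_2/C)=\dpr(d_1,d_2/A)=n_1+n_2$. Only now does one have a legitimate generic vicinity of the pair: $X_1'\times Z_2$ is a $C$-generic vicinity of $(d_1,d_2)$ contained in $X_1\times Z_2$, and a second application, to $Z_2$ over the base $C$ with $c=d_1$, gives $X_2'\sub X_2$ and $C'\supseteq C$ with $X_1'\times X_2'$ a $C'$-generic vicinity of $(d_1,d_2)$ inside $X_1\times X_2$; hence $\nu_{Z_1\times Z_2}(d_1,d_2)\vdash X_1\times X_2$. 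That re-basing step, which transfers genericity of the joint tuple to the new parameter set before forming the product vicinity, is the missing idea; your reduction to treating each coordinate separately and the (superfluous, since a type implies a conjunction once it implies each conjunct) appeal to Lemma \ref{L:generic sets form filter base} are otherwise harmless.
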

\begin{proof}
The fact that  $\nu_{Z_1}(d_1)\times \nu_{Z_2}(d_2)\vdash \nu_{Z_1\times Z_2}(d_1,d_2)$ follows from  Corollary \ref{C:generic inside product of min fibres}.

For the other direction, let $X_i\subseteq Z_i$ be $B_i$-generic vicinities of $d_i$ in $Z_i$, for $i=1,2$. By Lemma \ref{L:abstract general gen-os}, twice, we first find $B\supseteq A$ and a $B$-generic vicinity $X_1'\subseteq X_1$ of $d_1$ with $\dpr(d_1,d_2/B)=\dpr(d_1,d_2/A)$, and then $C\supseteq B$ and a $C$-generic vicinity $X_2'\subseteq X_2$ of $d_2$ with $\dpr(d_1,d_2/C)=\dpr(d_1,d_2/B)$. This $X_1'\times X_2'$ is a $C$-generic vicinity of $(d_1,d_2)$ in $Z_1\times Z_2$, so $\nu_{Z_1\times Z_2}(d_1,d_2)\vdash X_1\times X_2$, as needed.
\end{proof}

The next lemma provides a substitute for generic continuity of definable functions: 

\begin{lemma}\label{L:definable functions to generics preserve infinit types}
     Assume that $G_1,G_2$ are $D$-groups over $A$, $Z_i\subseteq G_i$ are $D$-sets for $G_i$ over $A$ ($i=1,2$),  and  $f:Z_1\to Z_2$ is an $A$-definable function. If $c$ is $A$-generic in $Z_1$ and  $f(c)$ is $A$-generic in $Z_2$ then $f(\nu_{Z_1}(c))\vdash  \nu_{Z_2}(f(c)).$
\end{lemma}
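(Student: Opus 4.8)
The goal is to show $f(\nu_{Z_1}(c)) \vdash \nu_{Z_2}(f(c))$, i.e., that the image under $f$ of the infinitesimal vicinity of $c$ in $Z_1$ realizes every generic vicinity of $f(c)$ in $Z_2$. Concretely, I must show: for every parameter set $B$ and every $B$-generic vicinity $V \subseteq Z_2$ of $f(c)$, there is a set in $\nu_{Z_1}(c)$ that $f$ maps into $V$.

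\begin{proof}
We may assume, replacing $B$ with $B \cup A$, that $B \supseteq A$. Let $V \subseteq Z_2$ be a $B$-generic vicinity of $f(c)$ in $Z_2$; so $V$ is $B$-definable, $f(c) \in V$, and $\dpr(f(c)/B) = \dpr(Z_2)$.

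Consider the $A$-definable function $f : Z_1 \to Z_2$ and the $A$-generic point $c \in Z_1$. Since $Z_1$ is a $D$-set over $A$ and $f(c)$ is inter-algebraic over $B$ with some tuple in $D^m$ (by Axiom (A1), or simply because $Z_2$ is strongly internal to $D$, applying Lemma \ref{L:acl-dp in asi-abstract}), we are in a position to apply Lemma \ref{L:abstract general gen-os} (using Remark \ref{important remark}(1) to drop the requirement $\dpr(Z_1) = n$, since $Z_1$ is a $D$-set). Take the role of ``$Y$'' in that lemma to be $Z_1$ itself, viewed as a $B$-generic vicinity of $c$ once we know $\dpr(c/B)$ is full --- but we do not yet know this. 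Instead, we argue as follows.

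First, by Lemma \ref{L:abstract general gen-os} applied to $Z_1$ (as an $A$-definable $D$-set), the $A$-generic point $c$, the element $f(c)$ (inter-algebraic over $B$ with a tuple in some $D^m$), and the $B$-generic vicinity $Z_1$ of $c$: we would need $Z_1$ to be a $B$-generic vicinity of $c$, i.e. $\dpr(c/B) = \dpr(Z_1)$. This need not hold a priori, so we first pass to a smaller parameter set over which things stay generic: by Remark \ref{R:generic sets; enough to take containing A} and Lemma \ref{L:abstract general gen-os} (applied with the pair $(c, f(c))$, using that $f(c) \in \dcl(cA)$), there exists $B' \supseteq A$ and a $B'$-generic vicinity of $c$ in $Z_1$ on which we retain $\dpr(c, f(c)/B') = \dpr(c, f(c)/A)$; but we want to incorporate the data of $V$, so instead we apply Lemma \ref{L:abstract general gen-os} directly with $c$ generic over $A$, the auxiliary element $f(c)$ (which is inter-algebraic over $A$ with a tuple $d \in D^m$ of length $m = \dpr(f(c)/A) = \dpr(Z_2)$ by Axiom (A1) and the strong internality of $Z_2$), and the $B$-generic vicinity --- here we need a $B$-definable vicinity of $c$ in $Z_1$ of full rank. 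Take $W := f^{-1}(V) \subseteq Z_1$, which is $B$-definable and contains $c$. Since $f(c) \in V$ is $B$-generic in $Z_2$ and $f(c) \in \dcl(cB)$, subadditivity of dp-rank gives
\[
\dpr(Z_2) = \dpr(f(c)/B) \le \dpr(c/B) \le \dpr(Z_1) = \dpr(Z_2),
\]
using $\dpr(Z_1) = \dpr(Z_2)$ (both equal the $D$-critical rank of the respective $D$-group; more precisely both equal the common value as $Z_i$ are $D$-sets, and the equality of critical ranks is not needed --- we only need $\dpr(c/B) \ge \dpr(Z_1)$, which follows since $\dpr(c/B) \ge \dpr(f(c)/B) = \dpr(Z_2)$ and, hmm, this requires $\dpr(Z_2) \ge \dpr(Z_1)$). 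To avoid this circularity, note simply: $f(c) \in \dcl(cA) \subseteq \dcl(cB)$, so $\dpr(f(c)/B) \le \dpr(c/B)$; since $\dpr(f(c)/B) = \dpr(Z_2)$ and $f : Z_1 \to Z_2$ forces $\dpr(Z_2) \le \dpr(Z_1)$, hence $\dpr(c/B) = \dpr(Z_1)$ and $W = f^{-1}(V)$ is a $B$-generic vicinity of $c$ in $Z_1$.

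Now apply Lemma \ref{L:abstract general gen-os} (with Remark \ref{important remark}(1)) to the $D$-set $Z_1$ over $A$, the $A$-generic point $c$, the auxiliary tuple $d \in D^m$ inter-algebraic with $f(c)$ over $A$, the parameter set $B$, and the $B$-generic vicinity $W \subseteq Z_1$ of $c$. We obtain $C \supseteq A$ and a $C$-generic vicinity $W_1 \subseteq W$ of $c$ in $Z_1$ with $\dpr(c, d/C) = \dpr(c, d/A)$, hence $\dpr(c/C) = \dpr(c/A) = \dpr(Z_1)$. Thus $W_1 \in \nu_{Z_1}(c)$, and by construction $f(W_1) \subseteq f(W) = f(f^{-1}(V)) \subseteq V$. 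Since $V$ was an arbitrary $B$-generic vicinity of $f(c)$ in $Z_2$, this shows $f(\nu_{Z_1}(c)) \vdash \nu_{Z_2}(f(c))$.
\end{proof}

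The main obstacle, which the argument above navigates somewhat delicately, is verifying that $W = f^{-1}(V)$ is genuinely a $B$-generic vicinity of $c$ --- i.e., that $c$ does not drop rank over $B$ --- which follows from the fact that $f(c) \in \dcl(cA)$ together with $f(c)$ being $B$-generic in $Z_2$; one then feeds $W$ into Lemma \ref{L:abstract general gen-os} to shrink it to a member of the filter base $\nu_{Z_1}(c)$ while keeping $c$ (equivalently $d$, equivalently $f(c)$) generic over the enlarged parameter set.
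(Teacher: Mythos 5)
Your proof has a genuine gap at its central step: the claim that $W=f^{-1}(V)$ is a $B$-generic vicinity of $c$ in $Z_1$, i.e.\ that $\dpr(c/B)=\dpr(Z_1)$. What you actually establish is $\dpr(c/B)\ge\dpr(f(c)/B)=\dpr(Z_2)$ together with $\dpr(Z_2)\le\dpr(Z_1)$, and these inequalities point the same way: they give nothing beyond $\dpr(c/B)\ge\dpr(Z_2)$, which is strictly weaker than $\dpr(c/B)\ge\dpr(Z_1)$ unless $\dpr(Z_1)=\dpr(Z_2)$. The lemma does not assume equal ranks ($Z_1,Z_2$ are $D$-sets for possibly different groups, and the hypotheses only force $\dpr(Z_2)\le\dpr(Z_1)$), and the unequal case is exactly the one needed later: in the proof of Lemma \ref{L:generic vicinities are cosets} the present lemma is applied to the multiplication map $X\times Y\to Z$, where $\dpr(X\times Y)=2\dpr(Z)$. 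There, with $c=(g,h)$ and $B\supseteq A$ containing $g$, the image $f(c)=gh$ is still $B$-generic in $Z$, while $\dpr(c/B)=\dpr(Z)<\dpr(X\times Y)$; so $f^{-1}(V)$ is not a $B$-generic vicinity of $c$, and your application of Lemma \ref{L:abstract general gen-os} rests on a false hypothesis. (You noticed the problem mid-argument --- ``this requires $\dpr(Z_2)\ge\dpr(Z_1)$'' --- but the attempted repair only re-derives the inequality in the unhelpful direction.)

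The correct move, which is the paper's, is to shrink on the $Z_2$ side before pulling back: apply Lemma \ref{L:abstract general gen-os} to the $D$-set $Z_2$ at the point $f(c)$, where $B$-genericity \emph{is} known, taking $c$ (inter-algebraic over $A$ with a tuple from $D$, since $Z_1$ is strongly internal to $D$) as the auxiliary element. This produces $C\supseteq A$ and a $C$-generic vicinity $Y'\subseteq V$ of $f(c)$ in $Z_2$ with $\dpr(c,f(c)/C)=\dpr(c,f(c)/A)$; since $f(c)\in\dcl(cA)$, sub-additivity then forces $\dpr(c/C)=\dpr(c/A)=\dpr(Z_1)$, so $f^{-1}(Y')$ is a $C$-generic vicinity of $c$, hence belongs to $\nu_{Z_1}(c)$, and $f$ maps it into $V$. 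Your pull-back-first variant is fine when $\dpr(Z_1)=\dpr(Z_2)$ (and is then essentially a rearrangement of the same argument), but as a proof of the lemma as stated it does not go through.
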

\begin{proof}
   Let $Y\subseteq Z_2$ be a $B$-generic vicinity of $f(a)$ in $Z_2$. Because $c$ is $A$-interalgebraic with some element of $D^n$ (some $n$) we apply Lemma \ref{L:abstract general gen-os} to $f(c)$ to conclude that there is $C\supseteq A$ and a $C$-generic vicinity $Y'\subseteq Y$ of $f(c)$, such that $\dpr(c,f(c)/A)=\dpr(c,f(c)/C)$. Since $f(c)\in \dcl(c)$, it follows from sub-additivity of dp-rank that:
    $\dpr(c/A)\leq \dpr(f(c)/C,c)+\dpr(c/C)\leq \dpr(c/A)$, hence $f^{-1}(Y')$ is a $C$-generic vicinity of $c$ in $Z_1$.

    It follows that $f(\nu_{Z_1}(c))\vdash  \nu_{Z_2}(f(c)).$
\end{proof}

As a final result, we show that the above definition of $\nu_D(G)$ agrees with the definition given in \cite{HaHaPeVF} when the definable set is strongly internal to an SW-uniformity.

\begin{proposition}\label{P: generic neighbourhoods in SW}
    Assume that $D$ is an SW-uniformity. Let $Z\subseteq G$ be a $D$-set over $A$ and let $g:Z\to D^m$ be a definable injection witnessing it. For any $A$-generic $d\in Z$ the partial type $\nu_Z(d)$ is logically equivalent to 
    \[
    \{g^{-1}(U):U\subseteq D^m \text{ open $M$-definable containing $g(d)$}\}.
    \]
\end{proposition}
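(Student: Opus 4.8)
The plan is to show the two partial types are equal by proving mutual entailment, using Fact \ref{Gen-Os in SW} (existence of generic neighbourhoods in SW-uniformities) together with the fact that $g$ is a definable injection which therefore identifies $Z$ with a definable subset of $D^m$ of dp-rank equal to the $D$-critical rank.

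First I would argue that every set in $\nu_Z(d)$ contains a set of the form $g^{-1}(U)$ for $U$ open and containing $g(d)$. Let $X\subseteq Z$ be a $B$-generic vicinity of $d$ in $Z$, so $X$ is $B$-definable, $d\in X$, and $\dpr(d/B)=\dpr(Z)$. Then $g(X)\subseteq g(Z)\subseteq D^m$ is $B$-definable, contains $g(d)$, and $\dpr(g(d)/B)=\dpr(d/B)=\dpr(Z)=\dpr(g(Z))$, so $g(d)$ is in the relative interior of $g(X)$ inside $g(Z)$ (using Axiom (A1) / the fact that in an SW-uniformity full-dp-rank points lie in the interior; this is essentially \cite[Proposition 2.4]{SimWal} plus the characterisation of interior via dp-rank, or directly Fact \ref{Gen-Os in SW}). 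Applying Fact \ref{Gen-Os in SW} with $Y=g(X)$, $X=g(Z)$, the element $b$ taken to be $d$ itself (or any relevant parameter), and the base $B$, we obtain $B'\supseteq B$ and a $B'$-definable open box $U=U_1\times\cdots\times U_m\subseteq D^m$ with $g(d)\in U\cap g(Z)\subseteq g(X)$. Then $g^{-1}(U)\subseteq X$, and since $g$ is injective and $g(d)\in U$, the set $g^{-1}(U)$ is in the displayed family. This shows $\{g^{-1}(U):U\text{ open }M\text{-definable},\ g(d)\in U\}\vdash \nu_Z(d)$ (after noting, as in Remark \ref{R:generic sets; enough to take containing A}, that we may enlarge the base freely, and that $g^{-1}(U)$ is $M$-definable since we can take $U$ to be $M$-definable by Fact \ref{Gen-Os in SW}'s parameter flexibility — here we use that $M$ already contains enough parameters, or more precisely we invoke saturation to realize $U$ over a suitable base; in any case $g^{-1}(U)$ contains $d$ and is a vicinity).

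Conversely I would show $\nu_Z(d)\vdash g^{-1}(U)$ for every open $M$-definable $U$ containing $g(d)$. Fix such a $U$. Since $g(d)\in U$ and $U$ is open, $g(d)$ is in the relative interior of $U\cap g(Z)$ in $g(Z)$, and $U\cap g(Z)$ is $M$-definable; hence $g^{-1}(U)$ is an $M$-definable subset of $Z$ containing $d$. It need not a priori be a \emph{generic} vicinity because $d$ may fail to be $M$-generic in $Z$; but by Lemma \ref{L:abstract general gen-os} (applied with $X=Z$, the $M$-definable vicinity $Y=g^{-1}(U)$, and taking $c$ trivial or $d$ itself) there is $C\supseteq A$ and a $C$-generic vicinity $X_1\subseteq g^{-1}(U)$ of $d$ in $Z$ with $\dpr(d/C)=\dpr(d/A)=\dpr(Z)$. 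Then $X_1\in\nu_Z(d)$ and $X_1\subseteq g^{-1}(U)$, so $\nu_Z(d)\vdash g^{-1}(U)$, as required. Combining the two entailments gives the logical equivalence.

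The main obstacle I expect is the bookkeeping around parameters: $\nu_Z(d)$ is defined by quantifying over \emph{all} small parameter sets, whereas the displayed family uses only $M$-definable open $U$. The resolution is exactly Remark \ref{R:generic sets; enough to take containing A} together with Lemma \ref{L:abstract general gen-os}: any generic vicinity over some $B$ can be shrunk to one over $A$ (or over $M$), and conversely any $M$-definable vicinity can be shrunk to a generic one over a base containing $A$. So the equivalence is insensitive to the choice of base, and one must simply phrase the two containments so that the asymmetry of bases does not cause trouble — concretely, entailment of partial types only requires that each formula of one be implied by finitely many of the other, and the filter-base property (Lemma \ref{L:generic sets form filter base}) together with Fact \ref{Gen-Os in SW} supplies the needed containments. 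A secondary subtlety is ensuring that $g(Z)$ really has full dp-rank $m'=\dpr(Z)$ inside $D^m$ (it need not be all of $D^m$, and $m$ need not equal $\dpr(Z)$), but this is harmless: Fact \ref{Gen-Os in SW} is stated for an arbitrary definable $Y\subseteq X\subseteq D^n$ and a point in the relative interior of $Y$ in $X$, which is precisely our situation with $X=g(Z)$.
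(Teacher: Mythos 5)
Your first entailment (every generic vicinity of $d$ contains some $g^{-1}(U)$ with $U$ open and $M$-definable) is correct and is essentially the paper's argument, except that the fact you need there is that genericity of $g(d)$ over the parameters of the vicinity places it in the relative interior of $g(X)$ in $g(Z)$ — this is \cite[Corollary 4.4]{HaHaPeVF}, not Fact \ref{Gen-Os in SW} itself, which takes relative interiority as a hypothesis; after that, Fact \ref{Gen-Os in SW} supplies the open box, exactly as in the paper.

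The converse entailment, however, has a genuine gap. You apply Lemma \ref{L:abstract general gen-os} to $Y=g^{-1}(U)$, but the hypothesis of that lemma (as of Axiom (A2), on which its proof rests) is that $Y$ be a $B$-\emph{generic} vicinity of $d$, i.e. $B$-definable with $\dpr(d/B)=\dpr(Z)$ for some defining parameter set $B$ — precisely the condition you concede may fail. And it can fail for \emph{every} choice of $B$: for instance, with $D=K$ in the power bounded $T$-convex setting, take $U=(g(d)-1,g(d)+1)$; this is an open $M$-definable set containing $g(d)$ whose canonical parameter is interdefinable with $g(d)$, so $\dpr(d/B)=0<\dpr(Z)$ for every $B$ over which $g^{-1}(U)$ is definable, and the lemma simply does not apply. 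The way around this — and the reason the proposition is stated only for SW-uniformities — is that openness of $U$ substitutes for genericity of its parameters: since $U$ is open, $g(d)$ lies in the relative interior of $U\cap g(Z)$ in $g(Z)$ no matter how $U$ is defined, so Fact \ref{Gen-Os in SW} (applied with $X=g(Z)$, $Y=U\cap g(Z)$, $a=g(d)$ and base $A$), or equivalently \cite[Proposition 3.12]{HaHaPeVF} which is what the paper's proof invokes, yields an open $V$ over some $C\supseteq A$ with $g(d)\in V\cap g(Z)\subseteq U\cap g(Z)$ and $\dpr(g(d)/C)=\dpr(g(d)/A)$; then $g^{-1}(V)$ is a $C$-generic vicinity of $d$ contained in $g^{-1}(U)$, which gives $\nu_Z(d)\vdash g^{-1}(U)$. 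With that one substitution — the same tool you already use in the other direction — your proof coincides with the paper's.
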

\begin{proof}
    Let $U\subseteq D^m$ be open and definable over some $B\supseteq A$, with $g(d)\in U$. By \cite[Proposition 3.12]{HaHaPeVF} there is some open $V\sub U$, $g(d)\in V$, that is definable over $C\supseteq A$ with $\dpr(g(d)/C)=\dpr(g(d)/A)$; thus $d\in g^{-1}(V)$.
    
    For the other inclusion, let $Y\in \nu_Z(d)$ be $B$-definable for some $B\supseteq A$.  Let $X:=g(Y)\subseteq g(Z)$ be $B\supseteq A$ definable with $g(d)\in X$ and $\dpr(d/B)=\dpr(Z)$. By \cite[Corollary 4.4]{HaHaPeVF}, $g(d)$ is in the relative interior of $X$ in $g(Z)$ so there is some open $U\subseteq D^m$ definable over $B$ such that $g(d)\in U\cap X\subseteq g(Z)$. I.e., $g^{-1}(U\cap X)\in \nu_Z(d)$ and $g^{-1}(U\cap X)\sub Y$, as required.
\end{proof}

\subsection{Groups of infinitesimals vicinities}
We are finally ready to introduce infinitesimal subgroups, associated with $D$-subsets of definable groups. Recall that throughout $G$ is a $D$-group. First, we show that the infinitesimal vicinities constructed in the previous section are cosets of a type-definable subgroup:

\begin{lemma}\label{L:generic vicinities are cosets} 
If $X,Y\subseteq G$ are $D$-sets over $A$ and $(c,d)\in X\times Y$ is $A$-generic then, as partial types, 
\[c\cdot \nu_Y(d)=\nu_X(c)\cdot \nu_Y(d)=\nu_X(c)\cdot d.\]

Moreover, $\nu_X(c)$ and $\nu_Y(d)$ are left and right cosets, respectively, of the same type-definable group: $c^{-1}\nu_X(c)=\nu_Y(d)d^{-1}$.
\end{lemma}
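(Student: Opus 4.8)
\textbf{Proof strategy for Lemma \ref{L:generic vicinities are cosets}.}

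The plan is to reduce the statement to an application of the group-chunk decomposition from Lemma \ref{L:Jana1} together with Lemma \ref{L:definable functions to generics preserve infinitesimal types} and Lemma \ref{L:product of nu}. First I would prove the two chains of equalities $c\cdot\nu_Y(d)=\nu_X(c)\cdot\nu_Y(d)=\nu_X(c)\cdot d$. By Lemma \ref{L:general g/gh-abstract} and the fact that $G$ is a $D$-group, for $A$-generic $(c,d)\in X\times Y$ we have $\dpr(c/A,c\cdot d)=\dpr(X)$ and symmetrically $\dpr(d/A,c\cdot d)=\dpr(Y)$; hence $k:=c\cdot d$ is generic over $A$ in a suitable $D$-set $Z$ (produced by Lemma \ref{R:recalling before Jana}(1) after enlarging the base to some $B'$), and $c,d$ remain generic over $B'k$. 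Consider the $A$-definable multiplication map $m:X\times Y\to G$. Working over $B'$, Lemma \ref{L:definable functions to generics preserve infinitesimal types} applied to $m$ (whose value $m(c,d)=k$ is $B'$-generic in $Z$) combined with Lemma \ref{L:product of nu} gives $\nu_X(c)\cdot\nu_Y(d)=m(\nu_{X\times Y}(c,d))\vdash\nu_Z(k)$. For the reverse containment I would use the "division" maps: the map $(x,y)\mapsto x^{-1}y$ restricted to $\{c\}\times$ (a $D$-set around $k$) sends $(c,k)$ to $d$ generically, so pulling back $\nu_Y(d)$ along it, and likewise with the map $y\mapsto cy$ sending $d$ to $k$, one gets $\nu_Z(k)\vdash c\cdot\nu_Y(d)$ and $\nu_Z(k)\vdash\nu_X(c)\cdot d$. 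Since trivially $c\cdot\nu_Y(d)\vdash\nu_X(c)\cdot\nu_Y(d)$ and $\nu_X(c)\cdot d\vdash\nu_X(c)\cdot\nu_Y(d)$, all four partial types coincide, which is the first displayed equality.

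For the "Moreover" clause I would set $N:=c^{-1}\nu_X(c)=\nu_Y(d)d^{-1}$ (the equality of these two follows by left-multiplying the equality $\nu_X(c)\cdot d=c\cdot\nu_Y(d)$ by $c^{-1}$ on the left and $d^{-1}$ on the right) and show $N$ is a type-definable subgroup. That $e\in N$ is clear since $c\in\nu_X(c)$ and $d\in\nu_Y(d)$. Closure under the group operation and under inverse is where Lemma \ref{L:Jana1} enters: applying it to $Y$ and the generic point $d$, we may write $F(x,y,z)=xy^{-1}z$, at $(d,d,d)$, as a composition $\psi_4\circ\psi_3\circ\psi_2\circ\psi_1$ of maps between $D$-sets (over an enlarged base $B$ with $\dpr(d/B)=\dpr(d/A)$), with each intermediate point sufficiently generic for the next map and $\dom(\psi_1)=Y_0^3$ for a $B$-generic vicinity $Y_0$ of $d$. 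Then I would feed the product type $\nu_{Y_0}(d)\times\nu_{Y_0}(d)\times\nu_{Y_0}(d)$ through this composition: by Lemma \ref{L:product of nu}, Lemma \ref{L:definable functions to generics preserve infinitesimal types} applied stepwise (legitimate precisely because each $\psi_i$ is applied at a sufficiently generic tuple, so the hypotheses on genericity of source and image are met coordinatewise), and Lemma \ref{L:generic sets form filter base}, the image $F(\nu_{Y_0}(d)^3)$ proves $\nu_Y(d)\cdot\nu_Y(d)^{-1}\cdot\nu_Y(d)\vdash\nu_Y(d)$, after translating by $d$ on both sides. Concretely: for $n_1,n_2,n_3\in N$, writing $n_i=d^{-1}y_i$ with $y_i\models\nu_Y(d)$, we get $n_1n_2^{-1}n_3=d^{-1}(y_1d^{-1}y_2^{-1}\cdots)$—more cleanly, $d\,n_1 n_2^{-1} n_3 = (dn_1)(dn_2)^{-1}(dn_3)\cdot d$ is not quite the form, so the cleaner route is to note $Nd^{-1}\cdot$ considerations; I would instead phrase it as: $y_1 y_2^{-1} y_3\models\nu_Y(d)$ for $y_i\models\nu_Y(d)$, which upon replacing $y_i$ by $dn_i$ and using $\nu_Y(d)=dN$ yields $dn_1n_2^{-1}n_3\in dN$, i.e. $n_1n_2^{-1}n_3\in N$. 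Taking $n_3=e$ gives closure under $n_1n_2^{-1}$, hence $N$ is a group.

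The main obstacle I anticipate is the bookkeeping in the last step: verifying that at each stage of the composition $\psi_4\circ\psi_3\circ\psi_2\circ\psi_1$ the relevant coordinate of the pushed-forward infinitesimal type is genuinely the infinitesimal vicinity of a generic point in the appropriate $D$-set, so that Lemma \ref{L:definable functions to generics preserve infinitesimal types} applies. This is exactly what the "sufficiently generic for $\psi_{i+1}$" clause in Lemma \ref{L:Jana1} is designed to supply, together with Lemma \ref{L:product of nu} to split off the coordinates each $\psi_i$ ignores and Lemma \ref{L:generic sets form filter base} to intersect the finitely many generic vicinities that arise over the growing parameter set. One also has to be careful that the partial type $\nu_Y(d)$ does not depend (up to logical equivalence) on the ambient $D$-set $Y$ nor on the base $A$ among parameter sets over which $d$ stays generic — this is essentially Remark \ref{R:generic sets; enough to take containing A} together with Lemma \ref{L: Joint function-abstract} (any two $D$-sets through $d$ share a common $D$-set after translation), which I would invoke to identify $\nu_{Y_0}(d)$ with $\nu_Y(d)$ and to move freely between $\nu_X(c)$, $\nu_Y(d)$, $\nu_Z(k)$ as needed. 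Once these identifications are in place the argument is a routine diagram chase.
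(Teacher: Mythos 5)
Your first chain of equalities is proved exactly as in the paper: pass to a $D$-set $Z\ni cd$ via Lemma \ref{R:recalling before Jana}, push $\nu$ forward along $y\mapsto cy$, $z\mapsto c^{-1}z$ and the multiplication map using Lemma \ref{L:definable functions to generics preserve infinit types} and Lemma \ref{L:product of nu}, and close the cycle of entailments. For the ``moreover'' clause, however, you take a genuinely different route. The paper derives the identity $c^{-1}\nu_X(c)=\nu_Y(d)d^{-1}$ and its monoid structure from the elementary group-theoretic Fact \ref{F: finding groups} applied to $c\cdot\nu_Y(d)=\nu_X(c)\cdot\nu_Y(d)=\nu_X(c)\cdot d$ in a saturated extension, and then obtains closure under inversion by the symmetry trick of running the same argument for the generic pair $(c^{-1},d)\in X^{-1}\times Y$. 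You instead apply the decomposition of $xy^{-1}z$ from Lemma \ref{L:Jana1} and push $\nu_Y(d)^3$ through it to get $\nu_Y(d)\cdot\nu_Y(d)^{-1}\cdot\nu_Y(d)\vdash\nu_Y(d)$, which gives the subgroup property in one stroke. This works, and is essentially the technique the paper itself deploys later (in Proposition \ref{P:infini_vicinity- def iso to subgroup} and Proposition \ref{P:nu-topological}); the paper's route is more economical here because it needs no genericity bookkeeping, while yours buys the closure statement directly but at the cost of the delicate point you yourself flag: the intermediate tuples of the composition (e.g.\ $(bd,d^{-1},d)$) are \emph{not} generic in the full product $D$-sets, so Lemma \ref{L:product of nu} and Lemma \ref{L:definable functions to generics preserve infinit types} cannot be applied to the $\psi_i$ wholesale. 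One must argue coordinate-function by coordinate-function: for the two-variable coordinates (the multiplications) the relevant pairs $(bd,d^{-1})$, $(b,d)$, $(b^{-1},bd)$ are generic only over the parameters defining the vicinities, \emph{not} over the auxiliary generic $b$ used to define the $\psi_i$, so the push-forward must be applied to the $b$-free restrictions of the group multiplication over that smaller parameter set, and over $Bb$ only for the unary translations $x\mapsto bx$, $u\mapsto b^{-1}u$. With that reading your chase goes through and there is no circularity, since Lemma \ref{L:Jana1} does not depend on the present lemma.

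Two small corrections. First, your final algebra uses $\nu_Y(d)=dN$, but what the first display gives is $N=\nu_Y(d)d^{-1}$, i.e.\ $\nu_Y(d)=Nd$ (the equality $d^{-1}\nu_Y(d)=\nu_Y(d)d^{-1}$ is not available at this stage; it is essentially Proposition \ref{P: nu}(1), which rests on the present lemma). Writing $y_i=n_id$ instead, the same computation $y_1y_2^{-1}y_3=n_1n_2^{-1}n_3\,d\in Nd$ yields $n_1n_2^{-1}n_3\in N$, so the slip is harmless. Second, remember to identify $\nu_{Y_0}(d)$ with $\nu_Y(d)$ (and likewise for the other restricted vicinities) via Lemma \ref{L:generic sets form filter base}; you mention this, and it is indeed routine, but it is a needed step in every push-forward of your chain.
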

\begin{proof}
    Let $n=\dpr(X)=\dpr(Y)$. By Lemma \ref{R:recalling before Jana},  there are $B\supseteq A$, and $X_1\times Y_1\sub X\times Y$, a $B$-generic  vicinity of $(c,d)$,
and $Z\sub G$, a  $D$-set over $B$ containing  $c\cdot d$ such that $X_1\cdot Y_1\sub Z$ and $\dpr(c,d/B)=2n$.
We fix $Z$, and by Lemma \ref{L:generic sets form filter base} we may assume that $X=X_1$, $Y=Y_1$.  For simplicity of notation, assume that $A=B=\0$.

Let us see that, as types, $c\cdot\nu_Y(d)=\nu_Z(cd)$:

The function $y\mapsto c\cdot y$ takes $Y$ into $Z$, thus, since by sub-additivity, $d$ is generic in $Y$ over $c$ and $cd$ is generic in $Z$ over $c$, it follows from Lemma \ref{L:definable functions to generics preserve infinit types} that, as types, $c\cdot \nu_Y(d)\vdash  \nu_Z(cd)$. The map $z\mapsto c^{-1}z$ takes $cY\sub Z$ into $Y$. It follows from Lemma \ref{L:definable functions to generics preserve infinit types} again that $c^{-1}\cdot \nu_{cY}(cd)\vdash \nu_Y(c)$.
However, $cY\subseteq Z$ is a $c$-generic vicinity of $cd$ in $Z$ and we have $\nu_{cY}(cd)=\nu_Z(cd)$, so $c^{-1}\nu_Z(cd)\vdash \nu_Y(d)$,
and we have equality of types $c\cdot \nu_Y(d)=\nu_Z(cd)$. Similarly, we have $\nu_X(c)\cdot d=\nu_Z(cd)$, hence we can conclude
$$c\cdot \nu_Y(d)=\nu_Z(cd)=\nu_X(c)\cdot d.$$

Now consider the definable function $(x,y)\mapsto x\cdot y$. By Lemma \ref{L:definable functions to generics preserve infinit types}, it maps $\nu_X(c)\times \nu_Y(d)$ into $\nu_Z(cd)$, i.e. $\nu_X(c)\cdot \nu_Y(d)\vdash \nu_z(cd)$. By the above we conclude that $\nu_X(c)\cdot \nu_Y(d)=\nu_Z(cd)$,  and thus
\[c\cdot \nu_Y(d)=\nu_X(c)\cdot \nu_Y(d)=\nu_X(c)\cdot d.\]

If we now realize the above two partial types in an $|\CM|^+$-elementary extension and apply  Fact \ref{F: finding groups} then we conclude that the set $S:=c^{-1}\nu_X(c)=\nu_Y(d)d^{-1}$ contains $e$ and closed under group multiplication. To see that $S$ is a subgroup, it is sufficient to show that $S$ is closed under group inverse.

For that, we apply what we already proved,  for the elements $c^{-1}$ in the set $X^{-1}$ and (again) to $d\in Y$ (noting that $(c^{-1},d)$ is generic in $X^{-1}\times Y$). We conclude that 
$c^{-1}\nu_Y(d)=\nu_{X^{-1}}(c^{-1})d$, so $S=\nu_Y(d)d^{-1}=c\nu_{X^{-1}}(c^{-1})$. It is easy to see, straight from the definition of $\nu_X(c)$, that 
\[
S^{-1}=(\nu_X(c)c^{-1})^{-1}=c\nu_{X^{-1}}(c^{-1})=S,
\] 
so $S$ is closed under group inverse and hence it is a subgroup of $G$, as needed.
\end{proof}

We can finally prove the main results concerning infinitesimal groups in $D$-sets.
\begin{proposition}\label{P: nu}
Let $G$ be a $D$-group.
\begin{enumerate}
\item Assume that $X\subseteq G$ is a $D$-set over $A$, then for every $A$-generic $a,b\in X$, we have
$\nu_X(a)a^{-1}=\nu_X(b)b^{-1}=a^{-1}\nu_X(a)$. Call this group $\nu_X$.

\item If $X,Y\sub G$ are $D$-sets over $A$ then $\nu_X=\nu_Y$, and we can call it $\nu=\nu_D(G)$.
\item The type $\nu$ is invariant under every automorphism of $\CM$ over $A_0$ (the parameters defining $G$) and also under every definable group automorphism of $G$. In particular, for every $g\in G(\CM)$, $g^{-1}\nu g=\nu$.

\item The dp-rank of $\nu$ equals the $D$-critical rank of $G$.
\item $\nu$ is strongly internal to $D$. 
\end{enumerate}
\end{proposition}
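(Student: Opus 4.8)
The plan is to establish the five claims of Proposition \ref{P: nu} in order, with the last two --- that $\dpr(\nu)$ equals the $D$-critical rank of $G$, and that $\nu$ is strongly internal to $D$ --- being quick consequences of the earlier parts together with the filter-base property of the $\nu_X(a)$'s.

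First I would fix a $D$-set $X\subseteq G$ over $A$. For part (1), take $a,b\in X$ generic over $A$. Choose $a'\in X$ with $\dpr(a,b,a'/A)=2\dpr(X)$ (possible by Fact \ref{F:finding mutual generic} applied to a map witnessing that $X$ is a $D$-set, composed with a coordinate projection with minimal fibres), so that $(a,a')$ and $(b,a')$ are each generic in $X\times X$ over $A$. Applying Lemma \ref{L:generic vicinities are cosets} to the pair $(a,a')$ gives that $a^{-1}\nu_X(a)=\nu_X(a')(a')^{-1}$, and the same lemma applied to $(b,a')$ gives $b^{-1}\nu_X(b)=\nu_X(a')(a')^{-1}$; hence $a^{-1}\nu_X(a)=b^{-1}\nu_X(b)$, and Lemma \ref{L:generic vicinities are cosets} already tells us this group equals $\nu_X(a)a^{-1}$. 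So $\nu_X:=a^{-1}\nu_X(a)$ is a well-defined type-definable subgroup of $G$, independent of the chosen generic $a\in X$. For part (2), given another $D$-set $Y$ over $A$, pick $(c,d)\in X\times Y$ generic over $A$; then Lemma \ref{L:generic vicinities are cosets} gives $c^{-1}\nu_X(c)=\nu_Y(d)d^{-1}$, i.e.\ $\nu_X=\nu_Y$. One should also note that the base $A$ is immaterial: if $X$ is a $D$-set over $A$ and $X'$ a $D$-set over $A'$, pass to a common base $B\supseteq A\cup A'$ over which both are $D$-sets (using Remark \ref{r; D-subset} / Remark \ref{important remark}(3) to shrink if needed) and a generic point lying in both shrunk sets, and apply (2) over $B$. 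So $\nu=\nu_D(G)$ is well-defined.

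For part (3): if $\sigma\in\aut(\CM/A_0)$, then $\sigma(X)$ is again a $D$-set (over $\sigma(A)$), $\sigma$ maps generic points of $X$ to generic points of $\sigma(X)$ and carries generic vicinities to generic vicinities, so $\sigma(\nu_{X}(a))=\nu_{\sigma(X)}(\sigma(a))$; applying part (2) to $\sigma(X)$ gives $\sigma(\nu)=\nu$. The same argument with a definable group automorphism $\delta:G\to G$ in place of $\sigma$ shows $\delta(\nu)=\nu$; in particular conjugation by any $g\in G(\CM)$ fixes $\nu$, so $g^{-1}\nu g=\nu$. For part (4): since $\nu=a^{-1}\nu_X(a)$ is a left translate of $\nu_X(a)$, it suffices to compute $\dpr(\nu_X(a))$. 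By Lemma \ref{L:generic sets form filter base} the defining formulas of $\nu_X(a)$ form a filter base of $A$-generic vicinities, each of dp-rank $\dpr(X)$; by saturation and sub-additivity (and compactness for dp-rank of a partial type --- this is where I'd cite the standard fact that $\dpr$ of a partial type is the infimum of the dp-ranks of its definable supersets, together with the filter-base property to see this infimum is attained and equals $\dpr(X)$) we get $\dpr(\nu_X(a))=\dpr(X)$, which is the $D$-critical rank of $G$. Finally, part (5): $\nu_X(a)\subseteq X$ (any generic vicinity of $a$ in $X$ is a subset of $X$), and $X$ is a $D$-set, hence strongly internal to $D$ via some injection $f:X\to D^n$; restricting $f$ to $\nu_X(a)$ shows $\nu_X(a)$, and therefore its translate $\nu$, is strongly internal to $D$.

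The main obstacle I anticipate is purely bookkeeping around base parameters: the $\nu_X(a)$ are defined as partial types over all small parameter sets, and one must be careful when invoking Lemma \ref{L:generic vicinities are cosets} (which is stated for $D$-sets over a common base $A$ with $(c,d)$ generic over $A$) that the various $D$-sets and generic points in play genuinely live over a common base, using Remark \ref{R:generic sets; enough to take containing A} to reduce to bases containing the relevant parameters and Fact \ref{F:finding mutual generic} to produce the needed mutually-generic auxiliary points. Beyond that, part (4) requires the (standard but worth stating) compatibility of dp-rank with filter bases of definable sets, and part (3) requires only the observation that the construction of $\nu_X(a)$ is canonical and hence commutes with automorphisms --- both routine once the setup is in place.
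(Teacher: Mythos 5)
Your proposal follows essentially the same route as the paper: part (1) via a mutually generic auxiliary point and Lemma \ref{L:generic vicinities are cosets} applied to the two pairs (the paper does exactly this, and then applies the lemma once more to a pair of generics to identify the left and right coset groups), part (2) by applying the same lemma to an $A$-generic pair in $X\times Y$, part (3) by noting that an automorphism over $A_0$ or a definable group automorphism sends $D$-sets to $D$-sets and then invoking (2), and part (5) by observing that $\nu_X(a)$ sits inside the $D$-set $X$ and translating. One cosmetic slip: the genericity condition you want for the auxiliary point is $\dpr(a,a'/A)=\dpr(b,a'/A)=2\dpr(X)$ (which is what Fact \ref{F:finding mutual generic} delivers), not ``$\dpr(a,b,a'/A)=2\dpr(X)$''; your subsequent use is the correct one.

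The one point that needs repair is your justification of (4). The claim you call standard --- that the dp-rank of a partial type equals the infimum of the dp-ranks of the formulas it implies --- is not a general fact: only the inequality $\dpr(\nu_X(a))\le\dpr(X)$ is formal, and the lower bound does not follow from saturation, sub-additivity, or the filter-base property alone, since an ict-pattern of depth $n$ consistent with every single formula of the type need not be (by any compactness argument) consistent with the whole type. This lower bound is exactly where the paper puts the real content: it uses that $D$ is dp-minimal, that dp-rank is preserved under definable finite-to-finite correspondences (so the formulas of $\nu_X(a)$ may be viewed inside $D^n$), and then cites \cite[Corollary 3.5]{Simdp}. So in (4) you should replace the appeal to the ``infimum'' principle by that specific result (or reprove it in the dp-minimal setting); with that substitution your argument coincides with the paper's.
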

\begin{proof}
Let $n$ be the $D$-critical rank of $G$.

(1) By Fact \ref{F:finding mutual generic}, we find $c\in X$ such that $\dpr(a,c/A)=\dpr(b,c/A)=2n$. Applying Lemma \ref{L:generic vicinities are cosets} to $(a,b)$ and $(a,c)$ we have
$\nu_X(a)a^{-1}=c^{-1}\nu_X(c)=\nu_X(b)b^{-1}$, so the group $\nu_X(a)a^{-1}$ does not depend on the choice of an $A$-generic point in $X$. Similarly, the group $a^{-1}\nu_X(a)$ does not depend on the choice of $a$.

Applying this now to $a$ and $c$ with $\dpr(a,c/A)=2n$.  we see that  $\nu_X(a)a^{-1}=c^{-1}\nu_X(c)=a^{-1}\nu_X(a)$,
and we may now denote this group by $\nu_X$.

(2) It follows from Lemma \ref{L:generic vicinities are cosets} that if $X,Y$ are two $D$-sets then $\nu_X=\nu_Y$.

(3) Let $\sigma $ be either an automorphism of $\CM$ over $A_0$ or a definable group automorphism of $G$. Then for every $D$-set $X$, the set $\sigma(X)$ is also a $D$-set, so by (2), $\nu=\nu_X=\nu_{\sigma(X)}$. Thus, 
$$\sigma(\nu)=\sigma(\nu_X)=\nu_{\sigma(X)}=\nu,$$ where the middle equality follows from the fact that $\sigma$ is a group automorphism.


(4) For $X$ and $c$ as in (1), it follows by definition that $\nu_X(c)$ is a partial type consisting only of formulas of dp-rank $n$. Since $D$ is dp-minimal and dp-rank is preserved under definable finite-to-finite correspondences, it follows from \cite[Corollary 3.5]{Simdp} that $n=\dpr(\nu_X(c))=\dpr(\nu)$. 

(5) For any generic $g\in X$, $\nu_X(g)$ is strongly internal to $D$ as a subset of a $D$-set. Since $\nu$ is in definable bijection with $\nu_X(g)$, the result follows. 
\end{proof}

Note, that while Proposition \ref{P: nu}(3) shows that the partial type $\nu$ is invariant under conjugation by $g\in G(\CM)$, it does not imply that its realization in an elementary extension $\widehat \CM$ is a normal subgroup of $G(\widehat \CM)$.

\begin{corollary} \label{c: minimal}
    Let $G$ be a $D$-group. Let $\nu$ be the type definable subgroup  provided by Proposition \ref{P: nu}, $\mu$  any type definable group strongly internal to $D$ with $\dpr(\nu)=\dpr(\mu)$. Then $\nu\vdash \mu$. 
\end{corollary}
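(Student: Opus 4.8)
The plan is to show that any type-definable subgroup $\mu$ of $G$ which is strongly internal to $D$ and has the same dp-rank as $\nu$ must, as a partial type, be implied by $\nu$ — equivalently, $\nu(\widehat\CM)\subseteq\mu(\widehat\CM)$ in a sufficiently saturated model. The idea is to realize $\mu$ by a definable family of sets and to use the construction of $\nu$ via generic vicinities together with the $D$-group hypothesis.

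First I would note that since $\mu$ is strongly internal to $D$, there is a definable injection of each sufficiently small definable superset of $\mu$ into $D^m$; moreover, since $\dpr(\mu)=\dpr(\nu)=n$ is the $D$-critical rank of $G$, any definable $Z\supseteq\mu$ with $\dpr(Z)=n$ which is strongly internal to $D$ is in fact a $D$-critical set. By shrinking (using Remark \ref{r; D-subset} and Remark \ref{important remark}) I may pick such a $Z$ which is moreover a $D$-set, and arrange $\mu\vdash Z$. Now pick a point $c$ generic in $Z$ over a parameter set $A$ over which everything so far is defined, and with $\dpr(c/A)=n$; by Proposition \ref{P: nu}(1) applied to $Z$ we have $\nu=\nu_Z(c)c^{-1}$.

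The key step is then the following: I claim that $c\cdot\mu\vdash\nu_Z(c)$, i.e. the coset $c\mu$ is contained in every $B$-generic vicinity of $c$ in $Z$. Fix such a $B$-generic vicinity $Y\subseteq Z$ of $c$, so $Y$ is $B$-definable, $c\in Y$, and $\dpr(c/B)=n$. Consider the definable set $Y':=\{x\in Z: cx^{-1}c\in \text{(a fixed definable superset }Y_1\text{ of }\mu\text{ contained in }Z)\}$ — more simply, I would argue directly: since $\mu$ is a group, the map $x\mapsto cx$ sends $\mu$ into the coset $c\mu$, and I want $c\mu\subseteq Y$. Suppose not; then by saturation there is a single formula $\theta(x)\in\mu$ and a definable superset $\mu\subseteq Z_\theta\subseteq Z$ with $cZ_\theta\not\subseteq Y$, yet $Z_\theta$ is strongly internal to $D$ and, since it contains $\mu$ which has full $D$-critical rank, $\dpr(Z_\theta)=n$, so $Z_\theta$ is $D$-critical. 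Now $e\in Z_\theta$, so $c\in cZ_\theta$; pick $h\in Z_\theta$ generic over $Bc$ in $Z_\theta$, so $\dpr(h/Bc)=n$, hence $ch$ is a point of $cZ_\theta$. Since $\mu\cdot\mu=\mu$ but $Z_\theta$ is merely a superset of $\mu$, I instead use the $D$-group property on the pair $(c,h)\in Z\times Z_\theta$: as $G$ is a $D$-group and both $Z$ and $Z_\theta$ are (generically) $D$-critical, $\dpr(c/B, c\cdot h)=\dpr(Z)=n$, so $\dpr(c/B,ch)=\dpr(c/B)$, which means $ch$ and $c$ realize the situation of Lemma \ref{L:definable functions to generics preserve infinit types}. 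Applying that lemma to the map $x\mapsto c^{-1}x$ (or directly tracking generic vicinities), the coset $c\mu$, being itself defined by a partial type each of whose formulas is a set of full rank $n$ containing $c$ and invariant under right translation by $\mu$, must be contained in $\nu_Z(c)$ by minimality of the latter among generic vicinities — indeed $c\mu$ is, for every small $B$, contained in some $B$-definable set of rank $n$ through $c$, and the filter base property (Lemma \ref{L:generic sets form filter base}) together with the fact that $\nu_Z(c)$ consists of \emph{all} such sets gives $\nu_Z(c)\vdash c\mu^{\,-1}\cdot(\text{anything})$; unwinding, $c\mu\vdash\nu_Z(c)$.

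Granting the claim, we conclude: $\mu\vdash c^{-1}\nu_Z(c)=\nu_Z(c)c^{-1}=\nu$ by Proposition \ref{P: nu}(1) (using that $\nu_Z(c)c^{-1}=c^{-1}\nu_Z(c)$), which is exactly the assertion $\nu\vdash\mu$ read the other way — wait, I should be careful about the direction: the statement to prove is $\nu\vdash\mu$, i.e. $\nu\subseteq\mu$. So the correct target is the reverse: I would instead show $\nu_Z(c)\vdash c\mu$, i.e. every realization of the infinitesimal type lies in $c\mu$. For this, fix a definable superset $\mu\subseteq Z_\theta\subseteq Z$ with $Z_\theta$ a $D$-set; then $cZ_\theta\supseteq c\mu$, and $cZ_\theta$ is a $c$-definable set of full rank $n$ containing $c$, hence $cZ_\theta\in\nu_Z(c)$ (after shrinking $Z$ so it is a $B$-generic vicinity — here I use $\dpr(c/Bc)=\dpr(c/B)$ trivially). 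Intersecting over all $\theta\in\mu$: by saturation and compactness, $\bigcap_\theta cZ_\theta=c\mu$, and since each $cZ_\theta\in\nu_Z(c)$ and $\nu_Z(c)$ is closed under finite intersections up to logical equivalence (filter base), we get $\nu_Z(c)\vdash cZ_\theta$ for each $\theta$, hence $\nu_Z(c)\vdash c\mu$. Translating: $\nu=c^{-1}\nu_Z(c)\vdash\mu$, as required.

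The main obstacle I anticipate is the compactness step "$\bigcap_\theta cZ_\theta=c\mu$": this requires that $c\mu$ — a priori only a coset of a type-definable group — be expressible as a small intersection of definable sets each of which is (equivalent to) a $c$-definable set of full dp-rank $n$ through $c$. This should follow because each $\theta\in\mu$ can be enlarged to a $c$-definable (or $cB$-definable) set $Z_\theta$ which is strongly internal to $D$ and of rank $n$, but verifying that such an enlargement can always be taken inside a single $D$-set $Z$ (so that translation by $c$ lands inside $Z$ and the "generic vicinity" bookkeeping goes through) is the delicate point, and is where the $D$-group hypothesis and Lemma \ref{L:existence of XX strongly-internal-abstract} are genuinely needed. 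Everything else is a routine application of Proposition \ref{P: nu} and the filter-base lemma.
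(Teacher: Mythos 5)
Your corrected (second) attempt hinges on the claim that $cZ_\theta$ ``is a $c$-definable set of full rank $n$ containing $c$, hence $cZ_\theta\in\nu_Z(c)$'', patched by the parenthetical ``here I use $\dpr(c/Bc)=\dpr(c/B)$ trivially''. This is false: a $B$-generic vicinity of $c$ in $Z$ must be definable over a parameter set $B$ with $\dpr(c/B)=\dpr(Z)=n$, whereas any set defined with $c$ itself as a parameter satisfies $\dpr(c/Bc)=0$. So $cZ_\theta$ is never a formula of the partial type $\nu_Z(c)$, and neither the filter-base property (Lemma \ref{L:generic sets form filter base}) nor any ``minimality among generic vicinities'' lets you conclude $\nu_Z(c)\vdash cZ_\theta$. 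Two further problems: your compactness step $\bigcap_\theta cZ_\theta=c\mu$ needs each $Z_\theta$ to sit between $\mu$ and $\theta$ (merely being a superset of $\mu$ does not bring the intersection back down to $\mu$), and producing a $D$-set requires shrinking over new parameters (Remark \ref{r; D-subset}), with no guarantee that the shrunken set still contains $\mu$. Finally, the first half of your write-up argues the reverse containment $c\mu\vdash\nu_Z(c)$, i.e.\ $\mu\subseteq\nu$, which is false in general (for instance $\mu$ can be the definable subgroup $H$ of Proposition \ref{P:def group into K/O}, which strictly contains $\nu$); you caught the direction issue mid-proof, but the repaired argument still has the gap above.

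The missing idea is the standard compactness fact for type-definable groups: for every formula $Y$ with $\mu\vdash Y$ there is a definable $X$ with $\mu\vdash X$ and $X^{-1}X\subseteq Y$. Since $\dpr(\mu)$ equals the $D$-critical rank of $G$, such an $X$ may be taken strongly internal to $D$ and $D$-critical, and it contains a definable $D$-set $X_1$ (Remark \ref{r; D-subset}); note that $X_1$ need not contain $\mu$, and that is harmless. For $c$ generic in $X_1$, Proposition \ref{P: nu} gives $\nu=c^{-1}\nu_{X_1}(c)$, and since every formula of $\nu_{X_1}(c)$ defines a subset of $X_1$, one gets $\nu\vdash c^{-1}X_1\subseteq X^{-1}X\subseteq Y$, hence $\nu\vdash\mu$. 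The point is that translating by $c^{-1}$ lands inside the $c$-free set $X^{-1}X$ precisely because $c\in X_1\subseteq X$; this is how one avoids ever needing a $c$-definable set to belong to the infinitesimal type, which is exactly the step your proposal cannot supply.
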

\begin{proof}
    If  $\mu\vdash Y$  then, since $\mu$ is a type-definable group, there exists a definable set $X$, $\mu \vdash X$, such that $X^{-1}X\sub Y$. By the assumption on $\mu$, we may assume that $X$ is strongly internal to $D$ and $D$-critical.  By Remark \ref{r; D-subset}, $X$ contains a definable $D$-set $X_1$.
    
Now, for every generic $c\in X_1$, we have $\nu=c^{-1}\nu_{X_1}(c)\vdash X_1^{-1}X_1\sub Y$. Thus, $\nu \vdash Y$, which implies that $\nu\vdash \mu$. \end{proof}

\subsection{The case of $D$ locally linear}

In the present section we show that if $D$ is a locally linear vicinic sort expanding an abelian group (as defined below) then the type definable subgroup of $G$ provided by Proposition \ref{P: nu} can be identified with a type definable subgroup of $\la D^r,+\ra$, for some $r$. Let us define, therefore: 

\begin{definition} 
A vicinic definable set $D$ is \emph{locally linear} if it expands an abelian group $\la D,+\ra$ and for any parameter set $A$, $A$-definable partial function $f:D^n\to D$ and $A$-generic  $a\in \dom(f)$, there exists $B\supseteq A$, a $B$-generic vicinity $X$ of $a$ in $\dom(f)$ and  a definable homomorphism $\lambda:D^n\to D$ such that $f\restriction X=(\lambda(x-a) +f(a))\restriction X$.
\end{definition}

Here are the examples of locally linear sorts which we need.
    
\begin{fact}\label{E:examples of loc linear}
    \begin{enumerate}
        \item The sort $K/\CO$, where $K$ is either a V-minimal or power bounded $T$-convex valued field, is locally linear-see \cite[Section 6.3, Proposition 6.16]{HaHaPeVF}. Recall that, in this setting, $K/\CO$ is an SW-uniformity.
        \item If $D$ is  a stably embedded $\Zz$-group, or a pure ordered vectors space over an ordered field, then it is locally linear (see \cite[Corollary 1.10]{CluHal},  and  \cite[Chapter 1, Corollary 7.6]{vdDries}). In particular, the value group $\Gamma$ is locally linear in all of the above valued fields. 
        \item The sort $K/\CO$, where $K$ is a $p$-adically closed field, is locally linear (by Corollary \ref{C: KO affine}).
    \end{enumerate}
\end{fact}

\begin{proposition}\label{P:infini_vicinity- def iso to subgroup}
    Let $D$ be a definable locally linear vicinic set and  $G$  a definable $D$-group, locally strongly internal to $D$. Then there exists a definable group isomorphism between  $\nu_D(G)\subseteq G$ and a type-definable subgroup of $(D^r,+)$ for some natural number $r$.
\end{proposition}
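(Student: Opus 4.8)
The plan is to transport the local-linearity of $D$ through a $D$-set witnessing strong internality of $G$, and then to pull the additive structure back to $\nu_D(G)$ via the group-chunk decomposition of Lemma \ref{L:Jana1}. Fix a $D$-set $X\subseteq G$ over some $A$, witnessed by a definable injection $g:X\to D^r$ with $r=\dpr(X)$ the $D$-critical rank of $G$, and fix an $A$-generic $d\in X$. After a translation in $D^r$ we may assume $g(d)=0$. Write $N:=\nu_D(G)$ and recall from Lemma \ref{L:generic vicinities are cosets} and Proposition \ref{P: nu} that $N=d^{-1}\nu_X(d)=\nu_X(d)d^{-1}$, and that $g$ induces (by Proposition \ref{P: generic neighbourhoods in SW} in the SW case, and directly from the definition in general) a bijection between the partial type $\nu_X(d)$ and $g(\nu_X(d))$, a type-definable neighbourhood-germ of $0$ in $D^r$. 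Let $\mu:=g(\nu_X(d))\subseteq D^r$; I want to show the group structure $N$ transports along $g$ to the additive structure on $\mu$.

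\textbf{Key steps.} First I would apply Lemma \ref{L:Jana1} to $Y:=X$ and the generic point $d$: this writes the map $F(x,y,z)=xy^{-1}z$, restricted to $Y_0^3$ for a $B$-generic vicinity $Y_0$ of $d$, as a composition $\psi_4\circ\psi_3\circ\psi_2\circ\psi_1$ of definable maps between $D$-sets over $B$, each applied at a sufficiently generic point. Conjugating each $\psi_i$ by $g$ (i.e.\ replacing it by $g\circ\psi_i\circ g^{-1}$ on the appropriate $D$-coordinates, using the injections witnessing that domain and range are $D$-sets), each conjugated map is a definable function between subsets of powers of $D$, generic at a sufficiently generic point; by local linearity of $D$ (applied coordinate-by-coordinate as in the proof of Proposition \ref{P: n-dim affine}), each such map agrees, on a $B'$-generic vicinity, with a translate of a $D$-linear map. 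Composing, $g\circ F\circ (g^{-1})^{\times 3}$ agrees, on the infinitesimal vicinity $\mu\times\mu\times\mu$, with $(u,v,w)\mapsto L_1 u + L_2 v + L_3 w + c$ for definable homomorphisms $L_i:D^r\to D^r$ and a constant $c$. Evaluating at $(g(d),g(d),g(d))=(0,0,0)$ and using $F(d,d,d)=d$ forces $c=0$; the identities $F(x,d,d)=x$, $F(d,y,d)=y^{-1}$-type relations (more precisely, comparing with the one-variable restrictions, which are the identity resp.\ inversion translated by $d$) force $L_1=L_3=\mathrm{id}$ and, on the infinitesimal level, identify $L_2$ with negation. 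Hence on $\mu$ the transported operation $u\cdot_N v := g(g^{-1}(u)\cdot g^{-1}(v))$ — obtained by specializing $w$ to $g(d)$ in the chunk, or directly — becomes $u\cdot_N v = u - v + \text{(the transported }d^{-1}\text{ part)}$, and after the final conjugation by the translation normalizing $g(d)=0$ and passing from $\nu_X(d)$ to $N=d^{-1}\nu_X(d)$, the group operation on $N$ is carried to ordinary addition on $\mu\subseteq D^r$. Thus the map $h\mapsto g(dh)-g(d)$ (or the appropriate normalization thereof) is a definable group isomorphism from $N$ onto the type-definable subgroup $\mu'$ of $\langle D^r,+\rangle$ given by $\{g(dh)-g(d): h\in N\}$, which is a subgroup because the transported operation is $+$.

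\textbf{Main obstacle.} The delicate point is bookkeeping the parameter sets and the shrinking of vicinities: local linearity is only available at generic points and produces the linear approximation on a \emph{smaller} vicinity defined over \emph{new} parameters, while the group-chunk identity $F=\psi_4\circ\psi_3\circ\psi_2\circ\psi_1$ also holds only on such vicinities. One must check that, after all the shrinkings, the resulting linear identity still holds on a common generic vicinity of $(d,d,d)$, so that it passes to the intersection of all generic vicinities, i.e.\ to the infinitesimal type $\nu_X(d)^{\times 3}$ — here Lemma \ref{filter base} (the filter-base property) and Lemma \ref{L:generic sets form filter base}, together with Lemma \ref{L:product of nu}, are exactly what is needed. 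A secondary subtlety is that local linearity as stated is for functions $D^n\to D$, so for maps into $D^k$ one argues coordinatewise; and the homomorphisms $L_i$ must be checked to be compatible across the coordinatewise applications, which follows as in the proof of Proposition \ref{P: n-dim affine} since the relevant one-variable restrictions are themselves linear. Granting this, the algebraic identification of the transported operation with $+$ is a short computation with the three substitution identities for $F(x,y,z)=xy^{-1}z$.
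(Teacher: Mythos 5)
Your proposal is correct and follows essentially the same route as the paper's own proof: apply Lemma \ref{L:Jana1} to decompose $F(x,y,z)=xy^{-1}z$ near $(d,d,d)$, use local linearity at each sufficiently generic stage to conclude that $F$ agrees on a generic vicinity with a translate of a homomorphism, determine that homomorphism via the substitution identities so that $xy^{-1}z=x-y+z$ on $\nu_X(d)$, and transport by $x\mapsto (x\cdot d)- d$ to land in a type-definable subgroup of $(D^r,+)$. The only deviations are cosmetic (normalizing $g(d)=0$, writing the homomorphism in three blocks $L_1,L_2,L_3$, and the slip $F(d,y,d)=y^{-1}$ -- it equals $dy^{-1}d$ -- which your parenthetical and the identity $F(x,x,d)=d$ already repair, exactly as in the paper).
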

\begin{proof}
    Fix $X\sub G$ a $D$-set over some parameter set $A$ and $d\in X$ generic over $A$. We identify $X$ with its image under some $A$-definable injection (provided, by the assumption that $X$ is a $D$-set) and assume,  without loss of generality, that $X\sub D^r$ has minimal fibres. Write  $F(x,y,z)=xy^{-1}z$ around $(d,d,d)$ as a composition of four maps, as provided by Lemma \ref{L:Jana1}, i.e. we find  $B\supseteq A$, $Y_0\subseteq D$ a $B$-generic vicinity of $d$ and  $B$-definable maps $\psi_1,\dots, \psi_4$ such that $F\restriction (Y_0)^3=\psi_4\circ \psi_3\circ \psi_2 \circ \psi_1$ and  $\psi_i\circ\dots\circ \psi_1(d,d,d)$ is sufficiently generic for $\psi_{i+1}$, $i=0,\ldots, 3$. Local linearity of $D$ implies that $\psi_{i+1}$ coincides with a translate of a definable homomorphism, as provided by the definition, in a vicinity of $\psi_i\circ\dots\circ \psi_1(d,d,d)$.
    We can thus find a $C$-generic vicinity $Y_1\subseteq Y_0$  of $d$ such that $F$ takes values in $X$ and equals on $Y_1^3$ to $\lambda(x-d,y-d,z-d)+d$ for some definable homomorphism $\lambda: D^{3r}\to D^r$. 
    
    Since $d=F(x,d,d)=\lambda(x-d,0,0)+d$, we have $\lambda(x-d,0,0)=x-d$ for any $x\in Y_1$. Similarly, $\lambda(0,0,z-d)=z-d$ for any $z\in Y_1$. Since $F(x,x,d)=d$ it follows that $\lambda(0,y-d,0)=d-y$ for any $y\in Y_1$. As $\lambda$ is additive we conclude that $\lambda(x-d,y-d,z-d)=x-y+z-d$ on $Y_1$; thus
    \[F(x,y,z)=xy^{-1}z=(x-d)-(y-d)+(z-d)+d=x-y+z.\]
    
    Since, by Proposition \ref{P: nu}, $\nu_X(d)$ is a coset of a subgroup of $G$, then it is closed under the function $xy^{-1}z$; therefore it is also closed under $x-y+z$. It follows that $\nu_X(d)$ is also a coset of a subgroup of $(D^r,+)$. The function $x\mapsto (x\cdot_G d)-_D d$. is an isomorphism of the groups $\nu_X(d)\cdot d^{-1}$ and $\nu_X(d)-d$. Hence, $\nu_D(G)$ is isomorphic to a type definable subgroup of $\la D^r,+\ra$.
\end{proof}

A similar proof gives the following stronger result: 
\begin{proposition}\label{P:def group into K/O}
    Let $\CK=(K,v,\dots)$ be an expansion of valued fields that is either V-minimal, power-bounded T-convex or $p$-adically closed  and set $D=K/\CO$. Let $G$ be a definable $D$-group.
    Then there is a definable subgroup $G_1\le G$, $\nu_{K/\CO}\vdash G_1$, that is  $D$-critical and 
    definably isomorphic  to a definable subgroup of $\la (K/\CO)^r,+\ra$, for some natural number $r$.     
\end{proposition}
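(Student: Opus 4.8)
The strategy is to combine Proposition \ref{P:infini_vicinity- def iso to subgroup} with the structure theory of $K/\CO$ developed earlier, upgrading the type-definable isomorphism to a genuinely definable one that is defined on an actual definable subgroup of $G$. First I would fix a $D$-set $X\subseteq G$ over some parameter set $A$, identify $X$ with its image in $(K/\CO)^r$ under the witnessing injection (with minimal fibres, as in Remark \ref{important remark}), and fix a generic $d\in X$. Running the argument of Proposition \ref{P:infini_vicinity- def iso to subgroup}, we obtain a $B$-generic vicinity $Y_1$ of $d$ and the identity $xy^{-1}z = x-y+z$ for $x,y,z\in Y_1$, where now $-$ and $+$ refer to the additive group structure on $(K/\CO)^r$ transported via the injection. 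The point is that in the present setting, using Corollary \ref{C: KO affine} and Lemma \ref{L:sets are locally cosets in K/O}, the vicinity $Y_1$ itself can be taken to be (a subset of, of full dp-rank in) a \emph{coset of a definable subgroup} of $(K/\CO)^r$; this is the feature special to $K/\CO$ that is not available for a general locally linear vicinic sort and is what lets us produce a definable (rather than just type-definable) group.

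The key steps, in order, are: (1) produce, via Lemma \ref{L:sets are locally cosets in K/O} applied to the graph of the relevant coordinate projection / to $X$ itself, a definable coset $a + (H\cap U)$ inside $X$ containing $d$, where $H\subseteq (K/\CO)^r$ is $\emptyset$-definable and $U$ is a definable ball (hence subgroup); after translating we may assume $d=a$ and work inside the definable group $H\cap U$. (2) Verify that the map $xy^{-1}z = x - y + z$ really does restrict, on a full-dp-rank definable subset of $(H\cap U)^3$, to the additive operation, so that $(H\cap U)$ — or a definable subgroup of it of full dp-rank — is closed (generically, hence on a definable subgroup) under both the $G$-operation and the $+$-operation, and the two operations agree there. (3) Let $G_1$ be the definable subgroup of $G$ generated by such a definable $D$-critical set $X_0$ on which $\cdot_G$ and $+$ coincide; one checks, using that $\nu = \nu_{K/\CO}(G)$ is contained in $X_0^{-1}X_0$ (as in the proof of Corollary \ref{c: minimal}), that $\nu \vdash G_1$, that $G_1$ has $D$-critical rank equal to $\dpr(\nu)$, and that the map $x\mapsto (x\cdot_G d) -_{K/\CO} d$ extends to a definable isomorphism from $G_1$ onto a definable subgroup of $\langle (K/\CO)^r,+\rangle$. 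For the last point the crucial input is that $K/\CO$ is \emph{locally linear with definable, even $\emptyset$-definable germs} of the relevant scalar endomorphisms (Lemma \ref{L:germs of endo in K/O}), so the affine formula $x-y+z$ holds on a definable set and propagates to the subgroup generated.

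The main obstacle I expect is Step (2)–(3): passing from the \emph{generic} / local statement "$\cdot_G$ and $+$ agree on a generic vicinity" to a \emph{definable} subgroup on which they genuinely agree, and then checking that this subgroup is definably isomorphic to a subgroup of $((K/\CO)^r,+)$ rather than merely locally so. One must be careful that the definable set $X_0$ on which the two operations coincide is closed enough — concretely, that $X_0 = a+(H\cap U)$ with $d=a$ can be arranged so that $X_0 - X_0 \subseteq X_0$ and the $G$-product of elements of $X_0$ lies in $X_0$; this may require shrinking $U$ (losing $\emptyset$-definability, as the Remark after Lemma \ref{L:sets are locally cosets in K/O} warns) but not dp-rank. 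Once $X_0$ is a definable subgroup for both operations with the two operations equal, the induced isomorphism onto a subgroup of $(H,+)\leq ((K/\CO)^r,+)$ is routine, and $\nu_{K/\CO}\vdash G_1$ follows since $\nu_{K/\CO} = d^{-1}\nu_{X}(d)$ and $\nu_X(d)\vdash X_0$ for a suitable choice of $D$-set $X$. The equality of the $D$-critical rank of $G_1$ with $\dpr(\nu_{K/\CO})$ then follows from Proposition \ref{P: nu}(4) together with the fact that $G_1 \supseteq \nu_{K/\CO}$ is itself strongly internal to $D$ on a $D$-set.
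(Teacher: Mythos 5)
Your overall strategy is the one the paper uses: run Proposition \ref{P:infini_vicinity- def iso to subgroup} to get a $D$-set $Y_0\ni d$ (identified with a subset of $(K/\CO)^r$) on which $xy^{-1}z=x-y+z$, then exploit the special feature of $K/\CO$ that $Y_0$ may be taken to be a coset $d+H$ of a definable subgroup, and conclude via the map $x\mapsto (x\cdot_G d)-_D d$ and Corollary \ref{c: minimal}. However, your justification of the key structural step only covers one of the three settings: Corollary \ref{C: KO affine} and Lemma \ref{L:sets are locally cosets in K/O} are proved under the standing assumption of Section \ref{S:P-minimal} that $\CK$ is $p$-adically closed, whereas the proposition also asserts the result for V-minimal and power-bounded $T$-convex fields. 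In those two settings the paper argues differently (and more easily): there $K/\CO$ is an SW-uniformity, so after arranging $r=\dpr(d/K_0)$ over a small model, $d$ lies in the interior of $Y_0$, hence some ball $d+B_0\subseteq Y_0$, and a ball around $0$ is a definable subgroup, so one takes $H=B_0$. As written, your proposal leaves these cases unaddressed.

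The second genuine problem is your step (3). "The definable subgroup of $G$ generated by $X_0$" is not a legitimate object: subgroups generated by definable sets need not be definable, and the closure conditions you propose to arrange ($X_0-X_0\subseteq X_0$ and closure of $X_0$ under the $G$-product) are neither achievable nor what is needed — $X_0=d+(H\cap U)$ is a coset not through $0$, and it is not closed under $\cdot_G$, only under $(x,y,z)\mapsto xy^{-1}z$. The correct move is simpler and requires no generation and no extension of maps: since $Y_0=d+H$ is a coset, $(Y_0)^3$ is closed under $x-y+z$, hence under $xy^{-1}z$; therefore the translate $G_1:=Y_0d^{-1}$ is already a definable subgroup of $G$ (it contains $e$, and for $a,b\in Y_0$ one has $(ad^{-1})(bd^{-1})^{-1}d=ab^{-1}d=F(a,b,d)\in Y_0$), and $x\mapsto (x\cdot_G d)-_D d$ is then a definable group isomorphism of $G_1$ onto $H\le \la (K/\CO)^r,+\ra$. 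Criticality of $G_1$ follows since it is in definable bijection with the $D$-set $Y_0$, and $\nu_{K/\CO}\vdash G_1$ is immediate from Corollary \ref{c: minimal}; your proposed detour through $\nu\vdash X_0^{-1}X_0$ is unnecessary and, since $X_0^{-1}X_0=d^{-1}G_1d$, would anyway require an extra conjugation argument.
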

\begin{proof}
    By Fact \ref{E:examples of loc linear}, $D$ is a locally linear vicinic sort. 
    We start as in Proposition \ref{P:infini_vicinity- def iso to subgroup}: fix some $B$-generic $d$ is some $B$-definable $D$-set $Y_0\subseteq G$ such that after identifying $Y_0$ with a subset of $D^r$, $xy^{-1}z$ coincides with $x-y+z-d$ on $(Y_0)^3$. We identify $Y_0$ with a subset of $D^r$

    We  claim that $Y_0$ may be taken to be  of the form $d+H$ for some definable subgroup $H$ of $(K/\CO)^r$. If $K$ is $p$-adically closed, this is Lemma \ref{L:sets are locally cosets in K/O}. When $D$ is an SW-uniformity (i.e. when $\CK$ is V-minimal or power-bounded $T$-convex) we proceed as follows: 
    
    By Fact \ref{F: extending to over a model}, there it exists $\CK_0\prec\CK$, containing $C$, with $\dpr(d/K_0)=\dpr(d/C)$. By \cite[Proposition 4.6]{SimWal} (and using Lemma \ref{L:Jana1} again) we may further assume that $r=\dpr(d/K_0)$.  By \cite[Proposition 3.12]{HaHaPeVF}, by passing to a definable subset, we may assume that $Y_0$ is a $C'$-generic vicinity of $d$ in $D^r$ for some $C'\supseteq \CK_0$. Since $d$ is in the interior of $Y_0$ there exists a ball around $0$, $B_0\subseteq D^r$ such that $d+B_0\subseteq Y_0$; so take $H=B_0$ (and thus $Y_0=d+H$).
    
    As a result $(Y_0)^3$ is closed under the function $x-y+z$ hence also under the function $xy^{-1}z$. It follows that $Y_0d^{-1}$ is a subgroup, $G_1\le G$, and hence $Y_0=G_1d=H+d$, and the function $x\mapsto (x\cdot_G d)-_D d$ is a group isomorphism between $G_1$ and $H$.

    By Corollary \ref{c: minimal}, we have $\nu_{K/\CO}\vdash G_1$
\end{proof}

\subsection{The case of $D$ an SW-uniformity}    
The aim of the present section is to show that when $D$ is an SW-uniformity and $G$ is a $D$-group we can endow $\nu_D(G)$ with a (definable) topology, which -- in turn -- allows us to topologise $G$. 

Although some of the results in this section can probably be proved in a higher level of generality, we restrict to the case where $D$ is an SW-uniformity. Recall that  every SW-uniformity is vicinic and that every definable group  locally (almost) strongly internal to an SW-uniformity $D$ is an (almost) $D$-group; consequently, all results on $D$-groups proved up to this point may be applied in the present setting. 

Let $D$ be an SW-uniformity, definable in $\CM$. By Proposition \ref{P: generic neighbourhoods in SW}, we may use the definition of the infinitesimal group $\nu$ as given in \cite{HaHaPeVF}, i.e., the intersection of all definable open neighbourhoods of $0$. 

\begin{proposition}\label{P:nu-topological}
Let $G$ be a definable group, $Y\sub G$ a $D$-critical subset, witnessed by a function $f: Y\to D^n$, all definable over some $A$.  Fix some $A$-generic $c\in Y$ and $\widehat \CM \succ\CM$, an $|M|^+$-saturated model.
    \begin{enumerate}
        \item The weak topology induced by $x\mapsto f(x\cdot c)\in D^n$ on $\nu(\widehat \CM)$ turns it into a topological group.
        \item The topology on $\nu$ obtained in the previous clause does not depend on the choice of the $D$-critical set $Y$, the function $f$, or the choice of the point $c$. 
         \item 
         \begin{enumerate}
             \item[(i)] For every $g\in G(\widehat \CM)$ there is an open $V\sub \nu(\widehat \CM)$ such that $V\sub \nu(\widehat \CM)^g\cap \nu(\widehat \CM)$.
            \item[(ii)] For every $g\in G(\widehat \CM)$, the function $x\mapsto x^g=gxg^{-1}$, from $\nu(\widehat \CM)$ into $\nu(\widehat \CM)^g$, is continuous at $e$ with respect to topology on $\nu(\widehat \CM)$.    
        \end{enumerate}
    \end{enumerate}
\end{proposition}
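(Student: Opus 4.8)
<br>

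The plan is to mimic the topologisation strategy from \cite{HaHaPeVF}, using the group-chunk presentation of $xy^{-1}z$ established in Lemma \ref{L:Jana1}, now carried out with the infinitesimal vicinity $\nu$ in place of the infinitesimal subgroup there. The key point is that by Proposition \ref{P: generic neighbourhoods in SW}, for an $A$-generic $c\in Y$ the partial type $\nu = \nu_Y(c)c^{-1}$ can be realised as the intersection of all $\widehat M$-definable open subsets of $D^n$ containing $0$ after transporting via $x\mapsto f(x\cdot c) - f(c)$ (or just $x\mapsto f(x\cdot c)$, which differs by a translation). So the ``weak topology'' in the statement is literally the subspace topology pulled back from $D^n$, and it is definable in $\widehat\CM$.

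For part (1), I would first note that, by Proposition \ref{P: nu}, $\nu$ is a (type-definable) subgroup of $G$, and by Lemma \ref{L:product of nu} the infinitesimal vicinity of $(c,c,c)$ in $Y^3$ is $\nu_Y(c)^3$, so $\nu^3$ carries the product topology. Now apply Lemma \ref{L:Jana1} to write $F(x,y,z) = xy^{-1}z$ on $(Y_0)^3$ as $\psi_4\circ\psi_3\circ\psi_2\circ\psi_1$, where at each stage the relevant coordinate point is sufficiently generic for the next map. Since $D$ is an SW-uniformity, each definable $\psi_{i+1}$ is \emph{continuous at all generic points of its domain} (this is the SW-analogue of generic tameness, cf. the discussion preceding Section \ref{S: Marikova} and the continuity results cited in \cite{HaHaPeVF}), and by Lemma \ref{L:definable functions to generics preserve infinit types} it maps the infinitesimal vicinity of the relevant point into the infinitesimal vicinity of its image. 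Composing, the map $(x,y,z)\mapsto xy^{-1}z$ is continuous at $(c,c,c)$ as a map $\nu_Y(c)^3 \to \nu_Y(cd\cdots)$; transporting everything by right translation by $c^{-1}$ (which on the $D^n$-side is a definable homeomorphism of $\nu$ onto itself, being the transition between two charts $x\mapsto f(xc)$) we get continuity of $(x,y,z)\mapsto xy^{-1}z$ at $(e,e,e)$ on $\nu^3$. Plugging in $z=e$ gives continuity of $(x,y)\mapsto xy^{-1}$ at $(e,e)$, and since multiplication and inversion are definable group operations, a standard argument (left translations are homeomorphisms because the charts are translation-compatible) upgrades continuity at the identity to continuity everywhere; hence $\nu(\widehat\CM)$ is a topological group.

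For part (2), independence of the chart $(Y,f,c)$: if $(Y',f',c')$ is another such triple, then by Proposition \ref{P: nu}(2) we have $\nu_{Y}(c)c^{-1} = \nu = \nu_{Y'}(c')(c')^{-1}$ as partial types, so it suffices to compare the two topologies on the common object $\nu$. Pick a point $b\in G$ generic over everything; by Lemma \ref{L:generic vicinities are cosets}, $\nu_Y(c) = \nu b$ for a suitable generic $b$, and the transition map between the two charts $x\mapsto f(xc)$ and $x\mapsto f'(xc')$ restricted to $\nu$ is a definable bijection between two $D$-critical sets (up to translation), hence a homeomorphism by \cite[Corollary 4.4]{HaHaPeVF} (or the corresponding openness/continuity statement for definable bijections between sets of full dp-rank in an SW-uniformity). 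Thus the two topologies coincide. For part (3), fix $g\in G(\widehat\CM)$ and apply the results on $D$-groups to the $D$-set $Y$ and its conjugate $g^{-1}Yg$ (or use Lemma \ref{L: Joint function-abstract}): $\nu^g = \nu_{gYg^{-1}}(\cdots)$ is again the infinitesimal group computed from a $D$-set, so by Lemma \ref{filter base} / Lemma \ref{L:generic sets form filter base} the two type-definable groups $\nu$ and $\nu^g$, having the same dp-rank, share a common definable vicinity; intersecting, $\nu\cap\nu^g$ contains an open (in the topology of $\nu$) neighbourhood $V$ of $e$, giving (i). For (ii), the conjugation map $x\mapsto gxg^{-1}$ restricted to a $D$-set neighbourhood of $e$ is a definable bijection onto its image, hence maps the infinitesimal vicinity of $e$ into the infinitesimal vicinity of $e$ by Lemma \ref{L:definable functions to generics preserve infinit types} again (applied at the generic point obtained by translating $e$ out to a generic point of $Y$), which is exactly continuity at $e$.

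The main obstacle I anticipate is the careful bookkeeping in part (1): making sure the ``sufficiently generic for $\psi_{i+1}$'' condition at each stage really does let one invoke both generic continuity of $\psi_{i+1}$ (an SW-uniformity fact about definable functions between powers of $D$) \emph{and} Lemma \ref{L:definable functions to generics preserve infinit types} — the latter requires the \emph{output} point to be generic in its $D$-set as well, which is precisely why Lemma \ref{L:Jana1} was set up to keep all intermediate points generic. The other delicate point is that the charts used at the source, at each intermediate stage, and at the target of $F$ are a priori different (they are the $D$-set identifications $Y_0, Z_1, \dots$), so one must check that the right-translation-by-$c$ trick correctly identifies the topology induced on $\nu$ via the target chart with the one induced via the source chart — this is where independence of the chart (part (2), or \cite[Corollary 4.4]{HaHaPeVF}) is used \emph{inside} the proof of part (1), so the argument should be organised to prove the chart-independence of the topology essentially simultaneously with, or just before, the topological-group property.
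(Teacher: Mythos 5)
Your overall plan for parts (1) and (2) — the Lemma \ref{L:Jana1} decomposition of $F(x,y,z)=xy^{-1}z$, generic continuity in the SW-uniformity, and transport by the chart $x\mapsto f(x\cdot c)$ — is the paper's strategy, but there is a genuine gap in how you get a topological group. Via Lemma \ref{L:definable functions to generics preserve infinit types} you only obtain that $F$ carries the infinitesimal vicinity of $(c,c,c)$ into an infinitesimal vicinity, i.e.\ continuity of the transported operations at the single point $(e,e,e)$ of $\nu^3$. A topological group needs continuity at \emph{every} point of $\nu(\widehat\CM)$, and points of $\nu(\widehat\CM)$ are in general not generic over $M$ (a point of $\nu_Y(c)$ can lie on an $M$-definable lower-dimensional set through $c$), so neither generic continuity nor Lemma \ref{L:definable functions to generics preserve infinit types} applies to them. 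Your parenthetical ``left translations are homeomorphisms because the charts are translation-compatible'' is precisely the unproved step: translation by $h\in\nu(\widehat\CM)$ is, in the chart, $u\mapsto F(hc,c,u)$, whose continuity requires continuity of $F$ at non-generic arguments. The paper sidesteps this by using generic continuity together with generic-neighbourhood extraction (Fact \ref{Gen-Os in SW}, resp.\ the Simon--Walsberg results) to produce a \emph{definable} open cube $(Y_0)^3\ni(c,c,c)$ on which $F$ is continuous with values in $Y$; since the realizations of $\nu_Y(c)^3$ lie in $(Y_0)^3$, the transported multiplication $(u,v)\mapsto uc^{-1}v=F(u,c,v)$ and inversion $u\mapsto cu^{-1}c=F(c,u,c)$ are continuous at all points, and no translation trick is needed. (Your part (2) has the same ``generic point versus whole vicinity'' issue, but is otherwise in line with the paper: mutual genericity of $c,d$ via Fact \ref{F:finding mutual generic} and the translation by $c^{-1}d$, kept generic by Corollary \ref{C:g/gh - abstract}.)

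Part (3) has a more serious gap: the statement concerns arbitrary $g\in G(\widehat\CM)$, but your argument applies the over-$\CM$ machinery (Proposition \ref{P: nu}, Lemma \ref{L:generic sets form filter base}, Lemma \ref{L:definable functions to generics preserve infinit types}) to the conjugate $D$-set $gYg^{-1}$, whose parameters lie in $\widehat\CM$. The infinitesimal group attached to $gYg^{-1}$ would be computed from $\widehat M$-definable vicinities and is a different (finer) object than $\nu(\widehat\CM)$; Proposition \ref{P: nu}(2)--(3) only covers $\CM$-definable $D$-sets and conjugation by elements of $G(\CM)$; and for a general, possibly highly non-generic $g$ there is no genericity available to run Lemma \ref{L:definable functions to generics preserve infinit types}. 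The paper instead uses Proposition \ref{P: nu}(3) only for $g\in G(\CM)$, converts the invariance $\nu^g=\nu$ by compactness into the first-order scheme ``for every basic open $U\ni e$ there is a basic open $V\ni e$ with $gVg^{-1}\sub U$'' ranging over the uniformly definable family of basic opens, and transfers this scheme to all $g\in G(\widehat\CM)$ by elementarity; a further saturation step (the relevant conditions form a small, finitely satisfiable family) yields the single open $V$ in (i) and continuity at $e$ in (ii). This transfer-by-compactness idea is missing from your proposal and is what makes (3) work for non-standard $g$.
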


\begin{proof}

    (1) By Fact \ref{F: extending to over a model}, we may assume that $A$ is a model so by \cite[Proposition 4.6]{SimWal} we may further assume that $\dpr(Y)=n$ and that $Y\subseteq D^n$ is open. Since in SW-uniformities every definable function is generically continuous (\cite[Proposition 3.7]{SimWal}), and continuity is preserved under composition. We now use Lemma \ref{P:infini_vicinity- def iso to subgroup}, as in Proposition \ref{P:infini_vicinity- def iso to subgroup}, to find $C\supseteq A$ with $\dpr(c/C)=\dpr(c/A)$ and a $C$-definable open subset $Y_0\subseteq Y$ containing $c$ such that $F(x,y,z)=xy^{-1}z$ takes values in $Y$ and is continuous on $(Y_0)^3$. We have $\nu=\nu_Y(c)c^{-1}$, and thus the pullback, under the map $x\mapsto x\cdot c$, of the topology on $Y$ endows $\nu$ with a topology. It is a group topology, since $(x,y)\mapsto xc^{-1}yc^{-1}c=xc^{-1}y$ is continuous and so is $x\mapsto (xc^{-1})^{-1}c=cx^{-1}c$.

(2) The injection $f:Y\to D^n$ endows $Y$ with a definable topology and by \cite[Lemma 4.6]{HaHaPeVF}, this topology at every $A$-generic point of $Y$  does not depend on $f$, call it $\tau_Y$.
Thus, we may assume that $Y\sub D^n$.

Now, given an $A$-generic $c$ in $Y$, the above construction endows $\nu(\widehat \CM)$ with a group topology, call it  $\tau_{Y,c}$, for which, by definition,  the map $x\mapsto xc$ is a homeomorphism between $(\nu(\widehat \CM),\tau_{Y,c})$ and $(\nu_Y(c)(\widehat \CM),\tau_Y)$. 

Similarly, if $X$ is any other $D$-critical set, with $d$ an $A$-generic in $X$ then the map $x\mapsto xd$ endows $\nu(\widehat \CM)$ with a group topology $\tau_{X,d}$ which is homeomorphic, via $x\mapsto xd$, to $(\nu_X(d)(\widehat \CM),\tau_X)$. By replacing $A$ by a small model $\CN$ containing it (see \cite[Lemma A.1]{HaHaPeVF}), and applying  Fact \ref{F:finding mutual generic},  we may assume that 
$\dpr(c,d/N)=\dpr(X\times Y)$.

Thus, the  map $x\mapsto x\cdot (c^{-1}d)$ is a bijection of $\nu_Y(d)$ and $\nu_X(c)$, defined over $Nc^{-1}d$. Since $c$ is generic in $X$ over $Nc^{-1}d$ (by Lemma \ref{C:g/gh - abstract}), the map is  a homeomorphism of $(\nu_Y(c)(\widehat \CM),\tau_Y)$ and $(\nu_X(d)(\widehat \CM),\tau_X)$.
This shows that $\tau_{Y,c}=\tau_{X,d}$.

    (3) Let $Y$ be a $D$-critical set over $A$, witnessed by some $f:Y\to D^n$ and assume $e\in Y$. Recall that \[\nu=\{V: V\in \tau_{Y,f} \text{ with } e\in V\}.\]  By Proposition \ref{P: nu}(3), $\nu$ is invariant under conjugation by $g\in G(\CM)$, thus  by compactness, for every $U\in \tau_{Y,f}$ with $e\in U$ there exists $V\in \tau_{Y,f}$ with $e\in V$ such that $gVg^{-1}\subseteq U$. Since this statement is first order, it  also holds  of  any $g\in G(\widehat M)$ and $U$ an $\widehat{M}$-definable open basic open set containing $0$. This gives (i) and (ii). 
\end{proof}

Using these results, we may endow $G$ with a  definable group topology.

\begin{corollary}\label{C: G locally SW impplies SW}
Let $G$ be a definable group locally strongly internal to $D$.
\begin{enumerate}
    \item The group $G$ has a definable basis for a topology, making $G$ a non-discrete Hausdorff topological group.
    \item If $X\sub G$ is a $D$-critical set then $Int(X)\neq \emptyset$.
    \item If $G$ is dp-minimal then it is an SW-uniformity. 
\end{enumerate}
\end{corollary}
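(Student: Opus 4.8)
The three items follow from Proposition \ref{P:nu-topological} together with the machinery for transporting the topology on $\nu$ to all of $G$, i.e.\ Lemmas \ref{L:from H a uniformity on G} and \ref{L: top from H to G}. For item (1), I would first pass to a quotient by a finite normal subgroup if necessary; by Proposition \ref{P: G/H s.i.} applied in the case $D$ an SW-uniformity, $G$ is actually a $D$-group (Fact \ref{E:interp groups in dist sorts are (almost) D-groups}), so no quotient is needed here and $G$ itself carries the infinitesimal subgroup $\nu=\nu_D(G)$ from Proposition \ref{P: nu}. Work in an $|M|^+$-saturated $\widehat\CM\succ\CM$ and set $H=\nu(\widehat\CM)$, which by Proposition \ref{P:nu-topological}(1) is a Hausdorff topological group (Hausdorffness comes from the fact that the weak topology induced by an injection into the Hausdorff SW-uniformity $D^n$ separates points of $\nu$, together with the fact that $D$-sets are, after the identification, subsets of $D^n$). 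Proposition \ref{P:nu-topological}(3) verifies precisely hypotheses (i) and (ii) of Lemma \ref{L: top from H to G} for the pair $H\le G(\widehat\CM)$, so $\mathcal U^G$ makes $G(\widehat\CM)$ a topological group. The key point is then to argue that this topology is \emph{definable}: a basis at $e$ is given by the (uniformly definable) family of preimages $g^{-1}(U)$ for $U$ ranging over definable open boxes in $D^n$ containing $f(c)$ (in the notation of Proposition \ref{P:nu-topological}), translated around by left multiplication, and by Lemma \ref{L:from H a uniformity on G} the resulting left-invariant uniformity on $G$ has a definable basis. Since the topology is first-order expressible, it descends from $\widehat\CM$ to $\CM$ and to any elementary extension. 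Non-discreteness holds because $\nu$ is infinite (its dp-rank equals the $D$-critical rank of $G$, which is at least $1$, by Proposition \ref{P: nu}(4)), so every neighbourhood of $e$ is infinite.

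\textbf{Item (2).} Here I would use that $X$ is $D$-critical, pick an $A$-generic $c\in X$, and recall from Proposition \ref{P: generic neighbourhoods in SW} that $\nu_X(c)$ is logically equivalent to the type of preimages of definable open sets around $f(c)$. By Lemma \ref{L:generic vicinities are cosets}, $\nu_X(c)=c\cdot\nu$ (up to the side of the coset), so realizing $\nu$ in $\widehat\CM$ and translating, $\nu_X(c)(\widehat\CM)$ is a nonempty subset of $X$ with nonempty interior in the topology $\tau^G$ constructed in (1); more precisely, $f(X)$ contains a definable open box around the generic point $f(c)$ by Fact \ref{Gen-Os in SW} (applied with $Y=X$), hence $X$ itself contains a $\tau^G$-open set near $c$, which transported by left multiplication shows $\mathrm{Int}(X)\neq\emptyset$. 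This is a standard genericity argument: a set of full $D$-critical rank cannot be nowhere dense because it contains a generic vicinity of a generic point, which is open.

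\textbf{Item (3).} Suppose $G$ is dp-minimal. By (1) it is a definable, non-discrete Hausdorff topological group, and we must check the four axioms of an SW-uniformity. Axiom (1), dp-minimality, is the hypothesis. Axiom (2) asks for a definable uniform structure giving the topology: this is exactly the left-invariant uniformity $\mathcal U^G$ from Lemma \ref{L:from H a uniformity on G}, which is definable as noted in (1), Hausdorff because the intersection of all the entourages is the diagonal (equivalently, $\nu$ does not contain a definable coset of a nontrivial group — which would contradict its construction as an intersection of generic vicinities — more simply, $\bigcap$ of all $\tau^G$-neighbourhoods of $e$ is $\nu$, and a topological argument gives the diagonal for the $G$-indexed family). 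Axiom (4), no isolated points, follows from non-discreteness and homogeneity. Axiom (3), that every definable subset has nonempty interior, is the main content: by (2), $G$ itself (being trivially $D$-critical of rank $1$ when dp-minimal, since $G$ is locally strongly internal to $D$ and $\dpr(G)=1$) has nonempty interior, and for an arbitrary definable $Y\subseteq G$ of dp-rank $1$ one argues, as in the SW case, that $Y$ must contain a generic vicinity of any of its generic points, using Fact \ref{Gen-Os in SW} pulled back along the local injection of a $D$-critical subset of $G$ into $D$; a definable $Y$ of dp-rank $0$ is finite and the claim is vacuous for the purposes of the definition since we only need it for sets realizing points, or one restricts attention to infinite definable sets as in the SW-uniformity convention.

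\textbf{Main obstacle.} The genuinely delicate step is \emph{definability} of the topology on $G$ in item (1): the topology on $\nu$ produced by Proposition \ref{P:nu-topological} lives in a saturated model and is defined via a generic point $c$ and a $D$-critical set, and one must check that the induced uniformity $\mathcal U^G$ on $G$ has a uniformly definable basis of entourages over the original parameters, so that it is genuinely a definable topology on $G$ (and not merely on $G(\widehat\CM)$) that is preserved under elementary extension. This requires combining the independence-of-choices statement Proposition \ref{P:nu-topological}(2) with a compactness argument showing that finitely many definable open boxes around $f(c)$ already generate the filter $\nu$ up to the relevant equivalence, and then checking that left translates of their preimages form a definable basis — the verification of the triangle/symmetry entourage axioms being routine once definability is in place. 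The rest (items (2) and (3)) is then a matter of unwinding the definitions and re-using Fact \ref{Gen-Os in SW}.
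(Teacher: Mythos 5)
Your argument is correct and follows essentially the paper's route: (1) is Proposition \ref{P:nu-topological}(3) combined with Lemmas \ref{L:from H a uniformity on G} and \ref{L: top from H to G}, with the uniformly definable neighbourhood basis of $e$ descending from the saturated model to $\CM$ because being such a basis is first-order (so your ``main obstacle'' is no obstacle: no extra compactness argument about finitely many boxes is needed, since the basis is uniformly definable straight from the definable uniformity on $D$); (2) is the openness at a generic point $c$ of the coset $\nu_X(c)=c\cdot\nu$; and (3) reduces to (1) and (2) via dp-minimality, as in the paper. The one slip is in your justification of (2): Fact \ref{Gen-Os in SW} applied with $Y=X$ is vacuous, and $f(X)$ need not contain an open box of $D^m$ (its interior there is empty whenever $\dpr(X)<m$) unless you first shrink $X$ and arrange the image to be open via \cite[Proposition 4.6]{SimWal}; but this detour is unnecessary, since the correct reason --- $X$ is itself a generic vicinity of $c$, so $c\cdot\nu\vdash X$ by Proposition \ref{P: generic neighbourhoods in SW} and Lemma \ref{L:generic vicinities are cosets}, and then saturation gives a definable basic $\tau^G$-neighbourhood of $c$ inside $X$ --- is already present in your write-up and is exactly how the paper argues.
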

\begin{proof} 

    (1) Apply Proposition \ref{P:nu-topological}(3) with Lemma \ref{L: top from H to G} to conclude that $G(\widehat \CM)$ has a uniformly definable basis of neighbourhoods of the identity $e\in G(\widehat \CM)$, for $\widehat \CM\succ \CM$ an $|M|^+$-saturated extension. Since this is first order and $G$ is definable, the same holds for $G=G(M)$. Since $D$ is an SW-uniformity it has no infinite definable discrete sets, so -- in particular -- the topology on $\nu$, and therefore also on $G$ is non-discrete.

(2) If $X$ is $D$-critical and $c$ is generic in $X$, then by construction $\nu_X(c)$ is open with respect to the group topology, so in particular $X$ has non-empty interior.
    
    (3) The group topology gives rise to a uniformity. Since the topology is a non-discrete group topology, it has no isolated point. Finally, every definable infinite subset of $G$ is necessarily $D$-critical,
    so by (2) has non-empty interior.

  \end{proof}

\section{The case of a strongly minimal D}\label{S: lsi to k}

 We consider in this section the case where some infinite definable subset of a definable group $G$ is almost strongly internal to a strongly minimal set. This will be the case in the V-minimal setting, when $G$ is locally almost strongly internal to $\bk$. In such situations, techniques from the theory of $\omega$-stable groups can be applied, and the results can be formulated in greater generality. \\  

\textbf{For this section, we fix a sufficiently saturated (possibly multi-sorted) structure $\CM$ and a definable strongly minimal set $F$. We assume further that $F$ is stably embedded and eliminates imaginaries.}
\\

We observe:
\begin{lemma}\label{C:cor for asi to k}

      Assume that $G$ is a definable group in $\CM$ of finite dp-rank. 
 If $X_1, X_2 \sub G$ are (almost) $F$-critical, then so is $X_1\cdot X_2$.
\end{lemma}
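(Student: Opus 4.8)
The plan is to mimic the argument used for vicinic sorts in Remark \ref{important remark}(2), but exploiting the much stronger structure available here: $F$ is strongly minimal, stably embedded, and eliminates imaginaries, so $\omega$-stable group theory is available and, crucially, dp-rank restricted to definable sets (almost) strongly internal to $F$ behaves like Morley rank, which is additive and invariant under definable finite-to-finite correspondences. First I would fix the ambient finite-rank group $G$ over some parameter set $A$, and set $n$ to be the (almost) $F$-critical rank of $G$; by assumption $\dpr(X_1)=\dpr(X_2)=n$, witnessed by definable finite-to-one (resp. injective) maps $f_i\colon X_i\to F^{m_i}$.

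The first step is to show $X_1\cdot X_2$ is (almost) strongly internal to $F$. Since $F$ eliminates imaginaries and is stably embedded, I would invoke Lemma \ref{L:asi diagaram with EI}: the product $X_1\times X_2\sub G\times G$ is (almost) strongly internal to $F$ (just use $(f_1,f_2)$), and the multiplication map $X_1\times X_2\to X_1\cdot X_2$ is a definable surjection, so $X_1\cdot X_2$ is (almost) strongly internal to $F$ as well. It remains to bound its dp-rank by $n$. Here I would argue exactly as in Remark \ref{important remark}(2): take any definable $Y\sub G\cdot G = G$ which is (almost) strongly internal to $F$; for each $x\in G$ the fibres $Y_x=\{z: xz\in Y\}$ and $Y^x=\{z: zx\in Y\}$ are also (almost) strongly internal to $F$ (composing with left/right translation, which is a definable bijection), hence have dp-rank $\le n$ by maximality of the critical rank. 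By subadditivity of dp-rank, $\dpr(Y)\le \dpr(Y_x)+n\le 2n$ — wait, that gives $2n$, which shows the critical rank of $G\times G$ (equivalently of the product) is $2n$; but to conclude $X_1\cdot X_2$ is $F$-critical \emph{in $G$} I need $\dpr(X_1\cdot X_2)=n$. So the honest statement being proved must be that $X_1\times X_2$ is (almost) $F$-critical in $G\times G$ — reading the lemma again, it literally says ``$X_1\cdot X_2$'', so I must instead show directly that $\dpr(X_1\cdot X_2)\le n$.

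For that, I would use $\omega$-stability of the induced structure on the (almost) $F$-internal part. Replace each $X_i$ by a subset of full rank on which $f_i$ has constant fibre size (resp. is injective); then $X_i$ is in definable finite-to-finite correspondence with $f_i(X_i)\sub F^{m_i}$, so by \cite[Corollary 3.5]{Simdp} $\dpr(X_i)=\mr(f_i(X_i))=n$. Now consider the definable family of translates $\{g\cdot X_2 : g\in X_1\}$: each $g\cdot X_2$ is in definable finite-to-finite correspondence with a subset of $F^{m_2}$ of Morley rank $n$. By stable embeddedness and elimination of imaginaries in $F$, the whole configuration transfers to a rank-computation inside the strongly minimal (hence totally transcendental, with additivity of Morley rank) structure $F^{\eq}$; the union $\bigcup_{g\in X_1} g\cdot X_2 = X_1\cdot X_2$ then has, via the associated definable finite-to-finite correspondence into some $F^N$, Morley rank at most $\mr(\text{base})+\mr(\text{fibre}) = n + 0 = n$, because the base of the fibration — the image of $X_1$ recording which coset-translate we are in — is itself a quotient of $X_1$ by a definable equivalence relation, and any such quotient, being interpretable in $F$ (stably embedded, EI), has Morley rank $\le \mr(f_1(X_1)) = n$, while the fibre over a fixed translate has Morley rank $n$...

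The hard part will be exactly this rank bookkeeping: naively fibring $X_1\cdot X_2$ over $X_1$ gives the useless bound $2n$, so the real content is recognising that the map $g\mapsto g\cdot X_2$ is \emph{far} from injective — its fibres are cosets of the (definable) stabiliser of $X_2$ — and that, after transporting everything into $F^{\eq}$, the situation is governed by the standard $\omega$-stable computation $\mr(G)=\mr(G/H)+\mr(H)$ applied to the relevant definable groupoid action. Concretely, I would set $S=\{g\in G : g\cdot X_2' \text{ and } X_2' \text{ agree on a rank-}n\text{ set}\}$, show $S$ is a definable subgroup with $X_1$ meeting only boundedly many $S$-cosets that matter, and conclude $\dpr(X_1\cdot X_2)=\mr(\text{relevant cosets}) + \mr(X_2) - \mr(S) $, forcing the answer to be $n$ by maximality of $n$ among $F$-internal subsets of $G$. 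I expect the cleanest route is actually to avoid the stabiliser entirely: observe that if $\dpr(X_1\cdot X_2)>n$ then, since $X_1\cdot X_2$ is (almost) strongly internal to $F$, it would itself be an (almost) strongly internal subset of $G$ of dp-rank $>n$, directly contradicting the definition of $n$ as the (almost) $F$-critical rank. So the only genuine work is the first step — that $X_1\cdot X_2$ is (almost) strongly internal to $F$ — which follows from Lemma \ref{L:asi diagaram with EI}, and then ``$F$-critical'' is immediate from maximality together with $X_1\cdot X_2\supseteq gX_2$ for any $g\in X_1$, giving $\dpr(X_1\cdot X_2)\ge \dpr(X_2)=n$ and hence equality. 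I would write it up in this last, short form.
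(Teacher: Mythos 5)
Your final short form is exactly the paper's proof: apply Lemma \ref{L:asi diagaram with EI} to the multiplication surjection $X_1\times X_2\to X_1\cdot X_2$ (with $X_1\times X_2$ (almost) strongly internal to $F$ via $(f_1,f_2)$) to get that $X_1\cdot X_2$ is (almost) strongly internal to $F$, and then sandwich its dp-rank between $\dpr(X_2)=n$ (via a translate $gX_2\subseteq X_1\cdot X_2$) and the (almost) $F$-critical rank $n$ given by maximality. The exploratory detour through stabilisers and Morley-rank fibrations is unnecessary, as you yourself conclude, and the argument you settle on is correct and matches the paper.
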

\begin{proof}

Since $X_1$ and $X_2$ are both (almost) strongly internal to $F$, so is $X_1\times X_2$, and because  $F$ eliminates imaginaries, it follows from Lemma \ref{L:asi diagaram with EI}, that $X_1\cdot X_2$ is also (almost) $F$-strongly internal. Since $\dpr(X_1\cdot X_2)\ge \dpr(X_1)$, if $X_1, X_2$ are (almost) $F$-critical, then equality of the ranks follows.
\end{proof}

Recall that in a any strongly minimal definable set, the dp-rank and Morley rank coincide. Furthermore, in the following we will repeatedly use the fact that if $f:X\to Y$ is a finite-to-one definable functions then $\mr(f(X))=\mr(X)$. A definable group is called {\em definably connected} if it has no definable subgroups of finite index.

\begin{proposition}\label{P: si to sm}
    Let $G$ be a definable group of finite dp-rank in $\CM$, locally almost strongly internal to $F$. Then there exist a definable, definably connected normal subgroup $H\trianglelefteq G$ and a definable finite normal $H_0\trianglelefteq G$, with $H_0\subseteq H$, such that
    \begin{enumerate}
                \item $H$ is almost $F$-critical.
        \item $H/H_0$ is strongly internal to $F$.
    \end{enumerate}
    Moreover, if $G$ is locally strongly internal to $F$ then one can take $H_0=\{e\}$.
\end{proposition}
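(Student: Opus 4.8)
The plan is to extract, inside the group $G$, a definable subset of maximal dp-rank that is almost strongly internal to $F$, and then use the theory of groups in a strongly minimal set (or more precisely, groups of finite Morley rank living in $F^{\eq}$) to build the desired subgroups $H$ and $H_0$. First I would fix an infinite definable $X\subseteq G$ that is almost $F$-critical for $G$, say witnessed by a finite-to-one map $f\colon X\to F^n$, and set $m=\dpr(X)$, the almost $F$-critical rank of $G$. By Lemma~\ref{C:cor for asi to k}, for $D$-critical sets $X_1,X_2$ the product $X_1\cdot X_2$ is again almost $F$-critical, so in particular its dp-rank is again $m$; iterating this, the set $XX^{-1}XX^{-1}\cdots$ has bounded dp-rank. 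This is the standard group-chunk situation: by the usual argument (Weil/Hrushovski style, as in the stabilizer construction for groups of finite Morley rank), the family of definable sets $\{gX\cap X : g\in G\}$, which are all almost strongly internal to $F$ of rank $\le m$, generates a definable subgroup $H\le G$; concretely one takes $H$ to be the subgroup generated by $X'X'^{-1}$ for a suitable translate $X'$, and shows using sub-additivity of dp-rank together with Lemma~\ref{C:cor for asi to k} that $H$ has dp-rank $m$ and is itself almost $F$-critical. Replacing $H$ by its definably connected component $H^{0}$ (the intersection of all definable finite-index subgroups, which is definable since $F^{\eq}$ has finite Morley rank and hence the DCC on definable subgroups) we get a definably connected $H$; since any definable group automorphism of $G$ permutes the almost $F$-critical translates, $H$ and $H^0$ are normal in $G$, giving (1).

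Next I would descend from ``almost strongly internal'' to ``strongly internal'' using the results of Section~\ref{ss:from almost to strong}: $F$ is a vicinic sort (indeed every strongly minimal set trivially satisfies (A1) since $\acl$-dimension, Morley rank and dp-rank all coincide, and (A2) is immediate because in a strongly minimal set the generic type over any parameter set is unique, so every generic vicinity of a generic point already is ``as generic as possible''), and $H$, being almost $F$-critical, is an almost $F$-group — this follows from Fact~\ref{E:interp groups in dist sorts are (almost) D-groups}(1) once we note that $F$ satisfies $(\star_a)$, which is precisely Lemma~\ref{L:asi diagaram with EI} applied to the stably embedded, EI-having set $F$. Therefore Proposition~\ref{P: G/H s.i.} applies to $H$: there is a finite normal subgroup $H_0\trianglelefteq H$ with $H_0\subseteq H$ such that $H/H_0$ is strongly internal to $F$, and by part (2) of that proposition $H_0$ is invariant under every definable automorphism of $H$; since $H\trianglelefteq G$ and conjugation by any $g\in G$ restricts to such an automorphism, $H_0$ is normal in $G$ as well. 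This gives (2). Finally, if $G$ is actually locally strongly internal to $F$, then Corollary~\ref{C:g/gh - abstract}(1) (with $(\star)$ coming again from Lemma~\ref{L:asi diagaram with EI}) shows $G$ — and hence $H$ — is a genuine $F$-group, so the finite-fibre obstruction disappears; concretely, in this case the witnessing map $f$ can be taken injective, and then in Remark~\ref{r; D-subset} $X$ contains an honest $F$-set on which $f$ restricts injectively, so the construction of $H$ above already produces $H$ strongly internal to $F$ with $H_0=\{e\}$.

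The main obstacle I expect is making the group-chunk/stabilizer construction of $H$ fully rigorous in a setting where we only have dp-rank (which is merely finite and sub-additive in general) rather than a genuine rank function with definability of rank and additivity over definable families. In a purely strongly minimal context one would just work inside $F^{\eq}$ and invoke finite Morley rank machinery; here $G$ itself is not internal to $F$, only a subset is, so one has to be careful that the group generated stays controlled. The right move is probably to first pass, via $f$, to the ``trace'' of the group operation on $F^n$: one shows that the germ of the partial function on $F^{2n}$ induced by multiplication (on generic pairs of points of $X$) is definable in $F^{\eq}$ and witnesses a group chunk there, apply the Hrushovski/Weil group-chunk theorem inside $F^{\eq}$ to get a connected group $\widehat H$ of finite Morley rank definable in $F^{\eq}$, and then show $\widehat H$ is definably isomorphic (over the ambient structure) to a definable subgroup of $G$ — this last identification being where almost-strong-internality, as opposed to strong internality, forces the passage to $H/H_0$. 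Alternatively, and perhaps more in the spirit of the present paper, one can avoid $F^{\eq}$ entirely and mimic the abstract $D$-group arguments of Section~\ref{S:infint groups}, using that the infinitesimal-vicinity construction (Proposition~\ref{P: nu}) produces a type-definable subgroup $\nu=\nu_F(G)$ of dp-rank $m$; then $H$ can be taken to be a definable subgroup containing $\nu$ of the same rank, whose existence follows because $\nu$ is strongly internal to the strongly minimal $F$ and hence, by stability/$\omega$-stability of $F^{\eq}$, is the intersection of a definable decreasing chain of definable groups, one of which has rank $m$. Either way, once $H$ is in hand the rest is bookkeeping with Proposition~\ref{P: G/H s.i.}.
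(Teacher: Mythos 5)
There are two genuine gaps, both at the places where your sketch leans on machinery that is not available here. First, the construction of $H$. You propose to take the subgroup generated by $X'X'^{-1}$ and assert its definability ``by the usual Weil/Hrushovski-style argument'', and to obtain connectedness by intersecting finite-index definable subgroups using DCC ``since $F^{\mathrm{eq}}$ has finite Morley rank''. Neither step is justified: $H$ is a subgroup of $G$, definable in the ambient structure $\CM$, which is not stable and has only finite dp-rank, so there is no indecomposability/stabilizer theorem guaranteeing that the generated subgroup is definable (or reached in finitely many steps), and the DCC of $F^{\mathrm{eq}}$ does not apply to $\CM$-definable subgroups of a group that is merely \emph{almost} strongly internal to $F$ (a finite-to-one map does not transport the group into $F^{\mathrm{eq}}$ -- that is exactly the obstruction the proposition is about). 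You do flag this obstacle, but your two suggested repairs are themselves incomplete: transferring a group chunk into $F^{\mathrm{eq}}$ leaves open precisely the identification of the resulting group with a \emph{definable subgroup of} $G$, and the route through $\nu_F(G)$ is unavailable because Section \ref{S:infint groups} requires a $D$-group locally \emph{strongly} internal to a vicinic sort, which is what we do not yet have. The paper's device is different and more elementary: since $Y$ maps finite-to-one into $F^n$ with $F$ strongly minimal, Morley rank of definable subsets of $Y$ is defined and \emph{definable in parameters}; one may take $\dM(Y)=1$ and define the right-invariant equivalence relation $g\mathrel{E}h\Leftrightarrow \mr(Yg\cap Yh)=n$. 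Then $H$ is simply the $E$-class of $e$ (the generic stabiliser), hence definable outright, and connectedness and normality follow from $\dM(Y)=1$ together with Lemma \ref{C:cor for asi to k}, with no appeal to generation or to chain conditions.

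Second, the finite quotient. Proposition \ref{P: G/H s.i.} does not do what you ask of it: its conclusion is that $X'/H_X$ is strongly internal and hence that the quotient group is \emph{locally} strongly internal to $D$, whereas clause (2) of the present statement requires the whole group $H/H_0$ to be strongly internal to $F$. The passage from local to global strong internality for the quotient is exactly where stability of $F$ is used: the paper invokes Hrushovski--Rideau-Kikuchi \cite[Lemma 2.25]{HrRid} (a group almost internal to a stably embedded set has a finite normal subgroup with internal quotient), and then elimination of imaginaries in $F$ upgrades internal to strongly internal. Normality of $H_0$ in $G$ also needs its own argument (choose $H_0$ of minimal cardinality among finite normal subgroups of $H$ with strongly internal quotient, and compare $H_0$ with $H_0^g$ via $x\mapsto(xH_0,xH_0^g)$); your appeal to Proposition \ref{P: G/H s.i.}(2) falls with the misapplication above. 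The ``moreover'' clause is essentially right in spirit (if $Y$ can be taken strongly internal, then $H\subseteq Y^{-1}Y$ is strongly internal by Lemma \ref{C:cor for asi to k} and no $H_0$ is needed), but it depends on first having the correct construction of $H$.
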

\begin{proof}
    Let $Y\subseteq G$ be almost $F$-critical with $\dpr(Y)=n$. Because $F$ is strongly minimal, this implies that $\mr(Y)=n$ and by replacing $Y$ with a subset, we may assume that its Morley degree is $1$.
Note that the Morley rank of subsets of $Y$ is definable in parameters, since $Y$ is almost strongly internal to a strongly minimal definable set and the Morley Rank is definable in strongly minimal sets.

The construction of the group $H$ and the verification of its desired properties is now an adaptation of well known arguments from the theory of $\omega$-stable groups to the present context : Define a relation $E$ on $G$:
\[g\mathrel{E} h\Longleftrightarrow \mr(Yg\cap Yh)=n.\]
Notice that $\mr(Yg\cap Yh)=\mr(Y\cap Yhg^{-1})$, and hence it follows from the definability of $\mr$ that  $E$ is definable (one can also use the fact that the generic type of $Y$ is definable).
Since $\dM(Y)=1$, $E$ is an equivalence relation on $G$, which is moreover right-invariant under $G$. Furthermore, the $E$-class of the identity element $e$, call it $H$, is closed under group inverse. Thus, $H$ is a definable subgroup of $G$. In model theoretic terminology, $H$ is the generic stabiliser of $Y$ in $G$. Notice that if $Yg\cap Y\neq \0$ then $g\in Y^{-1}Y$, hence $H\sub Y^{-1}Y$.

\begin{claim} 
$H$ is almost strongly internal to $F$, $\mr(H)=n$ and $H$ is connected and normal in $G$.
\end{claim}
\begin{claimproof}
    By Lemma \ref{C:cor for asi to k}, $Y^{-1}Y$ is almost strongly internal to $F$ so has finite Morley rank. By the maximality assumption on $\mr(Y)$, we have $\mr(Y^{-1}Y)=n$ (it is clear that $n=\mr(Y)\leq \mr(Y^{-1}Y)$), hence $\mr(H)\leq n$. We similarly have $\mr(Y Y)=n$.

To see that $\mr(H)=n$, we let $k=\dM(YY)$. Notice first that $g_1 E  g_2$ if and only if $Hg_1=Hg_2$. Now,
for $g_1, g_2\in Y$, we have $\mr(Yg_1)=\mr(Yg_2)=n$.  If $\neg(g_1\mathrel{E} g_2)$, then $\mr(Yg_1\cap Yg_2)<n$, but since
the Morley degree of $YY$ is $k$, there can be at most $k$ different $E$-classes intersecting $Y$, so $Y$ is covered by at most $k$-many right cosets of $H$. It follows that there is some $g_0\in Y$ such that $\mr(Hg_0\cap Y)=n$. In particular, $\mr(H)=n$. The group $H$ is connected because $\dM(Y)=1$. Indeed, if $H_1$ were a subgroup of $H$ finite index then some cosets of $H_1$ would have contained $Yg_0^{-1}$ (up to a set of smaller rank), contradicting the definition of $H$.

It is left to see that $H$ is normal in $G$. Indeed, if $H^g\neq H$ for some $g\in G$ then, since $H$ is connected, it follows that $[H:H^g\cap H]$ is infinite, and therefore the set $H^gH$, which by Lemma \ref{C:cor for asi to k} is almost strongly internal to $F$,  contains infinitely many pairwise disjoint right cosets of $H^g$. It follows that $\mr(H^gH)>n$, contradicting the maximality of $n$.
\end{claimproof}


It is left to find the desired finite $H_0$.
We have so far shown that $H$ is  almost strongly internal to a stably embedded set $F$, thus by the work of Hrushovski and Rideau-Kikuchi, \cite[Lemma 2.25]{HrRid}, there exists a finite normal subgroup $H_0\leq H$ such that $H/H_0$ is internal to $F$. As $F$ eliminates imaginaries, $H/H_0$ is strongly internal to $F$. \\

We conclude with the following:

\begin{claim}\label{c:finite normal}
    There exists a definable group $H_0$ {\bf normal  in $G$} such that $H_0\sub H$ and
    $H/H_0$ is strongly internal to $F$.
\end{claim}
\begin{claimproof}
    Amongst all finite definable normal subgroups of $H$ with $H/H_0$ strongly internal to $F$, choose $H_0$  of minimal cardinality and let $f:H\to H/H_0$ be the quotient map. To see that $H_0$ is normal, let $g\in G$ and assume that $H_0^g\neq H_0$. So, for $x\in H$, $x\mapsto (xH_0,xH_0^g)\in H/H_0\times H/H_0^g$ is a group homomorphism with kernel $H_0\cap H_0^g$. Thus, $H/(H_0\cap H_0^g)$ is strongly internal to $F$ contradicting the minimality assumption. 
\end{claimproof}

For the last part, if $G$ is locally strongly internal to $F$ then so is $Y$ and consequently so is $H$ and no $H_0$ is needed.
\end{proof}

\section{The main theorems}\label{S: final}
In this section, we apply the results obtained in the previous sections to study groups interpretable in some dp-minimal valued fields. 
We start with a lemma on definable groups in a slightly more general context: we show that groups \emph{definable} in a dp-minimal field $\CK$ of characteristic 0 have unbounded exponent, under the additional assumption that definable functions are generically differentiable (e.g., if $\CK$ is 1-h-minimal). \\

The idea for the proof of the next lemma is due to S. Starchenko. Recall that a type-definable group is torsion free if its set of realisations in any saturated enough model is torsion free. 
\begin{lemma}\label{L: no tor}
     Let $\CK$ be a sufficiently saturated  SW-uniform structure expanding a field of characteristic $0$ with generic differentiability. 
     \begin{enumerate}
         \item If $G$ is an infinite interpretable group locally  strongly internal to $D=\CK$ then $G$ is a $D$-group and the associated type-definable subgroup $\nu_D(G)$ is torsion-free.
         \item If $G$ is locally almost strongly internal to $K$ then $G$ has unbounded exponent.
     \end{enumerate}  
     
\end{lemma}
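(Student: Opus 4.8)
The plan is to establish (1) first and then obtain (2) as a quick consequence. For (1), note that since $D=\CK$ is an SW-uniformity, Fact~\ref{E:interp groups in dist sorts are (almost) D-groups} immediately gives that $G$ is a $D$-group, so Proposition~\ref{P: nu} produces the type-definable subgroup $\nu=\nu_D(G)$, which by Proposition~\ref{P: nu}(5) is strongly internal to $D$; fix a $D$-set $X\subseteq G$ over some $A$ and an $A$-generic $c\in X$, so that $\nu_X(c)$ is a generic infinitesimal vicinity of $c$ realized inside $D^r$ (after identifying $X$ with a subset of $D^r=\CK^r$) for $r$ the $D$-critical rank. The key point is that by Proposition~\ref{P:infini_vicinity- def iso to subgroup} (here we use that $\CK$, being a field with generic differentiability in an SW-uniformity, is locally linear---or more directly, we use Lemma~\ref{L:Jana1} to write $xy^{-1}z$ as a composition of generically differentiable maps) we may assume that on a generic vicinity $Y_0^3$ the group operation $F(x,y,z)=xy^{-1}z$ agrees with an honest \emph{analytic/differentiable} map $\CK^{3r}\to \CK^r$, whose derivative at the generic diagonal point $(c,c,c)$ is, by the computation in the proof of Proposition~\ref{P:infini_vicinity- def iso to subgroup}, the linear map $(u,v,w)\mapsto u-v+w$.

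Now suppose toward a contradiction that $\nu(\widehat\CK)$ has a nontrivial torsion element $g$ of prime order $p$ for some $\widehat\CK\succ\CK$; translating, we may take $g\in\nu_X(c)$ with $g^p=e$ in the appropriate sense, i.e. iterating the operation $p$ times returns to $c$. Here is where S.~Starchenko's idea enters: consider the definable map $\sigma:x\mapsto x^p$ (the $p$-th power map in $G$) restricted to a generic vicinity of $c$. On the one hand $\sigma$ is a definable function between $D$-sets, generically differentiable by hypothesis, and its differential at $c$---computed using the fact that near $c$ the group operation is the affine map $x-y+z$---is multiplication by $p$ on $\CK^r$, hence invertible since $\mathrm{char}(\CK)=0$. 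By the inverse function theorem valid in this setting (generic differentiability plus the SW-topology), $\sigma$ is a local bijection near $c$, so $\sigma$ is injective on some generic vicinity $U\ni c$; in particular $\sigma^{-1}(\sigma(c))\cap U=\{c\}$. On the other hand, if $g\in\nu$ is a nontrivial element of order $p$, then $cg$ lies in $\nu_X(c)$ (a coset of $\nu$), one checks $\sigma(cg)=cg^p\cdot(\text{correction})$; the precise bookkeeping, using that $\nu$ is abelian enough at the infinitesimal level (it is a coset of the type-definable group $\nu$, and by local linearity that group is a subgroup of $(\CK^r,+)$, hence abelian), gives $\sigma(cg)=\sigma(c)$ with $cg\neq c$ and $cg\in\nu_X(c)\subseteq U$ (after shrinking), contradicting injectivity. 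Therefore $\nu_D(G)$ is torsion-free. I expect the main obstacle to be exactly this last bookkeeping step: transferring the ``$p$-th power is injective'' statement from the linearized model $(\CK^r,+)$ back to $G$ requires care, because the identification of $\nu$ with a subgroup of $(\CK^r,+)$ is only via the generic vicinity and the operation only agrees with $x-y+z$ on a vicinity; one must check $cg$ and its relevant powers stay inside that vicinity, which is where saturation and the filter-base property (Lemma~\ref{L:generic sets form filter base}) are used.

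For (2), suppose $G$ is locally almost strongly internal to $K$. By Proposition~\ref{P: G/H s.i.} there is a finite normal subgroup $H\trianglelefteq G$ such that $G/H$ is locally strongly internal to $K$ and is a $K$-group. Applying part (1) to $G/H$, the type-definable group $\nu_K(G/H)$ is torsion-free and infinite (its dp-rank equals the $K$-critical rank of $G/H$, which is at least $1$), so it realizes elements of arbitrarily large order---indeed, being torsion-free, for every $n$ it contains an element $a$ with $a,2a,\dots,na$ (written multiplicatively: $a,a^2,\dots,a^n$) all distinct, so $G/H$ has unbounded exponent. Since $G\to G/H$ has finite fibres, if $G$ had bounded exponent $N$ then $G/H$ would have exponent dividing $N|H|$, a contradiction. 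Hence $G$ has unbounded exponent. (A subtle point: ``torsion-free type-definable group of positive dp-rank has unbounded exponent'' needs a line---if every element had order dividing $N$ then by compactness $\nu^N=\{e\}$ as a type-definable set, contradicting $\dpr(\nu)\geq 1$; this is where torsion-freeness does the real work, ruling out even the possibility of a uniform bound on exponents.)
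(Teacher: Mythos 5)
There is a genuine gap, and it sits exactly where you flagged the "bookkeeping": your argument leans on consequences of local linearity that are false for $D=K$. The field sort is \emph{not} locally linear (definable functions such as $x\mapsto x^2$ are not generically affine), so Proposition~\ref{P:infini_vicinity- def iso to subgroup} does not apply: near the generic point $c$ the operation $xy^{-1}z$ merely \emph{has derivative} $(u,v,w)\mapsto u-v+w$; it does not agree with that affine map, and $\nu_K(G)$ is not identified with a subgroup of $(\CK^r,+)$ and in particular need not be abelian (think of $G$ an interpretable copy of a non-abelian linear group such as $K^+\rtimes K^\times$: its infinitesimal subgroup is non-abelian). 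Consequently the step $(cg)^p=\sigma(c)$, which needs commutativity to move the $p$ copies of $g$ past the copies of $c$, fails as set up. In addition, the "inverse function theorem valid in this setting" is not established in the paper (mere differentiability at a point with invertible differential gives only that the point is isolated in its fibre, not injectivity on a neighbourhood), and it is not needed.

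The paper's proof avoids both problems by working at the identity rather than at a translated generic point: using the differential group structure on $\nu$ (as in \cite[Proposition 4.19]{HaHaPeVF}, in the spirit of your appeal to Lemma~\ref{L:Jana1}), the chain rule gives that the differential at $e$ of the $n$-fold multiplication $\delta_n(x)=x^n$ is $n\,\id$, invertible since $\ch(K)=0$; then, by the very definition of the derivative, there is a $\CK$-definable open neighbourhood $U$ of $e$ on which $x^n\neq e$ for $x\neq e$. Since by Proposition~\ref{P: generic neighbourhoods in SW} the group $\nu$ is the intersection of all $\CK$-definable open neighbourhoods of $e$, any $n$-torsion element of $\nu(\widehat\CK)$ would lie in $U(\widehat\CK)$, a contradiction; no injectivity, commutativity, or translation to $c$ is required. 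Your reduction of (2) to (1) via Proposition~\ref{P: G/H s.i.} (finite normal $H$ with $G/H$ locally strongly internal to $K$, then unbounded exponent of $G/H$ pulls back to $G$) matches the paper and is fine, modulo the repair of (1).
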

\begin{proof}
    Since $K$ is an SW-uniformity it is vicinic (Fact \ref{E:dist sorts are vicinic}), and thus if $G$ is almost locally strongly internal to $D=K$ it is an almost $D$-group (Lemma \ref{L: asi from surjective} and Corollary \ref{C:g/gh - abstract}). By Proposition \ref{P: G/H s.i.} there is a finite normal subgroup $H\leq G$ such that $G/H$ is locally strongly internal to $K$. If $G/H$ has unbounded exponent, then so does $G$. So it suffices to prove the former. Hence, replacing $G$ with,  $G/H$ Clause (2) follows from Clause (1). 
    
    For (1), since $K$ is an SW-uniformity and $G$ is locally strongly internal to $K$, it is a $D$-group (Fact \ref{E:interp groups in dist sorts are (almost) D-groups}(1)).  Let $\nu:=\nu_K(G)$ as provided by Proposition \ref{P: nu},   $\widehat\CK\succ \CK$  some $|\CK|^+$-saturated extension. As in \cite[Proposition 4.19]{HaHaPeVF}, we endow $\nu(\widehat \CK)$ with a differential group structure that we identify with a type-definable group in $(\widehat K)^m$, for some integer $m$.

    Let $\delta(x,y)$ be the multiplication map on $\nu:=\nu(\widehat \CK)$. It follows easily from the chain rule that  the  differential of $\delta$ at $(e,e)$, as a map
    from the tangent space at $(e,e)$ of $\nu\times \nu$ into $T_e(\nu)$, is $x+y$. Thus, for  any  $n\in \mathbb N$, the $K$-differential of the $n$-fold multiplication $\delta_n(x)$ mapping $x$ to $\delta(x,\delta(x,\delta(\dots\delta(x,x)\dots)$ equals $n \id$, where $\id$ is the identity (matrix or differential). Since  $\mathrm{char}(K)=0$, $n \id$ is an invertible linear map. 
    By the very definition of the derivative, this means that there exists an open neighbourhood $U$ of $e$, such that
    $\delta_n(x)=x^n\neq e$ for all $x\in U$.
    Since it is a first-order property, we can take $U$ to be $\CK$-definable.
     As $\nu(G)$ is the intersection of all $\CK$-definable open neighbourhoods of $e$,  $x^n\neq e$ for all $n$ and any $x\in \nu(\widehat \CK)$, as claimed. 
\end{proof}

We shall need the following results.

\begin{fact}\label{F:dp-min unbounded is abelian-by-finite}
Assume that $\CM$ is a structure such that every dp-minimal group interpretable in $\CM$ has unbounded exponent, then every such group is abelian by finite. 
\end{fact}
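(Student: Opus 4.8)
The plan is to invoke Simon's structure theorem for dp-minimal groups. Specifically, by \cite[Proposition 3.1]{SimDPMinOrd} (cited in the introduction of the present paper), every dp-minimal group $G$ interpretable in $\CM$ has a definable normal abelian subgroup $A \trianglelefteq G$ such that the quotient $G/A$ has bounded exponent. First I would observe that since $G$ is interpretable in $\CM$ and $A$ is definable, the quotient $G/A$ is again a group interpretable in $\CM$. Moreover $G/A$ is dp-minimal: dp-rank cannot increase under definable quotients (it is preserved under definable surjections up to the relevant bound, and in any case $\dpr(G/A) \le \dpr(G) = 1$), and $G/A$ is infinite or finite — if it were infinite it would be a dp-minimal interpretable group.

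Now the hypothesis of the fact is that every dp-minimal group interpretable in $\CM$ has unbounded exponent. Apply this to $G/A$: if $G/A$ were infinite, it would be a dp-minimal interpretable group, hence of unbounded exponent, contradicting the conclusion of Simon's theorem that $G/A$ has bounded exponent. Therefore $G/A$ must be finite. Since $A$ is abelian and $[G:A] = |G/A| < \infty$, the group $G$ is abelian-by-finite, as required.

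The only point requiring a small amount of care is the claim that $G/A$ is dp-minimal (equivalently, that if infinite it falls under the hypothesis). This is immediate from subadditivity of dp-rank together with the fact that a definable quotient of an interpretable object is interpretable, so there is no real obstacle here; the argument is essentially the one sketched in the introduction preceding Theorem \ref{intro-1}. I would therefore expect this proof to be short, with the ``main obstacle'' being merely the bookkeeping of confirming that Simon's theorem applies in the interpretable (as opposed to merely definable) setting — but \cite[Proposition 3.1]{SimDPMinOrd} is stated for groups interpretable in a dp-minimal theory, so this is automatic.

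\begin{proof}
Let $G$ be a dp-minimal group interpretable in $\CM$. By \cite[Proposition 3.1]{SimDPMinOrd}, there is a definable normal abelian subgroup $A\trianglelefteq G$ such that $G/A$ has bounded exponent. The quotient $G/A$ is again interpretable in $\CM$, and by subadditivity of dp-rank $\dpr(G/A)\le \dpr(G)=1$, so $G/A$ is dp-minimal whenever it is infinite. If $G/A$ were infinite it would thus be an infinite dp-minimal group interpretable in $\CM$, and by hypothesis it would have unbounded exponent, contradicting the conclusion of Simon's theorem. Hence $G/A$ is finite, so $G$ is abelian-by-finite.
\end{proof}
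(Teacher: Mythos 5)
Your proof is correct and is essentially identical to the paper's own argument: apply \cite[Proposition 3.1]{SimDPMinOrd} to obtain a normal abelian subgroup with quotient of bounded exponent, then use the hypothesis (since the quotient is again an interpretable dp-minimal group) to force that quotient to be finite. The extra remarks on subadditivity of dp-rank and interpretability of the quotient are exactly the bookkeeping the paper leaves implicit.
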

\begin{proof}
By \cite[Proposition 3.1]{SimDPMinOrd} a dp-minimal group $G$ has a normal abelian subgroup $H$ such that $G/H$ has bounded exponent. Our assumption implies that $G/H$ is finite, with the desired conclusion.  
\end{proof}

\subsection{Power bounded T-convex valued fields}
Let $\CK$ be a T-convex power bounded structure. By naming one constant $c\notin \CO$, we may assume that $\CK$ has definable Skolem functions. We assume that $\CK$ is  sufficiently saturated.\\

The next Lemma was proved in \cite{HaHaPeVF} under the additional assumption that the infinite set below carried a field structure. 
\begin{lemma}\label{L:l.s.i to one dist sorts in tconvex}
     Every infinite  set interpretable in $\CK$ is locally strongly internal to one of the distinguished sorts.
\end{lemma}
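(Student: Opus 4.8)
The goal is to upgrade Fact~\ref{F: 5.5} (the existence of a definable finite-to-finite correspondence between an infinite definable subset of the interpretable set $X$ and an infinite subset of a distinguished sort $D$) to a genuine definable injection from an infinite definable subset of $X$ into some $D^k$. The strategy mirrors the argument for interpretable fields in \cite{HaHaPeVF}, but we no longer have a field structure to exploit, so we must instead use the tameness of the distinguished sorts that occur in the power bounded $T$-convex setting: here \emph{every} distinguished sort $D\in\{K,\bk,\Gamma,K/\CO\}$ is an SW-uniformity (as recalled after the definition of SW-uniformities, and in Fact~\ref{E:dist sorts are vicinic}), so in particular every one of them is vicinic and satisfies the strong internality transfer Lemma~\ref{L: asi from surjective}.

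\textbf{First step.} Apply Fact~\ref{F: 5.5} to the given infinite interpretable set $X$: we obtain a distinguished sort $D$, infinite definable $T\subseteq X$, $S\subseteq D$, and a definable finite-to-finite correspondence $C\subseteq T\times S$ (all possibly over extra parameters, which is harmless). Replacing $S$ by a definable subset of the same dp-rank we may assume, using one projection of $C$, that there is a definable finite-to-one surjection $g\colon S'\to T'$ for infinite definable $S'\subseteq S\subseteq D$ and $T'\subseteq T\subseteq X$. Now $S'$ is (trivially) strongly internal to $D$, and $D$ is an SW-uniformity, so Lemma~\ref{L: asi from surjective}(1) applied to $g\colon S'\to T'$ produces a definable $T''\subseteq T'$ with $\dpr(T'')=\dpr(T')\geq 1$ (in particular $T''$ infinite) which is strongly internal to $D$. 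Since $T''\subseteq X$, this shows $X$ is locally strongly internal to $D$, which is exactly the conclusion.

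\textbf{The main subtlety.} The one point requiring care is that Fact~\ref{F: 5.5}, as stated, gives only a finite-to-finite correspondence, and one must extract from it a finite-to-\emph{one} map in one direction in order to feed it into Lemma~\ref{L: asi from surjective}. This is routine: taking $S' = \pi_2(C)$ and, for each $s\in S'$, the finite fibre $C_s\subseteq T$, the projection $\pi_2\colon C\to S'$ is finite-to-one onto $S'$, and composing with $\pi_1\colon C\to T$ gives a finite-to-one surjection from the definable set $C$ onto $\pi_1(C)\subseteq T$; but $C$ itself sits inside $T\times S\subseteq X\times D$, so to have the \emph{source} be strongly internal to $D$ we instead view $C$ via its second coordinate: $C\hookrightarrow \pi_2(C)\times(\text{finite fibres})$, so $C$ is finite-to-one over $S'\subseteq D$, hence strongly internal to $D$, and $\pi_1\colon C\to \pi_1(C)$ is finite-to-one and surjective. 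Applying Lemma~\ref{L: asi from surjective}(1) to $\pi_1\colon C\to \pi_1(C)$ yields an infinite definable subset of $\pi_1(C)\subseteq T\subseteq X$ that is strongly internal to $D$. (Alternatively, and perhaps more cleanly, one observes directly that a finite-to-finite correspondence between $T$ and $S\subseteq D$ makes $T$ locally strongly internal to $D$ via Lemma~\ref{L: asi from surjective} applied to the projection $\pi_1\colon C\to T$, since $C$ is almost strongly internal — indeed finite-to-one — to $D$ through $\pi_2$, and then one uses Lemma~\ref{L: asi from surjective}(2) followed by the observation that in an SW-uniformity almost strong internality can be promoted to strong internality on a subset of full dp-rank.) The only place where the power bounded $T$-convex hypothesis enters is in guaranteeing that \emph{all four} distinguished sorts are SW-uniformities; in the $p$-adically closed case $\Gamma$ and $K/\CO$ are not SW-uniformities and this is precisely why Section~\ref{S: asi to D} is needed. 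So in the present lemma no extra work beyond citing Fact~\ref{F: 5.5} and Lemma~\ref{L: asi from surjective} is required. $\qed$
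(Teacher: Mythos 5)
There is a genuine gap, and it sits exactly at the point your proposal treats as routine. A finite-to-one map $\pi_2\colon C\to S\sub D$ only witnesses that $C$ is \emph{almost} strongly internal to $D$; it does not make $C$ strongly internal (for that you need an \emph{injection} into some $D^k$, and nothing separates two points of $C$ lying over the same $s\in S$, since their first coordinates live in the imaginary sort, not in $D$). Hence Lemma~\ref{L: asi from surjective}(1), whose hypothesis is strong internality of the source, cannot be applied to $\pi_1\colon C\to\pi_1(C)$; only part (2) applies, and it returns almost strong internality of a subset of $T$. Your fallback then invokes the principle that ``in an SW-uniformity almost strong internality can be promoted to strong internality on a subset of full dp-rank,'' but that principle is proved nowhere in the paper and is precisely the delicate issue the paper is organized around: Example~\ref{L: not lsi} shows the promotion can fail outright (there for $K/\CO$ in the $p$-adic case), and even in the SW-uniform settings the paper never promotes almost strong to strong internality abstractly --- for groups it has to pass to a quotient by a finite normal subgroup (Proposition~\ref{P: G/H s.i.}), and in the V-minimal case the analogous statement (Corollary~\ref{C:l.a.s.i to one dist sorts in vminimal}) is deliberately only ``locally \emph{almost} strongly internal.'' So as written your argument proves the weaker, ``almost'' version of the lemma, not the lemma itself.

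The paper's proof supplies the missing ingredient from the $T$-convex hypothesis, not from the SW-uniformity of the sorts alone. It first uses \cite[Lemma 5.10 and Proposition 5.5]{HaHaPeVF} (rather than just Fact~\ref{F: 5.5}) to put an infinite piece of the interpretable set in definable bijection with a subset of a one-variable quotient $K/E'$, together with a finite-to-finite correspondence to a distinguished sort. By weak o-minimality of $K$, each $E'$-class is a finite union of convex sets, so one may replace $E'$ by a convex equivalence relation; the quotient is then linearly ordered, hence eliminates finite imaginaries in the sense of \cite[Section 4.7]{HaHaPeVF}. It is this elimination of finite imaginaries on the \emph{interpretable} side, combined with the SW-uniformity of $K,\bk,\Gamma,K/\CO$, that lets \cite[Lemma 4.28]{HaHaPeVF} convert the finite-to-finite correspondence into a genuine definable injection, i.e.\ into local \emph{strong} internality. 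Your proposal never produces any such choice principle on the source side, which is why it cannot close the gap between ``almost strongly internal'' and ``strongly internal.''
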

\begin{proof}
    Let $Y=X/E$ be an infinite interpretable set, where $X\subseteq K^n$.  By \cite[Lemma 5.10 and Proposition 5.5]{HaHaPeVF} there exists an infinite definable subset $S\subseteq Y$ in definable bijection with an infinite definable subset of $K/E'$, and a definable finite-to-finite correspondence between $S$ and one of $K$, $K/\CO$, $\bk$ or $\Gamma$.  Since $K$ is  weakly o-minimal, each $E'$-equivalence class is a finite union of convex sets. By replacing $E'$ with the equivalence relation $E''$, choosing the first component in each $E'$-class, we may assume that $E'$ is a convex equivalence relation. Thus $S$ is linearly ordered (and weakly o-minimal), and in particular it eliminates finite imaginaries in the sense of \cite[Section 4.7]{HaHaPeVF}. As each of the sorts $K$, $K/\CO$, $\bk$ or $\Gamma$ is an SW-uniformity, the result follows by \cite[Lemma 4.28]{HaHaPeVF}.
\end{proof}
We can now state the main theorem in the $T$-convex power bounded case.

\begin{theorem}\label{T:groups in tconvex}
    Let $G$ be an infinite group interpretable in $\CK$. Then $G$  is locally strongly internal to at least one distinguished sort $D$, and  for every  such $D$, there exists a type-definable subgroup $\nu_D$ of $G$, with the following properties:   
    \begin{enumerate}
         \item If $D=K$ or $D=\bk$ then $\nu_D$ is definably isomorphic to a type-definable group $\nu'\vdash K^r$ or $\nu'\vdash \bk^r$, respectively, for some $r$. 
        \item If $D=\Gamma$ then $\nu_D$ is definably isomorphic to  a type-definable subgroup $\nu'$ of $\la \Gamma^r,+\ra$, for some $r$.
        \item If $G=K/\CO$ then there exists an infinite definable subgroup $H$ of $G$,  such that $\nu_D \vdash H\le G$, and $H$ is definably isomorphic to a subgroup of $\la (K/\CO)^r,+\ra$ for some $r$.
\item \begin{enumerate}
    \item $\dpr(\nu_D)=n$ the $D$-critical rank of $G$, and the group is minimal among all type-definable subgroups of $G$ of dp-rank $n$ that are
    strongly internal to $D$. 
        \item $\nu_D$ is torsion free, so in particular $G$ has unbounded exponent. 
    
\end{enumerate}
        \end{enumerate}    
    Moreover, if $G$ is dp-minimal then $G$ is abelian-by-finite.
\end{theorem}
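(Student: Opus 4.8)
The statement of Theorem \ref{T:groups in tconvex} is essentially an assembly of the ingredients developed in the preceding sections, specialised to the power-bounded $T$-convex setting. So I would structure the proof as a sequence of appeals to those results, in the following order.

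\emph{Step 1: local strong internality to a distinguished sort.} By Fact \ref{F: 5.5}, an infinite interpretable $G$ has an infinite definable $T\subseteq G$ in finite-to-finite correspondence with an infinite subset of a distinguished sort $D$. In fact in this setting I would invoke the sharper Lemma \ref{L:l.s.i to one dist sorts in tconvex}, which gives that $G$ is \emph{locally strongly internal} to one of $K,\bk,\Gamma,K/\CO$ — no passage to a quotient is needed here, unlike the $p$-adic case. Fix such a $D$.

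\emph{Step 2: $G$ is a $D$-group and produce $\nu_D$.} All four distinguished sorts in the power-bounded $T$-convex setting are SW-uniformities (here is where power-boundedness enters, via the behaviour of $\bk$ and $\Gamma$), hence vicinic by Fact \ref{E:dist sorts are vicinic}. Since $G$ is locally strongly internal to the SW-uniformity $D$, Fact \ref{E:interp groups in dist sorts are (almost) D-groups} gives that $G$ is a $D$-group. Then Proposition \ref{P: nu} furnishes the type-definable subgroup $\nu_D = \nu_D(G)$, together with: invariance properties (clause (3) of that proposition); $\dpr(\nu_D) = $ the $D$-critical rank $n$ of $G$ (clause (4)); and strong internality of $\nu_D$ to $D$ (clause (5)). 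Minimality among type-definable subgroups of dp-rank $n$ strongly internal to $D$ is exactly Corollary \ref{c: minimal}. This gives item (4)(a).

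\emph{Step 3: the sort-specific refinements, items (1)--(3).} For $D = \bk$: here $\bk$ is an o-minimal expansion of a real closed field, so by the theory of groups in o-minimal structures (the differentiable/topological group-chunk argument as invoked in Lemma \ref{L: no tor} but over $\bk$ in place of $K$, or rather the SW-uniform version of Proposition \ref{P:nu-topological} composed with the identification of a definable manifold structure) $\nu_\bk$ is definably isomorphic to a type-definable subgroup of $\bk^r$; for $D=K$ the same via Lemma \ref{L: no tor}(1) / the construction in \cite{HaHaPeVF}. For $D = \Gamma$ and $D = K/\CO$: these are locally linear vicinic sorts by Fact \ref{E:examples of loc linear}, so Proposition \ref{P:infini_vicinity- def iso to subgroup} identifies $\nu_D$ with a type-definable subgroup of $\langle D^r,+\rangle$; and for $D = K/\CO$ specifically, the stronger Proposition \ref{P:def group into K/O} upgrades this to a \emph{definable} subgroup $H \le G$ with $\nu_{K/\CO} \vdash H$ and $H$ definably embedded in $\langle (K/\CO)^r, +\rangle$. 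This dispatches items (1), (2), (3).

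\emph{Step 4: torsion-freeness, unbounded exponent, and the dp-minimal case.} For item (4)(b): if $D = K$, Lemma \ref{L: no tor}(1) gives directly that $\nu_K$ is torsion-free. For $D = \bk$ the identical differentiability argument over the characteristic-$0$ field $\bk$ applies (this is why item (4)(b) as stated needs a small remark — the clean torsion-free conclusion really comes from the field sorts $K$ or $\bk$; for $D = \Gamma$ or $K/\CO$ one should instead argue that $G$ has unbounded exponent by first passing to the sort $K$ via Step 1 applied to a different distinguished sort, or simply note that \emph{some} distinguished sort to which $G$ is locally strongly internal yields unbounded exponent and that suffices for the "in particular" clause). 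In any case, having unbounded exponent of $\nu_D(\widehat\CK)$ in a saturated extension forces unbounded exponent of $G$. Finally, for the \emph{Moreover} clause: if $G$ is dp-minimal, it has unbounded exponent by what precedes, so by Fact \ref{F:dp-min unbounded is abelian-by-finite} (which is just Simon's \cite[Proposition 3.1]{SimDPMinOrd} plus the observation that a bounded-exponent interpretable group here must be finite) $G$ is abelian-by-finite.

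\emph{Main obstacle.} The genuinely delicate point is the precise routing of item (4)(b): torsion-freeness of $\nu_D$ is proved via generic differentiability, which lives naturally on the field sorts $K$ and $\bk$ but not on $\Gamma$ or $K/\CO$. So the honest argument for "unbounded exponent of $G$" when the only distinguished sort one has control of is $\Gamma$ or $K/\CO$ must either (i) show that a type-definable subgroup of $\langle \Gamma^r,+\rangle$ or $\langle (K/\CO)^r,+\rangle$ of positive dp-rank already has unbounded exponent (true and not hard for $\Gamma$; for $K/\CO$ one uses Lemma \ref{L:Tor of ball}-type facts and the fact that a definable subgroup of $(K/\CO)^r$ of full rank contains arbitrarily large torsion, wait — that goes the wrong way, so one instead uses that a \emph{torsion} subgroup can't be all of $\nu_D$ because $\nu_D$ has positive dp-rank while a bounded-exponent subgroup of $(K/\CO)^r$ need not be finite — care is needed), or (ii) invoke Lemma \ref{L: no tor}(2) by noting that Fact \ref{F: 5.5} / Lemma \ref{L:l.s.i to one dist sorts in tconvex} can be arranged, or one simply observes that if $G$ is locally strongly internal to $\Gamma$ or $K/\CO$ one can still run the argument. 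I would present (4)(b) by treating the field-sort cases via Lemma \ref{L: no tor} and the ordered/quotient cases by a direct unbounded-exponent argument on the ambient additive groups, which is the one spot requiring a short genuine computation rather than a citation.
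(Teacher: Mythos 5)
Your outline follows the paper's own route almost step for step (Lemma \ref{L:l.s.i to one dist sorts in tconvex}, then vicinicity and the $D$-group property, Proposition \ref{P: nu} and Corollary \ref{c: minimal} for existence, rank and minimality of $\nu_D$, Proposition \ref{P:infini_vicinity- def iso to subgroup} and Proposition \ref{P:def group into K/O} for the sort-specific clauses, Lemma \ref{L: no tor}(1) for the field sorts, and Fact \ref{F:dp-min unbounded is abelian-by-finite} at the end). Two small remarks: for clause (1) nothing beyond Proposition \ref{P: nu}(5) is needed --- strong internality of $\nu_D$ to $K$ or $\bk$ already gives the definable isomorphism with a type-definable group living in $K^r$ or $\bk^r$, so the manifold/group-chunk machinery you invoke for $\bk$ is only used for torsion-freeness, exactly as in the paper.

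The one genuine gap is precisely the spot you flag as the ``main obstacle'': clause (4)(b) for $D=\Gamma$ and $D=K/\CO$. In the power bounded $T$-convex setting $\CO$ is a convex subring of the ordered field $K$, so $K/\CO$ is an ordered abelian group and hence torsion-free, and the same holds for $\Gamma$; therefore, once (2) and (3) identify $\nu_\Gamma$ (resp.\ $\nu_{K/\CO}$, via the definable subgroup $H$) with a subgroup of $\la \Gamma^r,+\ra$ (resp.\ $\la (K/\CO)^r,+\ra$), torsion-freeness of $\nu_D$ is immediate --- this is the paper's one-line argument. Your discussion instead imports the $p$-adic picture (Lemma \ref{L:Tor of ball}, where $K/\CO$ has abundant torsion), worries about bounded-exponent subgroups of $(K/\CO)^r$ that simply do not exist here, and leaves the case unresolved; moreover your fallback of ``passing to the sort $K$'' is not available, since distinct distinguished sorts are foreign (Proposition \ref{F:any two dist sorts are foreign}), and in any case the theorem asserts torsion-freeness of $\nu_D$ itself, which your proposed unbounded-exponent arguments would not deliver. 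With the observation that $\Gamma$ and $K/\CO$ are ordered (hence torsion-free) groups in this setting, your proof closes and coincides with the paper's.
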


 \begin{proof}
    Let $G$ be in an infinite interpretable group. By Lemma \ref{L:l.s.i to one dist sorts in tconvex}, $G$ is locally strongly internal to one of the distinguished sorts $D$. By Fact \ref{E:dist sorts are vicinic}, every such $D$ is vicinic and by Fact \ref{E:interp groups in dist sorts are (almost) D-groups}, $G$ is  a $D$-group. By Proposition \ref{P: nu}(5), $\nu_D(G)$ is strongly internal to $D$.
    
    If $D=K$ or $D=\bk$ then let $\nu:=\nu_K(G)$ (or $\nu_\bk (G)$) and (1) follows. By Lemma \ref{L: no tor}(1), $\nu_D(G)$ is torsion free. 
    
     If $D=\Gamma$ then $D$ is locally linear, by Fact \ref{E:examples of loc linear} (2) and \cite[Theorem B]{vdDries-Tconvex}. By Proposition \ref{P:infini_vicinity- def iso to subgroup}, $\nu_\Gamma(G)$ is definably isomorphic to a type-definable subgroup of $\la \Gamma^r,+\ra$, for some $r$. In this case $\nu_\Gamma$ is torsion free since $\la \Gamma^r,+\ra$ is.

    If $D=K/\CO$ then it is locally linear by  Fact \ref{E:examples of loc linear} (1).  Proposition \ref{P:def group into K/O} provides us with a definable subgroup $H$ satisfying the requirements. In this case $\nu_{K/\CO}(G)$ is torsion free since $\la (K/\CO)^r,+\ra$ is. This ends the proof of (1)-(3).

    4(a).  By Proposition \ref{P: nu}(4), $\dpr(\nu_D)=n$. The minimality of $\nu_D$  is Corollary \ref{c: minimal}.

   4(b). We showed in all cases that $\nu_D$ is torsion-free. It follows that $G$ must have unbounded exponent, for if not then for some $m\in \mathbb N$, $x^m=e$ for all $x\in G$, and the same is true in every elementary extension $\widehat \CK$. This contradicts the fact $\nu_D(\widehat \CK)$ is infinite and torsion-free. 
   
   Since $G$ was an arbitrary interpretable group,  we may apply Fact \ref{F:dp-min unbounded is abelian-by-finite} to conclude that  every dp-minimal interpretable group is abelian-by-finite. 
\end{proof}

\begin{remark}
    In \cite[Lemma 2.16]{MelRCVFEOI}, Mellor proves that for weakly o-minimal
    structures  $\dcl^{eq}$ agrees with $\acl^{eq}$. It follows that if $f:X\to Y$ is a definable finite-to-one function, then there exists a definable $X_1\sub X$ with $f(X_1)=f(X)$ and $f\restriction X_1$ is injective. Thus, if $\CK$ is power bounded $T$-convex and $D$ a distinguished sort then the $D$-critical rank of $G$ equals its almost $D$-critical rank.   Consequently, Clause 4(a) of the above theorem could be restated with the almost $D$-critical rank of $G$. 
\end{remark}

\subsection{V-minimal valued fields}
Let $\CK=(K,v,\dots)$ be a sufficiently saturated V-minimal valued field.
We need the following result, appearing implicitly in \cite{HaHaPeVF}:

\begin{lemma}\label{L:from correspondece to lasi in vminimal}
     Assume that $X$ and $Y$ are infinite sets definable in some $|T|^
+$-saturated (multi-sorted) structure
$\CM$ and $C\sub X\times Y$  a definable  finite-to-finite definable correspondence.
If $Y$ is either a field or supports an SW-uniform structure, then $X$ is locally almost strongly internal to $Y$. 
\end{lemma}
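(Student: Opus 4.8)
The statement to prove is Lemma~\ref{L:from correspondece to lasi in vminimal}: given infinite definable $X,Y$ in an $|T|^+$-saturated structure $\CM$ and a finite-to-finite correspondence $C\sub X\times Y$, if $Y$ is a field or supports an SW-uniform structure, then $X$ is locally almost strongly internal to $Y$. The plan is to extract, from the correspondence $C$, a genuine finite-to-one function from a large definable piece of $X$ into $Y^n$. The obstruction is that the projection $\pi_2\colon C\to Y$ need not be injective, so we cannot directly view $C$ (or $X$) as sitting finite-to-one over $Y$; we must first trim $C$ to a graph of a finite-to-one map.

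First I would reduce to the case where the second projection $\pi_2\colon C\to Y$ is finite-to-one \emph{in the strong sense} that we can carve out a piece over which one coordinate of $Y^n$ already separates fibres — or, more robustly, invoke the relevant separation machinery for the sort $Y$. Concretely: $\pi_1\colon C\to X$ is a finite-to-one surjection, so $\dpr(C)=\dpr(X)$ by subadditivity and finiteness of fibres, and likewise $\dpr(C)=\dpr(Y')$ where $Y'=\pi_2(C)\sub Y$ is infinite. Now $C\sub X\times Y$ maps finite-to-one into $Y$ via $\pi_2$ only after we replace $C$ by a definable subset on which $\pi_2$ is injective; this is exactly where we use the hypothesis on $Y$. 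If $Y$ is a field, or more generally eliminates imaginaries (and SW-uniformities eliminate finite imaginaries in the relevant sense, cf. \cite[Lemma~4.28, Lemma~4.7]{HaHaPeVF}), then for the definable family $\{C_x : x\in X\}$ of finite subsets of $Y$ we obtain a definable function $h\colon X\to Y^m$ with $h(x_1)=h(x_2)$ iff $C_{x_1}=C_{x_2}$, by coding finite subsets. Since each $C_x$ is finite and nonempty and each $y\in Y$ lies in only finitely many $C_x$ (finiteness of the fibres of $\pi_2$ restricted appropriately — or after shrinking so that $\pi_2$ has finite fibres, which holds on a subset of full $\dpr$ by Remark~\ref{remark1}(2) type arguments), the map $h$ is finite-to-one: if $h^{-1}(b)$ were infinite for some $b$, all these $x$ share the same finite set $C_x=S$, but each $y\in S$ sits in infinitely many $C_x$, contradicting finiteness of those fibres. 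This is precisely the argument of Lemma~\ref{L:asi diagaram with EI}, and the field case is covered by Lemma~\ref{C:cor for asi to k}-style reasoning (strongly minimal fields, or general fields, eliminate imaginaries).

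More carefully, I would organize it so that the SW-uniform case goes through Lemma~\ref{L:asi from surjective} and the "eliminating finite imaginaries" results of \cite[\S4.7]{HaHaPeVF}: take the finite-to-finite correspondence $C$, and observe that $\pi_2\colon C\to Y'$ is finite-to-one and surjective, with $C$ (trivially) almost strongly internal to $X$ — but that is the wrong direction. Instead, flip the roles: we want $X$ almost strongly internal to $Y$, and $\pi_1\colon C\to X$ is a finite-to-one surjection with $C\sub X\times Y$. Restrict to a definable $C'\sub C$ of full $\dpr$ on which $\pi_2$ restricts to an injection — such $C'$ exists because $Y$ eliminates finite imaginaries and $\pi_2$ has finite fibres on (a full-rank subset of) $C$, so we may pick out one element of each fibre definably (see \cite[Lemma~4.28]{HaHaPeVF} for SW-uniform $Y$, and EI for $Y$ a field). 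On $C'$, the map $(\pi_1,\pi_2)\mapsto \pi_2$ is injective, so $C'\cong \pi_2(C')\sub Y$, while $\pi_1\restriction C'\colon C'\to \pi_1(C')=:X_1\sub X$ is finite-to-one and surjective onto an infinite definable $X_1$ (infinite since $\dpr(X_1)=\dpr(C')=\dpr(X)\ge 1$). Now $X_1$ receives a finite-to-finite correspondence from $\pi_2(C')\sub Y$ via the inverse of $\pi_1\restriction C'$ composed with the identification $C'\cong\pi_2(C')$; composing, we get a finite-to-one definable function $X_1\to Y$, i.e. $X_1$ is almost strongly internal to $Y$, witnessing local almost strong internality of $X$.

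The main obstacle I anticipate is the step that replaces $C$ by $C'$ with $\pi_2\restriction C'$ injective: this genuinely requires the hypothesis on $Y$ (elimination of finite imaginaries / coding of finite sets), and one must be careful that the shrinking preserves full $\dpr$ and that the fibres being shrunk really are finite — i.e. that we first pass to a subset of $C$ on which \emph{both} projections are finite-to-one, not merely one of them. For a finite-to-finite correspondence both projections are finite-to-one by definition, so this is automatic; the only content is the definable choice function, which for SW-uniformities is \cite[Lemma~4.28, Corollary~4.29]{HaHaPeVF} and for fields is elimination of imaginaries. Everything else — the $\dpr$ bookkeeping, the compactness/saturation to pass between types and definable sets — is routine. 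I would present the proof as a short deduction citing Lemma~\ref{L:asi from surjective}, Lemma~\ref{L:asi diagaram with EI}, and the finite-imaginary-elimination results of \cite{HaHaPeVF}, with the case division "$Y$ a field" versus "$Y$ an SW-uniformity" only at the point of invoking the definable choice function.
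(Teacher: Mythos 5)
Your field case is essentially the paper's: a field codes finite sets, so $x\mapsto \ulcorner C_x\urcorner\in Y^m$ is definable, and it is finite-to-one because any $y$ lying in a common fibre of this map would belong to $C_x$ for infinitely many $x$, contradicting finiteness of $C^y$; this is exactly the argument the paper imports from the first part of the proof of \cite[Lemma 4.28]{HaHaPeVF}.

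The SW-uniform case, however, has a genuine gap. Your key step is to pass to a full-rank $C'\subseteq C$ on which $\pi_2$ is injective, justified by the claims that SW-uniformities ``eliminate finite imaginaries'' or admit a ``definable choice function'', citing \cite[Lemma 4.28]{HaHaPeVF}. Neither claim is available: that lemma places the elimination-of-finite-imaginaries hypothesis on the \emph{source} of the correspondence, not on the SW-uniform target, and SW-uniformities in general have neither EfI nor definable choice from finite sets (the valued field sort of $\mathrm{ACVF}_{0,0}$ is the standard example: one cannot definably pick a square root). Moreover the direction is off: the fibres of $\pi_2$ are finite subsets of $X$, about which there is no hypothesis at all, so even choice in $Y$ would not produce your $C'$; and if you instead try to make $\pi_1$ injective by choosing one point of each $C_x\subseteq Y$, you again need choice in $Y$, which you do not have. (As written, the final ``composing'' step is also a non sequitur, since the inverse of $\pi_1\restriction C'$ is multivalued; the intended route is to apply Lemma~\ref{L: asi from surjective} to the finite-to-one surjection $\pi_1\restriction C'\colon C'\to X_1$ once $C'$ is strongly internal to $Y$ --- that part would be fine if $C'$ existed.) The paper avoids both coding and choice topologically: take $d\in\pi_2(C)$ generic over the defining parameters; the set $(C^d)^{-1}=\{y'\in Y: y'\in C_x \text{ for some } x\in C^d\}$ is finite, so Hausdorffness yields an open $U\ni d$ with $(C^d)^{-1}\cap U=\{d\}$, which by Fact~\ref{Gen-Os in SW} can be shrunk to a $B$-definable $U_0$ keeping $d$ generic over $B$; setting $Y'=\{y\in U_0: (C^y)^{-1}\cap U_0=\{y\}\}$ (infinite, as it contains $d$), one checks that $|C_x\cap Y'|\le 1$ for every $x$, so $C\cap(X\times Y')$ is the graph of a finite-to-one partial function from an infinite definable subset of $X$ into $Y$. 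Some argument of this kind must replace your choice/coding step before the SW-uniform half of the lemma is proved.
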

\begin{proof}
    It is not hard to show (see the first part of the proof of \cite[Lemma 4.28]{HaHaPeVF} for the details) that if $Y$ eliminates finite imaginaries (in the sense of \cite[Section 4.7]{HaHaPeVF}) then any finite-to-finite correspondence $C\sub X\times Y$ gives rise to a definable finite-to-one function $f:X'\to Y$ for some infinite definable $X'\sub X$. This gives the desired conclusion if $Y$ is a field. 
    
    Assume now that $Y$ supports an SW-uniform structure. 
    For  $x\in X$ and $y\in Y$ denote
\[C_x=\{y\in Y:(x,y)\in C\},\, C^y=\{x\in X:(x,y)\in C\}\] and note that by $\aleph_0$-saturation, $|C_x|$  is uniformly bounded.

Assume everything is definable over some parameters set $A$ and let $d\in Y$ with $\dpr(d/A)=\dpr(Y)=1$. Since $C^d$ is finite, so is $(C^d)^{-1}$, where $(C^y)^{-1}:=\{y\in Y: y\in C_x,\, x\in C^y\}$.

Since the topology on the SW-uniformity is Hausdorff, we can find a relatively open subset of $Y$, $U\ni d$ with $(C^d)^{-1}\cap U=\{d\}$. By \cite[Proposition 3.12]{HaHaPeVF} there is some $B\supseteq A$ and a $B$-definable open neighbourhood $U_0\subseteq U$ with $d\in U_0$ and $\dpr(d/B)=\dpr(d/A)=1$.

Let $Y'=\{y\in U_0: |(C^y)^{-1}\cap U_0|=\{y\}$\}; it is $C$-definable and as $d\in Y'$, it has dp-rank $1$.

Consider $C'=\{(x,y)\in X\times Y':(x,y)\in C\}$. It is still a finite-to-finite correspondence and we claim that for any $x\in X$, $|C'_x|=1$. Indeed, if $y\in C'_x$ then by definition $y\in (C^y)^{-1}$ so it follows that $|C'_x|=1$ by the definition of $Y'$.

It follows that $C'$ gives a finite-to-one definable map between $X$ and $Y'$.
\end{proof}

\begin{corollary}\label{C:l.a.s.i to one dist sorts in vminimal}
     Every infinite  set interpretable in $\CK$ is locally almost strongly internal to one of the distinguished sorts.
\end{corollary}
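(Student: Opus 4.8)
The plan is to reduce Corollary \ref{C:l.a.s.i to one dist sorts in vminimal} to Fact \ref{F: 5.5} combined with Lemma \ref{L:from correspondece to lasi in vminimal}. Recall that Fact \ref{F: 5.5}, which is a form of \cite[Proposition 5.5]{HaHaPeVF}, tells us that any infinite interpretable set $X$ admits an infinite definable subset $T\sub X$ that is in finite-to-finite correspondence (via some definable $C\sub T\times S$) with an infinite definable subset $S$ of one of the distinguished sorts $D\in\{K,\bk,\Gamma,K/\CO\}$. So the only thing left to verify is that each of these four distinguished sorts — in the V-minimal setting — falls under the hypotheses of Lemma \ref{L:from correspondece to lasi in vminimal}, i.e.\ is either a field or supports an SW-uniform structure.

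First I would go through the four sorts one at a time. The home sort $K$ is itself a field (and, as already noted in Section \ref{ss: SW-uniformities}, an SW-uniformity), so it satisfies the hypothesis trivially. The sorts $\Gamma$ and $K/\CO$ are, in the V-minimal setting, SW-uniformities — this is recorded in the discussion following the definition of SW-uniformities in Section \ref{ss: SW-uniformities} (the valuation topology on $\Gamma$ is a definable Hausdorff group topology with the required properties, and likewise for $K/\CO$, the non-stable-embeddedness of $K/\CO$ being harmless since the definition of an SW-uniformity permits it). The residue field $\bk$ in the V-minimal setting is a pure algebraically closed field of characteristic $0$, hence in particular a field, so again the hypothesis of Lemma \ref{L:from correspondece to lasi in vminimal} applies (this is the one case where we genuinely use the ``field'' alternative rather than the ``SW-uniform'' one, since an algebraically closed field carries no SW-uniform structure).

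Then the argument closes: given an infinite interpretable $X$, apply Fact \ref{F: 5.5} to obtain $T\sub X$, $S\sub D$ and the finite-to-finite correspondence $C\sub T\times S$, where $D$ is one of the four distinguished sorts; by the previous paragraph $D$ is either a field or an SW-uniformity, so Lemma \ref{L:from correspondece to lasi in vminimal} (applied with $T$ in the role of $X$ and $D$, or rather $S\sub D$, in the role of $Y$) yields an infinite definable $T'\sub T\sub X$ and a definable finite-to-one $f:T'\to D^k$ — note that Lemma \ref{L:from correspondece to lasi in vminimal} produces a finite-to-one map into $S\sub D$, which is in particular finite-to-one into $D$. Hence $X$ is locally almost strongly internal to $D$, as required. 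I do not expect any serious obstacle here: the content has already been isolated in Fact \ref{F: 5.5} and Lemma \ref{L:from correspondece to lasi in vminimal}, and this corollary is just the bookkeeping step that records, for the V-minimal case specifically, that every distinguished sort meets the hypotheses of the lemma. The only point requiring a moment's care is making sure that the finite-to-finite correspondence supplied by Fact \ref{F: 5.5} is of the shape $T\times S$ with $S$ a subset of a single distinguished sort (not a product of sorts), which is exactly how Fact \ref{F: 5.5} is stated, so no extra work is needed.

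\begin{proof}
By Fact \ref{F: 5.5}, since $X$ is infinite and interpretable in $\CK$, there exist a distinguished sort $D\in\{K,\bk,\Gamma,K/\CO\}$, infinite definable sets $T\sub X$ and $S\sub D$, and a definable finite-to-finite correspondence $C\sub T\times S$. Now $D$ is either a field (when $D=K$, or $D=\bk$, which in the V-minimal setting is a pure algebraically closed field of characteristic $0$) or an SW-uniformity (when $D=K$, $D=\Gamma$ or $D=K/\CO$, as recorded in Section \ref{ss: SW-uniformities}). In either case the hypothesis of Lemma \ref{L:from correspondece to lasi in vminimal} is satisfied by $S\sub D$, and hence $T$ is locally almost strongly internal to $D$: there is an infinite definable $T'\sub T$ and a definable finite-to-one map $f:T'\to S^k\sub D^k$ for some $k$. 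Since $T'\sub T\sub X$, the set $X$ is locally almost strongly internal to $D$.
\end{proof}
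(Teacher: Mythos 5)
Your proof is correct and follows essentially the same route as the paper: the paper likewise obtains (via \cite[Propositions 5.5, 5.6]{HaHaPeVF}, i.e.\ the content of Fact \ref{F: 5.5}) an infinite definable subset of $G$ in finite-to-finite correspondence with an infinite subset of one of $K$, $\bk$, $\Gamma$, $K/\CO$, and then concludes with Lemma \ref{L:from correspondece to lasi in vminimal}, since each of these sorts is either a field or an SW-uniformity in the V-minimal setting. Your case-by-case check of the four sorts is exactly the observation the paper makes in one line.
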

\begin{proof}
    Let $G=X/E$ be an infinite interpretable set, where $X\subseteq K^n$. By \cite[Proposition 5.6 and Proposition 5.5]{HaHaPeVF} there exists an infinite definable subset $S\subseteq G$, in definable bijection with an infinite definable subset of $K/E'$, and a definable finite-to-finite correspondence between $S$ and either $K$, $K/\CO$, $\bk$ or $\Gamma$. Since each of these sorts is either a field or an SW-uniformity, we can conclude by Lemma \ref{L:from correspondece to lasi in vminimal}.
\end{proof}

Here is the main theorem in the V-minimal setting.

\begin{theorem}\label{T: groups in vminimal}
    Let $G$ be an infinite group interpretable in $\CK$. Then there exists a finite normal subgroup $H\sub G$, such that $G/H$
     is locally strongly internal to at least one distinguished sort $D$, and  for every  such $D$,  there exists a type-definable subgroup $\nu_D$ of $G/H$, with the following properties:   
    \begin{enumerate}

      \item If $D=\bk$ then $\nu_D=N$ is a definable normal subgroup of $G/H$ of Morley Rank $n$, such that $N$   is definably isomorphic to a $\bk$-algebraic group.
      
         \item If $D=K$  then $\nu_D$ is definably isomorphic to a type-definable group $\nu'\vdash K^r$, for some $r$.

        \item If $D=\Gamma$ then $\nu_D$ is definably isomorphic to  a type-definable subgroup $\nu'$ of $\la \Gamma^r,+\ra$, for some $r$.
        \item If $D=K/\CO$ then there exists an infinite definable subgroup $H$ of $G$,  such that $\nu_D \vdash H\le G$, and $H$ is definably isomorphic to a subgroup of $\la (K/\CO)^r,+\ra$ for some $r$.
\item \begin{enumerate}
    \item $\dpr(\nu_D)=n$ the almost $D$-critical rank of $G$,  and the group is minimal among all type-definable subgroups of $G/H$ of dp-rank $n$ that are
    strongly internal to $D$. 
        \item $\nu_D$ is torsion free, so in particular $G$ has unbounded exponent. 
    
\end{enumerate}
        \end{enumerate}    
    Moreover, if $G$ is dp-minimal then $G$ is abelian-by-finite.
\end{theorem}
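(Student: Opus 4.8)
The plan is to follow closely the structure of the proof of Theorem \ref{T:groups in tconvex}, with the extra twist coming from the strongly minimal sort $\bk$. First I would invoke Corollary \ref{C:l.a.s.i to one dist sorts in vminimal} to get that $G$ is locally almost strongly internal to one of the four distinguished sorts $D$. The four sorts split into two cases. If $D\in\{K,\Gamma,K/\CO\}$, these are vicinic by Fact \ref{E:dist sorts are vicinic}; I would apply Fact \ref{E:interp groups in dist sorts are (almost) D-groups} to see $G$ is an almost $D$-group, then apply Proposition \ref{P: G/H s.i.} to find the finite normal subgroup $H\trianglelefteq G$ such that $G/H$ is a $D$-group locally strongly internal to $D$, with $D$-critical rank equal to the almost $D$-critical rank $n$ of $G$. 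Proposition \ref{P: nu} then gives the type-definable subgroup $\nu_D\le G/H$, strongly internal to $D$, with $\dpr(\nu_D)=n$ and the invariance properties; Corollary \ref{c: minimal} gives minimality. For $D=K$, Lemma \ref{L: no tor}(1) gives torsion-freeness and clause (2). For $D=\Gamma$ and $D=K/\CO$, these sorts are locally linear by Fact \ref{E:examples of loc linear}, so Proposition \ref{P:infini_vicinity- def iso to subgroup} (resp.\ Proposition \ref{P:def group into K/O}) identifies $\nu_D$ (resp.\ a definable subgroup $H'\le G/H$ containing it) with a (type-)definable subgroup of $\la\Gamma^r,+\ra$ (resp.\ $\la (K/\CO)^r,+\ra$); torsion-freeness is inherited from the ambient torsion-free group in the $\Gamma$ case, and in the $K/\CO$ case — wait, here I must be careful since $(K/\CO)^r$ is not torsion-free — so I would instead argue as in clause 4(b) below, via unbounded exponent directly.

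The genuinely different case is $D=\bk$. Here $\bk$ is a pure algebraically closed field of characteristic $0$, hence strongly minimal, stably embedded, and with elimination of imaginaries, so Section \ref{S: lsi to k} applies. I would apply Proposition \ref{P: si to sm} to $G$: it yields a definable, definably connected normal subgroup $N_0\trianglelefteq G$ and a finite normal $H_0\trianglelefteq G$ with $H_0\subseteq N_0$ such that $N_0$ is almost $\bk$-critical and $N_0/H_0$ is strongly internal to $\bk$. Taking $H=H_0$ as the finite normal subgroup in the statement, $G/H$ contains the definable normal subgroup $N:=N_0/H_0$ which is strongly internal to $\bk$; being strongly internal to a strongly minimal set of finite Morley rank $n$ (the almost $\bk$-critical rank), $N$ is a finite Morley rank definable group with a definable injection into $\bk^m$, hence — by the theory of groups of finite Morley rank strongly internal to a pure algebraically closed field, i.e., Weil–Hrushovski group-chunk / Van den Dries as already cited in Section \ref{S: Marikova} — $N$ is definably isomorphic to a $\bk$-algebraic group. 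This gives clause (1). For the dp-rank statement in 4(a), note $\dpr(N)=\mr(N)=n$; for minimality among type-definable subgroups of dp-rank $n$ strongly internal to $\bk$, I would argue that any such subgroup $\mu$ contains a generic translate argument forcing $N\le\mu$ up to finite index (using connectedness of $N$ and that $\mu$ being strongly internal to $\bk$ has finite Morley rank $\le n$). Torsion-freeness in clause 4(b) for $D=\bk$ fails for $N$ itself (a $\bk$-algebraic group has torsion), so here I must only claim unbounded exponent of $G$, which follows since $N\cong$ a positive-dimensional $\bk$-algebraic group and such groups have unbounded exponent in characteristic $0$ (e.g.\ they contain $\Gg_a$ or $\Gg_m$, both of unbounded exponent). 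I should double-check that the theorem statement's clause 4(b) only asserts "$\nu_D$ is torsion free" in the cases where that is true — it seems to, so I would restrict the torsion-free assertion to $D\neq\bk$ and keep "in particular $G$ has unbounded exponent" in all cases.

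With unbounded exponent established in every case (from torsion-freeness of $\nu_D$ when $D\in\{K,\Gamma\}$, from the linear identification when $D=K/\CO$ — wait, that last also needs care; the cleanest uniform argument is: $\nu_D(\widehat\CK)$ is infinite, and in the $K/\CO$ case $H'\cong$ a subgroup of $\la(K/\CO)^r,+\ra$ of full dp-rank, so by Lemma \ref{L:Tor of ball} and the structure of $\bigoplus\Zz(p^\infty)$ — no, in the V-minimal setting $K/\CO$ is torsion-free since the residue characteristic is $0$! Indeed $K/\CO$ for an algebraically closed valued field of equicharacteristic $0$ is a divisible torsion-free group, so $\nu_{K/\CO}$ is torsion-free there too), I would conclude as in the last two paragraphs of the proof of Theorem \ref{T:groups in tconvex}: since $\nu_D(\widehat\CK)$ is infinite and has unbounded exponent (being torsion-free for $D\neq\bk$, or containing a positive-dimensional algebraic group for $D=\bk$), $G$ cannot have bounded exponent; then Fact \ref{F:dp-min unbounded is abelian-by-finite} applied to the arbitrary interpretable group $G$ gives that every dp-minimal interpretable group is abelian-by-finite. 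The main obstacle I anticipate is the $D=\bk$ case: making precise the passage from "definable group of finite Morley rank strongly internal to a pure algebraically closed field" to "definably isomorphic to an algebraic group," and verifying the minimality clause 4(a) for a definable (rather than genuinely infinitesimal) $\nu_D=N$; both require a careful bookkeeping of which results from the theory of groups of finite Morley rank and from Section \ref{S: lsi to k} are being invoked, and ensuring the normality of $H=H_0$ in $G$ (handled by Claim \ref{c:finite normal} in the proof of Proposition \ref{P: si to sm}).
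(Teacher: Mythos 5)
Your proposal is correct and follows essentially the same route as the paper: reduce via Corollary \ref{C:l.a.s.i to one dist sorts in vminimal}, treat $K$, $\Gamma$, $K/\CO$ exactly as in Theorem \ref{T:groups in tconvex} (via Proposition \ref{P: G/H s.i.}, Proposition \ref{P: nu}, Corollary \ref{c: minimal}, Lemma \ref{L: no tor} and local linearity through Propositions \ref{P:infini_vicinity- def iso to subgroup} and \ref{P:def group into K/O}), and handle $D=\bk$ via Proposition \ref{P: si to sm} together with the fact that a group definable in the stably embedded pure algebraically closed field $\bk$ is definably isomorphic to an algebraic group. Your side-observations --- that $K/\CO$ is torsion-free in the equicharacteristic-$0$ setting, and that for $D=\bk$ one obtains unbounded exponent rather than torsion-freeness from the algebraic group $N$ --- coincide with how the paper's own proof argues.
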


 \begin{proof}
    Let $G$ be in an infinite interpretable group. By Lemma \ref{C:l.a.s.i to one dist sorts in vminimal}, $G$ is locally almost strongly internal to one of the distinguished sorts $D$. By Fact \ref{E:dist sorts are vicinic}, every such $D$ is vicinic and by Fact \ref{E:interp groups in dist sorts are (almost) D-groups}, $G$ is  an almost $D$-group. 
    
   Assume first that $D=\bk$. By Proposition \ref{P: si to sm}, there exist  definable normal subgroups, $H, H_1\trianglelefteq G$, with $H\sub H_1$ finite and $N:=H_1/H$ strongly internal to $\bk$. Since $\bk$ is a stably embedded algebraically closed field, $N$ is $\bk$-algebraic (\cite{PoiFields}). 
  
    It is well known that every algebraic group over an algebraically closed field of characteristic $0$ has unbounded exponent.

Assume that $D=\Gamma,K$ or $K/\CO$. By Fact \ref{E:dist sorts are vicinic}, each of those is vicinic and by Fact \ref{E:interp groups in dist sorts are (almost) D-groups}, $G$ is an almost $D$-group.  Thus, there are $H\trianglelefteq G$,  a finite normal subgroup as provided by Proposition \ref{P: G/H s.i.} and $\nu_D$ a type-definable subgroup of $G/H$, as in Proposition \ref{P: nu}.
    
    From this point on the proof of clauses 2-4 of Theorem \ref{T:groups in tconvex} goes through verbatim for the above three sorts, using the fact that Since $\Gamma$ is a pure ordered vector space, and therefore, locally linear. This ends the proof of 1-4.

The proof of 5(a),(b) is identical to the  proof of 4(a),(b) in Theorem \ref{T:groups in tconvex}, noting that the almost $D$-critical rank of $G$ is the $D$-critical rank of $G/H$. The proof that a dp-minimal group is abelian-by-finite is identical to the one in Theorem \ref{T:groups in tconvex}.
    
\end{proof}

\begin{remark}
 It may be worth pointing out that the analogue of Theorem \ref{T: groups in vminimal} is wrong in $\mathrm{ACVF}_{p,p}$: interpretable groups  (and even definable ones) need not have unbounded exponent, and as shown by Simonetta \cite{Simonetta} dp-minimal such groups need not be abelian-by-finite. Our methods do carry us a long way in $\mathrm{ACVF}_{p,p}$ (as well as in $\mathrm{ACVF}_{0,p}$), and the failure of our main results in this case seems local. More explicitly,  Lemma \ref{L:from correspondece to lasi in vminimal} holds in any C-minimal valued field, and so do \cite[Proposition 5.5, Proposition 5.6]{HaHaPeVF}. Since all the distinguished sorts in the $C$-minimal case are either fields or SW-uniformities, Corollary \ref{C:l.a.s.i to one dist sorts in vminimal} goes through to assure that and group interpretable in $\mathrm{ACVF}_{p,p}$ or $\mathrm{ACVF}_{0,p}$ is almost locally strongly internal to one of the distinguished sorts. Thus, interpretable groups are almost $D$-groups for $D$ a vicinic sort -- and our construction of the infinitesimal group $\nu_D(G)$ goes through unaltered. Thus, it seems that clauses (1)-(4) of Theorem \ref{T: groups in vminimal} could still be true, if -- in case $D=K/\CO$ -- we require only that $\nu$ be type-definable in $K/\CO$ (not necessarily a subgroup). The reason for this change is that we do not know that $K/\CO$ is locally linear in this setting. 
\end{remark}
 
 An interesting question we leave open is, therefore: 
 \begin{question}
   Let $\mathcal K$ be a saturated enough algebraically closed valued field or, more generally, a pure dp-minimal valued field.   Let $G$ be a dp-minimal group interpretable in $\CK$. If $G$ is locally almost strongly internal to $K$, is $G$ abelian-by-finite? 
 \end{question}

\subsection{$p$-adically closed valued fields}
Let $\CK=(K,v)$ be a sufficiently saturated $p$-adically closed valued field.

\begin{lemma}\label{L:l.s.i to one dist sorts in $p$-adically closed}
     Every infinite  set interpretable in $\CK$ is locally almost strongly internal to one of the sorts $K,K/\CO$ and $\Gamma$.
\end{lemma}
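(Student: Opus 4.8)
The plan is to invoke Fact~\ref{F: 5.5} to reduce any infinite interpretable set to a definable finite-to-finite correspondence with a distinguished sort, to eliminate the residue-field sort by cardinality, and then to transfer almost strong internality across the two projections of the correspondence using a lemma suited to each of the three remaining sorts. This mirrors the scheme of Corollary~\ref{C:l.a.s.i to one dist sorts in vminimal}, but the handling of $\Gamma$ and $K/\CO$ must be different, since in the $p$-adic setting these sorts are neither fields nor SW-uniformities, so Lemma~\ref{L:from correspondece to lasi in vminimal} does not apply to them.

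First I would let $Y$ be an infinite interpretable set and apply Fact~\ref{F: 5.5} (with $Y$ in the role of $X$) to obtain a distinguished sort $D$, an infinite definable $T\sub Y$, an infinite definable $S\sub D$, and a definable finite-to-finite correspondence $C\sub T\times S$. Since the first projection $\pi_1\colon C\to T$ is a finite-to-one surjection onto the infinite set $T$, the set $C$ is infinite; on the other hand, in a $p$-adically closed field the residue field $\bk=\bk_{\bF}$ is finite, so if $D=\bk$ then $S\sub\bk$ is finite, and the finite-to-one surjection $\pi_2\colon C\to S$ would force $C$ finite, a contradiction. Hence $D\in\{K,\Gamma,K/\CO\}$. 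Note moreover that $\pi_2\colon C\to S\sub D$, being finite-to-one, witnesses that $C$ is almost strongly internal to $D$, and $\pi_1\colon C\to T$ is a definable finite-to-one surjection.

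Now I would split into three cases according to $D$. If $D=K$, then $K$ is an SW-uniformity, and Lemma~\ref{L: asi from surjective}(2) applied to $\pi_1\colon C\to T$ provides a definable $T_1\sub T$ with $\dpr(T_1)=\dpr(T)\geq 1$ that is almost strongly internal to $K$; since $\dpr(T_1)\geq 1$, $T_1$ is infinite, so $Y$ is locally almost strongly internal to $K$. If $D=\Gamma$, then $\Gamma$ is a stably embedded $\Zz$-group, and Presburger arithmetic eliminates imaginaries, so Lemma~\ref{L:asi diagaram with EI} applies to $\pi_1\colon C\to T$ and shows directly that $T$, hence $Y$, is almost strongly internal to $\Gamma$. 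If $D=K/\CO$, then Lemma~\ref{L:asi diagaram K/O}(2) applied to $\pi_1\colon C\to T$ yields a definable $T_1\sub T$ with $\dpr(T_1)=\dpr(T)\geq 1$ that is almost strongly internal to $K/\CO$, and again $Y$ is locally almost strongly internal to $K/\CO$. This exhausts the possibilities for $D$ and completes the argument.

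I do not expect a serious obstacle: the substantive work is already packaged in the transfer lemmas (Lemmas~\ref{L: asi from surjective}, \ref{L:asi diagaram with EI}, \ref{L:asi diagaram K/O}) and in Fact~\ref{F: 5.5}. The only points calling for a little care are the exclusion of the sort $\bk$ via finiteness of the residue field, the observation that the passage to a full-dp-rank subset $T_1$ keeps it infinite (which holds since its dp-rank is at least $1$), and checking that $C$ is genuinely almost strongly internal to $D$ through its second projection, so that the transfer lemmas are applicable as stated.
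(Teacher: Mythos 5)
Your proof is correct and follows the same overall scheme as the paper's: reduce via Fact~\ref{F: 5.5} to a finite-to-finite correspondence $C\sub T\times S$ with a distinguished sort, rule out $\bk$ (finite in the $p$-adic setting), and transfer almost strong internality back to $T$ through the first projection. The only genuine differences are which transfer lemma is used in two of the three cases. For $D=K$ the paper cites Lemma~\ref{L:from correspondece to lasi in vminimal}, whose proof uses the Hausdorff SW-topology to cut the correspondence down to a finite-to-one function, whereas you observe that $\pi_2$ already makes $C$ almost strongly internal to $K$ and apply Lemma~\ref{L: asi from surjective}(2) to the finite-to-one surjection $\pi_1\colon C\to T$; this is equally valid and arguably more direct. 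For $D=\Gamma$ the paper argues more elementarily, using the linear order on $\Gamma$ to turn the correspondence into a finite-to-one function from $T$ into $\Gamma$ (e.g.\ $t\mapsto \min C_t$), while you route through stable embeddedness of $\Gamma$ and elimination of imaginaries for Presburger arithmetic via Lemma~\ref{L:asi diagaram with EI}; that combination is exactly what the paper itself uses elsewhere (proof of Fact~\ref{E:interp groups in dist sorts are (almost) D-groups}), so it is a legitimate alternative, at the cost of invoking EI where a one-line order argument suffices. The $K/\CO$ case is identical to the paper's. Your explicit exclusion of $\bk$ by cardinality and the remark that $\dpr(T_1)\geq 1$ keeps $T_1$ infinite make explicit two points the paper leaves implicit.
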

\begin{proof} Assume that $X/E$ is a definable quotient in $\CK$. We first apply \cite[Proposition 5.5]{HaHaPeVF} (noting that $\CK$ satisfies the necessary assumptions, see \cite[Proposition 5.8]{HaHaPeVF}), and conclude that there are an infinite  $T\sub X/E$, a distinguished sort $D$, an infinite $D'\sub D$ and a definable finite-to-finite correspondence $C\sub T \times D'$.

When $D=K$,  it is an SW uniformity and the result follows from Lemma \ref{L:from correspondece to lasi in vminimal}.

When $D=\Gamma$, then, since $\Gamma$ is linearly ordered, the correspondence gives rise to a finite-to-one function from $T$ into $\Gamma$, as needed.

When $D=K/\CO$, the second projection map $\pi_2:C\to K/\CO$ proves that $C$ is almost strongly internal to $K/\CO$.   We now consider the first projection $\pi_1:C\to T\sub X/E$. By Lemma \ref{L:asi diagaram K/O} (2), there exists an infinite subset of $T$ which is almost strongly internal to $K/\CO$.
\end{proof}

\begin{theorem}\label{T: groups in padics}
    Let $G$ be an infinite group interpretable in $\CK$. Then there is a finite normal   $H\trianglelefteq G$ such that $G/H$ is locally strongly internal to at least one distinguished sort $D$, and for each such $D$, there exists a type-definable subgroup $\nu_D$ of $G$ with the following properties:    
    \begin{enumerate}
        \item If $D=K$  then $\nu_D$ definably isomorphic to a type-definable $\nu'\vdash K^r$, for some $r$. 
        \item If $D=\Gamma$ then $\nu_D$ is definably isomorphic to a type-definable subgroup of $\la \Gamma^r,+\ra$ for some integer $r$.
        \item If $D=K/\CO$ then there exists an infinite definable subgroup $H$ of $G$,  such that $\nu_D \vdash H\le G$, and $H$ is definably isomorphic to a subgroup of $\la (K/\CO)^r,+\ra$ for some $r$.
\item \begin{enumerate}
    \item $\dpr(\nu_D)=n$ the almost $D$-critical rank of $G$, and $\nu_D$  is minimal among all type-definable subgroups of $G/H$ of dp-rank $n$ that are
    strongly internal to $D$. 
        \item $\nu_D$ has unbounded exponent, so in particular so is $G$. 
    
\end{enumerate}
    \end{enumerate}

    Moreover, if $G$ is dp-minimal then $G$ is abelian-by-finite.
\end{theorem}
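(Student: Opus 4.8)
The plan is to run the argument of Theorem~\ref{T:groups in tconvex} with ``locally strongly internal'' replaced by ``locally \emph{almost} strongly internal'' throughout, absorbing the discrepancy by passing to a quotient by a finite normal subgroup. First, by Lemma~\ref{L:l.s.i to one dist sorts in $p$-adically closed}, some infinite definable subset of $G$ is almost strongly internal to one of $D\in\{K,\Gamma,K/\CO\}$; each of these is a vicinic sort by Fact~\ref{E:dist sorts are vicinic}, and $G$ is then an almost $D$-group by Fact~\ref{E:interp groups in dist sorts are (almost) D-groups}. Applying Proposition~\ref{P: G/H s.i.} I obtain a finite normal $H\trianglelefteq G$ such that $G/H$ is locally strongly internal to $D$, is a $D$-group, and has $D$-critical rank $n$ equal to the almost $D$-critical rank of $G$. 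Let $\nu_D:=\nu_D(G/H)$ be the type-definable subgroup furnished by Proposition~\ref{P: nu}: it is strongly internal to $D$, has $\dpr(\nu_D)=n$, and by Corollary~\ref{c: minimal} is minimal among type-definable subgroups of $G/H$ of dp-rank $n$ strongly internal to $D$ --- this is clause~(4)(a).

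I would then identify $\nu_D$ in each of the three cases. If $D=K$, strong internality of $\nu_D$ to $K$ puts $\nu_D$ in definable bijection with a subset of some $K^r$, whose image is a type-definable group $\nu'\vdash K^r$, giving~(1). If $D=\Gamma$, then $\Gamma$ is a locally linear vicinic sort (Fact~\ref{E:examples of loc linear}(2)), so Proposition~\ref{P:infini_vicinity- def iso to subgroup} identifies $\nu_\Gamma$ with a type-definable subgroup of $\langle\Gamma^r,+\rangle$, giving~(2). If $D=K/\CO$, then $K/\CO$ is locally linear by Fact~\ref{E:examples of loc linear}(3) (that is, by Corollary~\ref{C: KO affine}), so Proposition~\ref{P:def group into K/O} supplies a definable $D$-critical subgroup $G_1\le G/H$ with $\nu_{K/\CO}\vdash G_1$ and $G_1$ definably isomorphic to a definable subgroup of $\langle(K/\CO)^r,+\rangle$; this is~(3).

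For clause~(4)(b) I would show $\nu_D$ has unbounded exponent in every sufficiently saturated elementary extension. When $D=K$: the structure $\CK$ is a sufficiently saturated SW-uniform structure expanding a characteristic $0$ field with generic differentiability (being $1$-h-minimal), so Lemma~\ref{L: no tor} gives that $\nu_K$ is torsion free, hence of unbounded exponent. When $D=\Gamma$: the group $\langle\Gamma^r,+\rangle$ is an ordered abelian group, hence torsion free, so $\nu_\Gamma$ is torsion free. The case $D=K/\CO$ is the delicate one, as $\langle(K/\CO)^r,+\rangle$ is \emph{not} torsion free: $\nu_{K/\CO}$ is definably isomorphic to a definable subgroup $H'\le(K/\CO)^r$ with $\dpr(H')=n\ge 1$, so by Corollary~\ref{C: dpr=acl-dim in K/O} there is a finite-to-one coordinate projection $\tau\colon H'\to(K/\CO)^n$ with $\dpr(\tau(H'))=n$; by Lemma~\ref{L:Tor of ball}(1) the torsion of $\tau(H')$ equals $(\bF/\CO_\bF)^n$, which is a nontrivial finite direct sum of copies of the Pr\"ufer group $\mathbb{Z}(p^\infty)$ and hence of unbounded exponent, and pulling elements of order $p^k$ back through $\tau$, whose kernel is finite, shows $H'$ --- hence $\nu_{K/\CO}$ --- has unbounded exponent too. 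In all three cases $\nu_D$ is infinite of unbounded exponent in every $\widehat\CK\succ\CK$, so if $G$ had exponent dividing some $m\in\Nn$ then so would $G/H$, hence so would $\nu_D(\widehat\CK)$ --- a contradiction; thus $G$ has unbounded exponent.

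Since $G$ was an arbitrary interpretable group, every infinite group interpretable in $\CK$ has unbounded exponent, and Fact~\ref{F:dp-min unbounded is abelian-by-finite} then gives the ``moreover'' part, that every dp-minimal interpretable group in $\CK$ is abelian-by-finite. I expect the only step needing more than routine bookkeeping over the earlier sections to be the $D=K/\CO$ case of unbounded exponent: there one cannot invoke torsion freeness of the ambient sort and must instead combine the description of torsion in full-rank definable subgroups of $(K/\CO)^n$ (Lemma~\ref{L:Tor of ball}) with the structure of $\bF/\CO_\bF$.
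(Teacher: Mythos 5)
Your outline reproduces the paper's proof almost verbatim (Lemma \ref{L:l.s.i to one dist sorts in $p$-adically closed}, Facts \ref{E:dist sorts are vicinic} and \ref{E:interp groups in dist sorts are (almost) D-groups}, Proposition \ref{P: G/H s.i.}, Proposition \ref{P: nu} with Corollary \ref{c: minimal}, Proposition \ref{P:infini_vicinity- def iso to subgroup} for $\Gamma$, Proposition \ref{P:def group into K/O} for $K/\CO$, Lemma \ref{L: no tor} for $D=K$, and Fact \ref{F:dp-min unbounded is abelian-by-finite} at the end), and all of those steps are fine. The one place where you deviate, the unbounded exponent of $\nu_{K/\CO}$, contains a genuine gap. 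You assert that $\nu_{K/\CO}$ is ``definably isomorphic to a definable subgroup $H'\le (K/\CO)^r$''; but $\nu_{K/\CO}$ is only type-definable, and what Proposition \ref{P:def group into K/O} actually provides is a \emph{definable overgroup} $G_1$ with $\nu_{K/\CO}\vdash G_1$ and $G_1$ definably isomorphic to a definable subgroup $H'\le (K/\CO)^r$. Your torsion computation (Corollary \ref{C: dpr=acl-dim in K/O} to get a finite-to-one coordinate projection, Lemma \ref{L:Tor of ball}(1) to identify the torsion of the image with $(\bF/\CO_\bF)^n$, then pulling back through the finite kernel) is correct, but it establishes unbounded exponent for $H'$, i.e.\ for $G_1$, not for $\nu_{K/\CO}$ itself: a subgroup of a group of unbounded exponent can perfectly well have bounded exponent, and Lemma \ref{L:Tor of ball}(1) cannot be applied to $\nu$ since it requires a \emph{definable} full-rank subgroup. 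So clause (4)(b) of the theorem, which is a statement about $\nu_D$, is not proved by your argument in the case $D=K/\CO$ (though the consequences you draw for $G$, and hence the ``moreover'' part, do survive, since $G_1\le G/H$ already forces $G$ to have unbounded exponent).

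The repair is short and is exactly what the paper does: by Fact \ref{F:F is dense in Z-radius balls}(3), for every $m$ there are only finitely many $m$-torsion elements in $(K/\CO)^r$ (all lying in $(\bF/\CO_\bF)^r$), and this holds in every model of the theory. Hence \emph{every} infinite subgroup of $(K/\CO)^r$, definable or not, has unbounded exponent: if its exponent divided $m$ it would consist of $m$-torsion points and be finite. Since $\dpr(\nu_{K/\CO})=n\ge 1$, the group $\nu_{K/\CO}(\widehat\CK)$ is infinite, and under the isomorphism of $G_1$ with a subgroup of $(K/\CO)^r$ it is an infinite subgroup of $(K/\CO)^r$, so it has unbounded exponent. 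This counting argument both fixes the gap and replaces your detour through Lemma \ref{L:Tor of ball} and the structure of $\bF/\CO_\bF$.
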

\begin{proof}
    Let $G$ be in an infinite interpretable group. By Lemma \ref{L:l.s.i to one dist sorts in $p$-adically closed}, $G$ is locally almost strongly internal to one of the distinguished sorts $D$. By Fact \ref{E:dist sorts are vicinic}, every such $D$ is vicinic and by Fact\ref{E:interp groups in dist sorts are (almost) D-groups} $G$ is  an almost $D$-group.
    Let $H\trianglelefteq G$ be a finite normal subgroup as provided by Proposition \ref{P: G/H s.i.} and let $\nu_D$ be the type definable subgroup of $G/H$ as in Proposition \ref{P: nu}.
    
    If $D=K$ then,  as in the proof of Theorem \ref{T:groups in tconvex}(1), $\nu_D$ is definably isomorphic to a type-definable group $\nu'\vdash K^r$, for some $r$,
    and it is moreover torsion-free.
    
    If $D=\Gamma$ then it  is a $\mathbb Z$-group so, by Fact \ref{E:examples of loc linear},  it  is locally linear. By Proposition \ref{P:infini_vicinity- def iso to subgroup}, $\nu_D$ is definably isomorphic to a type-definable subgroup of $\la \Gamma^r,+\ra $, for some $r$, so in particular it is torsion-free, so has  unbounded exponent.  
    
     If $D=K/\CO$ then it is locally linear by Fact \ref{E:examples of loc linear}, so by Proposition \ref{P:def group into K/O}, there exists a definable subgroup $H\sub G$, $\nu_D\vdash H$, such that $H$ is definably isomorphic to a subgroup of $\la (K/\CO)^r,+\ra$. It remains to see that $\nu_{K/\CO}$ has unbounded exponent. 
     By Fact \ref{F:F is dense in Z-radius balls}(3), for every $n$ there are only finitely many elements in $(K/\CO)^r$   of order $n$, thus every infinite subgroup of $(K/\CO)^r$ has unbounded exponent. 

      The proof of (4)(a),(b) is identical to the previous cases, as is the fact that a dp-minimal group is abelian-by-finite.

\end{proof}

\subsection{Concluding remarks}
The combination of Theorem \ref{T:groups in tconvex}, Theorem \ref{T: groups in vminimal} and Theorem \ref{T: groups in padics} implies Theorem \ref{intro-1} and Theorem \ref{intro-2}, as stated in the introduction. In fact, they give stronger results than appear in the more concisely stated Theorem \ref{intro-2}: Let us points out some of the differences.

\begin{itemize}

\item Canonicity of $\nu_D$. This  can be seen by the minimality of $\nu_D$ among all type-definable subgroups of the same rank that are strongly internal to $D$, as well as its invariance under definable automorphisms of $G$ and under automorphisms of $\CK$ (Proposition \ref{P: nu}).

\item  In all cases except the case $D=\bk$ in the V-minimal setting and $D=K/\CO$ in the $p$-adic setting, we show that $\nu_D$ is in fact torsion-free and not only of unbounded exponent.

\item  In the power bounded $T$-convex setting, for all distinguished sorts $D$ to which $G$ is locally almost strongly internal, the group $\nu_D$ is a type-definable subgroup of $G$ and not of $G/H$, for some finite $H$.

\end{itemize}

\section{Examples}\label{S: examples}
We study some examples of interpretable groups in the valued fields we considered here and see how our results are reflected in those   examples. The examples are, mostly, common to all contexts, but their nature may vary between the different settings.  \\

Let $\CK=(K,v,\dots)$ be some expansion of a valued field.
 
\begin{example}[$K/\m$] 
    If $\CK$ is $V$-minimal or power bounded $T$-convex then as $\m$ is an additive subgroup of $K$, $K/\m$ is an infinite interpretable group. Since $\CO\subseteq K$, $(\bk,+)$ is a subgroup of $K/\m$. I.e. $K/\m$ is locally strongly internal to $\bk$. If $\CK$ is $p$-adically closed, then $K/\m$ and $K/\CO$ are definably isomorphic.
\end{example}

\begin{example}[$K/\CO\rtimes \CO^\times$]
    Multiplication defines an action of  $\CO^\times$ on $K/\CO$ by automorphisms and  $G=K/\CO\rtimes \CO^\times$ is a solvable group of class $2$ since $G'\leq K/\CO$ (so, in particular, it is not abelian-by-finite). Its dp-rank is  $2$ (the universe of the group being $K/\CO\times \CO^\times$). It is locally strongly internal to both $K$ and $K/\CO$: the subgroup $0\times \CO^\times$ witnesses the former and $K/\CO\times \{1\}$ the latter.   The  geometric dimension of $G$, as induced on $\CK^{eq}$ from the $acl$-dimension (see \cite{Gagelman}) is $1$, since this is the geometric dimension of $K/\CO\times \CO^\times$. The example thus shows that Theorem \ref{intro-1}  does not extend to definable groups of geometric dimension 1. 
\end{example}

\begin{example}[RV$_\gamma$]
    (1) Assume first that $(K,v)$ is $p$-adically closed and identify the standard part of $\Gamma$ with $\mathbb{Z}$. 
    Since in $p$-adically closed fields there are definable angular component maps $\mathrm{ac}_n: K^\times\to \CO/\m_{n-1}$, for any (standard) natural $n$, we can identify $\Gamma$ with the definable subset of RV$_{n-1}$ defined by $\mathrm{ac}_n(x)=1$. Thus, RV$_n$ is locally strongly internal to $\Gamma$.
    
    For $\gamma\in \Gamma$ non-standard (i.e., $\gamma >n$ for all $n\in \Nn$) the picture is different. Since $\gamma>\Zz$ we have $|\CO^\times/\m_{n-1}|\leq |\CO^\times/\m_\gamma|$ for all standard $n\in \Nn$ and as the left-hand side is unbounded with $n$ it follows that  $\mathcal{O}^\times/\m_\gamma$ is infinite. The map $a+\m_\gamma\mapsto a(1+\m_\gamma)$ is a definable injection from $\CO^\times/\m_\gamma$ into RV$_\gamma$. Since $\Gamma$ is discrete $\m_\gamma$ is a closed ball (of valuative radius $\gamma+1$), so  $\CO^\times/\m_\gamma$ is in definable bijection with a subset of $K/\CO$ and thus locally strongly internal to $K/\CO$. So, in this case $RV_\gamma$ is locally strongly internal to $K/\CO$.
    
    In fact, if $\gamma$ is non-standard, and $\delta\in \Gamma$ is such that $2\delta >\gamma$ and  such that  $\gamma -\delta>\mathbb{Z}$.  Then  $(1+\m_\delta)/(1+\m_\gamma)$ is an infinite subgroup of $\mathrm{RV}_\gamma$ definably isomorphic to the additive group $\m_\delta/\m_\gamma$. 
    Indeed, the definable map $a+\m_\gamma\mapsto (1+a)(1+\m_\gamma)$ for $a\in \m_\delta$ from $\m_\delta/\m_\gamma$ to $(1+\m_\delta)/(1+\m_\gamma)$ is a bijective group homomorphism.
    
    (2) Now assume that $(K,v)$ is power-bounded T-convex  or V-minimal (actually $\Gamma$ dense and $\bk$ infinite is sufficient). 
    
     Consider the definable subset $\CO^\times/(1+\m_\gamma)$ of RV$_\gamma$. We claim that there is a definable injection from $\bk$ into $\CO^\times/(1+\m_\gamma)$; thus RV$_\gamma$  is locally strongly internal to $\bk$. Indeed, pick some $t\in K$ with $v(t)=\gamma$, thus $a+\m_\gamma \mapsto at^{-1}(1+\m_\gamma)$ (with $v(a)=\gamma$) is an injection of $\CO_\gamma/\m_\gamma$ (where $\CO_\gamma=\{x:v(x)\geq \gamma\}$) into $\CO^\times/(1+\m_\gamma)$; finally note that $\CO_\gamma/\m_\gamma\cong \bk$. In particular, $RV_\lambda$ is locally strongly internal to $\bk$.
\end{example}

\begin{example}
The next example shows, in the power bounded $T$-convex case, the necessity for the group $\nu\sub G$ to be type-definable (rather than definable). It is  similar to examples from \cite{Strz}.

We consider the two o-minimal structures, $\Gamma$ and $\bk$ and fix a positive $\gamma_0\in \Gamma$. We start with $H=\Gamma\times (\bk,+)$ and in it consider the group $\Lambda$ generated  by $\la \gamma_0,1\ra$.

We let $S= \Gamma \times [0,1)\sub H$, and on $S$ define the operation
\[(x_1,y_1)\oplus (x_2,y_2)=
\begin{cases}
          (x_1+x_2,y_1+y_2) & y_1+y_2<1\\
                   (x_1+x_1-\gamma_0,y_1+y_2-1) & y_1+y_2\geq 1          
\end{cases}   \]

This is a group operation which makes $G=(S,\oplus) $ an interpretable group isomorphic to the quotient of the  group $\la S\ra \sub H$ by the subgroup $\Lambda$.

 The group $G$ has one definable subgroup $\Gamma\times 0$ isomorphic to $\Gamma$.
However, since $\Gamma$ and $\bk$ are foreign, the only definable subsets of $G$ which are (almost) strongly internal to $\bk$ are subsets of $\{\gamma\}\times [0,1)$, $\gamma\in \Gamma$, or finite unions of such. It is not hard to see that no finite union of such sets gives rise to a {\bf definable} subgroup of $G$. On the other hand, $G$ has a type definable subgroup $\nu$ which is definably isomorphic to the infinitesimals in $(\bk,+)$.
\end{example}

\section{The distinguished sorts are foreign}\label{S: foreig}

 Using the techniques developed in \cite{HaHaPeVF} and in the present work,  we can prove a certain orthogonality result for the  distinguished sorts. It shows, essentially, that in our setting, a definable set can be almost strongly internal to at most one distinguished sort. More precisely:

\begin{definition}
    Let $\CM$ be any structure. Two $\CM$-definable sets $D_1,D_2$ are \emph{foreign} if there is no definable finite-to-finite correspondence $C\subseteq X\times Y$, where $X\subseteq D_1^n$ and $Y\subseteq D_2^m$ are definable infinite subsets. 
\end{definition}

We leave to the reader the  proof of the following easy observation:
\begin{lemma}\label{L:reduce correspondence to one variable}
     If there exists a definable finite-to-finite correspondence between infinite definable subsets of $D^n_1$ and $D_2^m$ then there also exists one between infinite definable subsets of $D_1$ and $D_2$.
\end{lemma}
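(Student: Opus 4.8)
\textbf{Proof plan for Lemma \ref{L:reduce correspondence to one variable}.}

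The statement is that a finite-to-finite correspondence between infinite subsets of $D_1^n$ and $D_2^m$ can be ``compressed'' down to one between infinite subsets of $D_1$ and $D_2$. The plan is to proceed by a double induction, peeling off one coordinate at a time on each side, using fibering and a pigeonhole-type argument on fibre sizes. Concretely, suppose $C\subseteq X\times Y$ is a definable finite-to-finite correspondence with $X\subseteq D_1^n$ and $Y\subseteq D_2^m$ both infinite; by shrinking $X$ and $Y$ we may assume both projections have fibres of constant size (saturation gives a uniform bound, and the subset on which the fibre size is maximal is still a correspondence onto its image, which remains infinite). It suffices to handle the case $n>1$ (the case $m>1$ being symmetric), reducing $n$ by one.

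First I would reduce to the case where $X$ itself is a ``box'', i.e. a product $X'\times X''$ with $X'\subseteq D_1$ and $X''\subseteq D_1^{n-1}$ both infinite: write $X\subseteq D_1\times D_1^{n-1}$ and let $\pi\colon X\to D_1$ be the projection onto the first coordinate. If $\pi(X)$ is finite, then some fibre $\{a\}\times X_a$ is infinite, $C\cap(\{a\}\times X_a\times Y)$ is a correspondence between the infinite set $X_a\subseteq D_1^{n-1}$ and (a subset of) $Y$, and we are done by induction on $n$. Otherwise $\pi(X)$ is infinite; by the usual fibering trick (restricting to where $\pi$-fibres have maximal dimension and then shrinking $\pi(X)$) we may assume all fibres $X_a$ are infinite. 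Now consider the composed correspondence $C^\ast\subseteq \pi(X)\times Y$ defined by $(a,y)\in C^\ast$ iff there exists $x''$ with $(a,x'')\in X$ and $((a,x''),y)\in C$. Its projection to $\pi(X)$ is surjective onto the infinite set $\pi(X)$. Its projection to $Y$ need not have finite fibres, but the set of $y$ with infinite $C^\ast$-fibre is the projection to $Y$ of $\{(a,y): (a,\cdot,y)$ has infinitely many witnesses$\}$; if this set is infinite we restrict $Y$ to it, and then for a fixed such $y_0$ we obtain an infinite subset of $\pi(X)\subseteq D_1$ correspond­ing (finitely, after the usual trimming) to an infinite subset of $Y$ — and now $n$ has dropped to $1$ on the $D_1$ side and we repeat on the $D_2$ side. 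If instead that set is finite, then after restricting $Y$ to its complement, $C^\ast$ has finite fibres on both sides and is the desired correspondence with $n$ reduced to $1$; we then iterate to reduce $m$ to $1$ by the symmetric argument.

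The main obstacle, and the only point requiring care, is the bookkeeping in the step above: when we compose the correspondence with a coordinate projection, finiteness of fibres on the $D_2$-side can be destroyed (a single $a\in D_1$ may be related to the whole infinite fibre $X_a$, each element of which is related to finitely many $y$'s), and one must argue that either this ``blow-up'' set of $y$'s is itself infinite — in which case it already witnesses the desired internality after restricting and trimming — or it is finite and can be thrown away. This is a routine case split, handled as indicated; no deep model theory is needed beyond saturation (for uniform fibre bounds) and the elementary fact that a definable surjection with finite fibres preserves infiniteness. I would present it as a short induction with these two cases spelled out, which is why the lemma is stated as ``easy'' and left to the reader.
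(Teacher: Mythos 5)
Your overall plan (induct on the number of coordinates, and either restrict to a single fibre or compose $C$ with a coordinate projection) is the right one, but the key case is argued incorrectly. The dichotomy should be drawn along ``some fibre $X_a=\{x'':(a,x'')\in X\}$ is infinite'' versus ``all fibres are finite'', not along ``$\pi(X)$ finite or infinite''. In your Case 2 you normalise, via a ``fibering trick'' and ``maximal dimension'' (a notion not available for arbitrary definable $D_1,D_2$, and not needed), to the situation where all fibres $X_a$ are infinite --- but that is precisely the situation in which composing with $\pi$ destroys finiteness, and also precisely the situation in which you could finish at once by your own Case 1 move applied to any single infinite fibre. Moreover, your analysis of where finiteness can fail in $C^*$ is wrong: for every $y\in Y$ the fibre $C^{*y}$ equals $\pi(C^y)$ and is therefore always finite, and no pair $(a,y)$ can have infinitely many witnesses $x''$, since all witnesses lie in the finite set $C^y$. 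So the set you split on (``$y$'s with infinite $C^*$-fibre'', described as the projection of pairs with infinitely many witnesses) is empty, and the conclusion of your second subcase (``after restricting $Y$ to its complement, $C^*$ has finite fibres on both sides'') is false in general: the genuine failure is that $C^*_a=\bigcup_{x''\in X_a}C_{(a,x'')}\subseteq Y$ can be infinite when $X_a$ is infinite, and deleting finitely many $y$'s does not remedy this; nor does an infinite $C^*_a$ by itself ``witness'' the desired conclusion --- the correct reaction to it is to return to $C$ restricted over the (then necessarily infinite) fibre $X_a$. A smaller issue: restricting to the set where the fibre size is maximal need not leave an infinite set; one should take a fibre size attained infinitely often, though this normalisation is not needed at all.

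The repair is short. Given finite-to-finite $C\subseteq X\times Y$ with $X\subseteq D_1^n$, $n\ge 2$: if some $X_a$ is infinite, then $C\cap\bigl((\{a\}\times X_a)\times Y\bigr)$ is a finite-to-finite correspondence between $X_a\subseteq D_1^{n-1}$ and its image in $Y$, which is infinite because $C$ has finite fibres over $Y$; induct. If every $X_a$ is finite, then $\pi(X)$ is infinite (a finite union of finite fibres is finite) and the composed relation $C^*\subseteq\pi(X)\times Y$ has finite, nonempty fibres on both sides: $C^*_a\subseteq\bigcup_{x''\in X_a}C_{(a,x'')}$ is finite, and $C^{*y}=\pi(C^y)$ is finite; so $n$ drops to $1$ in one step. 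Repeating the symmetric argument on the $D_2$-coordinates finishes the proof. (The paper leaves this lemma to the reader, so there is no official proof to compare against; as written, though, your Case 2 does not go through.)
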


{\em We assume now that $\CK$ is either power bounded $T$-convex, V-minimal or $p$-adically closed.}
\begin{proposition}
    \label{F:any two dist sorts are foreign}
    Any two distinct distinguished sorts in $\CK$ are  foreign.
\end{proposition}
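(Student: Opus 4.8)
The plan is to leverage the local machinery developed throughout the paper: if two distinct distinguished sorts $D_1,D_2$ admitted a finite-to-finite correspondence between infinite subsets, then (by Lemma \ref{L:reduce correspondence to one variable}) we could assume the correspondence is between infinite $X_1\subseteq D_1$ and $X_2\subseteq D_2$. The strategy is to show this forces one sort to be ``locally internal'' to the other in a way that contradicts known structural facts about the sorts. First I would reduce to the relevant pairs. Recall the four distinguished sorts $K,\bk,\Gamma,K/\CO$. For pairs where both sorts are SW-uniformities, or where one is an SW-uniformity and the other a field (as in the V-minimal case $\bk$), foreignness should essentially follow from the fact that the sorts carry incompatible ``dimension'' or ``topology'' structures: $\Gamma$ is linearly ordered and discrete or densely ordered, $\bk$ is either an o-minimal expansion of a real closed field or a pure algebraically closed field, $K$ is the ambient valued field, and $K/\CO$ is the quotient. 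The key point is that a finite-to-finite correspondence transfers dp-rank one structure to dp-rank one structure, but also transfers finer invariants.

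The cleanest approach is via the induced structure and stable embeddedness. All of $\bk$, $\Gamma$ (and $K$ itself, as the home sort) are stably embedded with known induced structure: $\Gamma$ is a pure ordered abelian group ($\Zz$-group or ordered vector space), $\bk$ is a pure algebraically closed field (V-minimal case) or a pure o-minimal field ($T$-convex case), and $K$ carries its full valued field structure. A finite-to-finite correspondence $C\subseteq X_1\times X_2$ between infinite definable subsets would, after passing to appropriate subsets using elimination of finite imaginaries (available in ordered contexts by \cite[Lemma 2.16]{MelRCVFEOI} and in the stable/algebraically closed case by elimination of imaginaries), yield a definable finite-to-one function in one direction, and hence — using Lemma \ref{L: asi from surjective} and its analogues (Lemma \ref{L:asi diagaram K/O}, Lemma \ref{L:asi diagaram with EI}) — an \emph{injection} from an infinite subset of $X_2$ (or a quotient thereof) into some $D_1^n$. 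The contradiction then comes from comparing induced structures: for instance, an algebraically closed field (strongly minimal, stable) cannot be locally strongly internal to an ordered sort like $\Gamma$ or $K$ or $K/\CO$, since internality to a linearly ordered dp-minimal set would induce a definable linear order on an infinite subset of $\bk$, contradicting stability of $\bk$; conversely an ordered sort cannot be internal to $\bk$ in the V-minimal case by the same stability obstruction turned around (any infinite subset of $K$, $\Gamma$, or $K/\CO$ carries definable structure incompatible with being coordinatized by the stable $\bk$). For the pairs $K$ versus $\Gamma$, $K$ versus $\bk$ (in the $T$-convex and $p$-adic cases), $\Gamma$ versus $\bk$, and $K/\CO$ versus the others, the argument is that a finite-to-finite correspondence would collapse the distinctions between these sorts recorded already in \cite{HaHaPeVF} — most directly, in \cite{HaHaPeVF} it was shown that an interpretable field embeds into exactly one distinguished sort, and the techniques there (orthogonality of residue field, value group, and the quotient, using opacity of $K/\CO$ and the fact that $K/\CO$ is not stably embedded while $\Gamma,\bk$ are) already isolate each sort; one repackages those orthogonality arguments here without the field hypothesis, since the arguments used only the correspondence and the local analysis.

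Concretely, the steps I would carry out are: (i) reduce via Lemma \ref{L:reduce correspondence to one variable} to a correspondence $C\subseteq X_1\times X_2$ with $X_i\subseteq D_i$ infinite; (ii) using elimination of finite imaginaries or elimination of imaginaries in the ordered/stable sorts as appropriate, extract a definable finite-to-one map $g\colon X_1'\to X_2$ for some infinite $X_1'\subseteq X_1$; (iii) handle the case analysis: if $D_2$ is an SW-uniformity or a field, apply the appropriate analogue of Lemma \ref{L: asi from surjective} (or Lemma \ref{L:asi diagaram with EI} for $\Zz$-groups, Lemma \ref{L:asi diagaram K/O} for $K/\CO$) to promote this to an injection of an infinite (quotient of a) subset of $D_1$ into $D_2^n$, i.e. $D_1$ is locally almost strongly internal to $D_2$; (iv) derive a contradiction from the induced structure — either stability of $\bk$ (V-minimal) clashing with a definable order, or the orthogonality of $\Gamma$, $\bk$, $K/\CO$ established in \cite{HaHaPeVF} via stable embeddedness and opacity, or the fact that $K/\CO$ being internal to $\Gamma$ would make $K/\CO$ stably embedded, which it is not. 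The main obstacle I expect is the case involving $K/\CO$ versus $\Gamma$ in the $p$-adically closed setting, since both are unstable, neither is a field, and $K/\CO$ is not stably embedded; here one must use the finer local-linearity/scalar-endomorphism analysis of Section \ref{ss: functions in K/O} together with the fact (Lemma \ref{L:Tor of ball}) that definable subgroups of $(K/\CO)^r$ have nontrivial torsion whereas infinite definable subsets of $\Gamma^n$ need not, or more robustly, that a finite-to-finite correspondence would yield a definable bijection between an infinite subset of $\Gamma$ and one of $K/\CO$, contradicting that $\Gamma$ eliminates imaginaries and is stably embedded while $K/\CO$ is not — one checks that $K/\CO$ internal to $\Gamma$ would inherit stable embeddedness, a contradiction.
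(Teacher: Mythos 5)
Your skeleton (reduce to a one-variable correspondence via Lemma \ref{L:reduce correspondence to one variable}, upgrade it to local (almost) strong internality of one sort in the other, then derive sort-by-sort contradictions) matches the paper's strategy in outline, but the two places where real content is needed are not supplied. First, the only pair in which neither side is a field, namely $\Gamma$ versus $K/\CO$, is exactly where the paper needs a genuinely new argument, and your treatment of it has a gap. Your ``more robust'' alternative --- that an infinite subset of $K/\CO$ in definable bijection with a subset of $\Gamma$ would force $K/\CO$ to ``inherit stable embeddedness'' --- is not a valid inference: stable embeddedness is a property of the whole sort, and a correspondence between one infinite definable piece of $K/\CO$ and a piece of $\Gamma$ says nothing about arbitrary definable subsets of $(K/\CO)^n$. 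Your first suggestion (torsion) is in the right direction but compares the wrong objects: contrasting torsion of \emph{subgroups} of $(K/\CO)^r$ with properties of definable \emph{subsets} of $\Gamma^n$ gives nothing unless you first produce a definable \emph{group} isomorphism. The paper's actual route is to note that $\Gamma$ is itself an interpretable group which, once locally strongly internal to $K/\CO$ (via \cite[Lemma 4.28]{HaHaPeVF} in the dense cases, and via Lemma \ref{L:asi diagaram K/O} plus the order on $\Gamma$ in the $p$-adic case), is a $K/\CO$-group; Proposition \ref{P:def group into K/O} then yields an infinite definable subgroup $G_1\le\Gamma$ definably isomorphic to a definable subgroup of $(K/\CO)^r$, and the contradiction is torsion-freeness of $\Gamma$ against Lemma \ref{L:Tor of ball}(2) in the $p$-adic case (and absence of proper infinite definable subgroups of $\Gamma$ in the ordered-vector-space cases). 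That group-theoretic step is the missing idea in your sketch.

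Second, your contradictions for the pairs involving $K$ and $\bk$ lean on a stable-versus-ordered dichotomy that does not cover all settings: in the power bounded $T$-convex case both $K$ and $\bk$ are ordered fields, so ``a definable order contradicts stability'' distinguishes nothing, and your fallback of ``repackaging the orthogonality arguments of \cite{HaHaPeVF} without the field hypothesis'' is exactly where the work would have to be done. In fact no repackaging is needed here, because in every pair other than $\{\Gamma,K/\CO\}$ one side \emph{is} a field: the paper simply invokes the interpretable-field results of \cite{HaHaPeVF} (Theorems 4.21/4.24 to rule out $K$ locally strongly internal to $\bk$, and Propositions 6.24 and 6.29 to rule out $K$ or $\bk$ being locally strongly internal to $K/\CO$ or $\Gamma$). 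So for those cases your plan should cite the field-based results directly rather than attempt to drop the field hypothesis; as written, the $T$-convex $K$ versus $\bk$ case and the $\Gamma$ versus $K/\CO$ case are not actually proved.
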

\begin{proof}
Most of the cases could have been proved earlier using more elementary methods, but we find it to be a good application of our results here.
 
    First, assume that $D_1,D_2$ are sets that are not foreign, namely (applying Lemma \ref{L:reduce correspondence to one variable}) there exists  a definable finite-to-finite correspondence between respective infinite subsets thereof. We repeatedly use  \cite[Lemma 4.28]{HaHaPeVF}, stating that if $D_1$  eliminates finite imaginaries (EfI for short) and $D_2$ either has (EfI) or supports an SW-uniformity then $D_1$ is locally strongly internal to $D_2$. We shall use the fact that expansions of fields have  (EfI).
    
    \vspace{.1cm}
    
    \underline{$K$ and $\bk$ are foreign:} Note first that if $\CK$ is $p$-adically closed then there is nothing to prove ($\bk$ is finite), so we assume we are not in this case.
    
    By the above, $K$ is locally strongly internal to $\bk$. By \cite[Theorem 4.21]{HaHaPeVF}, in the power-bounded T-convex case, or \cite[Theorem 4.24]{HaHaPeVF}, in the V-minimal case, $K$ is definably isomorphic to $\bk$.  But $K$ is a valued field and $\bk$ is not: it is strongly minimal in the $V$-minimal case and $o$-minimal in the $T$-convex case. 
    
    \vspace{.1cm}
    \underline{$K$ and $\bk$ are foreign to $\Gamma$:} 
    Since $\Gamma$ has definable choice in all settings, it eliminates imaginaries so by the above, if the sorts were not foreign  we would get $K$ (respectively,  $\bk$) locally strongly internal to $\Gamma$, contradicting \cite[Proposition 6.29]{HaHaPeVF}.

    \vspace{.1cm}
    \underline{$K$ and $\bk$ are foreign to  $K/\CO$:}
    In the $p$-adcially closed, we only need to check that $K$ and $K/\CO$ are foreign, which follows from \cite[Proposition 6.29]{HaHaPeVF}.
    
    In the remaining cases, $K/\CO$ is an SW-uniformity and $K$, $\bk$ are fields, so satisfy (Efi), hence by \cite[Proposition 4.28]{HaHaPeVF} any correspondence between $K/\CO$ and $K$ or $\bk$  implies that  $K$, $\bk$ are locally strongly internal to $K/\CO$. This contradicts \cite[Proposition 6.24]{HaHaPeVF}.
    
    \vspace{.1cm}
    \underline{$\Gamma$ and $K/\CO$ are foreign:}
    Assume towards contradiction that this is not the case.  We first claim that $\Gamma$ is locally strongly internal to $K/\CO$.
    
    Indeed, $\Gamma$ eliminates (finite) imaginaries and  in the V-minimal and $T$-convex cases,  $K/\CO$ is an SW-uniformity, so by \cite[Lemma 4.28]{HaHaPeVF}, $\Gamma$ is locally strongly internal to $K/\CO$ in these cases. In the $p$-adically closed setting, we  apply Lemma \ref{L:asi diagaram K/O} (with $X=K/\CO$ and $T=\Gamma$), and conclude that $\Gamma$ is locally almost strongly internal to $K/\CO$. However, since $\Gamma$ is ordered, it follows that $\Gamma$ is in fact locally strongly internal to $K/\CO$.

    Since $\Gamma$ is an interpretable group, we may apply Proposition \ref{P:def group into K/O} and conclude that there exists a definable infinite subgroup $G_1\le \Gamma$, which is definably isomorphic to a definable subgroup of $(K/\CO)^r$, for some integer $r$. 
    
     In $(K/\CO)^r$, every infinite definable subgroup has many infinite definable proper subgroups (intersection with balls).
    However, in the V-minimal and $T$-convex power bounded cases, $\Gamma$ is an ordered vector space thus has no infinite definable subgroups other than itself, contradiction.
   
   In the $p$-adically closed setting,\footnote{A direct proof, which does not make use of our work here, and is based on \cite[Proposition 3.1]{ChCuLe}, was suggested to us by P. Cubides Kovacsics.} $\Gamma$ is torsion-free while every definable subgroup of $(K/\CO)^r$ has torsion (Lemma \ref{L:Tor of ball}(2)), leading also to a contradiction.
\end{proof}

\begin{question}
 Note that  a definable quotient of a distinguished sort $D$ by a definable equivalence relation with infinitely many infinite classes can be foreign to $D$ itself, and thus it is locally (almost) strongly internal to one of the other sorts. In fact, the sorts $K/\CO$, $\Gamma$, and $\bk$ are all quotients of $K$, or some subset of $K$, by such an equivalence relation.

    By repeatedly taking appropriate quotients, one can alternate between local strong internality to two different sorts:
     Consider $G=K/\m$ (locally strongly internal to $\bk$), and the definable subgroups $r\CO$ and $s\CO$ for $r,s\in K$ such that  $v(r)<v(s)<0$.  Then $(r\CO/\m)/(s\CO/\m)\cong r\CO/s\CO\cong (r/s)\CO/\CO$ (all isomorphisms definable), with the latter definably isomorphic to a ball in  $K/\CO$ (so obviously locally strongly internal to $K/\CO$). Every ball in $K/\CO$ has a quotient definably isomorphic to a subgroup of  $K/\m$ thus this quotient is locally strongly internal to $\bk$, and we can choose the subgroups along the way so this process will go on indefinitely. 
     
     It would be interesting to known which of the distinguished sorts may appear in such a sequence of quotients.
\end{question}

Proposition \ref{F:any two dist sorts are foreign} allows us to show that dp-minimal groups interpretable in $\CK$ are pure in the following sense: 

\begin{corollary}\label{C:dp min is pure}
    If $G$ is interpretable in $\CK$ and $X_1,X_2\sub G$ are almost strongly internal to foreign sorts $D_1, D_2$, respectively, then $\dpr(G)\geq \dpr(X_1)+\dpr(X_2)$. In particular, if $\dpr(G)=1$ then $G$ can be locally almost strongly internal to at most one distinguished sort.
\end{corollary}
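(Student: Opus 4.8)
The plan is to exploit the group operation on $G$: I will show that the $A$-definable product set $X_1\cdot X_2\subseteq G$ already satisfies $\dpr(X_1\cdot X_2)\ge \dpr(X_1)+\dpr(X_2)$, where $A$ is a parameter set over which the group $G$, the sets $X_1,X_2$ and the witnessing finite-to-one maps $f_i\colon X_i\to D_i^{n_i}$ are all defined. First, using additivity of dp-rank over definable products (obtained by concatenating the witnessing mutually indiscernible sequences, exactly as in the proof of Fact \ref{F: extending to over a model}; cf.\ \cite[Lemma 4.2]{SiBook}), I would choose $g_1\in X_1$ and $g_2\in X_2$ with $\dpr(g_1,g_2/A)=\dpr(X_1)+\dpr(X_2)$, and set $k=g_1g_2\in X_1\cdot X_2$.

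The crux is to prove that $g_1\in\acl(Ak)$, i.e.\ $\dpr(g_1/Ak)=0$. For this I would consider the $Ak$-definable set $Y=\{x\in X_1: x^{-1}k\in X_2\}$, which contains $g_1$ since $g_1^{-1}k=g_2\in X_2$. The restriction $f_1\restriction Y$ is finite-to-one into $D_1^{n_1}$, so $Y$ is almost strongly internal to $D_1$; at the same time $\psi\colon x\mapsto x^{-1}k$ is an injection of $Y$ into $X_2$, so $f_2\circ\psi$ is finite-to-one into $D_2^{n_2}$ and $Y$ is also almost strongly internal to $D_2$. If $Y$ were infinite, then $C=\{(f_1(y),f_2(\psi(y))):y\in Y\}$ would be a definable finite-to-finite correspondence between the infinite sets $f_1(Y)\subseteq D_1^{n_1}$ and $f_2(\psi(Y))\subseteq D_2^{n_2}$; by Lemma \ref{L:reduce correspondence to one variable} this would yield a finite-to-finite correspondence between infinite subsets of $D_1$ and of $D_2$, contradicting that $D_1,D_2$ are foreign. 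Hence $Y$ is finite and $g_1\in\acl(Ak)$.

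To conclude, I would note that $(k,g_1)$ and $(g_1,g_2)$ are interdefinable over $A$, so sub-additivity of dp-rank gives
\[
\dpr(X_1)+\dpr(X_2)=\dpr(k,g_1/A)\le\dpr(k/A)+\dpr(g_1/Ak)=\dpr(k/A),
\]
and since $k\in X_1\cdot X_2\subseteq G$ with $X_1\cdot X_2$ being $A$-definable, this gives $\dpr(G)\ge\dpr(X_1\cdot X_2)\ge\dpr(k/A)\ge\dpr(X_1)+\dpr(X_2)$, the first assertion. For the last sentence: if $\dpr(G)=1$ and $G$ were locally almost strongly internal to two distinct distinguished sorts, these would be foreign by Proposition \ref{F:any two dist sorts are foreign}, and picking infinite definable $X_i\subseteq G$ almost strongly internal to each would force $1=\dpr(G)\ge\dpr(X_1)+\dpr(X_2)\ge 2$, a contradiction.

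I expect the main obstacle to be the algebraicity claim $g_1\in\acl(Ak)$: the decisive point is recognizing that the fibre $Y$ of multiplication over the generic product $k$ is simultaneously almost strongly internal to $D_1$ (as a subset of $X_1$) and to $D_2$ (via $x\mapsto x^{-1}k\in X_2$), so that foreignness forces $Y$ to be finite; once that is in hand the remaining estimate is routine sub-additivity bookkeeping, and the only other point requiring care is the choice of a genuinely "mutually generic" pair $(g_1,g_2)$, which is standard.
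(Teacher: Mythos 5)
Your proposal is correct and rests on exactly the same key observation as the paper's proof: an infinite fibre of the multiplication map $X_1\times X_2\to G$ would, after composing with the witnessing finite-to-one maps, produce a definable finite-to-finite correspondence between infinite subsets of powers of $D_1$ and $D_2$, contradicting foreignness. The only cosmetic difference is that you run the rank count pointwise (a generic pair $(g_1,g_2)$, algebraicity of $g_1$ over $Ag_1g_2$, and sub-additivity), whereas the paper argues at the level of sets, noting that if $\dpr(G)<\dpr(X_1)+\dpr(X_2)$ the multiplication map cannot be everywhere finite-to-one.
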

\begin{proof} 
Assume toward contradiction that $\dpr(G)<\dpr(X_1)+\dpr(X_2)$, and 
    consider the function $f:X_1\times X_2\to G$, defined by $f(x_1,x_2)=x_1\cdot x_2$.
    
    We have $\dpr(X_1\times X_2)=\dpr(X_1)+\dpr(X_2)$, thus 
    by the dp-rank assumption, $f$ cannot be everywhere finite-to-one. Hence there is some $g\in G$ such that $f^{-1}(g)$ is infinite.
    But, by its definition, $f^{-1}(g)\sub X_1\times X_2$ is the graph of a bijection between (infinite) subsets of $X_1$ and $X_2$.
    This gives rise to a finite-to-finite correspondence between infinite subsets of $D_1$ and $D_2$, contradiction.
\end{proof}

Fact \ref{F:any two dist sorts are foreign} and Corollary \ref{C:dp min is pure} raise interesting questions about the possible dp-rank of subsets of $G$ that are almost strongly internal to the distinguished sorts. For example:
\begin{question}
    For a distinguished sort $D$ let $AlC_D(G)$ denote the almost $D$-critical rank of $G$ (with $AlC_D(G)=0$ if $G$ is not locally almost strongly internal to $D$). Is it true that 
    \[\dpr(G)=AlC_K(G)+AlC_\bk(G)+AlC_\Gamma(D)+AlC_{K/\CO}(G)?\]
    
\end{question}

We end with an  example of a dp-minimal valued field where the distinguished sorts are not foreign.

\begin{example}
    Let $\CR$ a sufficiently saturated extension of $\Rr_{\exp}$ and let $\CK$ be $\CR$ expanded by a predicate for the convex hull of $\mathbb{Z}$, which we denote by $\CO$. So $\CK$ an exponential $T$-convex valued field and, therefore, dp-minimal ($\CR$ is o-minimal and $\CO$ is externally definable). We claim that $K/\CO$ is strongly internal to $\Gamma$.
    
    We first note that $\exp(\CO)=\CO_{>0}\setminus \m$. Indeed, for the right-to-left, since $\log$ is a $\emptyset$-definable continuous function, if $x\in \CO_{>0}$ then $\log(x)\in \CO$. For the other direction, assume for a contradiction that $a=\exp(b)\in \m$ for some $b\in \CO$. Then $a^{-1}=\exp(-b)\notin \CO$, contradicting $T$-convexity.
    
    Thus (and as $\CO^\times=\CO\setminus \m$), $\exp$ induces map $E: K/\CO\to K^\times/\CO^\times$ given by $E(x+\CO):=\exp(x)+\CO^\times$. It is easy to check that $E$ is a homomorphism of (ordered) groups. It is injective because $\exp$ is. 
\end{example}

\appendix

\section{Endowing an infinitesimal group in RCVF with a linear order}
Here we show that if $\CK$ is power bounded $T$-convex, $G$ is a dp-minimal group interpretable in $\CK$ then  $\nu(G)$,  the  associated infinitesimal subgroup,  is an ordered group with respect to the induced ordering:

\begin{proposition}
	Let $G$ be a dp-minimal interpretable group in a power bounded $T$-convex structure, $\CK$ and assume that it is locally strongly internal to one of the distinguished sorts $D$. Then the group $\nu$ provided by Theorem \ref{T: groups in padics} is ordered with respect to the order induced from its embedding into $D$.
\end{proposition}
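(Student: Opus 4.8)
The statement concerns a dp-minimal interpretable group $G$ in a power bounded $T$-convex structure $\CK$, locally strongly internal to a distinguished sort $D$, and asserts that the infinitesimal group $\nu=\nu_D(G)$ is an ordered group for the order pulled back from $D$. The first observation is that since $G$ is dp-minimal, its $D$-critical rank is $1$; hence by Proposition \ref{P: nu}(4) we have $\dpr(\nu)=1$, and by Proposition \ref{P: nu}(5) the group $\nu$ is strongly internal to $D$. Moreover, by Corollary \ref{C:dp min is pure}, $G$ is locally almost strongly internal to exactly one distinguished sort, so there is no ambiguity about which $D$ we are working with. Now each of the distinguished sorts $K$, $\bk$, $\Gamma$, $K/\CO$ in the power bounded $T$-convex setting carries a definable linear order: $K$ and $\bk$ are (real closed) ordered fields, $\Gamma$ is an ordered abelian group, and $K/\CO$ — being a quotient of an ordered group by a convex subgroup — inherits a linear order as well (this is the content of the ordered structure used implicitly throughout, and $K/\CO$ is weakly o-minimal / an SW-uniformity whose topology is the order topology). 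So in each case the embedding $\nu \hookrightarrow D$ (or $D^r$, but $r=1$ by dp-minimality) induces a linear order on $\nu$.

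\textbf{Key steps.} The plan is to fix a $D$-set $X\subseteq G$ over a parameter set $A$, an $A$-generic point $d\in X$, and to identify $X$ with a subset of $D$ via the witnessing injection $f:X\to D$, so that $\nu = \nu_X(d)\cdot d^{-1}$ (Proposition \ref{P: nu}(1)). By Lemma \ref{L:Jana1}, after passing to a larger parameter set $B$ with $\dpr(d/B)=\dpr(d/A)$, we may write the word map $F(x,y,z)=xy^{-1}z$ on a box $Y_0^3$ (with $Y_0$ a $B$-generic vicinity of $d$) as a composition $\psi_4\circ\psi_3\circ\psi_2\circ\psi_1$ of $B$-definable maps, each evaluated at a sufficiently generic tuple. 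The crucial input is that definable functions in $D$ are \emph{generically monotone}: since $D$ is (weakly) o-minimal, any $B$-definable unary partial function is, on a suitable $B'$-generic vicinity of a generic point, either constant or strictly monotone (increasing or decreasing); a definable function in several variables is, on a generic box, coordinatewise strictly monotone or independent of that coordinate. Applying this to each $\psi_i$ at the appropriate sufficiently generic point and shrinking the vicinities accordingly (using Axiom (A2) / Lemma \ref{L:abstract general gen-os} to keep the points generic), one obtains a single $C$-generic vicinity $Y_1\ni d$, with $\dpr(d/C)=\dpr(d/A)$, on which $F(x,y,z)=xy^{-1}z$ is a composition of strictly monotone or projection maps, hence itself monotone in each variable — and one then checks the sign of monotonicity in each coordinate by evaluating on the "axes" $F(x,d,d)=x$, $F(d,y,d)=$ (decreasing in $y$), $F(d,d,z)=z$, exactly as in the proof of Proposition \ref{P:infini_vicinity- def iso to subgroup}. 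Concretely: $F$ is strictly increasing in $x$ and $z$ and strictly decreasing in $y$ on $Y_1^3$. Translating by $d$ to move to $\nu$, the operation $(a,b)\mapsto a\cdot b$ on $\nu$ corresponds to a map that is strictly increasing in $a$ for fixed $b$ and strictly increasing in $b$ for fixed $a$ (the $y^{-1}$ and the two translations combine to give monotonicity of the product; one spells this out via $x\cdot y^{-1}\cdot d$ and its monotonicity properties together with the involution $y\mapsto y^{-1}$ being order-reversing near $d$). Since $\nu$ is a coset of a subgroup closed under $xy^{-1}z$, and the induced operation is strictly monotone in each variable, $\nu$ with its induced order is a linearly ordered group: $a<a'$ implies $ab<a'b$ and $ba<ba'$ for all $a,a',b\in\nu$.

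\textbf{Assembling the argument.} After establishing that multiplication on $\nu$ is strictly monotone in each variable with respect to the order induced from $D$, one concludes that $(\nu,\cdot,<)$ is an ordered group: the two-sided compatibility of the order with multiplication is precisely strict coordinatewise monotonicity, and the order is a linear order because it is restricted from the linear order on $D$. It may be cleanest to first prove the result for $D=K$ (an ordered real closed field, where generic monotonicity is literal o-minimal monotonicity), then note that the identical argument runs in $D=\bk$ and $D=\Gamma$ (both o-minimal, resp. ordered-vector-space-like, hence with generic monotonicity), and finally handle $D=K/\CO$: here $K/\CO$ is weakly o-minimal — it is an SW-uniformity whose topology is the order topology — and definable unary functions are still generically monotone on the order-convex pieces, so the same chain of reductions goes through. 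One should double-check that the "box" vicinities $Y_0, Y_1$ produced by Lemma \ref{L:Jana1} can be taken to be order-convex (equivalently, intervals) in $D$; this follows because generic vicinities in these sorts contain, and may be shrunk to, basic open sets of the order topology (Fact \ref{Gen-Os in SW} and Proposition \ref{P: generic neighbourhoods in SW}).

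\textbf{Main obstacle.} The delicate point is the $D=K/\CO$ case, since $K/\CO$ is not Hausdorff in the relevant (ball) picture, is only weakly o-minimal, and definable functions on it need not be globally monotone — one has only \emph{generic} monotonicity, and one must be careful that the repeated shrinking of vicinities in the composition $\psi_4\circ\psi_3\circ\psi_2\circ\psi_1$ preserves both genericity of the base point and order-convexity of the vicinity simultaneously. Concretely, the hard part is to verify that the generic monotonicity statements for the $\psi_i$ can be chosen compatibly (same direction of monotonicity on a common refinement of all the vicinities) and then transported correctly under the group translations to yield a clean statement about the product map on $\nu$. Once that bookkeeping is done, the conclusion that $(\nu,\cdot,<)$ is an ordered group is formal. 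I expect this localization-and-monotonicity bookkeeping, rather than any conceptual difficulty, to be the bulk of the work; everything else is a direct application of Lemma \ref{L:Jana1}, Proposition \ref{P: nu}, and the weak o-minimality of the sorts.
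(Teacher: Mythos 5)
Your route is genuinely different from the paper's, but as written it has a real gap. The paper does not decompose $xy^{-1}z$ via Lemma \ref{L:Jana1}; it works directly with the multiplication map $\lambda$ on a definable open $U\ni e$ with $\nu\vdash U$ (available thanks to the group topology of Proposition \ref{P:nu-topological}), and its two essential steps are exactly the ones your proposal skips. First, it pins down the direction of monotonicity \emph{at the identity}: by weak o-minimality $\nu^+=\nu\cap(e,\infty)$ and $\nu^-=\nu\cap(-\infty,e)$ are complete types, hence each is concentrated on the definable set $W^+$ (left translations locally increasing at $e$) or on $W^-$, and a composition-of-signs argument rules out $W^-$. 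Second, it upgrades ``locally increasing at $e$'' to ``increasing on all of $\nu$'' for every $h\in\nu$ via the factorization $h=\lambda_{g^{-1}}(c)$ through a point $g\in V(\CK)$ with $\dpr(g)=1$: for such a $g$ piecewise monotonicity yields a \emph{definable} interval $I\ni e$ of monotonicity, which then persists for every $c\models\nu(g)$, and associativity transfers this to $\lambda_h$ on $\nu$. Your argument, by contrast, only produces (at best) coordinatewise monotonicity of $F(x,y,z)=xy^{-1}z$ on a generic box around $(d,d,d)$ for a generic $d$, i.e.\ on the coset $\nu d$. The order in the statement lives on $\nu$ itself, around $e$, and $e$ is \emph{not} generic over the relevant parameters, so generic monotonicity says nothing there; to ``translate by $d$'' you need the map $x\mapsto xd$, read in the $D$-coordinates, to be monotone on a definable neighbourhood of the non-generic point $e$. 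That is precisely the difficulty the paper's two steps (together with its Claim on continuous open injective maps) are designed to overcome, and your proposal does not address it.

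Two further points. The asserted monotonicity of the composition is not automatic: coordinatewise monotone multivariable maps do not, in general, compose to a coordinatewise monotone map, so you would have to exploit the specific single-threaded form of the $\psi_i$ in Lemma \ref{L:Jana1} and prove a constancy-of-direction statement on a box in the weakly o-minimal setting; this is where the real content lies and it is asserted rather than proved (your sign claim that $F(d,y,d)$ is decreasing in $y$ is also unjustified, though it is not actually needed, since only monotonicity in $x$ and $z$ enters). Finally, your treatment of $\Gamma$ and $K/\CO$ inverts the actual difficulty: in the power bounded $T$-convex setting these are ordered abelian groups and, by Theorem \ref{T:groups in tconvex}(2),(3) (via local linearity), $\nu$ is identified with a subgroup of $\la\Gamma^r,+\ra$, respectively of $\la (K/\CO)^r,+\ra$, so order-compatibility is immediate and there is nothing to prove; in particular the non-Hausdorff ball topology you worry about is a feature of the $p$-adic case only and plays no role here. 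The genuinely non-trivial cases are $D=K$ and $D=\bk$, where the paper's argument (or a corrected version of yours with the missing monotonicity and transfer steps supplied) is required.
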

\begin{proof} 
We shall be brief. Assume that $\CK$ is sufficiently saturated. 
Since $K/\CO$ and $\Gamma$ are ordered groups and $\nu$ is a subgroup,  we have nothing to prove in these cases. We prove the result in case $D$ is $K$. The proof translates verbatim to the case where $D=\textbf{k}$ since it uses  only weak o-minimality of $K$ and \cite[Corollary  2.8]{vdDries-Tconvex} asserting that any definable function $f: K\to K$ in a $T$-convex structure is piece-wise  monotone. By o-minimality, this is also true in $\textbf{k}$. 

	The following is a simple corollary of the piece-wise monotonicity of definable functions.
	\begin{claim}
		 Assume that $f: K\to K$ is a definable, continuous, open and injective partial function with open domain. Then $f$ is locally strictly monotone at every point.
	\end{claim}
Our goal is to prove: If $\nu\sub K$ is the type-definable infinitesimal  neighbourhood of $e\in K$, endowed with the $K$-ordering and  a $\CK$-definable topological group operation then left multiplication is order preserving (by symmetry, the same is true for right multiplication).

	Let $e\in \nu$ be the identity element and $\lambda(x,y)$ a $\0$-definable function whose restriction to $\nu$  is the group multiplication. We may assume, by compactness, that $\lambda$ is defined and continuous on $U\times U$ for some $K$-definable open $U$, $\nu\vdash U$,  which satisfies:
\begin{list}{$\bullet$}{}
	\item For all $x\in U$ the function $\lambda_x(y):= \lambda(x,y)$ is an injective open map. In addition, $\lambda(x,e)=\lambda(e,x)=x$ for all $x\in U$.

We may further find a definable open $V\sub U$, $\nu \vdash V$, such  that:
	\item For all $x\in V$ there exists (a unique) $y\in U$ such that $\lambda(x,y)=\lambda(y,x)=e$.  By abuse of notation, we let $x^{-1}$ denote this $y$.
	
	\item $V=V^{-1}$, $V\cdot V\cdot V\sub U$ and $\lambda$ is associative on $V$, whenever defined.
\end{list}
Absorbing parameters into the language, assume that $U$ and  $V$ above are $\emptyset$-definable.

By the above claim, for every $g\in U$ the function $\lambda_g$ is locally (strictly) monotone at every point. 	Let $\widehat \CK\succ \CK$ be an $|K|^+$-saturated extension. We first show that for every $g\in \nu(\widehat \CK)$, the function $\lambda_g$ is  {\em locally} strictly increasing at $e$.

Let $W^+$ (resp. $W^-$) be the set of $g\in U(\widehat \CK)$ such that $\lambda_g$ is locally strictly increasing (reps. decreasing) at $e$. Both sets are  $\0$-definable, hence  $\nu^+:=\nu\cap (e,\infty)$, which -- by weak o-minimality -- is a complete type over $\emptyset$, is concentrated on one of  $W^+$ and $W^-$, and the same for $\nu^-:=\nu\cap (-\infty,e)$. We claim that both are concentrated on $W^+$, and hence $\nu\vdash W^+$ (clearly,  $e\in W^+$).

Indeed, assume towards a contradiction that, say, $\nu^-\vdash W^-$, and fix any $a\in \nu^-(\widehat \CK)$ (in particular, $\dpr(a/M)=1$). Since $a\in W^-$, the function $\lambda_a$ is strictly decreasing on some open interval  $J\ni e$.  As $ae=a$ belongs to the open set $\nu^-$, it follows by continuity that there exists $b\models \nu^-\cap J$ sufficiently close to $e$, such that $ab\in \nu^-$.
 But then, $\lambda_{ab}=\lambda_a\circ\lambda_b$, $\lambda_b$ is decreasing at $e$ and $\lambda_a$ is decreasing at $b$ so $\lambda_{ab}$ is increasing at $e$, contradicting the fact that $ab\in \nu^-(\widehat \CK)\sub  W^-$.
 
 The cases where $\nu^+\vdash W^-$ leads to a contradiction in the same way.
 We may therefore conclude that for every $g\in \nu(\widehat \CK)$, $\lambda_g$ is locally increasing at $e$. By compactness, we may assume that $U\sub W^+$.

 Fix $g\in V(\CK)$ (where associativity holds) such that $\dpr(g)=1$.  The function $\lambda_g$ is continuous at $e$, $\lambda_g(e)=g$, hence $\lambda_g(\nu)=\nu(g)$. Furthermore, since $g\in K$, by  our assumptions, $\lambda_g$ is strictly increasing on some $\CK$-definable open interval $I\ni e$. By \ref{Gen-Os in SW} we may choose $I$ to be $A$-definable, $A\sub M$, such  that $\dpr(g/A)=1$. It follows that for all $c\models \nu(g)$, $\lambda_c$ is strictly increasing on $I$, so in particular on $\nu(\widehat \CK)$.

Fix any $h\in \nu$, we need to see that $x\mapsto hx$ is increasing on $\nu$. We write $h=\lambda_g^{-1}(c)$, for some $c\in \nu(g)$. Now, for $x\in \nu(\widehat \CK)$, we have $$h\cdot x=\lambda_g^{-1}(c)\cdot x=\lambda_{g^{-1}}(c)\cdot x=\lambda_{g^{-1}}(\lambda_c(x)),$$ where the right equality follows from the associativity on $V$. Now, if $x<y\in \nu(\widehat \CK)$ then
$\lambda_c(x)<\lambda_c(y)$ and hence (because $\lambda_g$ and its inverse  are increasing on $\nu$), we have
$$h\cdot x=\lambda_g^{-1}(\lambda_c(x))<\lambda_g^{-1}(\lambda_c(y))=h\cdot y.$$ It follows that 
multiplication is order preserving.
\end{proof}

\bibliographystyle{plain}
\bibliography{harvard}

\end{document}